\documentclass[11pt]{amsart}
\usepackage{amsmath}
\usepackage{amssymb, verbatim}
\usepackage{pstricks,pst-node,pst-plot}
\usepackage{comment}
\usepackage{color}
\usepackage{extarrows}

\title[]{Special Lagrangian submanifolds of log Calabi-Yau manifolds}
\author[T. C. Collins]{Tristan C. Collins}
  \email{tristanc@mit.edu}
  \address{Department of Mathematics, Massachusetts Institute of Technology, 77 Massachusetts Avenue, Cambridge, MA 02139}
 \thanks{T.C.C is supported in part by NSF grant DMS-1810924, NSF CAREER grant DMS-1944952 and an Alfred P. Sloan Fellowship. }

 \author[A. Jacob]{Adam Jacob}
  \email{ajacob@math.ucdavis.edu}
  \address{Department of Mathematics, University of California, Davis, 1 Shields Ave, Davis, CA, 95616}
  \thanks{A.J. is supported in part by a Simons collaboration grant}
  
  \author[Y.-S. Lin]{Yu-Shen Lin}
  \email{yslin@bu.edu}
  \address{Department of Mathematics, Boston University, 11 Cummington Mall, Boston, MA 02215}
    \dedicatory{Dedicated to S.-T. Yau with admiration on the occasion of his 70th birthday.}

\theoremstyle{plain}
\newtheorem{thm}{Theorem}[section]
\newtheorem{prop}[thm]{Proposition}
\newtheorem{defn}[thm]{Definition}
\newtheorem{lem}[thm]{Lemma}
\newtheorem{cor}[thm]{Corollary}

\theoremstyle{definition}

\newtheorem{rk}[thm]{Remark}

\numberwithin{equation}{section}

\newcommand{\del}{\partial}

\newcommand{\dbar}{\overline{\del}}
\newcommand{\ddb}{\sqrt{-1}\del\dbar}
\newcommand{\ddt}{\frac{\del}{\del t}}

\newcommand{\ti}[1]{\tilde{#1}}

\newcommand{\cC}{\mathcal{C}}

\renewcommand{\leq}{\leqslant}
\renewcommand{\geq}{\geqslant}

\renewcommand{\epsilon}{\varepsilon}
\renewcommand{\phi}{\varphi}

\begin{document}

\maketitle

\begin{abstract}
We study the existence of special Lagrangian submanifolds of log Calabi-Yau manifolds equipped with the complete Ricci-flat K\"ahler metric constructed by Tian-Yau.  We prove that if $X$ is a Tian-Yau manifold and if the compact Calabi-Yau manifold at infinty admits a single special Lagrangian, then $X$ admits infinitely many disjoint special Lagrangians.  In complex dimension $2$, we prove that if $Y$ is a del Pezzo surface or a rational elliptic surface and $D\in |-K_{Y}|$ is a smooth divisor with $D^2=d$, then $X= Y\setminus D$ admits a special Lagrangian torus fibration, as conjectured by Strominger-Yau-Zaslow and Auroux.  In fact, we show that $X$ admits twin special Lagrangian fibrations, confirming a prediction of Leung-Yau.  In the special case that $Y$ is a rational elliptic surface or $Y= \mathbb{P}^2$ we identify the singular fibers for generic data, thereby confirming two conjectures of Auroux.  Finally, we prove that after a hyper-K\"ahler rotation, $X$ can be compactified to the complement of a Kodaira type $I_{d}$ fiber appearing as a singular fiber in a rational elliptic surface $\check{\pi}: \check{Y}\rightarrow \mathbb{P}^1$.
\end{abstract}

\section{Introduction}

Mirror symmetry arose from physics as a mysterious duality between Hodge numbers of certain Calabi-Yau threefolds $Y, \check{Y}$ (see, for example, \cite{COGP, CK}).  Over the past 30 years, mirror symmetry has attracted intense interest from mathematicians.  In 1994 Kontsevich \cite{Kont} proposed that mirror symmetry could be explained as a certain duality between categories; the derived category of sheaves $D^{b}Coh(Y)$ on the one hand and the Fukaya category $Fuk(\check{Y})$ on the other.  This proposal has come to be known as homological mirror symmetry (HMS).  Strominger-Yau-Zaslow \cite{SYZ} proposed a geometric mechanism for mirror symmetry based on the prediction that, in certain limits, Calabi-Yau manifolds admit fibrations by special Lagrangian tori.  Mirror symmetry is then obtained by fiberwise $T$-duality and the categories $D^{b}Coh(Y), Fuk(\check{Y})$ are related by a real Fourier-Mukai transform \cite{LYZ}. 

 A fundamental difficulty in making progress on the SYZ proposal has been the dearth of special Lagrangian submanifolds in Calabi-Yau manifolds.  Indeed, it is unknown whether a general Calabi-Yau manifold admits even a single special Lagrangian submanifold.  In fact, the only examples of Calabi-Yau manifolds which are known to admit SYZ fibrations are essentially trivial; complex tori and hyper-K\"ahler manifolds with holomorphic fibrations by complex tori, where SYZ fibrations can be produced by a hyper-K\"ahler rotation.  Nevertheless, these examples can be used to give non-trivial evidence for the SYZ picture.  For example, when $Y$ is an elliptically fibered $K3$ surface with 24 $I_1$ singular fibers, Gross-Wilson \cite{GrWi} used the hyper-K\"ahler rotation trick, together with a careful analysis of the Calabi-Yau metrics, to confirm the SYZ picture.  We refer the reader to \cite{GTW, GTW1, Ru, Ru1, To, To1, ToZh, Zha} and the references therein for related work.
 
In the case of non-compact manifolds only slightly more is known.  In special cases,  Goldstein \cite{Gold} and Gross \cite{Gro} have constructed special Lagrangian fibrations using group actions.  However, it is important to emphasize here that the symplectic form is {\em not} the Ricci-flat symplectic form. 

Since Kontsevich's original proposal, mirror symmetry has been extended beyond the setting of Calabi-Yau manifolds thanks to the work Batryev \cite{Bat, Bat1}, Kontsevich \cite{Kont1}, Givental \cite{Giv1, Giv2, Giv3}, Hori-Vafa \cite{HoVa} and many others.  When $-K_{Y}\rightarrow Y$ is effective, the mirror can no longer be compactand instead is expected to be a Landau-Ginzburg model $(\check{Y}, W)$ consisting of a non-compact K\"ahler manifold $\check{Y}$ and a holomorphic function $W$ called the super-potential.  

A beautiful proposal of Auroux \cite{Aur1} suggests that when $-K_{Y}$ is effective, the mirror of $Y$ should be constructed by applying SYZ mirror symmetry to the complement $Y\setminus D$ where $D \in |-K_{Y}|$ is an anticanonical divisor; such pairs $(Y,D)$ are usually referred to as log Calabi-Yau manifolds.   Note that SYZ mirror symmetry makes sense since $Y\setminus D$ carries a non-vanishing holomorphic volume form with a simple pole along $D$.  In particular, by the SYZ proposal, $Y\setminus D$ should admit a special Lagrangian torus fibration and the mirror $(\check{Y}, W)$ should be constructed from $Y\setminus D$ by $T$-duality along the fibers.  Furthermore, the super potential $W$ is generated by the Floer theory of $Y\setminus D$.  Auroux's proposal is in part inspired by Seidel's ICM address \cite{SeidelICM} in which he explained how the Fukaya category of the complement of a hyperplane divisor in a projective Calabi-Yau manifold could be effectively understood. 

We note that, when $Y$ is a projective surface and $D$ a singular nodal curve,  Gross-Hacking-Keel \cite{GHK} constructed an algebraic mirror of $Y\setminus D$ using tropical techniques, along the lines of the Gross-Siebert program. We refer the reader to \cite{GSsurv} and the references therein for an introduction to this active area of research.
  
In this paper, motivated by Auroux's work, we study the SYZ proposal for log Calabi-Yau manifolds $(Y,D)$. Our main interest will be the existence of special Lagrangian submanifolds in $X= Y\setminus D$.  We will consider the following two cases
\begin{itemize}
\item[(I)] $Y$ is a Fano variety and $D \in |-K_{Y}|$ is a smooth divisor.
\item[(II)] $Y$ admits a fibration $\pi: Y\rightarrow B$ onto a smooth algebraic curve with connected fibers and $D\in |-K_{X}|$ is a smooth fiber of $\pi$.
\end{itemize}

For example, the second case can be achieved by blowing up the base points of a pencil of anti-canonical divisors in a Fano manifold. In each of these cases it is a fundamental result of Tian-Yau \cite{TY, TY2} that $X= Y\setminus D$ admits a complete Ricci-flat metric making $(X,\omega_{TY})$ a complete, non-compact Calabi-Yau manifold.  Our first main theorem is the following

\begin{thm}\label{thm: main1Intro}
Suppose $(Y,D)$ is a log Calabi-Yau manifold of Type I or II.  Suppose that the Calabi-Yau manifold $D$ admits a smooth, immersed special Lagrangian $L \subset D$.  Then $X=Y\setminus D$ equipped with the Tian-Yau metric admits a countable infinity of disjoint, immersed special Lagrangian submanifolds $\{\tilde{L}_{i}\}_{i\in \mathbb{N}}$ with topology $L\times S^1$ and having the following property: for each $i$ there is a sequence $j_i \rightarrow \infty$ such that $d(\tilde{L}_i, \tilde{L}_{j_i}) \rightarrow \infty$ as $j_i \rightarrow \infty$.
\end{thm}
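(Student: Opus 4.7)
The plan is to produce the $\tilde{L}_{i}$ by perturbing explicit model special Lagrangians living in the asymptotic geometry of the Tian-Yau end. In both Type I and Type II, the metric $\omega_{TY}$ on $X = Y\setminus D$ is asymptotic, at a polynomial rate in the distance function, to a Calabi-ansatz Ricci-flat metric on a neighborhood of the zero section in the normal bundle $N_{D/Y} \to D$. This Calabi model carries a natural $S^{1}$-action rotating the fibers; the model K\"ahler form is $S^{1}$-invariant and the model holomorphic volume form transforms by a character under the rotation. Consequently, if $L \subset D$ is a smooth immersed special Lagrangian in the Calabi-Yau manifold $D$ and $S^{1}_{r}$ denotes the circle of radius $r$ in a fiber of $N_{D/Y}$, then the preimage of $L$ inside the circle bundle of radius $r$ is, after choosing the correct overall phase, a smooth immersed special Lagrangian $\tilde{L}^{\mathrm{mod}}_{r}$ in the model, diffeomorphic to $L \times S^{1}$. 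As $r$ varies this gives a one-parameter family of pairwise disjoint model special Lagrangians that exhausts the end of $X$ as $r\to\infty$.

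The main technical step is a quantitative perturbation: for each sufficiently large $r$, produce a genuine $\omega_{TY}$-special Lagrangian $\tilde{L}_{r}$ close to $\tilde{L}^{\mathrm{mod}}_{r}$. Parametrize nearby Lagrangians via a Weinstein neighborhood by closed one-forms $\alpha$ on $\tilde{L}^{\mathrm{mod}}_{r}$ and write the special Lagrangian condition as a nonlinear elliptic system whose linearization at $\alpha=0$ is Fredholm with cokernel the harmonic one-forms on $\tilde{L}^{\mathrm{mod}}_{r}\cong L\times S^{1}$, by McLean's theorem. These harmonic forms split as those pulled back from $L$ together with the generator $d\theta$ of the $S^{1}$-factor, and each direction corresponds to a genuine model deformation: harmonic forms on $L$ give nearby special Lagrangians in $D$ via McLean applied to $L\subset D$, while $d\theta$ corresponds to varying the radius $r$. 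A quantitative implicit function theorem applied on the orthogonal complement of the obstruction space, combined with adjusting these finite-dimensional moduli to kill the obstructions, should produce $\tilde{L}_{r}$ for all $r$ sufficiently large.

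The principal obstacle is obtaining $r$-independent estimates for the inverse of the linearized operator as $r\to\infty$. Since the Tian-Yau end is asymptotically cylindrical in a weighted sense and the intrinsic geometry of the slices $\tilde{L}^{\mathrm{mod}}_{r}$ itself depends on $r$, one must work in weighted Sobolev spaces adapted to this scaling and prove uniform invertibility on the complement of the obstruction space; this, combined with the decay rate of $\omega_{TY}$ to the model, controls the size of the nonlinear error in the implicit function theorem. Once this is in place, choosing a sequence $r_{i}\to\infty$ spaced widely enough that the small $C^{1}$ perturbations cannot create intersections yields countably many pairwise disjoint $\tilde{L}_{i}$. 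The divergence $d(\tilde{L}_{i},\tilde{L}_{j_{i}})\to\infty$ then follows from the fact that the ambient distance between the model slices of radii $r$ and $r'$ diverges as $|r-r'|\to\infty$, and this distance is preserved, up to bounded error at infinity, when passing from the Calabi model to $\omega_{TY}$.
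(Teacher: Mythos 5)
Your first half coincides with the paper's: the model special Lagrangians are exactly the circle bundles over $L$ inside the Calabi model (respectively, fixed slices of the cylinder in Type II), and they are transplanted into $(X,\omega_{TY})$ as Lagrangians with small error. Where you diverge is the perturbation step: you propose an elliptic Lyapunov--Schmidt/implicit function argument, whereas the paper deforms by Lagrangian mean curvature flow and proves an effective convergence theorem (Theorem~\ref{thm: TYLMCFconv}). The divergence matters, because the difficulty you flag --- ``$r$-independent estimates for the inverse of the linearized operator'' --- is not just hard, it is unattainable as stated: by Lemma~\ref{lem: TYmodLambda} the first nonzero eigenvalue of the slice $\tilde{L}^{\mathrm{mod}}_{r}\cong L\times S^{1}$ is $\lambda_1(L)(-\log\epsilon)^{-1/n}\to 0$, and the circle fiber collapses at rate $\ell_0^{1-n}$, so the inverse of the linearization (essentially the Laplacian on functions on the slice, after the Moser step) necessarily grows polynomially in the scale parameter; rescaling the norms only relocates this loss. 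The correct target is therefore not uniform invertibility but a quantitative statement in which a polynomially large inverse is beaten by the error.

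This is where your proposal has a genuine gap: you assert that $\omega_{TY}$ approaches the model ``at a polynomial rate in the distance function,'' and with only polynomial closeness the iteration does not visibly close against a polynomially blowing-up inverse and degenerating injectivity/non-collapsing scales. In fact the closeness is exponential in a power of the distance (Theorem~\ref{thm: TY} and Proposition~\ref{prop: HSVZdecay}: errors of size $O(e^{-\underline{\delta}\ell_0^{2n}})$, while $\lambda_1\sim \ell_0^{-2}$ and the non-collapsing scale is $\sim\ell_0^{1-n}$), and the entire substance of the paper's Section~\ref{sec: TY} is the bookkeeping showing that this exponential-versus-polynomial disparity persists (there, along the flow; in your scheme it would have to be threaded through the linear estimate and the quadratic terms of the IFT). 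Deferring precisely this to ``weighted Sobolev spaces \dots should produce $\tilde{L}_r$'' leaves the decisive estimate missing. Two smaller corrections: McLean's theorem identifies harmonic $1$-forms with the \emph{kernel} of the linearization (the deformations), not the cokernel; and once the submanifold has been made Lagrangian by Moser's trick, the remaining equation is the single angle equation, whose only obstruction is the constants and is removed by normalizing the phase of $\Omega$ as in~\eqref{eq: holVolMeps} --- not by moving $L$ inside $D$ or varying $r$. With those repairs and with the exponential decay actually exploited, your elliptic route is a plausible alternative to the flow argument, but as written it does not yet constitute a proof.
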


\begin{rk}
The final property is meant to emphasize that the countable infinity of special Lagrangians are not small deformations of a single special Lagrangian.
\end{rk}

One way to view this result is as a lifting result for special Lagrangian submanifolds from Calabi-Yau manifolds of dimension $n-1$ to Calabi-Yau manifolds of dimension $n$.  For instance, there are several examples of projective Calabi-Yau manifolds admitting special Lagrangian submanifolds \cite{Bry, HS, MKob}, and it is well-known that elliptically fibered $K3$ surfaces can be hyper-K\"ahler rotated to produce special Lagrangian fibrations, and in some examples hyper-K\"ahler rotation remains algebraic.  By Theorem~\ref{thm: main1Intro}, we obtain non-compact Calabi-Yau threefolds admitting many distinct special Lagrangian submanifolds.  Conceivably, one could glue two such Calabi-Yau manifolds along infinity to obtain a compact Calabi-Yau manifold with finitely many distinct special Lagrangian submanfolds.  If one were exceedingly lucky, this construction could be repeated to obtain special Lagrangian submanifolds of Calabi-Yau manifolds in higher and higher dimensions.  We note Talbot \cite{Tal} has obtained gluing results of this type for non-compact, asymptotically cylindrical special Lagrangians in asymptotically cylindrical Calabi-Yau manifolds.

One case in which the existence of special Lagrangians in $D$ is trivial is when $Y$ has dimension $2$, so that $D$ is a torus.  In this case, we obtain the following result which confirms the SYZ conjecture in this case

\begin{thm}\label{thm: wtf}
Suppose that $Y$ is a del Pezzo surface or a rational elliptic surface and $D\in|-K_{Y}|$ is smooth.  Then $X=Y\setminus D$ equipped with the Tian-Yau metric admits a special Lagrangian fibration $\pi:X \rightarrow \mathbb{R}^2$ with a section.  Furthermore, near $\infty$, the fibers of $\pi$ are contained in a neighborhood of $D$ and the smooth fibers are topologically $S^1$ bundles over special Lagrangian submanifolds of $D$.
\end{thm}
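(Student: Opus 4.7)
My plan is to establish the fibration in two stages: first construct it near infinity from an explicit model built on the Calabi asymptotic geometry of the Tian--Yau metric, then extend globally by hyper-K\"ahler rotating $X$ to a compactification by a Kodaira $I_d$ fiber in a rational elliptic surface, as flagged in the abstract.

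\textbf{Step 1: Fibration near infinity.}  The Tian--Yau metric on $X$ is asymptotic to the Calabi ansatz on the punctured normal disk bundle $N_D\setminus D$.  Because $D$ is a flat elliptic curve, $N_D$ carries a fiberwise rotation $S^1$-action which is Hamiltonian for the Calabi symplectic form, with moment map essentially the squared fiber norm.  Combining it with a translation $S^1$-action on $D$ produces a $T^2$-Hamiltonian action whose moment map $\mu : N_D\setminus D \to \mathbb{R}^2$ has Lagrangian orbit fibers.  A direct phase computation against the model holomorphic volume form shows that these fibers are in fact special Lagrangian tori, with exactly the product structure described in the theorem (an $S^1$-bundle over a flat geodesic circle in $D$).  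Applying McLean's unobstructed deformation theory together with sharp Tian--Yau decay estimates would then let one perturb the model fibration to an honest special Lagrangian torus fibration over an open subset of $\mathbb{R}^2$ parametrizing a neighborhood of infinity in $X$; a section is furnished by a $T^2$-equivariant transversal inherited from the zero section of $N_D$.

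\textbf{Step 2: Global extension via hyper-K\"ahler rotation.}  Since $\dim_{\mathbb{C}} X = 2$, the triple $(X,\omega_{TY},\Omega)$ is hyper-K\"ahler.  Rotate to the complex structure $\check{I}$ for which $\check{\Omega} := \omega_{TY} + \sqrt{-1}\,\mathrm{Im}(\Omega)$ is holomorphic symplectic and $\check{\omega} := \mathrm{Re}(\Omega)$ is K\"ahler; under this rotation each special Lagrangian torus from Step 1 becomes a smooth $\check{I}$-holomorphic elliptic curve.  The strategy is to construct a compactification of $(X,\check{I})$ to a projective rational elliptic surface $\check{\pi}:\check{Y}\to \mathbb{P}^1$ in which these tori appear as smooth fibers and the added divisor at infinity is a Kodaira $I_d$ fiber with $d=D^2$.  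The restriction of $\check{\pi}$ to $X$ is then a globally defined holomorphic elliptic fibration $X \to \mathbb{C}$; de-rotating converts it precisely to the desired special Lagrangian fibration $\pi : X \to \mathbb{R}^2$, and a section of $\pi$ comes from any holomorphic section of $\check{\pi}$, which always exists on a rational elliptic surface.

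\textbf{Main obstacle.}  The hard part is the compactification of $(X,\check{I})$ in Step 2.  After hyper-K\"ahler rotation, the $T^2$ orbits near infinity must compactify across a Kodaira $I_d$ fiber, so the asymptotic behavior of $\check{I}$ on the end of $X$ must match precisely that of a standard semi-flat rational elliptic surface near an $I_d$ singular fiber.  Establishing this match requires improving the Calabi asymptotics of $\omega_{TY}$ and $\Omega$ with polynomial decay rates strong enough to apply a Newlander--Nirenberg-type extension argument for $\check{I}$ across the boundary divisor, and then invoking a Torelli-type statement for rational elliptic surfaces (or for the log Calabi--Yau pair $(\check{Y}, I_d)$) to identify the resulting compactification globally with a concrete such surface.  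The analytic heart of the argument is thus a sharpening of the Tian--Yau asymptotic expansion to the point where the rotated complex structure $\check{I}$, and in particular its elliptic fibration structure, is controlled to sufficient order at infinity.
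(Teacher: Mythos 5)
There are two genuine gaps, and together they hide essentially all of the work the theorem requires.  First, in Step 1 the perturbation of the model fibration is not an application of McLean's theorem.  The explicit fibers in the Calabi model (circle bundles over geodesic circles $N\subset D$ at fixed $|\xi|_h$ -- note, incidentally, that these do not come from a Hamiltonian $T^2$-action: translations of the elliptic curve $D$ do not lift to the ample line bundle $L$, so there is no second $S^1$) are special Lagrangian only for the \emph{model} structure $(\omega_{\cC},\Omega_{\cC})$.  With respect to $(\omega_{TY},\Omega_{TY})$ they are merely approximately special Lagrangian, and McLean's deformation theory deforms genuine special Lagrangians inside a fixed Calabi--Yau structure; it does not convert approximate solutions into exact ones.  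Making that conversion is a quantitative problem in which the relevant geometry degenerates along the end (the injectivity radius decays like $\ell_0^{1-n}$ and $\lambda_1$ of the fibers decays like $K^{-2}$, while the error $|H|$ is only exponentially small), so no off-the-shelf implicit-function or stability theorem applies; this is exactly why the paper runs the Lagrangian mean curvature flow and proves a bespoke effective convergence theorem (Theorem~\ref{thm: TYLMCFconv}), and then needs a Floer-homology argument (Lemma~\ref{lem: HF}) plus continuity of the flow (Proposition~\ref{prop: MCFhomotope}) even to see that the resulting special Lagrangians are pairwise disjoint and actually fiber a neighborhood of infinity, rather than just existing individually.

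Second, and more seriously, Step 2 does not produce the global fibration.  Even granting the delicate matching of the rotated complex structure $\check I$ with a semi-flat model and an extension of $\check I$ across an added $I_d$ cycle, what you obtain is an elliptic fibration defined only near the divisor at infinity; nothing in your argument extends that fibration over the rest of $X$, nor shows that the compactified surface is projective/rational so that ``a section always exists'' and a Torelli theorem can be invoked -- those facts are consequences of already having the fibration, not inputs.  In the paper the global statement is the content of Theorem~\ref{thm: oneToMany}: after hyper-K\"ahler rotation one studies the moduli space of holomorphic tori in the class $[L]$, uses the non-escape estimate Proposition~\ref{prop: cpctSet} together with Gromov--Sacks--Uhlenbeck compactness, the adjunction formula and Kodaira's classification to control degenerate fibers, rules out accumulation of singular fibers (Lemma~\ref{lem: multAcc}, Propositions~\ref{prop: nodalAcc},~\ref{prop: cuspAcc}), and proves the fibered locus is open, closed and exhausts $X$, with base $\mathbb{C}$ via $H_1(X)$ torsion and Yau's Liouville theorem.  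Only afterwards is the fibration compactified over a punctured disk (Lemma~\ref{lem: section}, Corollary~\ref{lem: compact}, Theorem~\ref{thm: RES}) to identify $\check Y$ as a rational elliptic surface and extract the section.  Your proposal inverts this order, but the compactification-first route would still have to solve the extension-of-the-fibration problem on all of $X$, which is the actual heart of the theorem and is left untouched.
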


Theorem~\ref{thm: wtf} resolves a conjecture of Auroux \cite[Conjecture 5.1]{Aur2} in complex dimension 2.  To our knowledge, this theorem produces the first examples of SYZ fibrations in the literature which are neither trivial, nor obtained from hyper-K\"ahler rotating from an existing holomorphic torus fibration (as in the case of $K3$ surfaces). As an application of this result we obtain

\begin{cor}[Auroux, Conjecture 2.9, \cite{Aur}]\label{cor: introAurConj1}
Let $D$ be a smooth cubic in $\mathbb{P}^2$.  Then $X= \mathbb{P}^2\setminus D$ admits a special Lagrangian fibration with respect to the Tian-Yau metric $\pi: X \rightarrow \mathbb{R}^2$.   Furthermore,  the fibration $\pi$ has $3$ singular fibers, each of which is a nodal special Lagrangian sphere; that is, of Kodaira type $I_1$.
\end{cor}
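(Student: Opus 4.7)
First, apply Theorem~\ref{thm: wtf} to $(Y,D) = (\mathbb{P}^2, D)$ with $D$ a smooth cubic; this immediately produces a special Lagrangian torus fibration $\pi : X \to \mathbb{R}^2$ with a section for the Tian-Yau metric. It remains to identify the three singular fibers. The strategy is to pass to the hyper-K\"ahler rotation described in the abstract: this converts the special Lagrangian fibration into a holomorphic elliptic fibration on $\check{X}$ and, by the compactification theorem stated in the abstract, extends to a rational elliptic surface $\check{\pi} : \check{Y} \to \mathbb{P}^1$ whose fiber over $\infty$ is of Kodaira type $I_d$ with $d = D^2 = 9$. The task is to identify the remaining singular fibers of $\check\pi$.

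Since $\chi(\check{Y}) = 12$ for any rational elliptic surface, the Euler characteristics of the singular fibers of $\check{\pi}$ sum to $12$. Subtracting the $I_9$ fiber at infinity leaves total Euler characteristic $3$ for the remaining singular fibers. By Kodaira's classification the possibilities are $3I_1$, $I_2 + I_1$, $I_3$, $II + I_1$, $III$, or $IV$. Under the inverse hyper-K\"ahler rotation, an $I_1$ fiber of a holomorphic elliptic fibration, which is topologically a nodal elliptic curve, becomes an immersed special Lagrangian sphere with a single transverse self-intersection. Thus the corollary reduces to excluding every configuration other than $3I_1$ and then translating back through the rotation.

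The hard step is this exclusion. The rational elliptic surfaces carrying an $I_9$ fiber form an irreducible one-parameter family, and the non-$3I_1$ configurations correspond to a proper closed sub-locus where either two of the remaining discriminant zeros coalesce (giving $I_2+I_1$ or $I_3$) or the discriminant acquires a higher-order zero producing an additive fiber ($II$, $III$, or $IV$). My plan is to access the Weierstrass model of $\check{Y}$ directly from the geometric input of $(\mathbb{P}^2, D)$, using the monodromy of the special Lagrangian fibration around $\infty$, the period data of the Tian-Yau holomorphic volume form, and the smoothness of $D$, and then to verify that the resulting discriminant has three distinct simple zeros in $\mathbb{C} \subset \mathbb{P}^1$. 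Once this is established, transporting back through the hyper-K\"ahler rotation recovers the three nodal special Lagrangian spheres in $X$ asserted by the corollary.
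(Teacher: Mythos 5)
Your overall skeleton matches the paper's: Theorem~\ref{thm: wtf} gives the fibration, hyper-K\"ahler rotation plus Theorem~\ref{thm: RES} compactifies it to a rational elliptic surface $\check{\pi}:\check{Y}\to\mathbb{P}^1$ with an $I_9$ fiber at infinity and no multiple fibers, $\chi(\check{Y})=12$ leaves Euler number $3$ for the finite singular fibers, and an $I_1$ fiber rotates back to a nodal special Lagrangian sphere. (Minor slip: a type $IV$ fiber has Euler number $4$, so it should not appear on your list of candidates.) But the step that actually proves the corollary --- excluding every configuration other than $3I_1$ --- is left as a plan, and the plan as stated would not work. You propose to write down the Weierstrass model of $\check{Y}$ from the Tian-Yau period data and check that the discriminant has three simple zeros. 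The complex structure $I$ is obtained by rotating a non-explicit Ricci-flat metric; the paper has no access to the Weierstrass data of the specific surface $\check{Y}$ (indeed it stresses that it cannot even identify $\check{Y}$ with the SYZ mirror), so there is no mechanism to ``verify the discriminant'' for the particular member of the family. Moreover your framing rests on a false premise: the non-$3I_1$ configurations do not form a sublocus to be avoided by genericity --- they simply do not exist for a rational elliptic surface with an $I_9$ fiber, so no case-by-case verification of the specific surface is needed.

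The paper's argument is purely monodromy-theoretic and applies to every such fibration: the local monodromies of the finite singular fibers must compose (up to conjugation) to the inverse of
\[
m_{\infty}=\begin{pmatrix} 1 & 9\\ 0 & 1\end{pmatrix}.
\]
A single finite fiber ($I_3$, $III$) is impossible because its monodromy is not conjugate in $SL(2,\mathbb{Z})$ to an $I_9$ matrix; for $\{I_1,I_2\}$ one writes a general conjugate of the $I_1$ matrix as $\begin{pmatrix} 1-ab & a^2\\ -b^2 & 1+ab\end{pmatrix}$, multiplies by $\begin{pmatrix}1&2\\0&1\end{pmatrix}$, and the trace-$2$ condition forces $b=0$, $a=\pm1$, giving an $I_3$-type matrix, a contradiction; for $\{I_1,II\}$ the trace condition becomes $a^2+b^2-ab+1=0$, which has no real solutions. (Equivalently, $A_8\oplus A_1$ and $A_8\oplus A_2$ do not embed in $E_8$.) This closes the gap without any knowledge of the Weierstrass model, so you should replace your ``hard step'' with this $SL(2,\mathbb{Z})$ computation rather than attempt to extract the discriminant from the Tian-Yau data.
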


Corollary~\ref{cor: introAurConj1} resolves a conjecture of Auroux \cite[Conjecture 2.9]{Aur}.  Secondly, (in the Type II  case) we obtain the following corollary, which resolves another conjecture of Auroux

\begin{cor}[Auroux, Conjecture 2.10, \cite{Aur}]\label{cor: introAurConj2}
Let $Y$ be a rational elliptic surface and $D\in|-K_{Y}|$ a smooth divisor.  Then, for any choice of K\"ahler class $[\omega]$ on $Y$,  $X= Y\setminus D$ admits a special Lagrangian fibration $\pi: X \rightarrow \mathbb{R}^2$ with respect to the Tian-Yau metric.  For generic $(Y, [\omega], D)$, this fibration has $12$ singular fibers each of which is a nodal special Lagrangian sphere.
\end{cor}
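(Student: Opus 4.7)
The existence of a special Lagrangian fibration $\pi: X \to \mathbb{R}^2$ is already supplied by Theorem~\ref{thm: wtf}, which applies since $Y$ is a rational elliptic surface. Thus the only remaining content is the count and type of singular fibers for generic data. The plan is to transport this question across a hyper-K\"ahler rotation, so that it becomes a question about the singular fibers of a holomorphic elliptic fibration on a rational elliptic surface, to which standard algebraic geometry applies.

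The hyper-K\"ahler rotation result announced in the abstract identifies $(X, \omega_{TY})$, after rotation of the hyper-K\"ahler triple, with a Calabi-Yau open subset of a rational elliptic surface $\check{\pi}: \check{Y} \to \mathbb{P}^1$, namely the complement $\check{Y}\setminus F$ of a Kodaira type $I_d$ fiber with $d = D^2$. For a rational elliptic surface $Y$ one has $K_{Y}^2 = 0$, so in our setting $d=0$ and $F$ is simply a smooth elliptic fiber of $\check{\pi}$. Under this rotation, $\pi$ transports to the restriction $\check{\pi}|_{\check{Y}\setminus F}$: smooth special Lagrangian fibers correspond to smooth elliptic fibers, and singular special Lagrangian fibers correspond exactly to the singular holomorphic fibers of $\check{\pi}$, all of which lie in $\check{Y}\setminus F = X$ because $F$ itself is smooth.

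With the problem translated to the algebraic category, the count is forced by Kodaira's formula: since smooth elliptic fibers have Euler characteristic $0$,
\begin{equation*}
12 = \chi(\check{Y}) = \sum_{F_i \text{ singular}} \chi(F_i).
\end{equation*}
For a generic rational elliptic surface the only singular fibers are of type $I_1$, each contributing Euler characteristic $1$, giving exactly $12$ of them; and an $I_1$ fiber is a rational curve with one node, i.e.\ topologically a nodal sphere, which under the hyper-K\"ahler rotation becomes a nodal special Lagrangian sphere in $X$. The main obstacle is to convert the genericity hypothesis on $(Y,[\omega],D)$ into genericity of the rotated surface $\check{Y}$: the locus of rational elliptic surfaces admitting a singular fiber worse than $I_1$ is a countable union of proper analytic subvarieties of moduli, and one must verify that the map $(Y,[\omega],D) \mapsto \check{Y}$ induced by the hyper-K\"ahler rotation is not constant along any of these subvarieties. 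This should follow from the explicit description of the rotation together with a dimension count: perturbing $[\omega]$ and $D$ already produces a nontrivial family of complex structures on the underlying real four-manifold, hence a nontrivial family of compactifications $\check{Y}$, whose image in moduli meets the generic locus.
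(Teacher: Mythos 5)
Your overall route is the same as the paper's: existence comes from Theorem~\ref{thm: wtf}/Theorem~\ref{thm: sLagFib}, the hyper-K\"ahler rotation and compactification from Theorem~\ref{thm: RES} turn the question into one about singular fibers of an elliptic fibration on a rational elliptic surface $\check{Y}$ (with the fiber at infinity smooth, since $d=D^2=0$ here), and the count $\chi(\check{Y})=12$ together with Kodaira's classification gives twelve $I_1$ fibers once $\check{Y}$ is generic. The genuine gap is precisely the step you flag as "the main obstacle" and then dispose of heuristically: transferring genericity of $(Y,[\omega],D)$ to genericity of $\check{Y}$. Your proposed criterion --- that the rotation map be "not constant along" the bad subvarieties, backed by a dimension count showing one gets "a nontrivial family of compactifications" --- is not the right logical condition and would not close the argument. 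A nontrivial family of rotated surfaces could perfectly well stay inside the locus of surfaces with a worse-than-$I_1$ fiber, and even knowing the image meets the generic locus somewhere does not show that the \emph{generic} tuple $(Y,[\omega],D)$ lands there; for that you need something like openness or local surjectivity of the map to the moduli of pairs $(\check{Y},\check{D})$ (or analyticity of the map plus non-containment of its image in each bad subvariety, neither of which is evident, since the map passes through the Tian--Yau metric and the special Lagrangian fibration).

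This is exactly what the paper supplies and what your sketch is missing: by the hyper-K\"ahler rotation formula~\eqref{HK}, the holomorphic $2$-form of the rotated surface is $\check{\Omega}=\omega-\sqrt{-1}\,{\rm Im}\,\Omega$, and by McMullen's Torelli theorem for rational surfaces with smooth anticanonical divisor \cite{Mc}, the pair $(\check{Y},\check{D})$ is determined by the (suitably normalized) cohomology class of $\check{\Omega}$ in $H^2(X,\mathbb{C})$. Since one can vary $(Y',D',\Omega')$ and transport $[\omega]$ to a nearby K\"ahler class $[\omega']$ by Gauss--Manin (using $H^{2,0}(Y)=0$ and \cite{Demailly}), the class $[\check{\Omega}']$ can be prescribed arbitrarily near $[\check{\Omega}]$; hence \emph{every} small deformation of $(\check{Y},\check{D})$ is realized by some admissible input datum. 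This local surjectivity at the level of periods is what makes the preimage of the non-generic locus small and yields twelve nodal ($I_1$) special Lagrangian spheres for generic $(Y,[\omega],D)$. Without invoking the Torelli theorem (or some substitute establishing openness of the period correspondence), your proof of the "generic" clause does not go through.
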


Finally, we apply our results to mirror symmetry for del Pezzo surfaces and rational elliptic surfaces.  Before stating our result, let us explain the context.  At a homological level, mirror symmetry for del Pezzo surfaces and rational elliptic surfaces is quite well understood.  In this setting, Auroux-Katzarkov-Orlov \cite{AKO} proved one direction of the mirror correspondence, namely showing that the derived category of coherent sheaves on a del Pezzo surface $Y$ is equivalent to the derived category of vanishing cycles of a certain elliptic fibration,
\[
W : \check{X} \rightarrow \mathbb{C}.
\]
$W$ here is the superpotential of the Landau-Ginzburg mirror of the del Pezzo surface $Y$.   One of the key ideas in their work is that there is an elliptic fibration (in fact, a rational elliptic surface)
\[
\overline{W}: \check{Y} \rightarrow \mathbb{P}^1
\]
and that $\check{Y}\setminus \overline{W}^{-1}(\infty)$ is the fiber-wise compactification of $W : \check{X} \rightarrow \mathbb{C}$.  In fact, if $Y_{k}$ is the  del Pezzo surface obtained by blowing up $\mathbb{P}^2$ at $9-k$ points, then $ \overline{W}^{-1}(\infty)$ is a singular fiber of the elliptic fibration consisting of $k$ rational curves.  This correspondence is constructed by hand, exploiting the relative flexibility of the symplectic category.  Subsequently, Lunts-Przyjalkowski \cite{LunPr} showed that this construction gives mirror symmetry at the level of Hodge numbers, following a proposal by Katzarkov-Kontsevich-Pantev \cite{KKP}.  Using a different approach motivated by the Doran-Harder-Thompson conjecture \cite{DHT}, Doran-Thompson \cite{DT} showed that the mirror correspondence between del Pezzo complements and rational elliptic surfaces holds true at a lattice-theoretical level.

On a more historical note, it was originally thought that mirror symmetry for Calabi-Yau surfaces (and hyper-K\"ahler manifolds more generally) could be obtained by hyper-K\"ahler rotation \cite{BBS, BS, Huy}.  It is now understood that this is not the case in general.  Nevertheless, we prove

\begin{thm}\label{thm: main3Intro}
Let $Y$ be a del Pezzo surface or rational elliptic surface and $D \in |-K_{Y}|$ a smooth anticanonical divisor with $D^2=d$.  Let $X=Y\setminus D$ and equip $X$ with the Ricci-flat Tian-Yau metric $g_{TY}$.  Denote this complete non-compact Calabi-Yau manifold by $(X,g_{TY}, J)$.  Then, for any choice of homology class $[\gamma] \in H_1(D,\mathbb{Z})$ represented by a special Lagrangian, $(X,g_{TY}, J)$ admits a special Lagrangrian torus fibration $\pi: X\rightarrow \mathbb{R}^2$ with fibers topologically $S^1\times \gamma$.   We can perform a hyper-K\"ahler rotation to a complex structure $I$ so that the fibration $\pi: (X,g_{TY}, I) \rightarrow \mathbb{C}$ is holomorphic, with generic fiber an elliptic curve.  Furthermore, we have $(X,I) = \check{Y}\setminus \check{D}$, where $\check{\pi}:\check{Y}\rightarrow \mathbb{P}^1$ is a rational elliptic surface and $\check{D}$ is a singular fiber of $\check{\pi}$ of Kodaira type $I_d$. 
\end{thm}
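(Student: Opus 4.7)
The plan is to proceed in three steps.  First, I use Theorem~\ref{thm: wtf} to produce the special Lagrangian torus fibration $\pi: X\rightarrow \mathbb{R}^2$ with fibers topologically $S^1\times \gamma$ and then perform a hyper-K\"ahler rotation to promote it to a holomorphic elliptic fibration.  Since $\dim_{\mathbb{C}}X=2$, the metric $g_{TY}$ is hyper-K\"ahler; denote the original K\"ahler form by $\omega_J$ and the holomorphic volume form by $\Omega_J = \omega_I + \sqrt{-1}\omega_K$, where $\omega_I,\omega_K$ are the two transverse K\"ahler forms.  The special Lagrangian condition $\omega_J|_L = 0$ and $\mathrm{Re}(\Omega_J)|_L = \omega_I|_L = 0$ on a fiber $L$ is exactly the statement that $L$ is a complex curve for the rotated complex structure $I$ whose K\"ahler form is $\omega_I$ and whose holomorphic volume form is $\Omega_I = \omega_J + \sqrt{-1}\omega_K$.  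Hence $\pi:(X,I)\rightarrow \mathbb{C}$ is a holomorphic elliptic fibration whose generic fiber is a smooth complex torus.

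The second step is to compactify $(X,I)$ to a compact complex surface $\check{Y}$ carrying an extension $\check{\pi}:\check{Y}\rightarrow \mathbb{P}^1$ of $\pi$.  The input is the precise asymptotic description of both the Tian-Yau metric and the SLag fibration near the end of $X$ developed in the proof of Theorem~\ref{thm: wtf}: $g_{TY}$ is asymptotic to the Calabi Ansatz on the punctured disk bundle of $N_{D/Y}^{\vee}$, and the SLag fibers near infinity are $S^1$-bundles over translates of $\gamma\subset D$ with the $S^1$ coordinate rotating around $D$.  After hyper-K\"ahler rotation this circle-bundle structure provides the local model for a holomorphic degeneration of elliptic curves that can be completed by a nodal reference fiber.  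Gluing this local holomorphic model to $(X,I)$ produces $\check{Y}$; the section of $\pi$ extends through the model, and the K\"ahler form $\omega_I$ extends across the added fiber $\check{D} := \check{\pi}^{-1}(\infty)$.

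For the third step, since $\check{Y}$ is a compact K\"ahler elliptic surface admitting a section, the classification of elliptic surfaces identifies $\check{Y}$ as a rational elliptic surface as soon as one knows $\chi_{\mathrm{top}}(\check{Y}) = 12$.  From $\chi(D)=0$ we get $\chi(X) = \chi(Y)$, and in both cases $\chi(Y) = 12 - d$: a del Pezzo surface of degree $d$ has $\chi = 12-d$ and $D^2 = d$, while a rational elliptic surface has $\chi = 12$ and $d = 0$.  Additivity then yields $\chi(\check{D}) = \chi(\check{Y})-\chi(X) = d$.  To distinguish Kodaira type $I_d$ from other singular fibers with the same Euler number one reads the monodromy around $\infty$ off the circle-bundle structure from Step 2: it shears the cycle dual to $[\gamma]$ by $d$ copies of $[\gamma]$, giving a matrix conjugate to $\bigl(\begin{smallmatrix}1 & d \\ 0 & 1\end{smallmatrix}\bigr)$, which is precisely the $I_d$ monodromy.

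The principal obstacle is Step~2.  The hyper-K\"ahler rotation to a holomorphic fibration is essentially formal once Theorem~\ref{thm: wtf} is in hand.  What is delicate is producing a smooth holomorphic compactification and checking that the added fiber is a genuine nodal fiber of type $I_d$: one must control the rotated complex structure $I$ and the K\"ahler form $\omega_I$ with enough precision in the asymptotic region to glue a complex-analytic local model across $\check{D}$, to verify that the resulting K\"ahler surface is compact and projective, and to match the asymptotic degeneration of fibers to the standard nodal $I_d$ model.  The sharp asymptotics of $g_{TY}$ near $D$ and the explicit control on the SLag fibers near infinity built into the proof of Theorem~\ref{thm: wtf} are precisely the ingredients needed to make the gluing holomorphic.
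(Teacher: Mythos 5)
Your outline (rotate, compactify at infinity, classify the compact surface) is the same skeleton as the paper's, but the step you yourself flag as ``the principal obstacle'' -- Step~2 -- is precisely where the content of the theorem lies, and your proposal does not actually carry it out: no mechanism is given for producing the holomorphic compactification beyond the assertion that a local model can be ``glued''. The paper never glues a metric or complex-analytic model using the asymptotics of $g_{TY}$. Instead it compactifies by a chain of soft arguments: Lemma~\ref{lem: section} produces a local holomorphic section of the rotated fibration near $\infty$ by showing $R^{1}\pi_{*}\mathcal{O}$ is the trivial line bundle on a punctured disk (Grauert--R\"ohrl plus Cartan B) and comparing the uncountable space of its sections with the countable lattice cohomology; Lemma~\ref{lem: monodromy} pins down the monodromy at infinity as $\bigl(\begin{smallmatrix}1 & d\\ 0 & 1\end{smallmatrix}\bigr)$, and this step is not free -- it requires Proposition~\ref{prop: MCFhomotope}, i.e.\ that the actual fibration over a large loop is carried by LMCF to the model fibration, not merely that each individual fiber is an $S^{1}$-bundle over a circle in $D$; then Corollary~\ref{lem: compact} extends the classifying map to $\overline{\mathcal{M}}_{1,1}$ across the puncture, fills in the fiber, resolves and blows down to get a relatively minimal fibration whose added fiber is of type $I_{d}$ by Kodaira's table. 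Note also that your appeal to a section is circular as written: the global section of the SYZ fibration in Theorem~\ref{thm: wtf} is itself deduced from the compactification (Corollary~\ref{cor: SYZsection} follows from Theorem~\ref{thm: RES}), so it cannot be an input here.

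A second concrete error: the claim that ``the K\"ahler form $\omega_I$ extends across the added fiber $\check{D}$'' is false. The metric $g_{TY}$ is complete on $X$, so $\check{D}$ lies at infinite distance and the rotated K\"ahler form blows up there; indeed in the rotated structure it is the holomorphic $2$-form $\check{\Omega}$ that acquires a simple pole along $\check{D}$, consistent with $\check{D}$ being anticanonical. Consequently your Step~3 hypothesis ``compact K\"ahler elliptic surface admitting a section'' is unsupported on both counts, and the shortcut ``$\chi_{\mathrm{top}}=12$ plus a section implies rational elliptic'' is not available in this form. The paper instead establishes $b_{1}(\check{Y})=0$ via Lemma~\ref{lem: torsionFun} and Mayer--Vietoris, computes $\chi(\check{Y})=12$ from Lemma~\ref{lem: eulerChar} together with the already-identified $I_{d}$ fiber, and then runs the full Enriques--Kodaira argument in Theorem~\ref{thm: RES}: ruling out $K3$, Enriques and minimal properly elliptic surfaces, showing $\check{Y}$ is non-minimal, proving projectivity from a $(-1)$-curve multisection via $(C+F)^{2}>0$ rather than from an extended K\"ahler form, and concluding rationality by Castelnuovo; the section only appears at the end, as the $(-1)$-curve. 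So while your Euler-characteristic bookkeeping and the monodromy heuristic are in the right spirit, the proposal as it stands is missing the actual compactification argument and rests on two unjustified (and in one case false) claims.
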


It is important to remark that we do not know if the manifold $\check{Y}\setminus \check{D}$ is mirror in the sense of SYZ to $X$;  the correct mirror obtained by torus duality may have a different complex structure.  Nevertheless, it is in the correct family as suggested by the results of \cite{AKO, LunPr, KKP, DT}.   

Finally, we remark that Theorem~\ref{thm: main3Intro} in fact produces many inequivalent special Lagrangian fibrations on $X$.  Given the elliptic curve $D$ we can choose two distinct special Lagrangians $\gamma, \gamma'$  intersecting at one point and generating $H_1(D, \mathbb{Z})$.  Every such choice gives rise to a special Lagrangian fibration.  Since the mirror of $X$ is a rational elliptic surface, the existence of such {\em twin} special Lagrangian fibrations confirms a prediction of Leung-Yau \cite{LY, LLi}.

The paper is structured as follows.  In order to prove Theorem~\ref{thm: main1Intro} we use the explicit form of the Tian-Yau metrics near infinity to construct {\em approximate} special Lagrangians in the asymptotic geometry.  This construction proceeds in two steps.  Fix a point $x_0\in X$ and let $d(x_0,\cdot)$ be the distance to $x_0$ with respect to the Tian-Yau metric.  In the first step we construct explicit special Lagrangians $L_{R}$ in the model geometry to which the Tian-Yau metrics converge.  We find explicit bounds for the geometry of these special Lagrangians in terms of the parameter $R$, which roughly measures $d(x_0, L_{R})$.  Next, we transfer the special Lagrangians $L_{R}$ to {\em approximate} special Lagrangians $L'_{R}$ in the Tian-Yau manifolds, while maintaining precise control of the geometry of $L'_{R}$ in terms of $d(x_0, L'_{R})$.  

The second step is to run the Lagrangian mean curvature flow (LMCF) in order to deform $L_{R}'$ to a genuine special Lagrangian.  In the Type II case, the Tian-Yau metrics are asymptotically cylindrical and the geometry of the approximate Lagrangians $L'_{R}$ as well as the Tian-Yau metric is uniformly controlled near infinity.  This allows us to appeal to a theorem of Li \cite{Li} which in the current setting implies that for $R$ sufficiently large, the LMCF starting at $L_{R}'$ converges smoothly and exponentially fast to a special Lagrangian.  In the type I case, the situation is substantially more difficult, as the geometry of $L_{R}'$ as well as the Tian-Yau metric degenerates at  infinity.  Nevertheless, by exploiting the the precise control of the geometry achieved in the construction of $L_{R}'$ we prove that the LMCF converges smoothly and exponentially fast to a special Lagrangian submanifold; see Theorem~\ref{thm: TYLMCFconv}.  These results occupy Sections~\ref{sec: perturb},~\ref{sec: ACYL} and~\ref{sec: TY}.

Next, we focus on the surface case.  Using the deformation theory of special Lagrangians \cite{RMc}, together with the theory of $J$-holomorphic curves and a hyper-K\"ahler rotation trick, we show that the existence of two disjoint immersed special Lagrangians representing the same primitive homology class infers the existence of a special Lagrangian fibration.  Combining this result with Theorem~\ref{thm: main1Intro}, we obtain Theorem~\ref{thm: wtf}.  Finally, in Section~\ref{sec: mirror} we refine our results when $Y$ is a del Pezzo surface or a rational elliptic surface.  We prove Theorem~\ref{thm: main3Intro} as well as identify the (generic) singular fibers of the special Lagrangian fibrations, as predicted by Auroux's conjectures (see Corollaries~\ref{cor: introAurConj1} and~\ref{cor: introAurConj2}).
\\
\\
{\bf Acknowledgements}: The authors are grateful to D. Auroux and S.-T. Yau for their interest and encouragement.  The third author is grateful to P. Hacking and C. Doran for their interest and some helpful conversations.  The third author is also grateful to H.-J. Hein for explaining the work of \cite{HSVZ}.  We are also very grateful to the referees who provided many helpful comments and questions (in particular leading to Proposition~\ref{prop: MCFhomotope}) which have greatly improved the paper.

\section{Perturbation of Lagrangians}\label{sec: perturb}

In this section we collect together a few formulae for the variation of geometric quantities on a Lagrangian, or more generally a submanifold $M$, under variations in the Riemannian metric.  The primary application of these formulae will be in controlling the following perturbation problem.  Suppose $(X_{mod}, \omega_{mod}, J_{mod}, \Omega_{mod}, g_{mod})$ is a Calabi-Yau manifold (perhaps not complete) and suppose that $(X, \omega, J, \Omega, g)$ is a complete, non-compact Calabi-Yau manifold with one end.  Fix some point $p \in X$ and suppose that, for a large number $R<\infty$ there is a diffeomorphism $\Phi$ such that
 \[
X_{mod}\xlongrightarrow{\Phi}  \{ x \in X: d(p,x) >R\}  \subset X
\]
with the property that $\Phi^{*}\omega - \omega_{mod} = d\beta$ for some one form $\beta$.  Suppose that $M_{mod} \subset X_{mod}$ is a special Lagrangian.  The goal is to perturb $M_{mod}$ to a submanifold $M$ which is Lagrangian with respect to $\Phi^{*}\omega$, while maintaining control of the Riemannian geometry of $M$. 

The natural way to accomplish this is via Moser's trick. Define a time dependent family of symplectic forms  $\omega_t=(1-t)\omega_{mod}+t\Phi^*\omega$ for $t\in[0,1]$ and define the time-dependent vector field $V_t$ on $X_{mod}$ via  
\begin{equation}
i_{V_t}\omega_t=-\beta.\nonumber
\end{equation}
Let $F_t$ be the time-dependent diffeomorphism generated by the flow of $V_t$, i.e. defined by $\frac{dF_t}{dt}=V_t \circ F_t$ and $F_0=id$. Then
\begin{equation}
\frac{d}{dt}F_t^*\omega_t:=F_t^*\left(\frac{d}{dt}\omega_t+{\mathcal L}_{V_t}\omega_t\right).\nonumber
\end{equation}
Applying Cartan's formula and using that $\omega_t$ is closed, gives
\begin{equation}
\frac{d}{dt}F_t^*\omega_t:=F_t^*\left(\Phi^*\omega-\omega_{mod}+d(i_{V_t}\omega_t)\right)=F_t^*\left(d\beta+d(i_{V_t}\omega_t)\right)=0.\nonumber
\end{equation}
From this we conclude that $F_t^*\omega_t=\omega_{mod}$. Setting $t=1$ gives $F_1^*\Phi^*\omega=\omega_{mod}$. Thus, for any Lagrangian $M_{mod}$ with respect to $\omega_{mod}$, the image $F_1(M_{mod})$ will be a smooth, Lagrangian with respect to $\Phi^*\omega$. With  can now transplant $M_{mod}$ to a Lagrangian in $X$ by taking $M := \Phi (F_1(M_{mod}))$.  

To keep track of the Riemannian geometry throughout this process, we need to perturb the metrics.  Since the flow $F_{t}$ may not map $J_{mod}$ to $\Phi^{*}J$, the Riemannian structure is not naturally inherited from the flow of symplectic forms.  Instead, we will consider the one parameter family of metrics $\tilde{g}_{t} = (1-t)g_{mod} + t \Phi^{*}g$ for $t\in[0,1]$.  Note that the geometry of $F_{t}(L_{mod})$ as a submanifold of $(X_{mod}, \tilde{g}_{t})$ is just the same as the geometry of $L_{mod}$ as a submanifold of $X_{mod}$ equipped with the one-parameter metric $g_{t} = F_{t}^{*}\tilde{g}_{t}$, for $t\in [0,1]$.  In particular, we are essentially reduced to understanding how various geometric quantities vary under changes in the metric .   We begin with a simple lemma.

\begin{lem}\label{lem: perturbNorm}
Let $X$ be a Riemannian manifold.  For $t \in [0,1]$ consider time dependent Riemannian metrics $g_t$ and let $V_t$ be a time dependent vector field.  Let $F_{t}$ be the diffeomorphism of $X$ defined by $\ddt F_{t}(p) = V_{t}(F_{t}(p))$, with $F_{0}(p)= p$.  Then we have
\[
\ddt F_{t}^{*}g_t = F_{t}^{*}\left( \mathcal{L}_{V_{t}}g_t + \ddt g_t\right).
\]
In particular, we have
\[
\bigg|\ddt F_{t}^{*}g_t\bigg|_{F_{t}^{*}g_{t}} \leq 2\big|\nabla^{t}V_{t}\big|_{g_{t}} + \bigg|\ddt g_t\bigg|_{g_{t}},
\]
where $\nabla^{t}$ denotes the covariant derivative with respect to $g_t$.
\end{lem}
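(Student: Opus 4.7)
The identity is a standard Leibniz-type formula for the simultaneous derivative in $t$ of the pullback and of the tensor, and the norm bound follows immediately from the well-known formula for $\mathcal{L}_V g$ in terms of $\nabla V$. The plan is to separate the two sources of $t$-dependence first, then reduce each piece to a standard statement, and finally combine with the invariance of tensor norms under pullback by a diffeomorphism.

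First I would write, for any fixed $t_0$,
\[
\frac{d}{dt}\bigg|_{t=t_0} F_t^* g_t = \frac{d}{dt}\bigg|_{t=t_0} F_t^* g_{t_0} + F_{t_0}^*\left(\frac{d}{dt}\bigg|_{t=t_0} g_t\right),
\]
which follows by adding and subtracting $F_{t_0+h}^* g_{t_0}$ in the difference quotient and passing to the limit, using the smoothness of $F_t$ and $g_t$. For the first term on the right, writing locally $F_{t_0+s} = F_{t_0}\circ G_s$ where $G_s$ is the flow of the \emph{autonomous} vector field $V_{t_0}$ with $G_0 = \mathrm{id}$, gives
\[
\frac{d}{dt}\bigg|_{t=t_0} F_t^* g_{t_0} = F_{t_0}^*\left(\frac{d}{ds}\bigg|_{s=0} G_s^* g_{t_0}\right) = F_{t_0}^* \mathcal{L}_{V_{t_0}} g_{t_0},
\]
by the standard characterization of the Lie derivative as the infinitesimal generator of pullback by a flow. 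Combining the two displays yields the first assertion of the lemma.

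For the pointwise bound, I would use the standard identity
\[
\mathcal{L}_{V_t} g_t(A,B) = g_t(\nabla^t_A V_t, B) + g_t(A, \nabla^t_B V_t),
\]
which holds because $\nabla^t$ is the Levi-Civita connection of $g_t$ (so $\nabla^t g_t = 0$ and the torsion vanishes). Cauchy--Schwarz on each term gives $|\mathcal{L}_{V_t} g_t|_{g_t} \le 2|\nabla^t V_t|_{g_t}$. Since pullback by a diffeomorphism is an isometry of tensor bundles, in the sense that $|F_t^* T|_{F_t^* g_t}(p) = |T|_{g_t}(F_t(p))$ for any tensor $T$ (a consequence of the fact that $dF_t$ is an isometry from $(T_p X, (F_t^* g_t)_p)$ to $(T_{F_t(p)} X, (g_t)_{F_t(p)})$), the desired estimate follows by applying the triangle inequality to the identity already derived.

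I do not expect a real obstacle: this is essentially a bookkeeping exercise. The only subtlety worth flagging is that the right hand side of the inequality is naturally evaluated at $F_t(p)$ while the left hand side is at $p$, so when the estimate is later integrated along $t$ to control the perturbed metric in the Moser argument one must keep track of this change of base point, but this is harmless and built into the pullback invariance of the norm.
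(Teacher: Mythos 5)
Your proposal is correct and follows essentially the same route as the paper, which likewise derives the identity by a direct computation and then combines $\mathcal{L}_{V_t}g_t = 2g_t(\nabla^t_{\cdot}V_t,\cdot)$ with the pullback-invariance $|F_t^*T|_{F_t^*g_t} = |T|_{g_t}$. One small slip worth fixing: for the time-dependent flow the correct factorization is $F_{t_0+s} = G_s \circ F_{t_0}$ (flow applied after $F_{t_0}$), not $F_{t_0}\circ G_s$; your displayed computation is the one corresponding to the correct order, so the conclusion stands unchanged.
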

\begin{proof}
The formula for $\ddt F_{t}^{*}g_t$ is a straightforward computation.  The estimate follows from the formula $\mathcal{L}_{V_{t}}g_t = 2g_t(\nabla^t_{\cdot} V_{t}, \cdot)$ and the observation that, for any tensor $T$ we have $|F_{t}^{*}T|_{F_{t}^{*}g_{t}} = |T|_{g_{t}}$.
\end{proof} 

Next we consider the variation of the second fundamental form of a submanifold $M\subset X$.

\begin{lem}\label{lem: perturbAH}
Let $M^{k}\subset X^{n+k}$ be a submanifold and let $g_{t}$ be a family of Riemannian metrics on $X$ for $t\in (-\epsilon,\epsilon)$.  Let $A_{t}$ denote the second fundamental form of $M$ in $X$.  Then we have
\[
\begin{aligned}
\bigg|\ddt |A_t|^2_{g_{t}} \bigg| &\leq 10\left( |\nabla^{t} \del_tg_{t}|_{g_{t}}|A_{t}|_{g_{t}} + |\del_tg_{t}|_{g_{t}} |A_{t}|^2_{g_{t}}\right),\\
\bigg|\ddt |H_t|^2_{g_{t}} \bigg| &\leq 10\left( |\nabla^{t} \del_t{g}_{t}|_{g_{t}}|H_{t}|_{g_{t}} + |\del_t g_{t}|_{g_{t}} |H_{t}|^2_{g_{t}}\right).
\end{aligned}
\]
\end{lem}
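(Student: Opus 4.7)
The plan is to prove both bounds by direct computation, starting from the Gauss decomposition and the variation formula for the Levi-Civita connection. The core input is the standard identity
\[
(\del_t \nabla^t)(X,Y) = \tfrac{1}{2}g_t^{-1}\!\bigl(\nabla^t_X(\del_t g_t)(Y,\cdot) + \nabla^t_Y(\del_t g_t)(X,\cdot) - \nabla^t_{(\cdot)}(\del_t g_t)(X,Y)\bigr),
\]
which shows that $\del_t \nabla^t$ is a tensor and yields $|\del_t \nabla^t|_{g_t} \le \tfrac{3}{2}|\nabla^t \del_t g_t|_{g_t}$. Combined with the Gauss formula $A_t(X,Y) = \nabla^t_X Y - \nabla^{h_t}_X Y$, where $h_t = g_t|_{TM}$, this realises $A_t$ as the difference of two $t$-dependent connections, so $\del_t A_t$ is automatically a tensor. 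Applying the same identity to $(M,h_t)$ and converting $\nabla^{h_t}\del_t h_t$ to $\nabla^t \del_t g_t$ via Gauss (at the cost of an $O(|A_t||\del_t g_t|)$ correction term) gives the key pointwise estimate
\[
|\del_t A_t|_{g_t} \; \leq \; C\bigl(|\nabla^t \del_t g_t|_{g_t} + |A_t|_{g_t}|\del_t g_t|_{g_t}\bigr)
\]
for a universal constant $C$.

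For the first inequality, I would fix a local frame $\{\xi_i\}$ of $TM$ independent of $t$ and expand $|A_t|^2_{g_t} = h_t^{ik}h_t^{jl}g_t(A_{ij},A_{kl})$. Differentiating produces three groups of terms. The first two, involving $\del_t h_t^{-1}$ and $\del_t g_t$ in the metric factors, contribute $O(|\del_t g_t|\,|A_t|^2)$ via the identity $\del_t h^{ij} = -h^{ip}h^{jq}\del_t h_{pq}$ and Cauchy--Schwarz in a $g_t$-orthonormal frame. The third, $2 g_t(\del_t A_t, A_t)$, is at most $2|\del_t A_t|_{g_t}|A_t|_{g_t}$, which by the key estimate above is $O(|\nabla^t \del_t g_t|\,|A_t| + |\del_t g_t|\,|A_t|^2)$. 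Summing and tracking constants gives the factor of $10$.

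The second inequality is attacked along the same lines, writing $H_t = h_t^{ij}A_{ij}$ and $|H_t|^2_{g_t} = g_t(H_t,H_t)$, so that
\[
\del_t |H_t|^2 = (\del_t g_t)(H,H) + 2(\del_t h^{ij})\langle A_{ij},H\rangle_{g_t} + 2\langle h^{ij}\del_t A_{ij}, H\rangle_{g_t}.
\]
The first term is clearly bounded by $|\del_t g_t|\,|H_t|^2$, and the third by $|\nabla^t \del_t g_t|\,|H_t| + |\del_t g_t|\,|A_t|\,|H_t|$ after applying the key bound and Cauchy--Schwarz (noting that the trace in the $i,j$ indices converts one copy of $A$ into $H$). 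The main obstacle, and the point that requires the most careful bookkeeping, is the middle term: naive Cauchy--Schwarz gives only $|\del_t g_t|\,|A_t|\,|H_t|$, not the sharper $|\del_t g_t|\,|H_t|^2$ appearing in the statement. I expect to resolve this by decomposing $A_{ij}$ into its $h$-trace part $\tfrac{1}{k}H\,h_{ij}$, which contracts against $\del_t h^{ij}$ through the identity $\del_t h^{ij}\,h_{ij} = -\operatorname{tr}_{h}(\del_t h)$ to produce cleanly a term of size $|\del_t g_t|\,|H_t|^2$, plus a traceless remainder whose contribution must be absorbed by a further rearrangement of the contractions so that $H$ rather than $A$ appears in both slots. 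Once this step is carried out, collecting everything and tracking constants gives the claimed bound with factor $10$.
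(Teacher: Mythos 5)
Your plan for the first inequality is sound and is essentially the paper's argument in different clothing: the paper packages the computation on the product $\overline{X}=X\times(-\epsilon,\epsilon)$ with $\bar g=dt^2+g_t$, so that the metric is $\overline{\nabla}$-parallel, $\ddt|A_t|^2_{g_t}=2\langle A,\overline{\nabla}_{\del_t}A\rangle$, and the variational formula of Lemma~\ref{lem: secFunVar} plays exactly the role of your key estimate $|\del_tA_t|_{g_t}\lesssim|\nabla^t\del_tg_t|_{g_t}+|A_t|_{g_t}|\del_tg_t|_{g_t}$; differentiating the Gauss decomposition and the contraction $h_t^{ik}h_t^{jl}g_t(A_{ij},A_{kl})$ directly, as you propose, reaches the same bound with only cosmetic differences.

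The gap is in your plan for the second inequality, and it is not one that more careful bookkeeping can close. The ``middle term'' you isolate, $2(\del_th^{ij})\langle A_{ij},H\rangle_{g_t}$, genuinely contributes at order $|\del_tg_t|\,|A_t|\,|H_t|$: the trace-free part of the symmetric $2$-tensor $\langle A_{ij},H\rangle$ pairs nontrivially with the trace-free part of $\del_tg_t|_{TM}$, and no rearrangement of contractions can replace that copy of $A_t$ by $H_t$. Concretely, take $X=\mathbb{R}^3$, $M$ the graph of $\tfrac12\bigl(x^2-(1-\epsilon)y^2\bigr)$ near the origin, and $g_t$ the Euclidean metric plus $t(dx^2-dy^2)$. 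Then $\nabla^0\del_tg_t=0$, $|H|=\epsilon$, $|A|\sim1$, and at the origin one checks $\del_tA_{ij}=0$ and $(\del_tg)(H,H)=0$, so $\ddt|H_t|^2=2\langle(\del_th^{ij})A_{ij},H\rangle=-2\epsilon(2-\epsilon)$, which for small $\epsilon$ is far larger than $10\bigl(|\nabla^t\del_tg_t||H|+|\del_tg_t||H|^2\bigr)=10\sqrt2\,\epsilon^2$. So the second displayed inequality is too strong as literally stated, and the honest outcome of your computation (and, for that matter, of the paper's appendix formula once the $t$-dependence of the trace $h_t^{ij}$ is differentiated --- the ``similarly'' in the paper's proof glosses exactly this term) is
\[
\Big|\ddt|H_t|^2_{g_t}\Big|\;\leq\;C\Bigl(|\nabla^t\del_tg_t|_{g_t}|H_t|_{g_t}+|\del_tg_t|_{g_t}|A_t|_{g_t}|H_t|_{g_t}+|\del_tg_t|_{g_t}|H_t|^2_{g_t}\Bigr);
\]
your estimate for the third term likewise retains an $|A_t|$, which is consistent with this. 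You should stop at this weaker bound rather than attempt the trace/trace-free absorption, which cannot succeed. Note that this is all that is actually needed downstream: in Propositions~\ref{perturbedcyl} and~\ref{prop: TYalmostSLAG} the second fundamental form is uniformly bounded, so $|A_t||H_t|\leq C|H_t|$ and the comparison with the ODE $f'=c(f^{1/2}+f)$ goes through unchanged.
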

\begin{proof} 
 
The lemma follows immediately from the variational formula for the second fundamental form.  We refer the reader to the appendix and Lemma~\ref{lem: secFunVar} for a complete proof.  Let $\overline{X} = X\times (-\epsilon, \epsilon)$ and let $\bar{g} = g(t) +dt^2$.  If $\overline{\nabla}$ denotes the covariant derivative of $\bar{g}$ on $\overline{X}$, then Lemma~\ref{lem: secFunVar} gives
\[
\bigg|\ddt |A|^2_{g_t} \bigg|=2\bigg|\langle A, \overline{\nabla} _{\del_t} A\rangle\bigg| \leq 10\left(|A|_{g_t}|\nabla^t \del_t g|_{g_t} + |A|_{g_t}^2|\del_t g|_{g_t}\right)
\]
and similarly
\[
\bigg|\ddt |H|^2_{g_t}\bigg|\leq 10\left(|H|_{g_t}|\nabla^t \del_t g|_{g_t} + |H|_{g_t}^2|\del_t g|_{g_t}\right),
\]
which is the desired result.
\end{proof}

Finally we examine how the first non-zero eigenfunction of the Laplacian changes under a change in the metric.

\begin{lem}\label{lem: perturbLambda}
Let $M$ be a compact Riemannian manifold and let $g_t$ be a smooth family of Riemannian metrics for $t\in (-\epsilon, \epsilon)$.  Let $\lambda_1(t)$ denote the first non-zero eigenvalue of the Laplacian on $(M, g_t)$.  Then we have
\[
e^{-3\mu(t)}\lambda_1(0) \leq  \lambda_1(t) \leq e^{3\mu(t)}\lambda_1(0)
\]
where $\mu(t) = \int_0^t \sup_{M}|\del_sg_s|_{g_s} ds$.
\end{lem}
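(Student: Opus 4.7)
The plan is to differentiate $\lambda_1(t)$ directly via the first-variation formula for simple eigenvalues and then integrate the resulting logarithmic inequality. Writing $\lambda_1(t)=\int_M|\nabla^t\phi_t|_{g_t}^2\,dV_{g_t}$ with $\phi_t$ a smoothly-varying first eigenfunction normalized by $\int_M\phi_t^2\,dV_{g_t}=1$ and $\int_M\phi_t\,dV_{g_t}=0$, one computes via $\Delta_t\phi_t=-\lambda_1(t)\phi_t$, integration by parts, and differentiation of the normalization condition that
\[
\frac{d\lambda_1}{dt} \;=\; -\int_M(\partial_t g_t)(\nabla\phi_t,\nabla\phi_t)\,dV_{g_t} \;+\; \tfrac{1}{2}\int_M\bigl(|\nabla\phi_t|_{g_t}^2 - \lambda_1(t)\phi_t^2\bigr)\,\mathrm{tr}_{g_t}(\partial_t g_t)\,dV_{g_t}.
\]

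I would then estimate each term pointwise using the tensorial inequalities $|(\partial_t g_t)(X,X)|\le|\partial_t g_t|_{g_t}\,|X|_{g_t}^2$ and $|\mathrm{tr}_{g_t}(\partial_t g_t)|\le\sqrt{\dim M}\,|\partial_t g_t|_{g_t}$ (the latter by Cauchy--Schwarz). Combined with the normalizations $\int_M|\nabla\phi_t|^2\,dV_{g_t}=\lambda_1(t)$ and $\int_M\phi_t^2\,dV_{g_t}=1$, these give
\[
\left|\frac{d}{dt}\log\lambda_1(t)\right| \;\le\; \bigl(1+\sqrt{\dim M}\bigr)\,\sup_M|\partial_t g_t|_{g_t};
\]
for $\dim M\le 4$ the prefactor is at most $3$, so integrating from $0$ to $t$ and exponentiating yields the two-sided bound $e^{-3\mu(t)}\lambda_1(0)\le\lambda_1(t)\le e^{3\mu(t)}\lambda_1(0)$.

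The main technical point is the smoothness in $t$ of the eigendata, which requires simplicity of $\lambda_1$ along the family; this can be arranged by a standard perturbation of the family $\{g_t\}$, with the final estimate passing to the limit by continuity. Alternatively, one can sidestep the smoothness issue entirely by arguing through the Rayleigh--Ritz characterization of $\lambda_1(t)$: first establish the pointwise bilipschitz comparison $e^{-\mu(t)}g_0\le g_t\le e^{\mu(t)}g_0$ by Gr\"onwall applied to $s\mapsto g_s(X,X)$, then handle the moving orthogonality constraint $\int_M\phi\,dV_{g_t}=0$ using the variance-minimization identity $\int_M(\phi-\bar\phi_t)^2\,dV_{g_t}=\min_{c\in\mathbb{R}}\int_M(\phi-c)^2\,dV_{g_t}$ with the choice $c=\bar\phi_0$. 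This route yields the same exponential bound with a (slightly larger) dimension-linear constant in place of $3$, which is sufficient for all dimensions of interest in this paper.
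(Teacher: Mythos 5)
Your proposal is correct in substance, but it argues differently from the paper. The paper never touches eigenfunctions or eigenvalue perturbation theory: it works entirely with the Rayleigh quotient, taking an arbitrary test function $f$ with $\int_M f\,dV_{g_0}=0$, re-centering it at each time as $f_t=f-\frac{1}{\mathrm{Vol}(M,g_t)}\int_M f\,dV_{g_t}$ (so that $df_t=df$ and the constraint is satisfied for $g_t$), differentiating the numerator and the denominator in $t$, and applying a Gr\"onwall-type argument to the quotient $F(t)$ before taking the infimum over $f$. This avoids all questions of smoothness or simplicity of the spectrum. Your primary route instead differentiates $\lambda_1(t)$ via the first-variation formula (which you state correctly, and whose derivation from the eigenvalue equation and the $L^2$-normalization is fine), and that is where the one soft spot lies: the assertion that simplicity of $\lambda_1$ along the whole family ``can be arranged by a standard perturbation of the family'' is not routine --- it requires a genericity/transversality argument (eigenvalue coincidence being a codimension-two phenomenon), or else one should work with Lipschitz continuity and one-sided derivatives of the minimum of the eigenvalue branches. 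As written, that step is asserted rather than proved. However, your fallback argument --- the pointwise bilipschitz comparison $e^{-\mu(t)}g_0\le g_t\le e^{\mu(t)}g_0$ from Gr\"onwall, combined with the variance-minimization identity to handle the moving mean-zero constraint --- is complete, sidesteps the simplicity issue entirely, and is the closest in spirit to the paper's proof; the difference is that you compare the two endpoint metrics directly in the Rayleigh--Ritz characterization rather than integrating a differential inequality for the quotient, at the cost of a dimension-linear constant.

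On the constant: your remark that the prefactor $3$ is only justified for $\dim M\le 4$ (via $|\mathrm{tr}_{g_t}\partial_t g_t|\le\sqrt{\dim M}\,|\partial_t g_t|_{g_t}$) applies equally to the paper's own estimates, which implicitly bound $\tfrac12|\mathrm{tr}_{g_t}\partial_t g_t|$ by $\sup_M|\partial_t g_t|_{g_t}$; and, as you note, any fixed dimensional constant suffices for every application in the paper, since there $\mu(t)$ is exponentially small. So this discrepancy is harmless, and your treatment of it is in fact the more careful one.
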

\begin{proof}
We define the eigenvalues of the Laplacian by $\Delta f + \lambda f=0$.  Recall that the first non-zero eigenvalue of the Laplacian on $M$ is given by the Rayleigh quotient
\[
\lambda_1(t) = \inf \frac{\int_{M} |\nabla^t f_t|^2_{g_t} dVol_{t}}{\int_{M} f_t^2 dVol_t},
\]
where the infimum is taken over all smooth functions with $\int_{M} f dVol_t =0$.  Given a function $f$ such that $\int_{M} f dVol_0 =0$, define
\[
f_t= f- \frac{1}{{\rm Vol}(M,g_t)} \int_{M} f d Vol_{t}
\]
and note that $df_t = df$ for all $t$.  Then
\[
\ddt \int_{M} |\nabla^t f_t|^2_{g_t} dVol_{t} = \int_{M}\bigg(\frac{1}{2}\left( g_{t}^{ij} \del_t g_{ij}\right) \cdot |\nabla^t f_t|^2_{g_t} - \langle \del_t g, df_t \otimes df_t \rangle_{g_t}\bigg) dVol_{t}
\]
and so
\[
\bigg|\ddt \int_{M} |\nabla^t f|^2_{g_t} dVol_{t}\bigg| \leq 2\sup_{M}|\del_t g|_{g_t}\left(\int_{M} |\nabla^t f_t|^2_{g_{t}}dVol_t\right).
\]
Similarly, we have
\[
\begin{aligned}
\ddt \int_{M} f_t^2 dVol_t &= \frac{1}{2}\int_{M} f_{t}^2 \left( g_{t}^{ij} \del_t g_{ij}\right) dVol_t + 2 \ddt{f}_t \int_{M} f_t dVol_{t}\\
& =  \frac{1}{2}\int_{M} f_{t}^2 \left( g_{t}^{ij} \del_t g_{ij}\right) dVol_t,
\end{aligned}
\]
where we used that $\ddt f_t$ is constant on $M$ and $\int_{M} f_t dVol_t =0$.  Therefore
\[
\bigg|\ddt \int_{M} f_t^2 dVol_t \bigg| \leq \sup_{M}|\del_tg_t|_{g_{t}} \int_{M} f_t^2 dVol_t.
\]
Define 
\[
\mu(t) = \int_{0}^{t}\sup_{M}|\del_sg_s|_{g_{s}} ds,
\]
and denote the quotient by $F(t):=\frac{\int_{M} |\nabla^t f_t|^2_{g_t} dVol_{t}}{\int_{M} f_t^2 dVol_t}$.
The above inequalities imply the function $e^{-3\mu(t)}F(t)$ is non-increasing in time, while $e^{3\mu(t)}F(t)$ is non-decreasing. Thus, we conclude
\[
e^{-3\mu(t)} F(0) \leq F(t)\leq e^{3\mu(t)}F(0),
\]
from which the result follows.
\end{proof}

\section{Special Lagrangians in Asymptotically Cylindrical Calabi-Yau manifolds}\label{sec: ACYL}

We first turn to the case of asymptotically cylindrical Calabi-Yau manifolds and prove existence of infinitely many special Lagrangian submanifolds assuming the existence of one special Lagrangian in the asymptotic cross section. This case is much simpler than what we prove in the subsequent section, although the basic idea is similar.

\begin{defn}
A complete Riemannian manifold $(X,g)$ is called {\em asymptotically cylindrical} (ACyl) if there exists a compact subset $V\subset X$, a closed Riemannian manifold $(Y,h)$ and a diffeomorphism $\Phi:{\mathbb R}^+\times Y\rightarrow X\setminus V$ satisfying
\begin{equation}
\label{metricdecay}
 |\nabla^k_{g_\infty}(\Phi^*g-g_\infty)|_{g_{\infty}}=O(e^{-\delta \ell})
\end{equation}
for some $\delta>0$ and all $k\in{\mathbb N}$. The limiting metric is given by $g_\infty=d\ell^2\oplus h$, where $\ell$ is the coordinate on $\mathbb R^+$.
\end{defn}

Assume that $(X,\omega,J,\Omega,g)$ is a simply-connected, irreducible ACyl Calabi-Yau manifold and thus Ricci-flat .  By the Cheeger-Gromoll splitting theorem, unless $X$ is a product cylinder, it can only have a single cylindrical end. 

\subsection{The model cylindrical geometry}
We discuss the geometry of the limiting cylindrical end and construct a one-parameter family of special Lagrangians in this model space. Let $n={\rm dim}_{\mathbb C}X$ and first assume $n>2$. In this case Theorem B of \cite{HHN} applies:
\begin{thm}[Haskins-Hein-Nordstrom, \cite{HHN}]
\label{HHNB}
Let $X$ be a simply-connected irreducible ACyl Calabi-Yau manifold with $n  > 2$. There exists a compact Calabi-Yau manifold $D$ with a K\"ahler isometry $\iota$ of finite order $m$ such
that the cross-section $Y$ of $X$ can be written as $Y = ({S}^1\times D)/\iota$, where $\iota$ acts on the product
via $\iota(\theta,x)=(\theta+\frac{2\pi}{m},\iota(x))$. Moreover, $\iota$ preserves the holomorphic volume form $\Omega_{D}$ on $D$.
\end{thm}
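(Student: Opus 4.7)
The plan is to extract structure from the asymptotic Ricci-flat K\"ahler cylinder $(\mathbb{R}^+\times Y,\, d\ell^2+h)$ and show that the cross-section is almost a product. The central object is the vector field $\xi := J\partial_\ell$ on the cylinder. Since the product metric makes $\partial_\ell$ parallel and $J$ is covariantly constant, $\xi$ is parallel on the cylinder, hence $\ell$-translation-invariant, and so descends to a nontrivial Killing vector field on the compact cross-section $(Y,h)$. Its flow preserves $J_\infty$, $\omega_\infty$ and the limiting holomorphic volume form $\Omega_\infty$, and it fits together with $\partial_\ell$ into the holomorphic vector field $\partial_\ell - i\xi$ on the cylinder.

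The key step is to show that the orbits of $\xi$ on $Y$ are closed. Let $T \subset \mathrm{Isom}(Y,h)$ be the closure of the one-parameter group generated by $\xi$. Since $Y$ is compact and $\xi$ is Killing, $T$ is a compact connected torus; its action on the cylinder is holomorphic and preserves $\Omega_\infty$. I would argue that $\dim T \geq 2$ is ruled out by the irreducibility hypothesis on $X$: any extra Killing direction at infinity can, via the exponential decay \eqref{metricdecay}, be promoted by an elliptic argument on the asymptotically cylindrical end to an honest parallel object on $(X,g)$, forcing a de Rham splitting and contradicting irreducibility. Hence $\dim T = 1$ and $\xi$ generates a locally free $S^1$-action on $Y$ with orbits of some common primitive length $2\pi/m$ for some $m\in\mathbb{N}$.

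With closed orbits in hand, I would form $D := Y/S^1$ and check, via K\"ahler reduction of the transverse structure, that $D$ inherits a smooth compact Calabi--Yau structure of complex dimension $n-1$; this is where $n > 2$ enters, ensuring $D$ is a genuine Calabi--Yau manifold rather than a point. Passing to the connected $m$-fold cover that trivializes the isotropy of the $S^1$-action exhibits $Y$ as $(S^1\times D)/\iota$, where $\iota$ generates the deck group: by construction it rotates $S^1$ by $2\pi/m$ and acts on $D$ as a K\"ahler isometry, and compatibility of the covering map with $\Omega_\infty$ forces $\iota^*\Omega_D = \Omega_D$.

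The main obstacle is establishing closure of orbits, and within it the transfer of asymptotic Killing/parallel data to honest global structure on $X$. Parallel objects on the limiting cylinder do not automatically extend to parallel objects on the actual Calabi--Yau manifold, and one must solve a linear elliptic problem on the cylindrical end with prescribed exponential asymptotics, carefully controlling the cokernel against the decay rate $\delta$; this is precisely the type of weighted-analytic technology developed in \cite{HHN} and is what makes the statement nontrivial.
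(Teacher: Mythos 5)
There is no proof of this statement in the paper to compare against: it is quoted verbatim as Theorem B of \cite{HHN} and used as a black box, so your sketch has to be measured against what a correct proof of the cited theorem actually requires. Your first step is fine ($\xi=J\partial_\ell$ is parallel on the model cylinder, tangent to the cross-sections, and induces a parallel, hence Killing, field on $(Y,h)$ preserving the limiting Calabi--Yau structure), but the entire content of the theorem sits in the step you delegate to a hoped-for elliptic argument, and the mechanism you propose for it does not work. You claim that if the closure $T$ of the $\xi$-flow had $\dim T\ge 2$, the ``extra Killing direction at infinity'' could be promoted, using \eqref{metricdecay}, to a parallel object on $(X,g)$, contradicting irreducibility. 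Nothing in this distinguishes the extra directions from $\partial_\ell$ or from $\xi$ itself: all are translation-invariant parallel vector fields on the model, so any promotion scheme valid for one is valid for all, and promoting $\xi$ would contradict the irreducibility that holds by hypothesis for every irreducible ACyl Calabi--Yau. In fact no such promotion is possible on a simply-connected $X$: a bounded harmonic $1$-form exponentially asymptotic to a nonzero parallel $1$-form $\eta$ on the cross-section would be parallel by the Weitzenb\"ock formula (Ricci-flatness plus vanishing boundary terms), hence closed, hence exact since $H^1(X,\mathbb{R})=0$; but its periods over loops in a far cross-section converge to the nonzero periods of $\eta$ (and for the $d\ell$-component one gets a nonzero flux through the single end), a contradiction. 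So the dichotomy ``$\dim T\ge 2\Rightarrow$ reducible'' is unsupported, and your sketch gives no way to exclude the genuinely dangerous asymptotic model, a flat cylinder $\mathbb{R}\times T^{2n-1}$ in which $J\partial_\ell$ has irrational slope. Ruling that out is exactly the theorem, and in \cite{HHN} it is done through their structure theory of Calabi--Yau cylinders together with global topological and holonomy input from $X$, not by extending Killing fields from infinity.

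There are further gaps even granting closed orbits. A locally free isometric $S^1$-action generated by a parallel field makes $Y\to Y/S^1$ a \emph{flat} principal $S^1$-(orbi)bundle (the horizontal distribution is $\xi^\perp$, whose curvature is $d\xi^\flat=0$), and the presentation $Y=(S^1\times D)/\langle\iota\rangle$ with the product metric $d\theta^2+g_D$ and $\xi=\partial_\theta$ is equivalent to this flat bundle having \emph{finite} holonomy. That is a second rationality statement which your Kähler-reduction step silently assumes: a skew flat torus with lattice generated by $(1,0)$ and $(\alpha,1)$, $\alpha$ irrational, has closed $\xi$-orbits in the $(1,0)$-direction and a free $S^1$-action, yet admits no such orthogonal product presentation, because the leaves of $\xi^\perp$ are dense. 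Also, $Y/S^1$ is in general only an orbifold, so one cannot simply declare $D:=Y/S^1$ a smooth Calabi--Yau. Finally, your explanation of the hypothesis $n>2$ (``ensuring $D$ is not a point'') is off: for $n=2$ the would-be $D$ is an elliptic curve; the restriction in \cite{HHN} is tied to their compactification theory (cf.\ the remark at the end of Section 3 of this paper citing \cite[Remark 1.6]{HHN}), and the $n=2$ case used here is handled directly from the explicit Tian--Yau asymptotics with flat $T^3$ cross-section.
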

To construct the model cylinder, first consider  $\ti X_\infty:={\mathbb R^+}\times{S}^1\times D$ with the product complex structure $J_\infty$ and Hermitian metric $g_\infty=d\ell^2+d\theta^2+g_D$. Here $g_D$ is a Ricci-flat  metric on $D$, $\theta\in{S}^1$ and $J_{\infty}(\frac{\partial}{\partial \ell})=\frac{\partial}{\partial\theta}$. The associated K\"ahler form and  holomorphic $(n,0)$-form are given by 
\begin{equation}
\omega_\infty=d\ell\wedge d\theta+\omega_D\qquad{\rm and}\qquad \Omega_\infty=(d\ell+id\theta)\wedge \Omega_{D} \nonumber
\end{equation} 
respectively.  As in Theorem \ref{HHNB}, the action of $\iota$ on $D$ extends to ${S}^1\times D$,
and furthermore $\omega_\infty$ and $\Omega_\infty$ are fixed under this action. Thus both forms descend to the smooth K\"ahler manifold
\begin{equation}
X_\infty:={\mathbb R^+}\times({S}^1\times D)/\iota,\nonumber
\end{equation}
which serves as the cylindrical model for the end of $X$.

We now construct our one-parameter family of special Lagrangians in $X_\infty$. Assume $D$ admits a special Lagrangian submanifold $N\subset D$ which does not contain any fixed points of   $\iota$. Because $\iota$ is an isometry which preserves $\Omega_{D}$,    $\iota^k(N)$ is a special Lagrangian for $1\leq k\leq m-1$. For any $\rho\in\mathbb R^+,$ consider the following union, which consists of $m$ submanifolds with boundary sitting inside $\ti X_\infty$:
\begin{equation}
\ti M_{\rho}:=\bigcup_{k=1}^m\{\rho\}\times\iota^{k-1}\left(\left[0,\frac{2\pi}m\right]\times N\right).\nonumber
\end{equation}
$\ti M_{\rho}$ descends to a smooth submanifold of $X_\infty$, which we denote by $M_\rho$. The following lemma is immediately clear from our construction.
\begin{lem}
$M_\rho$ is isometric to $S^1\times N$ with the product metric, where the $S^1$ factor has length $\frac {2\pi}m$.  In particular, if
$N$ is a torus, then so is $M_\rho$.
\end{lem}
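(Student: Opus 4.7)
The plan is to construct an explicit isometry $\Phi \colon S^1_{2\pi/m} \times N \to M_\rho \subset X_\infty$ and verify it directly from the product structure of the cylindrical model.

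First I would record that, since the ambient metric is the Riemannian product $g_\infty = d\ell^2 + d\theta^2 + g_D$ on $\tilde X_\infty$, the horizontal slice $\{\rho\}\times [0,2\pi/m]\times N$ inherits exactly the product metric $d\theta^2 + g_N$, where $g_N := g_D|_N$. Because $\iota$ acts by isometries preserving the product structure, each of the $m$ translates $\iota^{k-1}(\{\rho\}\times [0,2\pi/m]\times N)$ is likewise isometric to $[0,2\pi/m]\times N$ with the product metric. Thus each of the $m$ strips making up $\tilde M_\rho$ is isometrically $[0,2\pi/m]\times N$.

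Next, I would check that the $\mathbb{Z}/m$-action generated by $\iota$ restricts to a free isometric action on $\tilde M_\rho$, the freeness coming from the hypothesis that $N$ contains no fixed points of $\iota$ together with the fact that $\iota$ rotates the $S^1$ factor by $2\pi/m$. This action cyclically permutes the $m$ strips, so the quotient $M_\rho = \tilde M_\rho / (\mathbb{Z}/m)$ is a smooth Riemannian manifold and $\tilde M_\rho \to M_\rho$ is a Riemannian $m$-fold covering. Defining $\Phi(s,y) := \pi(\rho, s, y)$ for $s$ in the fundamental interval $[0,2\pi/m]$ of the $\iota$-action on $S^1$ then gives an isometric immersion of $[0, 2\pi/m]\times N$ onto $M_\rho$, since a single strip already surjects onto $M_\rho$ after quotienting.

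To promote $\Phi$ to an isometry from the full circle $S^1_{2\pi/m}\times N$, one verifies that the cyclic concatenation of the $m$ strips, together with the gluings enforced by the successive $\iota$-shifts, makes the boundaries $\{0\}\times N$ and $\{2\pi/m\}\times N$ agree in $X_\infty$; crucially, since $\iota^m = \mathrm{id}$, the total monodromy of the resulting $N$-bundle over $S^1_{2\pi/m}$ is trivial, so the bundle is isometric to $S^1_{2\pi/m}\times N$ with the product metric. The main obstacle, which is essentially a careful bookkeeping exercise with the equivalence-class identifications, is verifying this last triviality-of-monodromy step rigorously. The "in particular" statement is then immediate, since $S^1\times T^{n-1}$ is an $n$-torus.
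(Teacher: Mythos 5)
The paper offers no written argument for this lemma (it is declared ``immediately clear from our construction''), so the real question is whether your explicit-isometry route is sound, and it is not as written: the decisive step fails. In the quotient $X_\infty=\mathbb{R}^+\times (S^1\times D)/\iota$, the identifications are generated by a \emph{single} application of $\iota$, namely $(\theta,x)\sim(\theta+\tfrac{2\pi}{m},\iota(x))$. Hence the right edge of your fundamental strip is glued by $(\tfrac{2\pi}{m},y)\sim(0,\iota^{-1}(y))$: it lands on $\{0\}\times\iota^{-1}(N)$, and it matches $\{0\}\times N$ at all only when $\iota(N)=N$, which does not follow from the stated hypothesis that $N$ contains no fixed points of $\iota$ (note also that freeness of the $\mathbb{Z}/m$-action is automatic from the $\theta$-rotation alone, so that hypothesis is not doing the work you assign to it). Moreover, even granting $\iota(N)=N$, going once around the circle of length $\tfrac{2\pi}{m}$ in $M_\rho$ picks up the monodromy $\iota^{\pm1}|_{N}$, not $\iota^{m}$; the relation $\iota^{m}=\mathrm{id}$ only reflects the monodromy of the $2\pi$-cover $\tilde M_\rho$ (equivalently, that the monodromy of $M_\rho$ has finite order), not that it is trivial. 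So your ``total monodromy is trivial since $\iota^m=\mathrm{id}$'' conflates the cover with the quotient: what the construction produces over $S^1_{2\pi/m}$ is a Riemannian mapping torus of $\iota|_N$, and a mapping torus of a nontrivial finite-order isometry is in general \emph{not} isometric to the product $S^1_{2\pi/m}\times N$ (already for $N$ a circle and $\iota|_N$ a rotation one gets a non-rectangular flat torus).

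Concretely, then, there are two gaps: (i) the closing-up of the strip into a manifold without boundary requires $\iota(N)=N$, a point your ``bookkeeping'' remark does not address and which the no-fixed-point hypothesis does not supply; and (ii) the identification of the glued object with the Riemannian product requires triviality of the single-shift monodromy $\iota|_N$, which is precisely the content of the lemma and is not implied by $\iota^m=\mathrm{id}$. Any correct proof must either verify $\iota|_N$-invariance and trivial monodromy in the situation at hand, or treat $M_\rho$ as the twisted product (mapping torus) it naturally is; your proposal as it stands establishes neither.
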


Let $\{\frac{\partial}{\partial x^i}\}_{i=1}^{n-1}$ form a basis for $TN$. At any point $p\in  M_\rho$ the tangent space $T_pM_\rho$ is spanned by the vectors $\{\frac{\partial}{\partial \theta},\frac{\partial}{\partial x^1},...,\frac{\partial}{\partial x^{n-1}}\}$. Because we assumed $N\subset D$ is Lagrangian, it is clear that $f|_{M_\rho}=0$ as well. Furthermore
\begin{equation}
\Omega_\infty|_{M_\rho}=i d\theta\left(\frac{\partial}{\partial \theta}\right)\Omega_{D}\left(\frac{\partial}{\partial x^1},...,\frac{\partial}{\partial x^{n-1}}\right)=i\Omega_{D}|_{N},\nonumber
\end{equation}
which is constant if $N\subset D$ is in addition a special Lagrangian. Additionally, the induced metric on $M_\rho$ is given by
\begin{equation}
g_{M_\rho}=d\theta^2+g_D|_{N}.\nonumber
\end{equation}
In particular we note that this metric is independent of $\rho$. This gives:
\begin{lem}
\label{cylindricalmodel}
Assume $N\subset D$ is a special Lagrangian which does not contain fixed points of $\iota$. Then $M_\rho$ is a special Lagrangian submanifold of $(X_\infty,\omega_\infty,\Omega_\infty)$ for all $\rho$.
Furthermore, the induced metric on $M_\rho$ is independent of $\rho\in\mathbb R^+$ and thus the geometric quantities associated to $M_\rho$, including the second fundamental form and the first eigenvalue of the Laplacian, 
depend only on $N$ and are independent of $\rho$.
\end{lem}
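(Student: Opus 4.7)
The plan is to verify the three assertions (Lagrangian, special, and $\rho$-independence of the intrinsic/extrinsic geometry) by direct computation, essentially compiling the observations already recorded in the paragraph preceding the lemma and upgrading them from $\tilde{X}_\infty$ to the quotient $X_\infty$. The core point is that the construction is $\iota$-equivariant: the building block $\{\rho\}\times [0,2\pi/m]\times N$ and its $\iota$-translates glue together into an $\iota$-invariant subset $\tilde M_\rho\subset \tilde X_\infty$, which descends to a smooth submanifold $M_\rho\subset X_\infty$. Because $\iota$ is a K\"ahler isometry preserving $\Omega_D$, it suffices to check that a single component $\{\rho\}\times[0,2\pi/m]\times N$ is Lagrangian and has constant phase with respect to $(\omega_\infty,\Omega_\infty)$; both properties are then inherited by every other component, and smoothness across the identifications is automatic since neighboring components are $\iota$-translates of one another.

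For the Lagrangian condition, restrict $\omega_\infty=d\ell\wedge d\theta+\omega_D$ to $\{\rho\}\times[0,2\pi/m]\times N$. The first term vanishes because $\ell\equiv\rho$ is constant on $M_\rho$, so $d\ell|_{M_\rho}=0$; the second term vanishes because $N\subset D$ is Lagrangian, so $\omega_D|_N=0$. For the special Lagrangian condition, the same vanishing $d\ell|_{M_\rho}=0$ gives
\[
\Omega_\infty|_{M_\rho}=(d\ell+i\,d\theta)\wedge\Omega_D\big|_{M_\rho}=i\,d\theta\wedge \Omega_D|_N.
\]
Since $N$ is special Lagrangian in $D$, $\Omega_D|_N$ is a constant multiple of the Riemannian volume form of $N$ (of fixed phase), hence $\Omega_\infty|_{M_\rho}$ has constant phase equal to $\pi/2$, proving that $M_\rho$ is a special Lagrangian of $(X_\infty,\omega_\infty,\Omega_\infty)$. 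Once the hypothesis that $N$ avoids the fixed locus of $\iota$ is used to guarantee that $\iota^{k-1}(N)$ is a smooth embedded submanifold disjoint from the fixed locus for each $k$, the smoothness of $M_\rho$ in the quotient follows.

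For the last claim, restricting $g_\infty=d\ell^2+d\theta^2+g_D$ to $M_\rho$ kills the $d\ell^2$ term and gives the induced metric $g_{M_\rho}=d\theta^2+g_D|_N$, manifestly independent of $\rho$. To upgrade this from the intrinsic statement to $\rho$-independence of extrinsic quantities such as the second fundamental form, observe that for any $c\in\mathbb R$ the translation $\ell\mapsto \ell+c$ is an isometry of $(\tilde X_\infty,g_\infty)$ commuting with $\iota$, so it descends to an isometry $T_c$ of $(X_\infty,g_\infty)$ with $T_c(M_\rho)=M_{\rho+c}$. Consequently $M_\rho$ and $M_{\rho+c}$ are isometric as embedded submanifolds, so all intrinsic and extrinsic Riemannian invariants — including $|A|$, $|H|$ and the spectrum of the Laplacian, in particular the first nonzero eigenvalue $\lambda_1$ — are independent of $\rho$ and depend only on the special Lagrangian $N\subset D$. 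There is no real obstacle to this argument; the only mild subtlety is the verification that the $m$ pieces assemble into a genuinely smooth submanifold of $X_\infty$, which is why the hypothesis that $N$ misses the fixed points of $\iota$ is essential.
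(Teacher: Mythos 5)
Your proof is correct and follows essentially the same route as the paper, which establishes the lemma by the very same restriction computations ($d\ell|_{M_\rho}=0$ and $\omega_D|_N=0$ give the Lagrangian condition, $\Omega_\infty|_{M_\rho}=i\,d\theta\wedge\Omega_D|_N$ gives constant phase, and $g_\infty|_{M_\rho}=d\theta^2+g_D|_N$ gives $\rho$-independence) carried out in the paragraph preceding the statement. Your extra observation that translation in $\ell$ is an ambient isometry commuting with $\iota$ and carrying $M_\rho$ to $M_{\rho+c}$ is a worthwhile refinement, since the $\rho$-independence of the \emph{extrinsic} quantities such as the second fundamental form does not follow from the induced metric alone, which is all the paper explicitly invokes.
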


\subsection{Perturbation} 
Given our one-parameter family of special Lagrangians in the model space, we demonstrate how to perturb this family into a family of Lagrangians with respect to the ACyl metric. By assumption, our Calabi-Yau manifold $(X,\omega,J,\Omega,g)$ comes equipped with a cylindrical diffeomorphism  $\Phi:X_\infty\rightarrow X\setminus V$ for a given compact subset $V\subset X$. Pulling back via $\Phi$ allows us to work on the half cylinder $X_\infty$, where in addition to \eqref{metricdecay} we have the following decay for all $k\geq 0$: 
\begin{equation}
\label{formdecay}
|\nabla^k_{g_\infty}(\Phi^*\omega-\omega_\infty)|_{g_\infty}=O(e^{-\delta\ell})
\end{equation}
\begin{equation}
\label{Jdecay}
|\nabla^k_{g_\infty}(\Phi^*J-J_\infty)|_{g_\infty}=O(e^{-\delta\ell})
\end{equation}
\begin{equation}
\label{holvoldecay}
|\nabla^k_{g_\infty}(\Phi^*\Omega-\Omega_\infty)|_{g_\infty}=O(e^{-\delta\ell}).
\end{equation}

We will apply the perturbation results from Section \ref{sec: perturb}. First, we demonstrate that the two K\"ahler forms $\Phi^*\omega$ and $\omega_\infty$, are cohomologous, using an argument from \cite{HHN}.

Denote the difference in K\"ahler forms by $\eta:=\omega_\infty-\Phi^*\omega$ and decompose this two form as $\eta=d\ell\wedge \eta_1+\eta_2$. Since  $\Phi^*\omega$ and $\omega_\infty$ are closed, $d\eta=0$ and so
\begin{equation}
d\eta_2=d\ell\wedge \ti d\eta_1,\nonumber
\end{equation}
where $\ti d$ denotes the differential on the cross section $({S}^1\times D)/\iota$ only. 
Since the right hand side above contains $d\ell$, the exterior derivative of $\eta_2$ must be of the form $d\eta_2=d\ell\wedge\frac{\partial}{\partial \ell}\eta_2$, which implies that $\ti d\eta_1=\frac{\partial}{\partial \ell}\eta_2.$ 

Define a one form $\tau$ on $X_\infty$ by integrating $\eta_1$ in the $\ell$ direction:
\begin{equation}
\tau(\ell,p)=-\int_{\ell}^\infty\eta_1(s,p)ds,\nonumber
\end{equation}
which is finite by \eqref{formdecay}. Taking the exterior derivative gives
\begin{equation}
d\tau=d\ell\wedge\eta_1-\int_{\ell}^\infty \ti d\eta_1ds=d\ell\wedge\eta_1-\int_{\ell}^\infty \frac{\partial}{\partial \ell}\eta_2ds=d\ell\wedge\eta_1+\eta_2=\eta,\nonumber
\end{equation}
Thus $d\tau=\omega_\infty-\Phi^*\omega$ and $\tau$ is determined by the initial data.

Next we employ the standard Moser trick from Section~\ref{sec: perturb}, setting  $\omega_{mod}$ as $\omega_\infty$.  This gives a family of diffeomorphisms $F_t$ of $X_\infty$  satisfying $F_t^*\omega_t=\omega_\infty$. Setting $t=1$ gives $F_1^*\Phi^*\omega=\omega_\infty$. Thus, for any Lagrangian $M_\rho$ with respect to $\omega_\infty$, the image $F_1(M_\rho)$ will be Lagrangian with respect to $\Phi^*\omega$. In addition to being a Lagrangian, we have the following control the geometry of $\hat M_{\rho}:=F_1(M_{\rho})$.
\begin{prop}
\label{perturbedcyl}
For any $\rho>0$, the submanifold $\hat M_{\rho}:=F_1(M_{\rho})\subset X_\infty$ is a Lagrangian with respect to $\Phi^*\omega$ with vanishing Maslov class. Furthermore, for $\rho$ sufficiently large, there are uniform constants $C>2$ and $\delta_0>0$, depending on $N$, \eqref{metricdecay},\eqref{formdecay},\eqref{Jdecay} and \eqref{holvoldecay}, so that:
\begin{enumerate}
\item The coordinate $\ell$ on ${\mathbb R}^+$ restricted to $\hat M_{\rho}$ satisfies
$$\rho-C < \ell|_{\hat M_{\rho}} < \rho+C, $$
\item The second fundamental form satisfies
$$|A|^2 \leq C,$$
\item The mean curvature satisfies
$$|H|^2 \leq Ce^{-\delta_0\rho},$$
\item The volume satisfies 
$$(1-C^{-1}){\rm Vol}_{g_{D}}(N) \leq {\rm Vol}_{\Phi^*g}(\hat M_{\rho}) \leq (1+C^{-1}){\rm Vol}_{g_{D}}(N),$$
\item The first positive eigenvalue $\lambda_1(\hat M_{\rho})$ satisfies
$$C^{-1}\lambda_1(N) \leq \lambda_1(\hat M_{\rho}) \leq C\lambda_1(N).$$
\end{enumerate}

\end{prop}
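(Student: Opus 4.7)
The strategy is to apply the variation lemmas of Section~\ref{sec: perturb} to the one-parameter family of metrics $g_t := F_t^*\tilde g_t$, where $\tilde g_t = (1-t)g_\infty + t\Phi^*g$, and to observe that all the relevant driving data decay exponentially in the cylindrical coordinate $\ell$. By Lemma~\ref{cylindricalmodel}, the intrinsic geometry of $(M_\rho, g_\infty|_{M_\rho})$ is independent of $\rho$, so the game reduces to controlling the discrepancy between the $t=1$ and $t=0$ configurations on $M_\rho$.

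The first task I would undertake is to bound the Moser vector field. Because $\tau$ is constructed by integrating $\eta_1$ on $[\ell,\infty)$, the decay \eqref{formdecay} of $\eta=\omega_\infty-\Phi^*\omega$ forces $|\nabla^k_{g_\infty}\tau|_{g_\infty} = O(e^{-\delta\ell})$ for every $k$. Combined with \eqref{metricdecay}, which keeps $\omega_t$ non-degenerate and uniformly equivalent to $\omega_\infty$ on a neighborhood of $M_\rho$ once $\rho$ is large, the defining relation $i_{V_t}\omega_t=-\beta$ yields $|V_t|_{g_\infty}+|\nabla^{g_\infty}V_t|_{g_\infty}\le Ce^{-\delta\ell}$ uniformly in $t$. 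Integrating the flow proves item (1) and ensures that $M_\rho$ stays in a region where these ambient decay estimates persist for all $t\in[0,1]$. Since $\partial_t\tilde g_t = \Phi^*g-g_\infty$ also decays like $e^{-\delta\ell}$ with its covariant derivative by \eqref{metricdecay}, Lemma~\ref{lem: perturbNorm} then delivers $|\partial_tg_t|_{g_t}+|\nabla^{g_t}\partial_t g_t|_{g_t}\le Ce^{-\delta\rho}$ along $M_\rho$.

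The remaining items are short consequences of the variation lemmas applied to this exponentially small data. Item (2) follows from Lemma~\ref{lem: perturbAH} and Gronwall on $[0,1]$, starting from the $\rho$-independent bound on $|A_{M_\rho}|^2$ of Lemma~\ref{cylindricalmodel}. For item (4), the first variation $\partial_t dVol_{g_t} = \tfrac12(\mathrm{tr}_{g_t}\partial_t g_t)dVol_{g_t}$ combined with the bound above shows that $\mathrm{Vol}_{\Phi^*g}(\hat M_\rho)$ differs from $\mathrm{Vol}_{g_\infty}(M_\rho) = \tfrac{2\pi}{m}\mathrm{Vol}_{g_D}(N)$ by a quantity of order $e^{-\delta\rho}$, which is absorbed into $C$ for $\rho$ large. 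Item (5) is immediate from Lemma~\ref{lem: perturbLambda}, since $\mu(1)\le Ce^{-\delta\rho}\to 0$ forces $e^{\pm 3\mu(1)}\to 1$. Vanishing of the Maslov class is topological: via $F_1$, the phase function of $\hat M_\rho$ with respect to $\Phi^*\Omega$ is a $C^0$-small perturbation of the constant phase of $M_\rho$ with respect to $\Omega_\infty$ (using \eqref{holvoldecay} and the bound on $V_t$), hence nullhomotopic as a map to $S^1$.

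The main obstacle is item (3). Since $M_\rho$ is already special Lagrangian in the model, $|H_0|=0$, and one must extract an exponential rate $e^{-\delta_0\rho}$ for $|H_1|^2$ from the inequality $\tfrac{d}{dt}|H_t|^2 \le 10(|\nabla^t\partial_tg_t||H_t| + |\partial_tg_t||H_t|^2)$ of Lemma~\ref{lem: perturbAH}. The right-hand side is sublinear in $|H_t|$, so the cleanest route is to pass to $u_t := |H_t|$ (regularized near zero by $\sqrt{|H_t|^2+\epsilon^2}$ if necessary), deduce the linear inequality $u_t' \le 5|\nabla^t\partial_tg_t| + 5|\partial_tg_t|u_t$, and integrate from $u_0=0$ via Gronwall to obtain $u_1 \le Ce^{-\delta\rho}$, whence $|H_1|^2 \le Ce^{-2\delta\rho}$. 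This proves item (3) with $\delta_0=2\delta$ (or any smaller positive constant), after which all estimates are complete.
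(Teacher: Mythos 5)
Your proposal is correct and follows essentially the same route as the paper: exponential decay of the Moser vector field from \eqref{formdecay} and \eqref{metricdecay}, item (1) by integrating the flow, the perturbation lemmas of Section~\ref{sec: perturb} for $|A|^2$, $|H|^2$, the volume and $\lambda_1$, with the key point that $|H|(0)=0$ turns the sublinear differential inequality into an exponentially small bound. Your Gronwall argument on $u=|H|$ is equivalent to the paper's explicit comparison with the ODE $f'=c(f^{1/2}+f)$, and your phase-function argument for the Maslov class is a harmless variant of the paper's homotopy-invariance observation.
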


\begin{proof}
  We have already seen that $\hat M_{\rho}$ is Lagrangian with respect to $\Phi^*\omega$. Furthermore, because $\hat M_{\rho}$ is homotopic to $M_{\rho}$ via a one-parameter family of diffeomorphisms,  the Maslov class of $\hat M_{\rho}$ vanishes.
  
Since $V_t$ is uniquely determined by $\tau$, it depends only on $\Phi^*\omega-\omega_\infty$. Thus, by \eqref{formdecay} we see
\begin{equation}
|\nabla^k_{g_\infty}V_t|_{g_\infty}=O(e^{-\delta\ell}).\nonumber
\end{equation}
Furthermore, given the Riemannian metrics $g_t=(1-t)g_\infty+t\Phi^*g$, \eqref{metricdecay} implies
\begin{equation}
\frac12 g_\infty\leq g_t\leq 2g_\infty,\qquad |\nabla^k_{g_\infty}g_t|_{g_\infty}=O(e^{-\delta\ell}).\nonumber
\end{equation}
Putting these together gives
\begin{equation}
\label{Vtbound}
|\nabla^k_{g_t}V_t|_{g_t}=O(e^{-\delta\ell}).
\end{equation}
Now, by definition we have the restriction $\ell|_{M_{\rho}}=\rho$. Our control of  $V_t$ bounds how far the points in $M_{\rho}$ can move by the diffeomorphism $F_1$, proving $(1)$. 

To bound the remaining quantities, we rely on our analysis from Section~\ref{sec: perturb}. First we see control of  $(F_{t}(M_{\rho}), g_{t})$ follows from  control of the geometry of  $M_{\rho} \subset X_\infty$ with respect to the Riemannian metrics  $\tilde{g}_{t} := F_{t}^{*}g_{t}$.  The above bound  \eqref{Vtbound}, together with Lemma~\ref{lem: perturbNorm}, gives
\begin{equation}\label{eq: perturbEstCylmod}
\begin{aligned}
|\del_t \tilde{g}_{t}|_{\tilde{g_{t}}} &\leq C_0 e^{-\delta\ell}\\
|\nabla_{\tilde{g}_{t}}\del_{t}\tilde{g}_{t}|_{\tilde{g_{t}}}& \leq C_0 e^{-\delta\ell}.
\end{aligned}
\end{equation}
Turning to the second fundamental form and the mean curvature of $ M_{\rho}$ with respect to $\tilde{g}_{t}$, we consider the ODE
\begin{equation}\label{eq: perturbVarEqCyl}
\frac{df}{dt} = c(f^{\frac{1}{2}} + f), \qquad f(0)\geq 0,
\end{equation}
whose solution is $f(t) = \left(-1+ [1+f(0)^{\frac{1}{2}}]e^{\frac{ct}{2}}\right)^2$.  By Lemma~\ref{lem: perturbAH}, equation~\eqref{eq: perturbEstCylmod} and part $(1)$ of the proposition, both $|H|^2(t)$ and $|A|^2(t)$ are subsolutions of ~\eqref{eq: perturbVarEqCyl} with constant $c= C'e^{-\delta'\rho}$ where $C', \delta' >0$ are uniform constants.  For $\rho$ sufficiently large depending only on $C', \delta',$ we obtain
\begin{eqnarray}
|A|^2(t) &\leq 100(C')^2e^{-2\delta'\rho}+ 4|A|^2(0)  \nonumber \\
|H|^2(t) &\leq  100(C')^2e^{-2\delta'\rho} + 4|H|^2(0).\nonumber
\end{eqnarray}
Lemma \ref{cylindricalmodel} describes the geometry of $M_{\rho}$ with the metric induced  by $\tilde{g}_0$, from which we see $|A|(0)$ is controlled by the supremum of the second fundamental form of the compact Lagrangian $N\subset D$. This establishes $(2)$. Also, since $M_{\rho}$ is minimal with respect to $\tilde{g}_0$ we have $|H|(0)=0$, establishing $(3)$.

Estimate $(4)$ follows immediately from~\eqref{eq: perturbEstCylmod}, while $(5)$ follows from~\eqref{eq: perturbEstCylmod} and Lemma~\ref{lem: perturbLambda}, again using our understanding of the geometry of  $M_{\rho}$ for $t=0$ given by Lemma \ref{cylindricalmodel}. This completes the proof.
\end{proof}

\subsection{The mean curvature flow} 
We now evolve the perturbed Lagrangian $\hat M_{\rho}$ by mean curvature flow. We apply the following result of  Li \cite{Li}, based on work of Chen-Li in the K\"ahler-Ricci flow \cite{ChLi}:

\begin{thm}[Li \cite{Li}] \label{Liconverge} Let $(X, g)$ be a complete K\"ahler-Einstein manifold with  scalar curvature $\overline{R}\geq 0$ and $M$ a compact Lagrangian submanifold smoothly immersed in $X$ with vanishing Maslov class. For any $V_0,\Lambda_0,$ and $\delta_0>0$, there exists an $\epsilon>0$, depending on $V_0,\Lambda_0,\delta_0,\overline{R}$,  a lower bound for the injectivity radius of $X$ and an upper bound for $\sum_{k=0}^5 |\nabla^kRm|_{g}$  so that if 
\begin{equation}
\lambda_1\geq\frac{\overline R}{2n}+\delta_0, {\rm Vol}(M)\leq V_0, |A|\leq \Lambda_0, \int_M|H|^2\leq \epsilon,\nonumber
\end{equation}
then the Lagrangian mean curvature flow with initial data $M$ will converge exponentially fast to a minimal Lagrangian submanifold in $X$. 
\end{thm}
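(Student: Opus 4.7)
The plan is to follow the strategy of Chen-Li \cite{ChLi} for the K\"ahler-Ricci flow, adapted to the Lagrangian mean curvature flow, exploiting the key fact that for a Lagrangian in a K\"ahler-Einstein manifold with vanishing Maslov class, the Lagrangian angle $\theta:M\to \mathbb{R}$ is a well-defined single-valued function satisfying $H=J\nabla\theta$, and that under LMCF $\theta$ evolves by the heat equation $\ddt\theta=\Delta_{M_t}\theta$. Consequently $\int_{M_t}|H|^2\,dV_t = \int_{M_t}|\nabla\theta|^2\,dV_t$, so the spectral gap hypothesis translates directly into a Poincar\'e-type inequality for $H$ that will force energy decay.

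Short-time existence is standard. A continuity argument gives some $T_0>0$ depending only on $V_0,\Lambda_0,\delta_0$ and the ambient bounded geometry on which the relaxed bounds $|A|\leq 2\Lambda_0$, $\mathrm{Vol}(M_t)\leq 2V_0$ and $\lambda_1(M_t)\geq \overline{R}/(2n)+\delta_0/2$ all persist. On this interval the Simons-type evolution of $|H|^2$ together with the K\"ahler-Einstein identity gives
\[
\ddt \int_{M_t}|H|^2\,dV_t \leq -2\int_{M_t}|\nabla H|^2\,dV_t + \frac{\overline{R}}{n}\int_{M_t}|H|^2\,dV_t + C(\Lambda_0)\int_{M_t}|H|^3\,dV_t.
\]
Since $H=J\nabla\theta$, the spectral gap assumption yields the Poincar\'e inequality $\int_{M_t}|\nabla H|^2\,dV_t\geq \lambda_1(M_t)\int_{M_t}|H|^2\,dV_t \geq \bigl(\overline{R}/(2n)+\delta_0/2\bigr)\int_{M_t}|H|^2\,dV_t$. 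Feeding this back and absorbing the cubic error using $\|H\|_{L^\infty}\ll 1$ yields $\ddt\int_{M_t}|H|^2\,dV_t \leq -\delta_0\int_{M_t}|H|^2\,dV_t$, hence exponential $L^2$ decay of $H$.

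The remaining work is to convert this $L^2$ decay into pointwise $C^k$ decay and to propagate the bounds on $|A|$, volume and $\lambda_1$ for all time. Pointwise control of $H$ is obtained by Moser iteration on the parabolic inequality $(\ddt-\Delta)|H|^2\leq C|H|^2$, followed by parabolic Schauder estimates to bootstrap higher derivatives, using the ambient injectivity radius and $\sum_{k\leq 5}|\nabla^k \mathrm{Rm}|$ bounds. To propagate $|A|\leq 2\Lambda_0$, one uses an $\epsilon$-regularity argument in the style of White: the evolution of $|A|^2$ under LMCF satisfies $(\ddt-\Delta)|A|^2\leq C(|A|^4+|A|^2)$, and smallness of $\|H\|_{L^2}$ combined with the $L^2$-to-$L^\infty$ bootstrap prevents $|A|^2$ from concentrating. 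Continuity of $\lambda_1(M_t)$ under smooth convergence then preserves the spectral gap.

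The main obstacle is calibrating $\epsilon$ small enough, depending on $V_0,\Lambda_0,\delta_0,\overline{R}$ and the ambient bounded geometry, so that the continuation $T_0\to\infty$ actually closes: one needs to verify that the relaxed bounds can always be restored before they saturate. Once this is achieved, exponential $C^k$ decay of $H$ forces smooth convergence to a limit submanifold which is Lagrangian and minimal by continuity of both conditions under $C^2$ convergence. A key structural point is that the vanishing Maslov class is essential for defining $\theta$ globally, and thus for applying the Poincar\'e inequality in the form above; without it one would control only $\|H\|_{L^2}$ locally, not its integral against a single-valued potential on $M_t$ itself, and the exponential mechanism would fail.
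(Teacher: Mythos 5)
The paper does not actually prove this statement—it is quoted from Li \cite{Li}—but your outline is essentially the same route as Li's argument (and as the paper's own effective adaptation in Lemma~\ref{lem: HexpDec} and Theorem~\ref{thm: TYLMCFconv}): Maslov class zero makes the mean curvature form exact, $H=d\theta$, the spectral gap gives exponential $L^2$ decay of $H$, and non-collapsing, Moser-type interpolation and smoothing estimates upgrade this to pointwise decay, propagation of the $|A|$, volume and $\lambda_1$ bounds, and a continuation/convergence argument. The only points to tighten are that the Poincar\'e step should run through $\int_{M_t}|\Delta\theta|^2 \geq \lambda_1(M_t)\int_{M_t}|\nabla\theta|^2$ (integration by parts produces the divergence $\nabla^iH_i=\Delta\theta$, not the full $|\nabla H|^2$), and that in the genuinely K\"ahler--Einstein case $\theta$ is merely a primitive of the closed mean curvature form whose evolution is $\Delta\theta$ plus a zeroth-order term with coefficient $\overline{R}/2n$—which is exactly why the threshold $\lambda_1\geq \overline{R}/2n+\delta_0$ appears.
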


To apply the above result to the Lagrangian $\hat M_{\rho}$, note that the constants $\delta_0$, $V_0$ and $ \Lambda_0$ can be specified by Proposition \ref{perturbedcyl}. Asymptotic decay \eqref{metricdecay} gives the desired control of background Riemannian curvature tensor and also allows us to bound the injectivity radius of $(X_\infty,\Phi^*g)$  below by the injectivity radius of the cross section $({S}^1\times D)/\iota$. All of these quantities are independent of $\rho$ and thus using $(3)$ from Proposition \ref{perturbedcyl}, for any $\epsilon>0$ we can choose $\rho$ large enough so
$$|H|^2 \leq Ce^{-\delta_0\rho}<\epsilon.$$
The hypothesis of Theorem \ref{Liconverge} now apply and as a result the mean curvature flow beginning with $\hat M_{\rho}$ converges to a special Lagrangian submanifold, which we denote by $\mathcal M_{\rho}$.

We now show there is a countable family of special Lagrangians. Let $C$ be the fixed constant from  Proposition \ref{perturbedcyl}. Fix $\rho$ so $\hat M_{\rho}$ converges to $\mathcal M_{\rho}$ along the mean curvature flow. Now, conclusion $(1)$ from Proposition \ref{perturbedcyl}  allows us to choose $\rho_1>\rho+2C+1$ large enough so that the corresponding perturbed Lagrangians  $\hat M_{\rho}$ and  $\hat M_{\rho_1}$ are at least distance $1$ apart. To see that they stay distinct along the flow, we only need to observe that    the mean curvature vector decays exponentially along the flow. Specifically,  Lemma 5.2 in \cite{Li} gives
$$ |H(t)|\leq Ce^{\frac{-\delta_0\rho}{n+2}}e^{\frac{-\beta_0}{2n+4}t},$$
 which controls how far each Lagrangian can travel. Thus, for $\rho$ large enough, we can  construct a sequence $\rho_{i+1}=\rho_{i}+2(C+1)$, so that the corresponding  limiting special Lagrangians $\mathcal M_{\rho_i}$ are distinct.

We conclude this section with a note about the case $n=2$.  Recall that Theorem~\ref{HHNB} stipulates $n>2$, but this is only required to obtain a compactification; see \cite[Remark 1.6]{HHN}).  However, in our setting we are not working on an arbitrary ACyl Calabi-Yau manifold.  Rather, we consider $Y$ a compact K\"ahler surface which admits a holomorphic fibration and let $X=Y\setminus D$ be the complement of a smooth fiber in $|-K_Y|$. The complete metric $\omega$ constructed by Tian-Yau \cite{TY} (see also \cite{Hein, HHN}) on $X$ is ACyl with cross section $\mathbb T^3=S^1\times \mathbb T^2$, equipped with a flat metric $h$ of the form 
 $$h=\gamma d\theta^2+g_{\epsilon,\tau_0}.$$
 Here $\theta\in S^1$, $\gamma$ is a fixed length and $g_{\epsilon,\tau_0}$ is the unique flat metric of area $\epsilon$ and modulus $\tau_0$ on $\mathbb T^2$. Our above construction of a special Lagrangian can now be carried out, with the model Lagrangian $M_\rho$ given by the product of the   $S^1$ factor with any line of rational slope in $\mathbb T^2$. Thus 
 
 \begin{cor}
Let $Y$ be a rational elliptic surface and $D\in |-K_{Y}|$ a smooth divisor.  Then, for any closed loop $[\gamma] \in H_{1}(D,\mathbb{Z})$, the non-compact Calabi-Yau manifold $Y\setminus D$ admits infinitely many special Lagrangian submanifolds which are topologically $[\gamma] \times S^1$.
\end{cor}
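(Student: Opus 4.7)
The plan is to specialize the three-step argument of Lemma~\ref{cylindricalmodel}, Proposition~\ref{perturbedcyl} and Theorem~\ref{Liconverge} directly to the explicit ACyl Tian-Yau geometry on $X = Y \setminus D$ described just before the statement. The key point is that although Theorem~\ref{HHNB} requires $n>2$, Tian-Yau's construction on a rational elliptic surface gives by hand the analogous cylindrical model with $m=1$: $X_\infty = \mathbb{R}^+ \times S^1 \times \mathbb{T}^2$, with product flat metric $d\ell^2 + \gamma\, d\theta^2 + g_{\epsilon,\tau_0}$, K\"ahler form $\omega_\infty = d\ell \wedge d\theta + \omega_{\mathbb{T}^2}$, and holomorphic volume form $\Omega_\infty = (d\ell + i\, d\theta)\wedge \Omega_{\mathbb{T}^2}$, along with a diffeomorphism $\Phi : X_\infty \to X\setminus V$ satisfying the decay estimates \eqref{metricdecay}, \eqref{formdecay}, \eqref{Jdecay}, \eqref{holvoldecay}.

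Given any class $[\gamma] \in H_1(D,\mathbb{Z})$, I first produce the seed special Lagrangian in $D$. Since $D = \mathbb{T}^2$ carries the flat metric $g_{\epsilon,\tau_0}$, every primitive class is represented by a closed geodesic of rational slope, and any such geodesic $N$ is automatically calibrated by $\mathrm{Re}(e^{-i\vartheta}\Omega_{\mathbb{T}^2})$ for an appropriate phase $\vartheta$, hence is a special Lagrangian. Setting
\[
M_\rho := \{\rho\} \times S^1 \times N \subset X_\infty,
\]
the calculation of Lemma~\ref{cylindricalmodel} goes through verbatim (in fact more simply, since $\iota = \mathrm{id}$), and shows that $M_\rho$ is a special Lagrangian in $(X_\infty, \omega_\infty, \Omega_\infty)$ diffeomorphic to $S^1 \times \gamma$, with induced metric, second fundamental form and Laplace spectrum independent of $\rho$.

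Next I would run the Moser perturbation of Proposition~\ref{perturbedcyl} without modification: the decomposition $\omega_\infty - \Phi^*\omega = d\tau$ on the cylinder, the ODE for the time-one flow $F_1$ of $V_t$ defined by $i_{V_t}\omega_t = -\beta$, and the bounds on $V_t$ from \eqref{formdecay} give $\hat M_\rho := F_1(M_\rho)$ Lagrangian for $\Phi^*\omega_{TY}$ and satisfying, for $\rho$ large, $|A|^2 \le C$, $|H|^2 \le C e^{-\delta_0 \rho}$, uniform bounds on volume, and $\lambda_1(\hat M_\rho) \ge C^{-1}\lambda_1(N) > 0$. Since $g_{TY}$ is Ricci-flat, $\overline R = 0$, and the bounds on curvature derivatives and the injectivity radius of $(X_\infty, \Phi^*g_{TY})$ inherited from the compact cross section $\mathbb{T}^3$ provide the remaining hypotheses of Theorem~\ref{Liconverge}. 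Taking $\rho$ large enough that $\int_{\hat M_\rho}|H|^2 < \epsilon$, LMCF starting at $\hat M_\rho$ converges exponentially to a smooth special Lagrangian $\mathcal{M}_\rho$ of the same topology $S^1 \times \gamma$. Iterating with $\rho_{i+1} = \rho_i + 2(C+1)$ as in the main argument, and using that the total displacement along the flow is bounded by $\int_0^\infty |H(t)|\, dt \le C'e^{-\delta_0 \rho_i/(n+2)}$, produces pairwise disjoint limits $\{\mathcal M_{\rho_i}\}$.

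The main potential obstacle is that the compactification statement of Theorem~\ref{HHNB} fails in dimension two, so one cannot cite the abstract ACyl structure theorem; however, this is purely a red herring here, because Tian-Yau's explicit construction already supplies the product cylindrical model $S^1 \times \mathbb{T}^2$ together with the exponential decay \eqref{metricdecay}--\eqref{holvoldecay}, which is all the perturbation argument and Theorem~\ref{Liconverge} actually consume. With that input in hand, every estimate in Proposition~\ref{perturbedcyl} and every hypothesis of Li's theorem transfers to the $n=2$ setting without change.
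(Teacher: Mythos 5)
Your proposal is correct and follows essentially the same route as the paper: the paper likewise observes that the $n>2$ hypothesis of Theorem~\ref{HHNB} is only needed for the abstract compactification, and that Tian-Yau's construction on $Y\setminus D$ directly supplies the ACyl model with cross section $S^1\times \mathbb{T}^2$ and flat metric $\gamma\,d\theta^2+g_{\epsilon,\tau_0}$, after which the model Lagrangians $M_\rho = \{\rho\}\times S^1\times N$ (with $N$ a rational-slope geodesic in $\mathbb{T}^2$) are fed through Proposition~\ref{perturbedcyl} and Theorem~\ref{Liconverge} exactly as you describe.
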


\section{Special Lagrangians in Tian-Yau spaces of Type I}\label{sec: TY}

In this section we prove the existence of infinitely many special Lagrangian submanifolds in Tian-Yau spaces, under the assumption that the Calabi-Yau manifold at infinity admits one smooth special Lagrangian.  We begin by reviewing the model geometry for the Tian-Yau spaces.

\subsection{The Calabi model geometry}\label{sec: TYmodLag}

Suppose that $D$ is a projective Calabi-Yau manifold of dimension $n-1$, with $K_{D} = \mathcal{O}_{D}$ and let $\pi : L\rightarrow D$ be an ample line bundle.  By Yau's Theorem \cite{Y} we can find a hermitian metric $h$ on $L$, unique up to scaling, so that $\omega_{D} = -\ddb\log (h)$ defines a Ricci-flat K\"ahler metric.  Consider the open $n$ dimensional complex manifold
\[
\mathcal{C} = \{ \xi \in L : 0<|\xi|_{h} <1 \}.
\]
The space $\mathcal{C}$ has a natural, non-vanishing holomorphic $(n,0)$-form induced in the following way.  Fix local holomorphic coordinates $(z_1,\ldots,z_{n-1})$ on $D$ and a local trivialization $\xi$ of $L$.  We get coordinates $(z_1,\ldots,z_{n-1}, w)$ on $L$ by
\[
(z_1,\ldots,z_{n-1}, w) \mapsto (z_1,\ldots,z_{n-1}, w\xi).
\]
Let $\Omega_{D}$ be a holomorphic volume form on $D$.  We fix a scale for $\Omega_{D}$ by requiring that
\begin{equation}\label{eq: normOmegaD}
\frac{1}{2}\int_{D}(\sqrt{-1})^{(n-1)^2} \Omega_{D}\wedge\overline{\Omega}_{D} = \int_{D}(2\pi c_1(L))^{n-1}.
\end{equation}
$\Omega_{D}$ can be locally written as $f(z)dz_1\wedge\cdots\wedge dz_{n-1}$.  Then
\[
\Omega_{\mathcal{C}} = \frac{f(z)}{w}dz_1\wedge\cdots\wedge dz_{n-1}\wedge dw
\]
is a local, non-vanishing holomorphic $(n,0)$-form on $\mathcal{C}$.  It is easy to see that this expression is independent of the choice of trivialization of $L$ and hence $\Omega_{\mathcal{C}}$ glues to a trivialization of $K_{\mathcal{C}}$. We will assume that $\Omega_D$ is normalized such that 
   \begin{align} \label{normalization of omega_D}
       \omega_D^{n-1}=\frac{1}{2}(\sqrt{-1})^{(n-1)^2}\Omega_D\wedge \bar{\Omega}_D. 
   \end{align}

Define a function on $\mathcal{C}$ by
\[
\cC \ni \xi \longmapsto \frac{n}{n+1}(-\log |\xi|^{2}_{h})^{\frac{n+1}{n}}
\]
and let
\[
\omega_{\cC} = \ddb \frac{n}{n+1}(-\log |\xi|^{2}_{h})^{\frac{n+1}{n}}.
\]
By direct computation one can verify that $\omega_{\cC}$ defines a K\"ahler metric on $\cC$, which is complete as $|\xi|_{h} \rightarrow 0$, but incomplete as $|\xi|_{h}\rightarrow 1$.  Fix a point $p\in D$ and let $(z_1,\ldots,z_{n-1})$ be coordinates centered at $p$.  Choose a trivialization of $L$ so that $h(p)=1$, $dh(p)=0$ and write $h= e^{-\phi}$ for a locally defined function $\phi$.  If we write $w=re^{\sqrt{-1}\theta}$, then the K\"ahler form and the Riemannian metric are given by
\begin{equation}\label{eq: calModSymp}
\begin{aligned}
\omega_{\cC} &= \sqrt{-1}\big(-\log(|w|^2e^{-\phi})\big)^{\frac{1}{n}-1} \frac{1}{n}\left(\frac{dw}{w}+ \del \phi\right)\wedge \left(\frac{d\bar{w}}{\bar{w}} + \dbar\phi\right)\\
&\quad + \big(-\log(|w|^2e^{-\phi})\big)^{\frac{1}{n}} \pi^{*}\omega_{D},\\
g_{\cC} &= \big(-\log(r^2e^{-\phi})\big)^{\frac{1}{n}-1} \frac{1}{n}\left((\frac{dr}{r} - \frac{1}{2}d\phi)^2 + (d\theta + \frac{1}{2}Jd\phi)^2 \right)\\
&\quad + \big(-\log(r^2e^{-\phi})\big)^{\frac{1}{n}} \pi^{*}g_{D}.
\end{aligned}
\end{equation}
At any point $(0,\ldots, 0, re^{\sqrt{-1}\theta})$ this reduces to
\begin{equation}\label{eq: calModSympPoint}
\begin{aligned}
\omega_{C} &= \sqrt{-1}\frac{\big(-\log(|w|^2)\big)^{\frac{1}{n}-1}}{n}\frac{dw\wedge d\bar{w}}{|w|^2} + (-\log(|w|^2))^{\frac{1}{n}} \pi^{*}\omega_{D}\\
g_{C} &= \big(-\log(r^2)\big)^{\frac{1}{n}-1} \frac{1}{n}\left(\frac{dr^2}{r^2} + d\theta^2\right) + \big(-\log(r^2)\big)^{\frac{1}{n}} \pi^{*}g_{D}.
\end{aligned}
\end{equation}
Completeness as $r\rightarrow 0$ easily follows from this formula.  Furthermore, from \eqref{normalization of omega_D}, we have 
\[
\omega_{\cC}^{n} = \sqrt{-1}\frac{dw\wedge d\bar{w}}{|w|^2}\wedge \pi^{*}\omega_{D}^{n-1} = \frac{1}{2}(\sqrt{-1})^{n^2}\Omega_{\cC}\wedge \overline{\Omega_{\cC}},
\]
so $\omega_{\cC}$ is Ricci-flat.  Let us introduce the following terminology;

\begin{defn}\label{def: scaleFunc}
We define the {\em scale function} on $\cC$ to be
\[
\ell_0= (-\log|\xi|^2_{h})^{\frac{1}{2n}}.
\]
\end{defn}

\begin{rk}\label{rek: scaleRk}
Note that the scale function satisfies
\[
|\nabla_{g_{\cC}}\ell_0^{n+1}|^2 = \frac{(n+1)^2}{4n}.
\]
Furthermore, if $p \in \cC$ is a fixed point there there is a constant $C_1$ so that
\[
C_{1}^{-1} \ell_0^{n+1} \leq d_{\cC}(p, x) \leq C_{1}\ell_0^{n+1}.
\]
\end{rk}

By direct computation we have
\begin{prop}\label{prop: calModDec}
Let $g_{\cC}$ be the Riemannian metric on $\cC$ induced by $\omega_{\cC}$ and suppose that $n\geq 3$.  Then, for all $k \in \mathbb{N}\cup \{0\}$ there is a constant $C_k$ so that
\[
|\nabla^{k}Rm(g_{\cC})|_{g_{\cC}} \leq C_{k}\ell_0^{-(k+2)}.
\]
If $n=2$, then we have
\[
|\nabla^{k}Rm(g_{\cC})|_{g_{\cC}} \leq C_{k}\ell_0^{-(k+6)}.
\]
The injectivity radius satisfies
\[
C_{\iota}^{-1}\ell_0^{1-n}\leq {\rm inj}\,g_{\cC} \leq C_{\iota}\ell_0^{1-n}
\]
for a uniform constant $C_{\iota}$.
\end{prop}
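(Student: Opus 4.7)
The proof is a direct computation in the explicit Calabi ansatz. The plan is to fix a point $p \in \cC$ with $\ell_0(p) = \ell$, choose local holomorphic coordinates on $D$ centered at $\pi(p)$ and a local trivialization of $L$ so that the local K\"ahler potential $\phi = -\log h$ satisfies $\phi(0) = 0$ and $d\phi(0) = 0$, and then compute in an orthonormal frame at $p$ adapted to the base/fiber splitting of $g_{\cC}$. In such a frame the base coordinate vectors $\partial_{z_i}$ are rescaled by $\ell^{-1}$ while the vertical vectors $r\partial_r$ and $\partial_\theta$ are rescaled by $\sqrt{n}\,\ell^{n-1}$.

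Using $\sqrt{-1}\del\dbar \phi = \pi^{*}\omega_D$ together with the explicit form of the potential $\tfrac{n}{n+1}u^{(n+1)/n}$ where $u = \ell_0^{2n}$, the Riemann tensor of $g_{\cC}$ at $p$ decomposes into three pieces: (i) a horizontal piece equal to $\pi^{*}Rm(g_D)$ rescaled by $\ell^{-2}$; (ii) mixed base/fiber components built from $\omega_D$ (the curvature form of $L$) and its derivatives; and (iii) a vertical piece coming from the fiber collapsing. A bookkeeping argument in the orthonormal frame shows pieces (ii) and (iii) both have size $O(\ell^{-2n-2})$. For $n \ge 3$ the horizontal piece dominates and yields $|Rm|_{g_{\cC}} \leq C\ell_0^{-2}$. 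For $n=2$, $D$ is a smooth elliptic curve with $Rm(g_D) \equiv 0$, so the horizontal term vanishes identically and the subleading pieces set the size: $|Rm|_{g_{\cC}} \leq C\ell_0^{-6} = C\ell_0^{-2n-2}$. The derivative bounds follow by iteration: each $\nabla$ applied in the orthonormal frame contributes a factor of $\ell_0^{-1}$, as can be verified by rescaling $g_{\cC}$ to $\tilde g = \ell^{-2}g_{\cC}$ on a ball at $p$ of radius comparable to $\ell_0^{1-n}$ and observing that $\tilde g$ has bounded geometry to all orders uniformly in $\ell$. Using the $U(1)$ rotational symmetry to eliminate $\theta$-dependence in the scalar invariants $|\nabla^{k}Rm|^2$, one obtains $|\nabla^{k}Rm|_{g_{\cC}} \leq C_k\ell_0^{-(k+2)}$ (respectively $C_k\ell_0^{-(k+6)}$ for $n=2$).

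For the injectivity radius, the $S^1$-action $\xi \mapsto e^{\sqrt{-1}\theta}\xi$ on $L$ is an isometry of $g_{\cC}$ whose infinitesimal generator has constant norm $\ell_0^{1-n}/\sqrt{n}$ along each orbit, so each orbit is a closed geodesic. At $p$ this gives a closed geodesic of length $\sim \ell_0^{1-n}$, hence $\mathrm{inj}\,g_{\cC} \le C_\iota \ell_0^{1-n}$. For the matching lower bound, pass to the local $\mathbb{Z}$-cover that unwraps the short fiber at $p$; the rescaled geometry then has uniformly bounded curvature and uniform volume ratios on balls of controlled size, and Cheeger's injectivity radius estimate (equivalently, Cheeger-Gromov compactness applied to the pointed rescalings $(\cC, \ell^{2n-2}g_{\cC}, p)$) yields $\mathrm{inj}\,g_{\cC} \ge C_\iota^{-1}\ell_0^{1-n}$. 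The main obstacle throughout is keeping track of the three distinct scales at $\ell_0 = \ell$ --- base $\sim \ell$, fiber circle $\sim \ell^{1-n}$, and radial distance from a base point $\sim \ell^{n+1}$ --- and using Ricci-flatness $\omega_{\cC}^n = \tfrac{(\sqrt{-1})^{n^2}}{2}\,\Omega_{\cC} \wedge \overline{\Omega_{\cC}}$ to isolate the cancellations that produce the sharp $\ell_0^{-2}$ vs.\ $\ell_0^{-6}$ dichotomy between $n \ge 3$ and $n=2$.
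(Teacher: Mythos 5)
Your proposal takes the same route as the paper, which offers nothing beyond ``by direct computation'' from the explicit formulae~\eqref{eq: calModSymp}--\eqref{eq: calModSympPoint}: the three-scale bookkeeping (base $\sim\ell_0$, fiber circle $\sim\ell_0^{1-n}$, radial distance $\sim\ell_0^{n+1}$), the horizontal/mixed/vertical split with the mixed and vertical pieces of size $O(\ell_0^{-2n-2})$, and the observation that flatness of the elliptic curve $D$ kills the leading horizontal term when $n=2$ are exactly the computation being alluded to, and the Cheeger-type lower bound for the injectivity radius (bounded curvature at scale $\ell_0^{-2}$ plus volume non-collapsing at scale $\ell_0^{1-n}$) is correct.

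Two steps need repair, though neither threatens the conclusion. First, the $S^1$-orbits are \emph{not} closed geodesics: constancy of $|\partial_\theta|$ along an orbit is not sufficient, and here $\nabla_{\partial_\theta}\partial_\theta=-\tfrac12\nabla|\partial_\theta|^2\neq 0$ because $|\partial_\theta|^2=\tfrac1n\bigl(-\log|\xi|^2_h\bigr)^{\frac1n-1}$ depends on the radial variable (compare latitude circles on the round sphere). The upper bound ${\rm inj}\,g_{\cC}\le C_\iota\ell_0^{1-n}$ still follows, since a metric ball of radius below the injectivity radius is the diffeomorphic image of a Euclidean ball and hence contractible, while $B(p,\,C\ell_0^{1-n})$ contains the fiber circle through $p$, of length $\sim\ell_0^{1-n}$, which is non-contractible in the local trivialization region containing that ball. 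Second, the rescaled metric $\tilde g=\ell^{-2}g_{\cC}$ does \emph{not} have uniformly bounded geometry: its injectivity radius collapses like $\ell^{-n}$. To make the ``each $\nabla$ costs $\ell_0^{-1}$'' argument honest you should pass to local covers unwrapping the fiber circle, where the rescaled pointed metrics converge smoothly to $\mathbb{R}^2\times(D,g_D)$, or simply differentiate the frame computation directly; note also that for $n=2$ this rescaling only yields $\ell_0^{-(2+k)}$, so the stronger rate $\ell_0^{-(k+6)}$ must come from the explicit bookkeeping (the curvature components are built from the warping functions, whose derivatives in the radial and fiber directions gain factors of $\ell_0^{-(n+1)}$, and from the parallel form $\omega_D$, whose base derivatives gain $\ell_0^{-1}$), not from the crude rescaling alone.
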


Having now understood the Riemannian geometry of $\cC$, we move on to the study of the special Lagrangian submanifolds of $\cC$.  To this end, suppose that $N \subset D$ is a special Lagrangian submanifold of $D$.  Fix $\epsilon >0$ and consider the $S^1$ bundle over $D$
\[
\cC_{\epsilon} = \{ \xi \in \cC : |\xi|^2_{h} = \epsilon\} \xlongrightarrow{\pi} D.
\]
For each $\epsilon \in (0,1)$ we define a smooth, real codimension $n$ submanifold of $\cC$ by
\[
M_{\epsilon} = \pi^{-1}(N) \cap \cC_{\epsilon}.
\]
In other words, $M_{\epsilon}$ is the manifold obtained by restricting the circle bundle $\cC_{\epsilon}$ to $N$.  First, we describe the topology of $M_{\epsilon}$.
\begin{lem}
The manifold $M_{\epsilon}$ is topologically $S^1\times N$.  In particular, if $N = (S^1)^{n-1}$, is a torus, then so is $M_{\epsilon}$.
\end{lem}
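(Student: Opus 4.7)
The plan is to show that $M_\epsilon \to N$ is a topologically trivial $S^1$-bundle, from which the desired diffeomorphism $M_\epsilon \cong S^1 \times N$ follows, and hence the ``in particular'' statement, since $S^1 \times (S^1)^{n-1} = (S^1)^n$. First I would observe that, up to the fiberwise rescaling $\xi \mapsto \xi/\sqrt{\epsilon}$, the level set $\cC_\epsilon = \{|\xi|_h^2 = \epsilon\}$ is the unit circle bundle of the Hermitian line bundle $(L,h)$ over $D$, and $M_\epsilon \to N$ is simply the restriction of this circle bundle to $N \subset D$.

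To see this restricted circle bundle is trivial, I would compute its first Chern class by Chern--Weil. The curvature of the Chern connection on $(L,h)$ is $-\ddb \log h = \omega_D$, so the Chern--Weil representative of $c_1(L|_N) \in H^2(N,\mathbb{R})$ is $\tfrac{1}{2\pi}\omega_D|_N$. Since $N \subset D$ is Lagrangian, $\omega_D|_N \equiv 0$, and consequently $c_1(L|_N) = 0$ in $H^2(N,\mathbb{R})$.

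In the torus case $N = (S^1)^{n-1}$, the integral cohomology $H^2(N,\mathbb{Z}) \cong \mathbb{Z}^{\binom{n-1}{2}}$ is torsion-free, so the natural map $H^2(N,\mathbb{Z}) \to H^2(N,\mathbb{R})$ is injective and we obtain $c_1(L|_N) = 0$ in integral cohomology. Therefore $L|_N$ is topologically trivial as a complex line bundle, its unit circle bundle $M_\epsilon$ admits a smooth nowhere-vanishing section, and normalizing that section to unit length produces a diffeomorphism $M_\epsilon \cong S^1 \times N$.

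The step most at risk is the passage from $c_1(L|_N) = 0$ in real cohomology to vanishing in integral cohomology. For a general special Lagrangian $N$ the group $H^2(N,\mathbb{Z})$ may contain torsion, in which case the flatness of the restricted Chern connection only constrains the topological class of $L|_N$ to lie in the torsion subgroup of $H^2(N,\mathbb{Z})$ rather than forcing it to vanish. For the main applications in this paper—in particular the complex surface case where $N$ is a loop in an elliptic curve and $H^2(N,\mathbb{Z}) = 0$, and the higher-dimensional torus case above—this torsion obstruction is absent and the argument closes; for a general special Lagrangian one would need additional input, such as a direct construction of a nowhere-vanishing section of $L|_N$ adapted to the special Lagrangian geometry.
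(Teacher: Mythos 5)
Your argument is essentially the paper's own proof: the Lagrangian condition gives $\omega_D|_{N}=0$, and since $\omega_D$ represents $c_1(L)$ the Euler class of the circle bundle $M_{\epsilon}\rightarrow N$ vanishes, so the bundle is trivial and $M_{\epsilon}\cong S^1\times N$. The torsion caveat you flag is a legitimate subtlety that the paper's one-line proof also passes over silently (real Chern--Weil vanishing only kills the free part of $c_1(L|_{N})\in H^2(N,\mathbb{Z})$), but in every case the paper actually uses — $N$ a special Lagrangian circle in an elliptic curve or, more generally, a torus — $H^2(N,\mathbb{Z})$ is torsion-free, so both your argument and the paper's are complete there.
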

\begin{proof}
Since $N$ is Lagrangian, we have $\omega|_{N} = 0$.  On the other hand, $\omega$ is the first Chern class of $L\rightarrow N$.  Since $M_{\epsilon}$ is the circle bundle in $L$, it follows that the Euler class of the circle bundle $M_{\epsilon} \rightarrow N$ vanishes and hence $M_{\epsilon}$ is topologically a trivial $S^1$-bundle over $N$.
\end{proof}

Our primary case of interest will be when $N$ is a torus and hence the above lemma ensures that, in this case $M_{\epsilon}$ will also be a torus.

Next we claim that $M_{\epsilon}$ is Lagrangian.  This can be achieved by a pointwise calculation, using the coordinate expression for $\omega_\cC$ in~\eqref{eq: calModSympPoint}.  Fix a point $p\in N$. As before, let $\xi$ be a local trivialization of $L$ so that $h(p)=1, dh(p)=0$ and write $h = e^{-\phi}$.  Let $w$ the corresponding local coordinate on $L$ and write $w=re^{\sqrt{-1}\theta}$ where $\theta \in S^1$.  Locally we have
\[
M_{\epsilon} = \{ (p,w) : p\in N, \quad |w|^2e^{-\phi(p)}= \epsilon\}.
\]
Therefore $\frac{dw}{w} = \frac{dr}{r} + \sqrt{-1}d\theta$ and we have
\[
\frac{dw}{w}\bigg|_{M_{\epsilon}} = \frac{1}{2}d\phi + \sqrt{-1}d\theta.
\]
By our choice of trivialization using \eqref{eq: calModSympPoint} we have $d\phi(p) =0$ and hence
\[
\frac{dw\wedge d\bar{w}}{|w|^2}\bigg|_{M_{\epsilon}} =0, \qquad \Omega_{\cC}|_{M_{\epsilon}} = \sqrt{-1}\pi^{*}\Omega_{D} \wedge d\theta\bigg|_{M_\epsilon}.
\]
It follows from \eqref{eq: calModSympPoint} that $M_{\epsilon}$ is Lagrangian.  Furthermore,
\begin{equation}
\label{eq: holVolMeps}
e^{-\sqrt{-1}\frac{\pi}{2}}\Omega_{\cC}\bigg|_{M_{\epsilon}}=  \pi^{*}{\rm Im}\Omega_{D}\bigg|_{N}\wedge d\theta =0.
\end{equation}
If necessary, rotate $\Omega_{D}$ by a unit complex number so that ${\rm Im}\left(\Omega_{D}\big|_{N}\right)=0$ and ${\rm Re}\left(\Omega_{D}\big|_{N}\right)=dVol_{N}$.  Then we have ${\rm Im}\left(e^{-\sqrt{-1}\frac{\pi}{2}}\Omega_{C}\big|_{M_{\epsilon}}\right) =0$.  Summarizing we have
\begin{lem}
If $N \subset D$ is special Lagrangian, then, for each $\epsilon>0$ the manifold 
\[
M_{\epsilon} = \{ \xi \in L : \pi(\xi)\in N,\,\, |\xi|^2_{h}=\epsilon\}
\]
is a special Lagrangian submanifold of $(\cC,\omega_\cC)$ which is topologically $N\times S^1$.
\end{lem}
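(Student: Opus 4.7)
The plan is to consolidate the pointwise local computations already set up in the paragraphs preceding the lemma; no new ingredients are needed. First, I would dispatch the topological claim $M_\epsilon \cong N\times S^1$ by appealing to the previous lemma, which showed that the circle bundle $M_\epsilon\to N$ has vanishing Euler class because $[\omega_D]|_N=0$, hence is smoothly trivial.

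Next, to verify that $M_\epsilon$ is Lagrangian, I would work pointwise. Given any $p\in N$, choose a local holomorphic trivialization $\xi$ of $L$ near $p$ with $h(p)=1$ and $dh(p)=0$, writing $h=e^{-\phi}$ and $w=re^{\sqrt{-1}\theta}$. The defining relation $|w|^2e^{-\phi}=\epsilon$ of $M_\epsilon$ yields $\frac{dw}{w}\big|_{M_\epsilon}=\tfrac{1}{2}d\phi+\sqrt{-1}\,d\theta$; evaluating at $p$ itself, where $d\phi(p)=0$, the $(1,0)$-form $\frac{dw}{w}+\del\phi$ reduces to $\sqrt{-1}\,d\theta$, so the first term of the formula \eqref{eq: calModSymp} for $\omega_\cC$ contributes $\sqrt{-1}\,d\theta\wedge(-\sqrt{-1}\,d\theta)=0$ at $p$, while the second term vanishes because $\pi^*\omega_D|_N=0$ by Lagrangian-ness of $N\subset D$. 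Since such a trivialization is available around every point of $N$ and the Lagrangian condition is pointwise, I conclude $\omega_\cC|_{M_\epsilon}\equiv 0$.

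Finally, to upgrade to special Lagrangian I would compute $\Omega_\cC|_{M_\epsilon}$ directly from the local expression $\Omega_\cC=\frac{f(z)}{w}dz_1\wedge\cdots\wedge dz_{n-1}\wedge dw$. Using the identity for $\frac{dw}{w}|_{M_\epsilon}$ above together with the dimensional observation that $\pi^*\Omega_D|_N\wedge d\phi|_N=0$ (because $\pi^*\Omega_D|_N$ is already of top degree on the $(n-1)$-real-dimensional manifold $N$), one obtains the trivialization-independent identity $\Omega_\cC|_{M_\epsilon}=\sqrt{-1}\,\pi^*\Omega_D|_N\wedge d\theta$. After rotating $\Omega_D$ by a unit complex number so that ${\rm Im}(\Omega_D|_N)=0$ and ${\rm Re}(\Omega_D|_N)=dVol_N$, which is possible exactly because $N$ is special Lagrangian in $D$, it follows that ${\rm Im}\bigl(e^{-\sqrt{-1}\pi/2}\Omega_\cC|_{M_\epsilon}\bigr)=0$, i.e.\ the calibration condition for $M_\epsilon$ in $\cC$. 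There is no substantive obstacle here; the only conceptual point is to notice that the dimensional vanishing $\pi^*\Omega_D|_N\wedge d\phi|_N=0$ permits the local calculation at a base point with $d\phi(p)=0$ to promote to a global statement, obviating any need for a cohomological or integration argument.
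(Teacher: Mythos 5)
Your proposal is correct and follows essentially the same route as the paper: the same pointwise computation in a trivialization with $h(p)=1$, $dh(p)=0$ using $\frac{dw}{w}\big|_{M_\epsilon}=\tfrac12 d\phi+\sqrt{-1}\,d\theta$ to kill $\omega_\cC|_{M_\epsilon}$ (together with $\omega_D|_N=0$), followed by $\Omega_\cC|_{M_\epsilon}=\sqrt{-1}\,\pi^*\Omega_D|_N\wedge d\theta$ and a unit rotation of $\Omega_D$ to get the calibration condition, with the topology handled by the preceding Euler-class lemma. Your explicit degree-count $\pi^*\Omega_D|_N\wedge d\phi|_N=0$ is a harmless refinement of the paper's pointwise argument, not a different method.
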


As described in the introduction and already executed in Section~\ref{sec: ACYL}, we will transplant these special Lagrangians into the Tian-Yau spaces and run the Lagrangian mean curvature flow in order to produced special Lagrangians. In order to prove the convergence of the mean curvature flow, we will need to understand the Riemannian geometry of $M_{\epsilon}$ in some detail. This is what we take up next.  First, we compute the volume of $M_{\epsilon}$ with respect to the induced metric.
\begin{lem}\label{lem: TYmodVol}
The volume of $M_{\epsilon} \subset (\cC, g_{\cC})$ is independent of $\epsilon$ and given by
\[
{\rm Vol}(M_{\epsilon}, g_{\cC}) = \frac{2\pi}{\sqrt{n}} {\rm Vol}(N, g_{D}).
\]
\end{lem}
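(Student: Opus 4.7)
The plan is to directly restrict the Calabi model metric $g_{\cC}$ from \eqref{eq: calModSymp} to $M_{\epsilon}$ and compute the induced volume form, exploiting the fact that the defining condition $|\xi|^2_h = \epsilon$ makes the scale function constant on $M_{\epsilon}$.

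First I would exploit the constraint. On $M_\epsilon$ we have $r^2 e^{-\phi} = \epsilon$, which after taking logarithms and differentiating yields $\frac{dr}{r} = \tfrac{1}{2} d\phi$; therefore the vertical-radial one-form $\tfrac{dr}{r} - \tfrac{1}{2}d\phi$ appearing in \eqref{eq: calModSymp} vanishes identically on $M_{\epsilon}$. Moreover $-\log(r^2 e^{-\phi}) = -\log\epsilon$ is constant, so pulling \eqref{eq: calModSymp} back to $M_{\epsilon}$ yields
\begin{equation*}
g_{M_{\epsilon}} = \frac{(-\log\epsilon)^{\frac{1}{n}-1}}{n}\bigl(d\theta + \tfrac{1}{2}J d\phi\bigr)^2 + (-\log\epsilon)^{\frac{1}{n}} \pi^*g_D\big|_N.
\end{equation*}

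Next I would compute the volume form by using the split of $TM_\epsilon$ into the vertical $S^1$-direction and the horizontal lift of $TN$. The one-form $d\theta + \tfrac{1}{2}Jd\phi$ is a connection one-form on the circle bundle $\cC_\epsilon \to D$, so its restriction to the fiber is $d\theta$, and the horizontal part $\tfrac{1}{2}J d\phi$ is the pullback of a one-form on $D$. Since the induced metric is block-diagonal in this splitting, with the fiber direction carrying the factor $\tfrac{1}{\sqrt{n}}(-\log\epsilon)^{(1-n)/(2n)}$ and each of the $n-1$ horizontal directions contributing a factor $(-\log\epsilon)^{1/(2n)}$, the $(-\log\epsilon)$ factors cancel exactly and
\begin{equation*}
dVol_{M_{\epsilon}} = \frac{1}{\sqrt{n}}\bigl(d\theta + \tfrac{1}{2}J d\phi\bigr) \wedge \pi^* dVol_N.
\end{equation*}

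Finally, the term $\tfrac{1}{2}Jd\phi \wedge \pi^* dVol_N$ vanishes identically on $M_{\epsilon}$: it is a wedge of two horizontal forms whose total degree $n$ exceeds the rank $n-1$ of the horizontal distribution, so it must annihilate some vertical vector and therefore vanishes as a top-degree form on the $n$-dimensional manifold $M_{\epsilon}$. Hence $dVol_{M_{\epsilon}} = \tfrac{1}{\sqrt{n}} d\theta \wedge \pi^* dVol_N$ and integration over the trivial $S^1$-bundle gives
\begin{equation*}
\mathrm{Vol}(M_{\epsilon}, g_{\cC}) = \frac{1}{\sqrt{n}} \cdot 2\pi \cdot \mathrm{Vol}(N, g_D),
\end{equation*}
which is manifestly independent of $\epsilon$, as claimed. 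There is no real obstacle here — the whole calculation is an exercise in bookkeeping, and the key conceptual point is the precise exponent matching $\tfrac{1-n}{2n} + \tfrac{n-1}{2n} = 0$, which encodes the scale invariance predicted by the fact that $M_\epsilon$ lies in a level set of the scale function.
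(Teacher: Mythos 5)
Your proposal is correct and follows essentially the same route as the paper, which simply computes the induced volume form $dVol_{M_{\epsilon}} = \frac{1}{\sqrt{n}}\,\pi^{*}dVol_{N}\wedge d\theta$ directly from \eqref{eq: calModSymp} and integrates. Your write-up just makes explicit the two points the paper leaves implicit: the vanishing of $\frac{dr}{r}-\frac{1}{2}d\phi$ on $M_{\epsilon}$ and the cancellation of the $(-\log\epsilon)$ exponents, together with the degree argument killing the $\frac{1}{2}Jd\phi$ term.
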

\begin{proof}
One can compute directly the volume form of the induced metric on $M_{\epsilon}$ from the formula of $\omega_{\cC}$~\eqref{eq: calModSymp}, to find 
\[
dVol_{M_{\epsilon}} = \frac{1}{\sqrt{n}}\pi^{*}dVol_{N}\wedge d\theta.
\]
The lemma follows by integration over $M_{\epsilon}$.
\end{proof}

Next we examine the bottom of the spectrum of the Laplacian on $M_{\epsilon}$.  Recall that the first non-zero eigenvalue of the Laplacian on a compact, boundaryless Riemannian manifold $(M,g)$ is characterized by the Rayleigh quotient
\[
\lambda_1(M,g) = \inf \frac{\int_{M}|\nabla f|^2 }{\int_{M} f^2 },
\]
where the infimum is taken over all real valued $L^2$ functions on $M$ with $\int_{M} f=0$.  Suppose $f : N \rightarrow \mathbb{R}$ and $\int_{N}f =0$.  By the formula for the volume form of $M_{\epsilon}$ we get $\int_{M_{\epsilon}}\pi^{*}f =0$ and
\[
\int_{M_{\epsilon}} (\pi^{*}f)^2 = 2\pi \int_{N} f^2.
\]
Furthermore, by a local computation using~\eqref{eq: calModSympPoint} we have
\[
|\nabla \pi^{*}f|_{g_{\cC}}^2 = (-\log(\epsilon))^{-1/n}\pi^{*} |\nabla f|_{g_{D}}^2.
\]
It follows immediately from the Rayleigh quotient formula that
\begin{equation}\label{eq: evBdMod}
\lambda_1(M_{\epsilon}) \leq \frac{\lambda_1(N)}{(-\log(\epsilon))^{1/n}}.
\end{equation}
We claim that in fact we have equality in~\eqref{eq: evBdMod}, provided $\epsilon$ is sufficiently small.  This essentially follows from work of B\'erard-Bergery- Bourguignon \cite{BB} (see also \cite{BeBo}), but we will give a simple explicit proof for the reader's convenience. 

\begin{lem}\label{lem: TYmodLambda}
The first non-zero eigenvalue of the Laplacian on $M_{\epsilon}$ satisfies
\[
\lambda_1(M_{\epsilon}) = \frac{\lambda_1(N)}{(-\log(\epsilon))^{1/n}}
\]
provided $\epsilon$ is sufficiently small, depending only on $\lambda_1(N), n$.
\end{lem}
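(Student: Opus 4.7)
The plan is to establish the matching lower bound $\lambda_1(M_\epsilon) \ge \lambda_1(N)/(-\log\epsilon)^{1/n}$, which combined with~\eqref{eq: evBdMod} gives equality. The key observation is that fiberwise rotation $\xi\mapsto e^{\sqrt{-1}t}\xi$ is an isometry of $(\cC,g_\cC)$ (inspection of~\eqref{eq: calModSymp}) and preserves $M_\epsilon$, so it acts on $M_\epsilon$ by isometries and commutes with the Laplacian $\Delta$.

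Given $u\in C^\infty(M_\epsilon)$ with $\int_{M_\epsilon} u\,dVol =0$, I would decompose
\[
u = u_0 + u_\perp,\qquad u_0(p):=\frac{1}{2\pi}\int_0^{2\pi} u(\sigma_t(p))\,dt,
\]
where $\sigma_t$ denotes the $S^1$-action. Then $u_0$ is $S^1$-invariant and descends to a function on $N$ with $\int_N u_0\,dVol_{g_D}=0$ (using Lemma~\ref{lem: TYmodVol}), while $u_\perp$ has vanishing integral on every $S^1$-orbit. Orthogonality $\int u_0 u_\perp=0$ is immediate, and integration by parts together with the $S^1$-invariance of $\Delta u_0$ followed by fiber integration gives
\[
\int_{M_\epsilon}\langle \nabla u_0,\nabla u_\perp\rangle\, dVol = -\int_{M_\epsilon} u_\perp\,\Delta u_0\, dVol = 0.
\]
Hence the Rayleigh quotient splits cleanly as
\[
\frac{\int|\nabla u|^2\, dVol}{\int u^2\, dVol} = \frac{\int|\nabla u_0|^2 + \int|\nabla u_\perp|^2}{\int u_0^2 + \int u_\perp^2}.
\]

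Next I would bound each piece separately. For $u_0$, the formula $|\nabla\pi^*f|^2_{g_\cC} = (-\log\epsilon)^{-1/n}\pi^*|\nabla f|^2_{g_D}$ together with Lemma~\ref{lem: TYmodVol} yields $\int|\nabla u_0|^2/\int u_0^2 \ge \lambda_1(N)/(-\log\epsilon)^{1/n}$. For $u_\perp$, inspection of~\eqref{eq: calModSympPoint} shows each $S^1$-fiber of $M_\epsilon$ has length $L = 2\pi(-\log\epsilon)^{(1-n)/(2n)}/\sqrt n$, so the Poincar\'e inequality on the circle combined with $|\nabla u_\perp|^2\ge|\nabla^V u_\perp|^2$ gives
\[
\frac{\int|\nabla u_\perp|^2}{\int u_\perp^2}\ \ge\ \left(\frac{2\pi}{L}\right)^2 = n(-\log\epsilon)^{1-1/n}.
\]
Provided $-\log\epsilon\ge \lambda_1(N)/n$, i.e.\ $\epsilon\le e^{-\lambda_1(N)/n}$ (which depends only on $\lambda_1(N)$ and $n$), the vertical bound dominates the first, and so the Rayleigh quotient of $u$ is bounded below by $\lambda_1(N)/(-\log\epsilon)^{1/n}$, as desired.

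The main obstacle I anticipate is that $M_\epsilon\to N$ is a (generally non-trivially) twisted $S^1$ bundle: by~\eqref{eq: calModSymp} the induced metric involves the closed connection 1-form $\tfrac12 J d\phi|_N$, so $M_\epsilon$ is not literally a Riemannian product $S^1\times N$, and a naive Fourier decomposition along the fiber is unavailable. The virtue of the approach outlined above is that one never requires a metric splitting; only the $S^1$-equivariance of $\Delta$ is used (which is automatic from the isometric action), so the non-trivial connection is never confronted explicitly.
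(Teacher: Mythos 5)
Your proof is correct, and it reaches the lower bound by a genuinely different route than the paper. The paper works directly with the eigenvalue equation: it computes the Laplacian of $M_{\epsilon}$ in adapted coordinates, splits it as a vertical term plus a horizontal operator $\Delta_{H}$, decomposes an eigenfunction into the weight spaces of the isometric $U(1)$-action, and uses the non-positivity of $\Delta_{H}$ to show any eigenvalue with a nonzero Fourier mode has magnitude at least $2n(-\log\epsilon)^{1-\frac{1}{n}}$, forcing the lowest eigenfunctions to be pullbacks from $N$. You instead argue variationally: split an arbitrary mean-zero test function into its fiberwise average $u_{0}$ and the fluctuation $u_{\perp}$, check that both the $L^{2}$ norm and the Dirichlet energy split (your integration-by-parts argument using $S^1$-invariance of $\Delta u_0$ and of the volume form is valid, since the orbits have constant speed so the group average coincides with the arc-length average), bound the invariant part via the gradient rescaling formula and Lemma~\ref{lem: TYmodVol}, and bound the fluctuation by Wirtinger's inequality on fibers of length $\frac{2\pi}{\sqrt{n}}(-\log\epsilon)^{\frac{1-n}{2n}}$, which indeed gives the vertical Rayleigh quotient bound $n(-\log\epsilon)^{1-\frac{1}{n}}$; combined with \eqref{eq: evBdMod} this yields equality once $\epsilon\leq e^{-\lambda_{1}(N)/n}$. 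The two arguments exploit the same mechanism (oscillation along the short circle fibers is energetically expensive, so the low spectrum is horizontal), and impose essentially the same smallness condition on $\epsilon$, but yours avoids any coordinate computation of the Laplacian, never isolates $\Delta_{H}$, lumps all nonzero Fourier modes together, and applies to test functions rather than eigenfunctions, with the explicit threshold $\epsilon \leq e^{-\lambda_1(N)/n}$; the paper's computation, closer in spirit to the B\'erard-Bergery--Bourguignon submersion picture, additionally identifies the low eigenfunctions as pullbacks from $N$, slightly more structural information than the bare equality of eigenvalues. One point worth making explicit if you write this up: the identity $|\nabla \pi^{*}f|^{2}=(-\log\epsilon)^{-1/n}\pi^{*}|\nabla f|^{2}_{g_{D}}$ holds globally on $M_{\epsilon}$ with respect to the induced metric (because the induced metric is block diagonal in the coframe $\{d\theta+\tfrac12 Jd\phi,\pi^{*}e^{i}\}$ and $d\pi^{*}f$ has no component on the connection form), not merely at points where $d\phi=0$; this is exactly why, as you note, the twisting of the circle bundle never enters.
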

\begin{proof}

Fix a point $p \in N$ and choose normal coordinates $(x_1,\ldots, x_{n-1})$ centered at $p$.  As before, we use $(x_1,\ldots, x_{n-1},\theta)$ as coordinates on $M_{\epsilon}$ choosing the trivialization of $L$ so that,
\begin{equation}\label{eq: metSimpCoord}
g_{\cC}\big|_{M_{\epsilon}} = (-\log(\epsilon))^{\frac{1}{n}-1} \frac{1}{n}d\theta^2 + (-\log(\epsilon))^{\frac{1}{n}}\pi^{*}g_{D}\big|_{N}
\end{equation}
at any point $q \in \pi^{-1}(p)$.  It is easy to see that the circle $\pi^{-1}(p)$ is a geodesic and so the Laplacian takes the form
\[
\begin{aligned}
\Delta_{M_{\epsilon}} f&= g^{\alpha\beta}\nabla_{\alpha}\nabla_{\beta} f - \nabla_{\nabla_{\del_{\alpha}\del_\beta}}f\\
&= n(-\log(\epsilon))^{1-\frac{1}{n}}\nabla_{\theta}\nabla_{\theta} f + \Delta_{H} f
\end{aligned}
\]
where
\[
\Delta_{H}f = (-\log(\epsilon))^{-\frac{1}{n}}\sum_{1\leq i,j \leq n-1} (g_{D})^{ij}\nabla_i\nabla_j f - (-\log(\epsilon))^{-\frac{1}{n}}(g_{D})^{ij}\Gamma_{ij}^{\theta} \frac{\del f}{\del \theta}.
\]
From this formula it is clear that if $f$ is an eigenfunction on $N$ with eigenvalue $\lambda$, then $\pi^{*}f$ is an eigenfunction on $M_{\epsilon}$ with eigenvalue $(-\log(\epsilon))^{-1/n}\lambda$.  In particular, $\frac{\lambda_1(N)}{(-\log(\epsilon))^{1/n}}$ is an eigenvalue of $\Delta_{M_{\epsilon}}$.

Now suppose $f$ is an eigenfunction of $\Delta_{M_{\epsilon}}$ with eigenvalue $\mu$.  Define (local) smooth functions $a_k(x)$ by
\[
a_k(x) = \frac{1}{2\pi}\int_{S^1} f(x,\theta) e^{-\sqrt{-1}k\theta}d\theta,
\]
and let $f_{k}(x,\theta) = a_k(x)e^{\sqrt{-1}k\theta}$.  Then, by standard results in Fourier analysis we have
\[
f = \sum_{k\in \mathbb{Z}} f_k(x,\theta)
\]
and this series converges locally smoothly.  In fact, $f_k(x,\theta)$ are globally defined smooth functions corresponding to the decomposition of $L^2(M_{\epsilon})$ into weight spaces induced by the natural isometric $U(1)$ action on $M_{\epsilon}$.  Since the $U(1)$ action is Killing we have, for each $\ell \in \mathbb{Z}$
\[
\begin{aligned}
\mu f_{\ell} &= \Delta_{M_{\epsilon}} f_{\ell}\\
&= 2n(-\log(\epsilon))^{1-\frac{1}{n}} (-\ell^2)f_{\ell} + \Delta_{H}f_{\ell}.\\
\end{aligned}
\]
Therefore we have
\begin{equation}\label{eq: horLapEig}
\Delta_{H}f_{\ell} = (\mu + 2n(-\log(\epsilon))^{1-\frac{1}{2}}\ell^2) f_{\ell}.
\end{equation}
On the other hand, for any complex valued function $h$ we have
\begin{equation}\label{eq: horLapNeg}
\begin{aligned}
\int_{M_{\epsilon}} \Delta_{H}h \overline{h}  &= \int_{M_{\epsilon}} \Delta h \overline{h} -  2n(-\log(\epsilon))^{1-\frac{1}{n}}\nabla_{\theta}\nabla_{\theta} h \overline{h} \\
&= -\int_{M_{\epsilon}} |\nabla h|^2 + \int_{M_{\epsilon}} |\nabla_{\theta} h|^2 \leq 0.
\end{aligned}
\end{equation}
Combining~\eqref{eq: horLapEig} and~\eqref{eq: horLapNeg} we conclude that, if $f_{\ell} \ne 0$ then
\[
(\mu + 2n(-\log(\epsilon))^{1-\frac{1}{n}}\ell^2) \leq 0.
\]
If $\epsilon$ is sufficiently small, then
\[
-2n(-\log(\epsilon))^{1-\frac{1}{n}} \ll \frac{\lambda_1(N)}{(-\log(\epsilon))^{1/n}},
\]
and so to be a competitor for $\lambda_1(M_{\epsilon})$ we must have $f_{\ell}=0$ for all $\ell \ne 0$.  But in this case it is clear that $f= \pi^{*}\tilde{f}$ for some eigenfunction $\tilde{f}$ of the Laplacian on  $N$.  The lemma follows immediately. 
\end{proof}

Next we will compute the second fundamental form of $M_{\epsilon} \subset (\cC,g_{\cC})$.  To this end, choose real coordinates $(x_1,\ldots,x_{n-1}, y_1,\ldots, y_{n-1})$ for $D$ centered at a point $p\in N$ so that $(x_{1}, \ldots, x_{n-1})$ are coordinates on $N$ and at $p$ we have $\{\frac{\del}{\del x_1},\ldots, \frac{\del}{\del x_{n-1}}\}$ orthonormal and 
\[
J_{D}(p) \frac{\del}{\del x_i}= \frac{\del}{\del y_i}.
\]
Since $g_{D}$ is Hermitian,  $\{\frac{\del}{\del y_1},\ldots, \frac{\del}{\del y_{n-1}}\}$ form an orthonormal basis for $T_{p}N^{\perp} \subset T_{p}D$.  As before we choose a trivialization of $L$ so that $h(p)= 1$ and $dh(p)=0$ and write $h= e^{-\phi}$.  Let $w$ be the induced coordinate on $L$ and write $w=re^{\sqrt{-1}\theta}$.  Define a new coordinate $u$ by
\[
u= \log(r)-\frac{1}{2}\phi.
\]
Then $(x_1,\ldots,x_{n-1}, y_1,\ldots, y_{n-1}, u, \theta)$ form local coordinates for any point near $(0,u,\theta)$.  In these coordinates the metric is given by
\[
g_{\cC} = (-2u)^{\frac{1}{n}-1}\frac{1}{n}\left(du^2 + \left(d\theta + \frac{1}{2}Jd\phi\right)^2\right) + (-2u)^{\frac{1}{n}}\pi^{*}g_{D},
\]
and this simplifies at $(0,u,\theta)$ since $d\phi=0$, so in particular, the metric is block diagonal there.  Fix a point $q\in M_{\epsilon}\subset \{ u=\frac{1}{2}\log(\epsilon)\}$ with $\pi(q)=p$.  To compute the second fundamental form we note that
\[
Y_k = (-\log(\epsilon))^{-\frac{1}{2n}}\frac{\del}{\del y_i},  \qquad U = \sqrt{n} (-\log(\epsilon))^{n-1}{2n} \frac{\del}{\del u}
\]
form an orthonormal basis for $T_{q}M_{\epsilon}$.  It suffices to compute $\langle \nabla_{X}Y, Z\rangle$ where $X,Y$ run over $\{\frac{\del}{\del x_1},\ldots, \frac{\del}{\del x_{n-1}}, \frac{\del}{\del\theta}\}$ and $Z$ runs over  $\{\frac{\del}{\del y_1},\ldots, \frac{\del}{\del y_{n-1}}, \frac{\del}{\del u}\}$.  In other words, we need to compute some of the Christoffel symbols of $g_{\cC}$ at $q$.  We begin by computing $\Gamma_{x_ix_j}^{\alpha}$, where $\alpha = y_k, u$. We have 
\[
\Gamma^{u}_{x_ix_j} = \frac{1}{2}n(-\log(\epsilon))^{1-\frac{1}{n}} \left(-\del_ug_{x_ix_j}\right).
\]
Now $g_{x_ix_j} = (-2u)^{\frac{1}{n}}(g_{D})_{x_ix_j} + (-2u)^{\frac{1}{n}-1} O((d\phi)^2)$ so that
\[
\Gamma^{u}_{x_ix_j} = \delta_{ij}, \qquad \langle \nabla_{\del_{x_i}}\del_{x_j}, U \rangle = \frac{(-\log(\epsilon))^{\frac{1-n}{2n}}}{\sqrt{n}} \delta_{ij}.
\]
Next, since $d\phi=0$ at $q$, it is straightforward to show that $\Gamma^{y_k}_{x_ix_j} = (\Gamma_{D})_{x_ix_j}^{y_k}$ where $\Gamma_{D}$ are the Christoffel symbols at $p\in D$.  Thus,
\[
 \langle \nabla_{\del_{x_i}}\del_{x_j}, Y_{k} \rangle = (-\log(\epsilon))^{\frac{1}{2n}}\langle\nabla^{D}_{\del_{x_i}}\del_{x_{j}}, \del_{y_{k}}\rangle_{g_{D}},
\]
which is a rescaling of the second fundamental form of $N\subset (D, g_{D})$.  Next, consider $\nabla_{\del_{\theta}}\del_{\theta}$.  We begin by computing
\[
\Gamma_{\theta\theta}^{u} = -\frac{1}{2}\del_{u}\log\left(\frac{(-2u)^{\frac{1}{n}-1}}{n}\right) = \frac{n-1}{2nu}.
\]
Therefore,
\[
\langle \nabla _{\del_{\theta}}\del_{\theta}, U \rangle = \frac{-(n-1)}{n\sqrt{n}} (-2u)^{\frac{1-3n}{2n}}.
\]
One easily checks that $\langle \nabla_{\del_\theta} \del_\theta ,Y_k\rangle =0$ and so it only remains to compute the contribution from $\nabla_{\del_{\theta}} \del_{x_{i}}$.  We have
\[
\Gamma_{\theta x_i}^{u} = \frac{n}{2}(-2u)^{1-\frac{1}{n}} \big(-\del_u (Jd\phi)(\frac{\del}{\del x_i})\big) =0,
\]
since $Jd\phi$ is independent of $u$.  It therefore suffices to compute
\[
\Gamma_{\theta x_i}^{y_k} = \frac{1}{2}(-\log(\epsilon))^{-\frac{1}{n}}(\del_{x_i}g_{y_{k}\theta} + \del_{\theta}g_{x_iy_k} - \del_{y_{k}}g_{x_i \theta}).
\]
Now $\del_{\theta}g_{x_iy_k} = 0$ since the metric is $\theta$-independent.  On the other hand,
\[
\del_{x_i}g_{y_{k}\theta} = (-2u)^{\frac{1}{n}-1}\frac{1}{2n}\frac{\del}{\del x_i}\left[(Jd\phi)\bigg(\frac{\del}{\del y_k}\bigg)\right].
\]
Since $p= \pi(q)$ satisfies $d\phi(p)=0$, we have $Jd\phi( \del_{y_k}) = -\del_{x_k}\phi + O(x^2)$, and so we get
\[
\del_{x_i}g_{y_{k}\theta} =- (-2u)^{\frac{1}{n}-1}\frac{1}{2n}\frac{\del^2 \phi}{\del x_ix_k}.
\]
Similarly we have $\del_{y_k}g_{x_{i}\theta} = (-2u)^{\frac{1}{n}-1}\frac{1}{2n}\frac{\del^2 \phi}{\del y_iy_k}$.  Therefore,
\[
\Gamma_{\theta x_i}^{y_k}= \frac{-1}{n}(-\log(\epsilon))^{-1}\frac{1}{4}\left(\frac{\del^2 \phi}{\del x_ix_k}+\frac{\del^2 \phi}{\del y_iy_k}\right).
\]
On the other hand, we have
\[
g_{i\bar{j}} = \frac{\del^2 \phi}{\del z_i\del\bar{z}_{j}} = \frac{1}{4} \left(\frac{\del^2 \phi}{\del x_i\del x_j} + \frac{\del^2\phi}{\del y_i \del y_j}\right) + \frac{\sqrt{-1}}{4}\left(\frac{\del^2 \phi}{\del x_i \del y_j} - \frac{\del^2}{\del y_i\del x_j}\right),
\]
and so, by our choice of coordinates, we have
\[
\Gamma_{\theta x_i}^{y_k}= \frac{-1}{n}(-\log(\epsilon))^{-1} \delta_{ik}.
\]
Therefore,
\[
\langle \nabla_{\del_{\theta}}\del_{x_i}, Y_k\rangle =  \frac{-1}{n}(-\log(\epsilon))^{\frac{1}{2n}-1} \delta_{ik}.
\]
Taking the norm of the second fundamental form, we obtain
\begin{lem}\label{lem: TYmodA}
The special Lagrangian submanifold $M_{\epsilon} \subset (\cC, g_{\cC})$ satisfies
\[
|A|^2_{g} \leq C(N,n)(-\log(\epsilon))^{-\frac{1}{n}},
\]
for a constant $C(N,n)$ depending only on the dimension and the second fundamental form of $N\subset(D, g_{D})$.
\end{lem}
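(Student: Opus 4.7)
The plan for proving Lemma~\ref{lem: TYmodA} is to compute $|A|^2$ pointwise at a generic $q \in M_\epsilon$ by introducing adapted coordinates in which the relevant Christoffel symbols of $g_{\cC}$ are tractable, then to convert to an orthonormal frame and sum the squared components. First I would fix $p \in N$ and select real coordinates $(x_1,\ldots,x_{n-1}, y_1,\ldots,y_{n-1})$ on $D$ centered at $p$ such that $(x_1,\ldots,x_{n-1})$ are coordinates on $N$, $\{\partial_{x_i}, \partial_{y_j}\}$ is orthonormal at $p$, and $J_D \partial_{x_i} = \partial_{y_i}$ at $p$. Then trivialize $L$ near $p$ so that $h(p) = 1$ and $dh(p) = 0$, and introduce $u := \log r - \tfrac{1}{2}\phi$ as in the computation around \eqref{eq: calModSymp}. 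In these coordinates $M_\epsilon$ is locally $\{y = 0,\ u = \tfrac{1}{2}\log\epsilon\}$, so at $q \in \pi^{-1}(p) \cap M_\epsilon$ we have $T_q M_\epsilon = \mathrm{span}\{\partial_{x_i}, \partial_\theta\}$ and the normal bundle is spanned by $\{\partial_{y_k}, \partial_u\}$. Crucially, because $d\phi(p) = 0$, the metric $g_{\cC}$ is block-diagonal at $q$ in the splitting $\mathrm{span}\{\partial_{x_i}\}\oplus\mathrm{span}\{\partial_{y_k}\}\oplus\mathrm{span}\{\partial_u\}\oplus\mathrm{span}\{\partial_\theta\}$, with blocks that are constant scalar multiples of the model pieces.

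Next I would pass to orthonormal frames $X_i := (-\log\epsilon)^{-1/(2n)}\partial_{x_i}$, $\Theta := \sqrt{n}(-\log\epsilon)^{(n-1)/(2n)}\partial_\theta$ for $T_q M_\epsilon$, and analogous $Y_k, U$ for the normal bundle. Since the rescaling constants depend only on $\epsilon$, no additional derivative-of-rescaling terms arise, and the orthonormal components of $A$ reduce to scalar rescalings of $\langle \nabla_{\partial_a}\partial_b, \partial_\alpha\rangle_{g_{\cC}}$ for $a,b \in \{x_i,\theta\}$ and $\alpha \in \{y_k, u\}$.

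The computation then splits into four families of Christoffel symbols of $g_{\cC}$ at $q$, each a short local calculation. Differentiating the conformal factor $(-2u)^{1/n}$ in $u$ yields $\Gamma^u_{x_i x_j}(q) = \delta_{ij}$. For $\Gamma^{y_k}_{x_i x_j}$, the vanishing $d\phi(p) = 0$ identifies the horizontal part of $g_{\cC}$ with a conformal rescaling of $g_D$, giving $\Gamma^{y_k}_{x_i x_j}(q) = (\Gamma_D)^{y_k}_{x_i x_j}(p)$, which encodes the second fundamental form of $N \subset D$. The $\theta$-$\theta$ entries are direct from the explicit $g_{\theta\theta}$. Finally, $\Gamma^{y_k}_{\theta x_i}$ requires computing $\partial_{x_i} g_{y_k\theta}$ and $\partial_{y_k} g_{x_i\theta}$; using $Jd\phi(\partial_{y_k}) = -\partial_{x_k}\phi + O(|x|^2)$ near $p$ together with the K\"ahler identity
\[
g_{i\bar{j}}(p) = \tfrac{1}{4}\bigl(\partial^2_{x_i x_j}\phi + \partial^2_{y_i y_j}\phi\bigr)(p) + \tfrac{\sqrt{-1}}{4}\bigl(\partial^2_{x_i y_j}\phi - \partial^2_{y_i x_j}\phi\bigr)(p)
\]
and orthonormality at $p$ reduces this symbol to a rescaled Kronecker delta.

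Converting back to the orthonormal frame, the four contributions to $|A|^2$ at $q$ scale, respectively, as $(-\log\epsilon)^{-(n+1)/n}$ from $\langle\nabla_{X_i}X_j, U\rangle$, as $(-\log\epsilon)^{-1/n}|A^{N\subset D}|^2_{g_D}(p)$ from $\langle\nabla_{X_i}X_j, Y_k\rangle$, as $(-\log\epsilon)^{-(n+1)/n}$ from $\langle\nabla_\Theta \Theta, U\rangle$, and as $(-\log\epsilon)^{-1}$ from $\langle\nabla_\Theta X_i, Y_k\rangle$. For $\epsilon \leq 1/e$ the slowest-decaying exponent is $-1/n$, so the second term dominates, yielding
\[
|A|^2_{g_{\cC}}(q) \leq C(n)\bigl(1 + \sup_N |A^{N\subset D}|^2_{g_D}\bigr)(-\log\epsilon)^{-1/n},
\]
which is the claimed bound. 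The main obstacle is not the individual Christoffel computations but the careful bookkeeping of powers of $-\log\epsilon$ under the orthonormal rescaling; fixing the trivialization of $L$ so that $dh(p) = 0$ before rescaling ensures the metric is block-diagonal at $q$ and that the rescaling constants are $\epsilon$-dependent only, which prevents spurious cross-terms from appearing and keeps the exponent tracking transparent.
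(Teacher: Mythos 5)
Your proposal follows essentially the same route as the paper: the same adapted coordinates $(x,y,u,\theta)$ with $d\phi(p)=0$, the same orthonormal rescalings, and the same four families of Christoffel symbols, with the dominant contribution $(-\log\epsilon)^{-1/n}|A^{N\subset D}|^2$ coming from $\Gamma^{y_k}_{x_ix_j}=(\Gamma_D)^{y_k}_{x_ix_j}$. The only slip is in the mixed term: the squared contribution of $\langle\nabla_\Theta X_i, Y_k\rangle$ is actually $(-\log\epsilon)^{-(n+1)/n}$ rather than $(-\log\epsilon)^{-1}$, but since both are dominated by $(-\log\epsilon)^{-1/n}$ this does not affect the stated bound.
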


\begin{defn}
	Let $(M,g)$ be a Riemannian manifold of dimension $n$ and let $\kappa, r_0>0$. We say $(M,g)$ is $\kappa$ non-collapsing at scale $r_0$ if for every $0<r<r_0$ and for every $p\in M$, we have
	\[
	{\rm Vol}(B(p,r)) \geq \kappa r^{n},
	\]
	where ${\rm Vol}(B(p,r))$ denotes the volume with respect to $g$ of the geodesic ball of radius $r$ about $p$ in $(M,g)$.
\end{defn}

Finally, we have the following non-collapsing result.

\begin{lem}\label{lem: TYmodNonCol} Assume that $N$ is $\kappa_N$ non-collapsing at the scale $r_N$. 
Define constants $\kappa, r_{\epsilon} >0$ by
\[
\kappa = \frac{\kappa_N }{2^{n-1}}, \qquad  r_{\epsilon} = \frac{2\pi}{\sqrt{n}}(-\log(\epsilon))^{\frac{1-n}{2n}}.
\]
For $\epsilon$ sufficiently small, depending only on $(N, g_{D})$,  $M_{\epsilon}$ is $\kappa$ non-collapsing at scale $r_\epsilon$. 

\end{lem}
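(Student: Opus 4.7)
The plan is to exhibit $\pi:M_\epsilon\to N$ as a Riemannian submersion with totally-geodesic circle fibres of length $r_\epsilon$ carrying a \emph{flat} connection (since $N$ is Lagrangian), and then reduce the ball-volume bound on $M_\epsilon$ to the non-collapsing of $N$ via a local product trivialization.

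First I would restrict \eqref{eq: calModSymp} to $M_\epsilon=\{r^{2}e^{-\phi}=\epsilon\}$; the relation $\tfrac{dr}{r}-\tfrac12 d\phi=0$ on $M_\epsilon$ reduces the induced metric to
\[
g_{M_\epsilon}=\frac{(-\log\epsilon)^{1/n-1}}{n}(d\theta+\alpha)^{2}+b\,\pi^{*}g_{D},\qquad b:=(-\log\epsilon)^{1/n},
\]
with connection $1$-form $\alpha=\tfrac12 Jd\phi$ whose curvature is a multiple of $\omega_{D}$. Since $\omega_{D}|_{N}=0$, this connection is flat, so over any simply-connected open $V\subset N$ I can write $\alpha=df$ and introduce $\tilde\theta=\theta+f$ to present $M_\epsilon|_V$ as a genuine Riemannian product $S^{1}_{r_\epsilon}\times(V,bg_{D})$.

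The core estimate is then: for $q\in M_\epsilon$ with $p=\pi(q)$ and $r\leq r_\epsilon/\sqrt 2$, pick a simply-connected trivializing neighbourhood of $p$ containing $B_{(N,bg_D)}(p,r/\sqrt 2)=B_{(N,g_D)}(p,r/\sqrt{2b})$. This is possible once $\epsilon$ is small enough that $r_\epsilon/\sqrt{2b}=\tfrac{\pi\sqrt 2}{\sqrt n}(-\log\epsilon)^{-1/2}$ lies below both $r_N$ and the injectivity radius of $(N,g_D)$. Inside the product, the ball contains the rectangle $\{|\tilde\theta-\tilde\theta_q|<r/\sqrt 2\}\times B_{(N,bg_D)}(p,r/\sqrt 2)$, and integrating the submersion volume form $d\tilde\theta\wedge b^{(n-1)/2}\pi^{*}dV_{(N,g_D)}$ together with the $\kappa_N$-non-collapsing of $N$ yields
\[
\mathrm{Vol}\bigl(B_{M_\epsilon}(q,r)\bigr)\geq r\sqrt 2\cdot b^{(n-1)/2}\cdot\kappa_N\bigl(r/\sqrt{2b}\bigr)^{n-1}=\frac{\kappa_N r^n}{2^{(n-2)/2}}\geq \kappa r^n,
\]
the powers of $b$ cancelling exactly. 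For $r\in(r_\epsilon/\sqrt 2,\,r_\epsilon)$ I would just use monotonicity: $\mathrm{Vol}(B_{M_\epsilon}(q,r))\geq\mathrm{Vol}(B_{M_\epsilon}(q,r_\epsilon/\sqrt 2))\geq \kappa_N r_\epsilon^n/2^{n-1}\geq \kappa r^n$, where the last step uses $r<r_\epsilon$.

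The main technical subtlety I expect is justifying the local product step in the presence of possibly non-trivial holonomy of the flat connection around loops in $N$; restricting to a simply-connected neighbourhood of $p$ is precisely what handles this. The required smallness of $\epsilon$, depending only on $(N,g_{D})$ through $r_N$ and the injectivity radius, is exactly what guarantees that such a neighbourhood contains $B_{(N,g_D)}(p,r/\sqrt{2b})$ for every $r\leq r_\epsilon/\sqrt 2$.
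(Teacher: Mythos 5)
Your proposal is correct, and at its core it runs on the same engine as the paper's proof: exploit the $S^1$-bundle structure $\pi\colon M_{\epsilon}\to N$, bound the ball $B_{M_\epsilon}(q,r)$ from below by a region fibering over a $g_D$-ball in $N$ of radius comparable to $r(-\log\epsilon)^{-1/2}$, and finish with Fubini plus the $\kappa_N$ non-collapsing of $N$. The mechanism for the containment step is genuinely different, though. The paper takes horizontal lifts of minimizing geodesics of $(N,g_D)$ with respect to the connection form $d\theta+\tfrac12 Jd\phi$ and concatenates them with fibre arcs; this only uses that a horizontal lift has $g_{\cC}$-length equal to the $bg_D$-length of its projection, so no flatness, no simple connectedness, and no injectivity-radius bound on $N$ is needed. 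You instead observe that the restricted connection is flat (its curvature is a multiple of $\omega_D|_N=0$, which is the same observation the paper uses only to trivialize the bundle topologically), gauge it away over a small geodesic ball to obtain a genuine Riemannian product $S^1_{r_\epsilon}\times (V,b\,g_D)$, and then use the standard product-ball-contains-rectangle estimate with the diagonal $\sqrt{(r/\sqrt2)^2+(r/\sqrt2)^2}=r$ splitting, plus a monotonicity step for $r\in(r_\epsilon/\sqrt2,r_\epsilon)$; this is what forces your extra smallness condition involving the injectivity radius of $(N,g_D)$, which is harmless since it still depends only on $(N,g_D)$. Your constants check out: $\kappa_N 2^{-(n-2)/2}\geq\kappa_N2^{-(n-1)}=\kappa$ in the first range, and exactly $\kappa$ after the monotonicity step, matching the paper's bound. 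One cosmetic point: with $\tilde\theta$ an angular coordinate the induced volume form is $\tfrac{1}{\sqrt n}\,d\tilde\theta\wedge\pi^{*}dVol_{N}$ (as in Lemma~\ref{lem: TYmodVol}), not $d\tilde\theta\wedge b^{(n-1)/2}\pi^{*}dVol_N$; since your numerical estimate uses the metric length $r\sqrt2$ of the fibre arc rather than its coordinate length, this does not affect the final bound.
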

\begin{proof}
Fix a point $p \in M_{\epsilon}$.  Choose $\epsilon$ sufficiently small so that
\[
\frac{2\pi}{\sqrt{n}} (-\log(\epsilon))^{-\frac{1}{2}} < r_{N}.
\]
  Let $d_{N}$ denote the distance function on $(N, g_{D})$.  For $r < \frac{2\pi}{\sqrt{n}}$, consider the set $Q = \{ q\in N: d_{N}(\pi(p),q)<  \frac{r}{2}(-\log \epsilon)^{-1/2}\}$.  For each point $q \in Q$ with $\pi(p) \ne q$ we can choose a unit speed geodesic $\gamma(t)$ in $(N, g_{D})$ connecting $\pi(p)$ to $q$ and having length $d_{N}(\pi(p),q) < \frac{r}{2}(-\log \epsilon)^{-1/2}$.  We now take the horizontal lift of this curve through $p$.  Recall that the metric on $M_{\epsilon}$ is given in local coordinates by
  \[
  g_{\cC}\big|_{M_{\epsilon}} = (-\log(\epsilon))^{\frac{1}{n}-1} \frac{1}{n}(d\theta +\frac{1}{2}Jd\phi)^2 + (-\log(\epsilon))^{\frac{1}{n}}\pi^{*}g_{D}\big|_{N}
   \]
   and the tangent space of the fibers $\pi:M_{\epsilon}\rightarrow N$ is spanned by $\frac{\del}{\del \theta}$.  Thus, the horizontal lift $\bar{\gamma}$ is a lift of $\gamma$ to $M_{\epsilon}$ such that
   \[
   \iota_{\dot{\overline{\gamma}}}(d\theta + \frac{1}{2}Jd \phi )=0.
   \]
   In particular, if $\bar{\gamma}$ denotes the horizontal lift of $\gamma$ to $M_{\epsilon}$ passing through $p$, then $\bar{\gamma}$ connects $p$ to a point $\bar{\gamma}(1) \in \pi^{-1}(q)$ and $\bar{\gamma}$ satisfies
   \[
   |\dot{\overline{\gamma}}|^2=(-\log(\epsilon))^{\frac{1}{n}}|\dot{\gamma}|^2_{g_{D}}.
   \]
   Thus, $\bar{\gamma}$ has length in $M_{\epsilon}$ given by
  \[
  {\rm Length}_{(M_{\epsilon}, g_{\cC})}(\bar{\gamma}) =(-\log(\epsilon))^{\frac{1}{2n}}d_{N}(\pi(p),q) <   \frac{r}{2}(-\log(\epsilon))^{\frac{1-n}{2n}}.
  \]
  
Consider the ball $B: = B(p,r(-\log(\epsilon))^{\frac{1-n}{2n}}) \subset M_{\epsilon}$.  Choose coordinates $(x_1,\ldots, x_{n-1}, \theta)$ centered at $\bar{\gamma}(1)$ so that the metric takes the form~\eqref{eq: metSimpCoord} at any point in the fiber containing $\overline{\gamma}(1)$.  If $q'$ is any point in the fiber containing $\overline{\gamma}(1)$ with $|\theta(q')|< \frac{\sqrt{n}r}{2}$ then we claim that $q' \in B$.  To see this, connect $q'$ to $p$ by concantenating the curve $\overline{\gamma}$ with the curve in the fiber from $\overline{\gamma}(1)$ to $q'$. The length of this curve is at most $r(-\log(\epsilon))^{\frac{1-n}{2n}}$.  Recall that, by assumption, $\frac{\sqrt{n}r}{2} < \pi$.  Since this holds for every point $q\in Q$ and the volume form of $M_{\epsilon}$ is given by $\frac{1}{\sqrt{n}}d\theta \wedge \pi^{*}dVol_{N}$, we have
\[
\begin{aligned}
\int_{B} dVol_{M_{\epsilon}} \geq r \int_{Q} dVol_{N} &> r\kappa_N \left(\frac{r}{2}\right)^{n-1}(-\log(\epsilon))^{\frac{1-n}{2}}\\
&= \frac{\kappa_N }{2^{n-1}}\left(r(-\log(\epsilon))^{\frac{1-n}{2n}}\right)^{n}
\end{aligned}
\]
which is the desired result. 
\end{proof}

We now essentially understand the geometry of $M_{\epsilon}$ as a subset of the model space $(\cC, g_{\cC})$.  

\subsection{Transplantation and perturbation}

The next step in the construction is to transplant the special Lagrangians in the Calabi model into the Tian-Yau spaces to produce approximate special Lagrangians using the calculations in Section~\ref{sec: perturb}.  To begin, we recall the identification of the end of the Tian-Yau spaces with the Calabi model; our discussion follows closely \cite{HSVZ}.  Thus, we fix a Fano K\"ahler manifold $X$ and assume that $D = \{\sigma =0\}$ is a smooth anti-canonical divisor in the linear system $|-K_{X}|$. Denote by $L=-K_X|_D$ the normal bundle of $D$ in $Y$.  We fix a holomorphic volume form on $D$ satisfying the normalization~\eqref{eq: normOmegaD}. We view $\frac{1}{\sigma} = \Omega_{X}$ as a holomorphic $(n,0)$-form on $X$ with its normalization chosen so that the residue of $\Omega_{X}$ on $D$ is $\Omega_{D}$.  Let $h_{D}$ be the unique up-to-scale positively curved metric on $-K_{X}|_{D}\rightarrow D$ such that
\[
-\ddb \log h_{D} = \omega_{D}
\]
is Ricci-flat on $D$.  Let $h_{X}$ be a positively curved hermitian metric on $-K_{X}$ extending $h_{D}$ and put
\[
\omega_{X} = \ddb\frac{n}{n+1}(-\log|\sigma|^2_{h_{X}})^{\frac{n+1}{n}}.
\]
After possibly scaling $h_{X}$ by a sufficiently small positive constant, we can assume that $\omega_{X}$ defines a smooth positive K\"ahler metric on $X\setminus D$ and evidently $\omega_{X}$ is asymptotic to the Calabi model.  The following theorem is due to Tian-Yau \cite{TY} with the exponential decay estimates due to Hein \cite{Hein}.

\begin{thm}\label{thm: TY}
There exists a function $\phi: X\setminus D \rightarrow \mathbb{R}$ such that $\omega_{TY} := \omega_{X} + \ddb \phi$ is a complete Ricci-flat K\"ahler metric on $X\setminus D$ solving the Monge-Amp\`ere equation
\begin{align} \label{cpx Monge}
\omega_{TY}^{n} = \frac{(\sqrt{-1})^{n^2}}{2}\Omega_{X}\wedge \overline{\Omega}_{X}.
\end{align}

Furthermore, there is a constant $\delta_0= \delta_0(M,D)$ such that
\[
|\nabla^{k}_{g_{X}} \phi|_{g_{X}} = O(e^{-\delta_0 \ell_0^{n+1}})
\]
where $\ell_0$ is the scale function of Definition~\ref{def: scaleFunc}
\end{thm}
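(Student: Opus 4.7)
The plan is to solve the complex Monge--Amp\`ere equation in \eqref{cpx Monge} by writing $\omega_{TY} = \omega_{X} + \sqrt{-1}\del\dbar \phi$ and reducing to
\[
(\omega_{X} + \sqrt{-1}\del\dbar\phi)^{n} = e^{F}\omega_{X}^{n},
\]
where $e^{F}\omega_X^n = \frac{(\sqrt{-1})^{n^2}}{2}\Omega_{X}\wedge\overline{\Omega}_{X}$ measures the failure of $\omega_{X}$ to be Ricci-flat. Because $\omega_{X}$ is constructed from the same K\"ahler potential $\tfrac{n}{n+1}(-\log|\sigma|^{2}_{h_{X}})^{\frac{n+1}{n}}$ as the model $\omega_{\cC}$, and the hermitian metric $h_{X}$ restricts to $h_{D}$ on $-K_{X}|_{D}$, direct differentiation in a tubular neighborhood of $D$ shows $\omega_{X}-\pi^{*}\omega_{\cC} = O(|\sigma|^{2/n}_{h_X})$ relative to the model geometry. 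From this and the normalization \eqref{eq: normOmegaD} of $\Omega_{D}$ one checks $F = O(e^{-\delta \ell_0^{n+1}})$ together with all model-covariant derivatives.

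Next I would solve the Monge--Amp\`ere equation by an exhaustion argument in the style of Tian--Yau. Choose an exhaustion $\{U_{j}\}$ of $X\setminus D$ by the sublevel sets of $\ell_{0}$, cut off $F$ appropriately near $\del U_{j}$, and solve the Dirichlet problem on each $U_{j}$ with zero boundary data using the continuity method of Caffarelli--Kohn--Nirenberg--Spruck. The crucial inputs are uniform a priori estimates in $j$: a $C^{0}$ bound via a Moser iteration based on the Sobolev inequality for $(X_{mod},g_{\cC})$ (which holds because the model has bounded local geometry after rescaling by $\ell_{0}^{-1}$, cf.\ Proposition~\ref{prop: calModDec}), a $C^{2}$ bound from Yau's Laplacian inequality using bounded bisectional curvature of $\omega_{X}$, and higher-order bounds by Evans--Krylov and Schauder. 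Passing to the limit produces a solution $\phi$ on all of $X\setminus D$ with $\sup|\phi|<\infty$.

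For the exponential decay of $\phi$, following Hein \cite{Hein}, one upgrades the $C^{0}$ bound using a weighted analysis on the Calabi cylinder. Concretely, the linearization $\Delta_{g_{TY}}$ acts as an isomorphism between appropriately chosen weighted H\"older spaces $C^{k,\alpha}_{\delta}$ on the end, where the weight is measured against the scale function $\ell_{0}$; this relies on Proposition~\ref{prop: calModDec} together with a Fredholm/maximum-principle argument comparing $\Delta_{g_{TY}}$ with the Laplacian on $(\cC,g_{\cC})$. Alternatively one constructs explicit radial barriers in $\ell_{0}$ using that $|\nabla_{g_{\cC}}\ell_{0}^{n+1}|^{2}$ is constant (Remark~\ref{rek: scaleRk}). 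Once $\phi = O(e^{-\delta \ell_{0}^{n+1}})$ is known, interior Schauder estimates applied on balls of size $\ell_{0}^{(1-n)/2}$ (where $g_{TY}$ has uniformly bounded geometry) promote this to the full statement with all $g_{X}$-covariant derivatives.

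The main obstacle is the last step: getting from a uniform $C^{0}$ bound on $\phi$ to exponential decay in the (non-Euclidean) scale $\ell_{0}^{n+1}$. One must choose weighted norms finely tuned to the polynomial degeneration of $g_{\cC}$ and verify surjectivity of $\Delta_{g_{TY}}$ in these spaces; the na\"ive weight $\ell_0$ will not do, and the correct spectral theory for $\Delta_{g_\cC}$ on the model (amounting to Fourier decomposition along the $S^{1}$-fibers of $\cC \to D$) is what singles out the rate $\delta_{0}$.
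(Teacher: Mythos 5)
You should first note that the paper does not prove this statement at all: Theorem~\ref{thm: TY} is quoted as an external input, with existence due to Tian--Yau \cite{TY} and the exponential decay due to Hein \cite{Hein}, so there is no in-paper argument to compare against. Your sketch reconstructs the broad strategy of those references (reduce to a complex Monge--Amp\`ere equation with rapidly decaying right-hand side, solve by exhaustion with uniform a priori estimates, then run a weighted analysis on the Calabi model to upgrade boundedness of $\phi$ to decay), which is the correct skeleton; the preliminary claim that the error $F$ decays like a power of $|\sigma|_{h_X}$, i.e.\ like $e^{-c\ell_0^{2n}}$, faster than the rate $e^{-\delta_0\ell_0^{n+1}}$ claimed for $\phi$, is also consistent with Proposition~\ref{prop: HSVZdecay}.

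As a proof, however, there are genuine gaps. The most serious is the uniform $C^{0}$ estimate: bounded local geometry of $(\cC, g_{\cC})$ after rescaling by $\ell_0$ gives only \emph{local} Sobolev inequalities, whereas the global Sobolev inequality needed to close a Moser iteration uniformly over the exhaustion fails on the Calabi model, because the volume growth is $\mathrm{Vol}(B(p,r))\sim r^{2n/(n+1)}$ (sub-quadratic, since distance scales like $\ell_0^{n+1}$ while volume scales like $\ell_0^{2n}$), and a Sobolev inequality with any exponent $\nu>2$ forces volume growth at least $r^{\nu}$. This is exactly the difficulty that drives the weighted iteration in Tian--Yau and Hein's analysis of manifolds with this volume growth, so the step ``$C^0$ bound via Moser iteration based on the Sobolev inequality for the model'' would fail as stated. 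Secondly, the decay statement is asserted rather than proved: you would need to actually establish invertibility of the relevant Laplacian on weighted spaces modeled on $e^{-\delta\ell_0^{n+1}}$ (or construct barriers and control the nonlinear terms) and verify that a definite rate $\delta_0$ survives the iteration; you correctly flag this as the main obstacle, but it is the substance of Hein's contribution and not a routine Schauder argument. Since the paper itself uses the theorem as a black box, these gaps do not affect the paper, but your outline should be read as a road map to \cite{TY} and \cite{Hein} rather than a self-contained proof.
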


Fix a smooth K\"ahler metric $g$ on $X$.  Using $g$ we can identify $L$ with $(T^{1,0}D)^{\perp}$ as $C^{\infty}$ complex line bundles.  Using the $g$-exponential map, we get a diffeomorphism $\Phi$ between a neighborhood of the zero section in $L$ and a neighborhood of $D$ in $X$.  Under this identification, we have the following estimates \cite{Hein, HSVZ}

\begin{prop}\label{prop: HSVZdecay}
There is a diffeomorphism $\Phi: \mathcal{C}\setminus K' \rightarrow X\setminus K$, where $K\subset X$ is compact, and $K' := \{ |\xi|_{h} \geq \frac{1}{2} \}$, such that the following estimates hold uniformly for all large enough values of $\ell_0$:
\begin{enumerate}
\item $|\nabla^k_{g_{C}}(\Phi^{*}J_{X} - J_C)|_{g_{C}} = O(e^{-(\frac{1}{2}-\epsilon)\ell_0^{2n}})$ for all $k\geq 0, \epsilon>0$,
\item $|\nabla^k_{g_{C}}(\Phi^{*}\Omega_{X} - \Omega_C)|_{g_{C}} = O(e^{-(\frac{1}{2}-\epsilon)\ell_0^{2n}})$ for all $k\geq 0, \epsilon>0$,
\item $|\nabla^k_{g_{C}}(\Phi^{*}\omega_{X} - \omega_C)|_{g_{C}} = O(e^{-(\frac{1}{2}-\epsilon)\ell_0^{2n}}))$ for all $k\geq 0, \epsilon>0$,
\item $|\nabla^k_{g_{C}}(\Phi^{*}(-\log|\sigma|^2_{h_{X}})^{\frac{n+1}{n}} - (-\log|\xi|^2_{h_{D}})^{\frac{n+1}{n}})|_{g_{C}} =O(e^{-(\frac{1}{2}-\epsilon)\ell_0^{2n}})$ for all $k\geq 0, \epsilon>0$,
\item There is a number $\underline{\delta}>0$ such that for all $k\geq 0$, we have 
\[
\begin{aligned}
|\nabla^k_{g_{C}}(\Phi^{*}\omega_{TY} - \omega_C)|_{g_{C}} &= O(e^{-\underline{\delta}\ell_0^{2n}}),\\
|\nabla^k_{g_{C}}(\Phi^{*}g_{TY} - g_C)|_{g_{C}} &= O(e^{-\underline{\delta}\ell_0^{2n}}).
\end{aligned}
\]
\end{enumerate}
\end{prop}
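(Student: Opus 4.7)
The plan is to construct $\Phi$ as the composition of two natural identifications and then establish the five estimates by a Taylor expansion in the normal direction to $D$, combined with the exponential decay estimate from Theorem~\ref{thm: TY} for the Tian-Yau potential. More precisely, I would first choose a Hermitian metric on the holomorphic normal bundle $N_{D/X} \cong L = -K_X|_D$ compatible with $h_D$, and use the $g$-exponential map from $D$ to identify a neighborhood of the zero section of $L$ with a tubular neighborhood of $D$ in $X$. This gives a $C^{\infty}$ diffeomorphism between $\{\xi \in L : |\xi|_h < \tfrac{1}{2}\}$ and a neighborhood of $D$, and hence a map $\Phi$ between $\cC \setminus K'$ and $X \setminus K$ for an appropriate compact $K$.

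For the estimates (1)--(4), I would fix a point of $D$ and compare the pulled back tensors with the Calabi model ones by Taylor expansion in the normal variable $w$. The key point is that $\Phi$ is constructed so that along $D$ (i.e.\ at $w = 0$) the pullbacks $\Phi^{*}J_X$, $\Phi^{*}\Omega_X$, and $\Phi^{*}\omega_X$ agree with their model counterparts to the leading order prescribed by the normal form. For instance, by the adjunction and the Poincar\'e residue, the residue of $\Omega_X$ along $D$ is $\Omega_D$; unraveling this in normal coordinates $(z,w)$ identifies $\Omega_X$ with $\Omega_\cC(1 + O(w))$, and similarly for $\omega_X$ using the normalization of $h_X$ and that $\omega_D = -\ddb \log h_D$. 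Thus each difference tensor vanishes at $D$ and hence is pointwise $O(|w|) = O(|\xi|_h)$. Converting this bound via $|\xi|_h^{2} = e^{-\ell_0^{2n}}$ gives the $e^{-\frac{1}{2}\ell_0^{2n}}$ scaling. The derivative estimates in the Calabi metric introduce factors of $\ell_0^{-1}$ per derivative (see Proposition~\ref{prop: calModDec}), which are at most a polynomial in $\ell_0^{2n}$; these are absorbed into the $\epsilon$-loss, yielding $e^{-(\frac{1}{2}-\epsilon)\ell_0^{2n}}$ in each covariant norm $|\nabla^k_{g_\cC}\cdot|_{g_\cC}$. Item (4) is handled in exactly the same way since $h_X$ restricts to $h_D$ and its higher-order variation in $w$ contributes an $O(|\xi|_h^{2})$ term inside the logarithm, which, after composition with $s\mapsto s^{(n+1)/n}$, remains of the same order.

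For (5), the idea is to combine the already established estimate (3) with Theorem~\ref{thm: TY}. Since $\omega_{TY} = \omega_X + \ddb \phi$ and $|\nabla^k_{g_X}\phi|_{g_X} = O(e^{-\delta_0 \ell_0^{n+1}})$, pulling back under $\Phi$ and using (3) to replace $g_X$ norms by $g_\cC$ norms gives
\[
|\nabla^k_{g_\cC}(\Phi^{*}\omega_{TY} - \omega_\cC)|_{g_\cC} \leq |\nabla^k_{g_\cC}(\Phi^{*}\omega_X - \omega_\cC)|_{g_\cC} + |\nabla^k_{g_\cC}\ddb(\Phi^{*}\phi)|_{g_\cC}.
\]
The first term decays like $e^{-(\frac{1}{2}-\epsilon)\ell_0^{2n}}$ by (3), while the second decays like $e^{-\delta_0 \ell_0^{n+1}}$ by Theorem~\ref{thm: TY} (again up to polynomial losses in $\ell_0$ coming from the change of connection). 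Since $\ell_0^{n+1}$ grows strictly slower than $\ell_0^{2n}$ for $n \geq 2$, the Tian-Yau term dominates, so taking any $\underline{\delta} < \delta_0$ absorbs all polynomial factors and gives the claimed estimate on $\omega_{TY}$. The corresponding bound for $g_{TY}$ then follows by combining this with (1) via the formula $g(\cdot,\cdot) = \omega(\cdot, J\cdot)$.

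The main technical obstacle is the careful bookkeeping in the derivative estimates: because the Calabi metric is collapsing along the fibers of $\cC \to D$, each derivative in $g_\cC$ picks up polynomial factors in $\ell_0$, and one must verify that these do not destroy the exponential decay. The cleanest way to handle this is to work in scale-invariant coordinates where the metric is uniformly equivalent to a Euclidean model and track the decay there; this is precisely the viewpoint developed in~\cite{Hein, HSVZ}, and I would invoke their framework rather than redo the computation from scratch.
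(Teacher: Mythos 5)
The step that fails is your derivation of item (5). From Theorem~\ref{thm: TY} you only know $|\nabla^k_{g_X}\phi|_{g_X}=O(e^{-\delta_0\ell_0^{n+1}})$, so splitting $\Phi^{*}\omega_{TY}-\omega_{\cC}=(\Phi^{*}\omega_X-\omega_{\cC})+\Phi^{*}\ddb\phi$ and using (3) gives a bound of the form $O(e^{-(\frac{1}{2}-\epsilon)\ell_0^{2n}})+O(e^{-\delta_0\ell_0^{n+1}})$, and for $n\geq 2$ the second term is the larger one, so the sum is only $O(e^{-c\,\ell_0^{n+1}})$. Your conclusion that ``the Tian--Yau term dominates, so taking any $\underline{\delta}<\delta_0$ gives the claimed estimate'' reverses the comparison: no choice of $\underline{\delta}>0$ makes $e^{-\delta_0\ell_0^{n+1}}=O(e^{-\underline{\delta}\ell_0^{2n}})$, since $\ell_0^{2n}/\ell_0^{n+1}\rightarrow\infty$. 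The rate asserted in (5) -- exponential decay in $\ell_0^{2n}=-\log|\xi|^2_{h}$ rather than in the distance $d\sim\ell_0^{n+1}$ -- is strictly stronger than what Hein's potential estimate combined with (3) can give; it is precisely the refined ``exponentially asymptotically Calabi'' asymptotics of \cite{HSVZ}, whose proof works directly with the complex Monge--Amp\`ere equation on the Calabi model and is not a formal consequence of Theorem~\ref{thm: TY}. The paper itself offers no proof of the Proposition: it constructs $\Phi$ by the same normal-bundle/exponential-map identification you describe and then quotes \cite{Hein, HSVZ} for all five estimates, so the clean fix is to cite the HSVZ asymptotics for (5) as well, rather than trying to deduce it from the potential decay. (This matters downstream: the $(C,K,\delta')$-bounded geometry framework of Section~\ref{sec: TY} is phrased with the $e^{-\delta' K^{2n}}$ rate, so one cannot quietly substitute the weaker $\ell_0^{n+1}$ rate without revisiting those arguments.)

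A smaller caveat concerns (1)--(4). First-order vanishing of the difference tensors along $D$ does not by itself give the claimed bounds, because the $g_{\cC}$-norm is not uniformly comparable to a fixed coordinate norm: components whose output lies in the fiber direction carry a weight comparable to $|w|^{-1}$ up to powers of $\ell_0$, so a coefficient that is merely $O(|w|)$ could contribute a term that does not decay at all. One needs the finer structure of the identification (higher-order vanishing of exactly the dangerous components), and this bookkeeping is the real content of the estimates in \cite{Hein, HSVZ}. Since you ultimately invoke their framework anyway -- which is exactly what the paper does -- this is acceptable, but as written the Taylor-expansion argument for (1)--(4) is incomplete, and (5) as argued is incorrect.
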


We will (somewhat abusively) use $\ell_0$ to denote the scale function pulled back by $\Phi^{-1}$ to $X$.  Note that by Proposition~\ref{prop: HSVZdecay}, there is a uniform constant $C$ such that
\[
C^{-1}(-\log|\sigma|^2_{h_{X}})^{\frac{1}{2n}} \leq \ell_0 \leq C(-\log|\sigma|^2_{h_{X}})^{\frac{1}{2n}}.
\]
Furthermore, Proposition~\ref{prop: HSVZdecay} together with Proposition~\ref{prop: calModDec} can be equivalently reformulated as
\begin{cor}\label{cor: TYcurvDec}
For all $k \geq 0$ and $n \geq 3$, we have
\[
|\nabla^{k}_{g_{TY}}Rm(g_{TY})|_{g_{TY}}  = O(\ell_0^{-(2+k)}).
\]
When $n=2$, then, for all $k\geq 0$ we have
\[
|\nabla^{k}_{g_{TY}}Rm(g_{TY})|_{g_{TY}}  =O(\ell_0^{-(6+k)}).
\]
The injectivity radius satisfies
\[
C_{\iota}^{-1}\ell_0^{1-n}\leq {\rm inj}\,g_{TY} \leq C_{\iota}\ell_0^{1-n}
\]
for a uniform constant $C_{\iota}$.
\end{cor}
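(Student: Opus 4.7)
The result is a routine transfer of the already-established polynomial bounds on the Calabi model metric $g_{\mathcal{C}}$ in Proposition~\ref{prop: calModDec} to the Tian-Yau metric $g_{TY}$, using the exponential comparison of the two metrics provided by Proposition~\ref{prop: HSVZdecay}(5). The plan is as follows.

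Set $\eta := \Phi^*g_{TY} - g_{\mathcal{C}}$, so that Proposition~\ref{prop: HSVZdecay}(5) reads $|\nabla^k_{g_{\mathcal{C}}}\eta|_{g_{\mathcal{C}}}=O(e^{-\underline{\delta}\ell_0^{2n}})$ for every $k\ge 0$. Because this decay dominates any negative power of $\ell_0$, the pulled-back metric $\Phi^*g_{TY}$ is uniformly equivalent to $g_{\mathcal{C}}$ at infinity, the inverse and volume density of $\Phi^*g_{TY}$ agree with those of $g_{\mathcal{C}}$ up to multiplicative $1+O(e^{-\underline{\delta}\ell_0^{2n}})$ corrections, and norms measured with respect to either metric coincide up to such a factor.

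Next, one expands the curvature tensor of $\Phi^*g_{TY}$ around that of $g_{\mathcal{C}}$: a standard schematic computation yields
\[
Rm(\Phi^*g_{TY}) = Rm(g_{\mathcal{C}}) + P\bigl(\eta, \nabla_{g_{\mathcal{C}}}\eta, \nabla^2_{g_{\mathcal{C}}}\eta, Rm(g_{\mathcal{C}}), g_{\mathcal{C}}^{-1}\bigr),
\]
where $P$ is polynomial and vanishes when $\eta\equiv 0$, depending linearly on $\eta$ and its first two derivatives. Combining the exponential bound on $\eta$ with the polynomial bound on $Rm(g_{\mathcal{C}})$ from Proposition~\ref{prop: calModDec}, the correction is $O(e^{-\underline{\delta}\ell_0^{2n}/2})$ in $g_{\mathcal{C}}$-norm, which is negligible compared to $\ell_0^{-2}$ (respectively $\ell_0^{-6}$ for $n=2$). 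Iterating this reasoning for higher covariant derivatives and writing $\nabla_{g_{TY}} = \nabla_{g_{\mathcal{C}}} + A$ with $|\nabla^j_{g_{\mathcal{C}}}A|_{g_{\mathcal{C}}}=O(e^{-\underline{\delta}\ell_0^{2n}})$, we convert $g_{\mathcal{C}}$-covariant derivatives into $g_{TY}$-covariant derivatives at the cost of additional exponentially small terms. This produces the claimed bounds $|\nabla^k_{g_{TY}} Rm(g_{TY})|_{g_{TY}} = O(\ell_0^{-(2+k)})$ for $n\ge 3$ and $O(\ell_0^{-(6+k)})$ for $n=2$.

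For the injectivity radius, the uniform metric equivalence implies that lengths of smooth loops and distances between points differ by a factor $1+O(e^{-\underline{\delta}\ell_0^{2n}})$ for the two metrics at infinity; hence their injectivity radii are comparable, and Proposition~\ref{prop: calModDec} transplants to $g_{TY}$ after slightly enlarging the constant $C_{\iota}$. The compact region of $X$ not covered by $\Phi$ contributes only a uniformly positive lower bound on the injectivity radius, which is absorbed into the constant. The main technical point is simply bookkeeping the polynomial growth of $Rm(g_{\mathcal{C}})$ against the exponential decay of $\eta$ when differentiating products; nothing of substance is added beyond Propositions~\ref{prop: calModDec} and~\ref{prop: HSVZdecay}, which contain all the analytic content.
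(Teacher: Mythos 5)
Your proposal is correct and follows the same route the paper intends: the paper treats Corollary~\ref{cor: TYcurvDec} as an immediate reformulation of Proposition~\ref{prop: calModDec} (polynomial curvature and injectivity-radius bounds for the Calabi model) combined with Proposition~\ref{prop: HSVZdecay}(5) (exponential $C^k$-closeness of $\Phi^{*}g_{TY}$ to $g_{\cC}$), exactly the transfer argument you carry out. Your write-up simply makes explicit the bookkeeping (curvature expansion in $\eta$, conversion of covariant derivatives, and the compact region) that the paper leaves implicit.
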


Note that
\[
\begin{aligned}
&\Phi^{*}\omega_{TY}- \omega_{C}\\
 &= d\left( \Phi^{*} \left(\sqrt{-1}\,\dbar_{X}\frac{n}{n+1}(-\log|\sigma|^2_{h_{X}})^{\frac{n+1}{n}} + \dbar_{X}\phi \right)- \dbar_{C}\frac{n}{n+1}(-\log |\xi|^{2}_{h})^{\frac{n+1}{n}}\right)\\
&= \frac{-n}{2(n+1)}d\left( (\Phi^{*}J_{X})d\Phi^{*}\left[(-\log|\sigma|^2_{h_{X}})^{\frac{n+1}{n}} +\frac{n+1}{n}\phi\right] - J_{C}d(-\log |\xi|^{2}_{h})^{\frac{n+1}{n}}\right).
\end{aligned}
\]
Thus, by Proposition~\ref{prop: HSVZdecay} parts  $(1)$ and $(4)$, together with Theorem~\ref{thm: TY} we get a $1$-form $\beta$ satisfying
\[
\Phi^{*}\omega_{TY}- \omega_{C} = d\beta, \qquad  \sum_{\ell=0}^{\ell}|\nabla^{\ell}_{g_{C}} \beta|_{g_{C}} = O(e^{-(\frac{1}{2}-\epsilon)\ell_0^{2n}}),
\]
for all $k\geq 0$ and $\epsilon >0$.  Define symplectic forms $\omega_{t} = (1-t)\omega_{C} + t\Phi^{*}\omega_{TY}$ for $t\in [0,1]$ and a time dependent vector field $V_{t}$ by
\begin{equation}\label{eq: TYmodV}
\iota_{V_{t}}\omega_{t} = -\beta.
\end{equation}
It follows from the estimates in Proposition~\ref{prop: HSVZdecay}, that, for all $k\geq 0$,
\begin{equation}\label{eq: calModVest}
\sum_{\ell=0}^{k}|\nabla^{\ell}_{g_{C}} V_t|_{g_{C}} = O(e^{-\underline{\delta}\ell_0^{2n}}).
\end{equation}
Define Riemannian metrics $g_{t} = (1-t)g_{C} + t\Phi^{*}g_{TY}$.  Proposition~\ref{prop: HSVZdecay} implies that for all $k \geq 0$, we have  
\[
\frac{1}{2}g_{C} \leq g_{t} \leq 2 g_{C},\quad  |\nabla^{k}_{g_{C}}g_{t}|_{g_{C}} = O(e^{-\underline{\delta}\ell_0^{2n}}).
\]
We obtain
\begin{cor}\label{cor: TYmodVest}
Let $V_{t}, g_{t}$ be as above.  Then there is a number $\underline{\delta} >0$, so that, for all $t\in [0,1]$ and all $k\geq 0$ we have 
\[
\sum_{\ell=0}^{k} |\nabla^{\ell}_{g_{t}} V_t|_{g_{t}} =  O(e^{-\underline{\delta}\ell_0^{2n}}).
\]
\end{cor}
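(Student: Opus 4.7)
The plan is to deduce this corollary by a routine conversion of the $g_C$-based estimates on $V_t$ and on the metric difference $g_t - g_C$ established immediately before the statement, into the $g_t$-based estimates claimed. The key inputs are the pointwise equivalence $\frac{1}{2} g_C \leq g_t \leq 2 g_C$ and the estimate $|\nabla^k_{g_C}g_t|_{g_C} = O(e^{-\underline{\delta}\ell_0^{2n}})$ coming from Proposition~\ref{prop: HSVZdecay}(5), together with the bound $\sum_{\ell=0}^{k}|\nabla^{\ell}_{g_C} V_t|_{g_C} = O(e^{-\underline{\delta}\ell_0^{2n}})$ displayed in equation~\eqref{eq: calModVest}.

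First I would record the norm equivalence: the two-sided bound on $g_t$ implies that for every tensor $T$ of fixed type $(r,s)$, $|T|_{g_t}$ and $|T|_{g_C}$ are comparable up to a constant depending only on $(r,s)$, so it suffices to show $|\nabla^\ell_{g_t} V_t|_{g_C} = O(e^{-\underline{\delta}\ell_0^{2n}})$ for $0\le \ell\le k$. Next I would introduce the difference tensor $D_t := \nabla^{g_t} - \nabla^{g_C}$, which in local coordinates is given by the standard expression
\[
(D_t)^a_{\,bc} = \frac{1}{2} (g_t^{-1})^{ad}\Bigl(\nabla^{g_C}_b (g_t)_{cd} + \nabla^{g_C}_c (g_t)_{bd} - \nabla^{g_C}_d (g_t)_{bc}\Bigr).
\]
Since $g_t^{-1}$ is uniformly bounded in $g_C$-norm and $|\nabla^{m+1}_{g_C}g_t|_{g_C} = O(e^{-\underline{\delta}\ell_0^{2n}})$ for every $m\ge 0$, a Leibniz expansion (using also that $\nabla^{g_C}_m (g_t^{-1})$ is a polynomial in $g_t^{-1}$ and $\nabla^{g_C}_m g_t$) yields $|\nabla^m_{g_C} D_t|_{g_C} = O(e^{-\underline{\delta}\ell_0^{2n}})$ for every $m\ge 0$.

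Finally, I would run an induction on $\ell$ using the schematic identity $\nabla^{g_t} = \nabla^{g_C} + D_t$ to expand
\[
\nabla^\ell_{g_t}V_t = \nabla^\ell_{g_C}V_t + \sum (\nabla^{a_1}_{g_C} D_t)\ast\cdots\ast (\nabla^{a_p}_{g_C} D_t)\ast (\nabla^b_{g_C}V_t),
\]
where the sum is over a finite collection of multi-indices with $p\ge 1$ and $b+a_1+\cdots+a_p \le \ell-1$. Every factor appearing in the sum is $O(e^{-\underline{\delta}\ell_0^{2n}})$ by the hypothesis on $V_t$ and by the bound on $D_t$ from the previous step, and the number of terms depends only on $\ell$, so the whole expression is $O(e^{-\underline{\delta}\ell_0^{2n}})$. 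Combined with the norm equivalence of the first step this gives the claimed bound on each term and hence the corollary after summing over $\ell$. There is no substantive obstacle: the entire argument is bookkeeping, and the decay constant $\underline{\delta}$ is preserved because for each fixed $k$ only finitely many terms appear and each factor already decays at rate $\underline{\delta}$ (product terms decay strictly faster, but this improvement is unnecessary).
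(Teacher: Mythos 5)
Your argument is correct and is exactly the routine conversion the paper has in mind: the paper states Corollary~\ref{cor: TYmodVest} as an immediate consequence of \eqref{eq: calModVest} and the uniform equivalence and decay of $g_t$ relative to $g_C$, with the connection-comparison bookkeeping left implicit. Your write-up simply fills in that bookkeeping (norm equivalence, difference tensor $\nabla^{g_t}-\nabla^{g_C}$ controlled by $\nabla_{g_C}g_t$, induction on the number of derivatives), so it matches the paper's approach.
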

We can now apply the analysis of Section~\ref{sec: perturb} to conclude

\begin{prop}\label{prop: TYalmostSLAG}
Suppose that $N \subset (D, \omega_{D})$ is a special Lagrangian submanifold.  Then, for all $K\gg 0$ sufficiently large there exists a Lagrangian submanifold $M_{K} \subset (X, \omega_{TY}, g_{TY}, J_{TY}, \Omega_{TY})$, which is topologically $N\times S^1$, and has vanishing Maslov class.  Furthermore, there are uniform constants $C>2, \delta'>0$, depending only on $N$ and the estimates in Proposition~\ref{prop: HSVZdecay}, such that
\begin{enumerate}
\item The function $\ell_0$ satisfies
\[
C^{-1}K < \ell_0|_{M_{K}} < CK.
\]
\item The second fundamental form satisfies
\[
|A|_{g_{TY}}^2 \leq CK^{-2}.
\]
\item The mean curvature satisfies
\[
|H|_{g_{TY}}^2 \leq Ce^{-\delta'K^{2n}}.
\]
\item The volume satisfies 
\[
(1-C^{-1}){\rm Vol}_{g_{D}}(N) \leq {\rm Vol}_{g_{TY}}(M_{K}) \leq (1+C^{-1}){\rm Vol}_{g_{D}}(N).
\]
\item The first positive eigenvalue $\lambda_1(M_K)$, of $(M_{K}, g_{TY})$ satisfies
\[
C^{-1}\lambda_1(N)\ell_0^{-2} \leq \lambda_1(M_K) \leq C\lambda_1(N)\ell_0^{-2}.
\]
\item  $(M_{K}, g_{TY})$ is $\kappa_0$ non-collapsing on scale $r_K$ where 
\[
\kappa_0 = C^{-1}, \qquad r_{K} = C^{-1} K^{1-n},
\]
\end{enumerate}
where $C, \kappa_{N}$ are uniform constants depending only on $N$ and the estimates in Proposition~\ref{prop: HSVZdecay}.
\end{prop}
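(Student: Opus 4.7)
The plan is to build $M_K$ by transporting the explicit model special Lagrangian $M_\epsilon \subset \cC$ of Section~\ref{sec: TYmodLag} into $X$ along the Moser flow attached to $\Phi$, with the scale chosen so that $\ell_0|_{M_\epsilon} = K$. Concretely, set $\epsilon = e^{-K^{2n}}$; then Lemmas~\ref{lem: TYmodVol}, \ref{lem: TYmodLambda}, \ref{lem: TYmodA}, and \ref{lem: TYmodNonCol} give explicit control of the volume, first eigenvalue, second fundamental form, and non-collapsing constants of $M_\epsilon$ in the model geometry. These are exactly the quantities appearing on the right-hand sides of items (2)--(6), up to multiplicative constants that will be absorbed in the perturbation.

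Next I would pull back by $\Phi$ and apply the Moser trick of Section~\ref{sec: perturb} on $\cC$: let $V_t$ be the vector field from~\eqref{eq: TYmodV}, $F_t$ its flow, and set $M_K := \Phi(F_1(M_\epsilon))$. By construction $M_K$ is Lagrangian for $\omega_{TY}$, topologically $N \times S^1$, and its Maslov class vanishes by the same homotopy argument used in the proof of Proposition~\ref{perturbedcyl}. Item (1) is then immediate from Corollary~\ref{cor: TYmodVest}: since $|V_t|_{g_t} = O(e^{-\underline{\delta}\ell_0^{2n}})$ and $|\nabla \ell_0^{n+1}|_{g_{\cC}}$ is bounded by Remark~\ref{rek: scaleRk}, points of $M_\epsilon$ move by at most $O(e^{-\underline{\delta}K^{2n}})$, a negligible perturbation of $\ell_0 = K$.

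For items (2) and (3), the geometry of $F_t(M_\epsilon) \subset (\cC,g_t)$ equals that of $M_\epsilon \subset (\cC, \widetilde g_t)$ with $\widetilde g_t := F_t^{*} g_t$. Lemma~\ref{lem: perturbNorm} combined with Corollary~\ref{cor: TYmodVest} gives $|\partial_t \widetilde g_t|_{\widetilde g_t} + |\nabla^{\widetilde g_t}\partial_t \widetilde g_t|_{\widetilde g_t} = O(e^{-\underline{\delta}K^{2n}})$, and feeding this into Lemma~\ref{lem: perturbAH} yields differential inequalities
\begin{equation}
\Bigl|\tfrac{d}{dt}|A|^2\Bigr| \leq c_K\bigl(|A| + |A|^2\bigr), \qquad \Bigl|\tfrac{d}{dt}|H|^2\Bigr| \leq c_K\bigl(|H| + |H|^2\bigr),\nonumber
\end{equation}
with $c_K = O(e^{-\underline{\delta}K^{2n}})$. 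Integrating the scalar ODE $f' = c_K(f^{1/2}+f)$ (as in the proof of Proposition~\ref{perturbedcyl}) with initial data $|A|^2(0) \leq C(N,n)K^{-2}$ from Lemma~\ref{lem: TYmodA} and $|H|^2(0)=0$ (since $M_\epsilon$ is special Lagrangian) delivers (2) and (3) for $K$ large.

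Items (4), (5), (6) are then harvested from the analogous model estimates: (4) is immediate from Lemma~\ref{lem: TYmodVol} and the uniform $C^0$-closeness of $\Phi^{*}g_{TY}$ to $g_{\cC}$ on $M_\epsilon$; (5) follows from Lemma~\ref{lem: perturbLambda} applied to $\widetilde g_t$, whose quantity $\mu(1) = O(e^{-\underline{\delta}K^{2n}})$ pins $\lambda_1(M_K)$ within a factor $e^{\pm 3\mu(1)}$ of $\lambda_1(M_\epsilon) = \lambda_1(N)K^{-2}$ from Lemma~\ref{lem: TYmodLambda}; (6) follows from Lemma~\ref{lem: TYmodNonCol} together with the fact that volumes and metric balls distort only by $1 + O(e^{-\underline{\delta}K^{2n}})$. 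The only real technical point is a bookkeeping check that the exponentially small perturbation error $e^{-\underline{\delta}K^{2n}}$ is dominated by every polynomial-in-$K$ geometric quantity on the model side (for example $K^{-2}$ in the second fundamental form and eigenvalue bounds, and $K^{1-n}$ in the non-collapsing scale), which is automatic once $K$ is chosen sufficiently large; there is no genuine analytic obstacle beyond the one already handled in Proposition~\ref{perturbedcyl}, since the sharper model estimates of Section~\ref{sec: TYmodLag} encode all the $K$-dependent improvements.
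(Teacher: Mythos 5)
Your proposal is correct and follows essentially the same route as the paper: the same choice $\epsilon=e^{-K^{2n}}$, the same transplantation $M_K=\Phi(F_1(M_\epsilon))$ via the Moser flow of the vector field in~\eqref{eq: TYmodV}, control of $\ell_0$ through $|\nabla\ell_0^{n+1}|$ and the exponential decay of $V_t$, and the same ODE comparison from Lemma~\ref{lem: perturbAH} fed by Lemma~\ref{lem: perturbNorm} and Corollary~\ref{cor: TYmodVest}, with the model inputs of Lemmas~\ref{lem: TYmodVol}--\ref{lem: TYmodNonCol} supplying the initial data. The concluding bookkeeping observation, that the $e^{-\underline{\delta}K^{2n}}$ perturbation errors are dominated by the polynomial-in-$K$ model quantities, is exactly how the paper absorbs the constants.
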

\begin{proof}
The proof is the culmination of the estimates in Section~\ref{sec: TYmodLag} together with the arguments in Section~\ref{sec: perturb}.  Given $K$ large, define  $\epsilon >0$ by $K = (-\log(\epsilon))^{\frac{1}{2n}}$.  Let $M_{\epsilon}$ be the special Lagrangian in $(C,\omega_{C}, J_{C}, \Omega_{C})$ constructed in Section~\ref{sec: TYmodLag}.  By the calculations in Section~\ref{sec: TYmodLag} the above estimates hold, with constants depending only on $n, N$ for $M_{\epsilon}$.  We now follow the arguments in Section~\ref{sec: perturb} to transplant and perturb $M_{\epsilon}$ to a Lagrangian in the Tian-Yau space.  To this end, let $F_{t}$ the the time $t$ flow of the vector field $V_{t}$ defined in~\eqref{eq: TYmodV} and let $g_{t} = (1-t)g_{C} + t \Phi^{*}g_{TY}$ as above.  By definition we have that $F_{t}^{*}\omega_{t} = \omega_{C}$, so $F_{t}(M_{\epsilon})$ is Lagrangian with Maslov class zero with respect to $\omega_{t}$.  It follows that
\[
M_{K} := \Phi(F_{1}(M_{\epsilon})) \subset (X,\omega_{TY})
\]
is Lagrangian with vanishing Maslov class.

To control the geometry, we begin by estimating the function $\ell_0\big|_{F_{t}(M_{\epsilon})}$.  By definition we have $\ell_{0}\big|_{F_{0}(M_{\epsilon})} = K$.  By Remark~\ref{rek: scaleRk} and the estimates in Proposition~\ref{prop: HSVZdecay}, we have
\[
|\nabla_{g_t}\ell_0^{n+1}|_{g_t}^2 \leq \frac{(n+1)^2}{4n} + Ce^{-\underline{\delta}\ell_0^{2n}} \leq (n+1)^2
\]
provided $K$ is sufficiently large. Therefore,
\[
\bigg|\ddt \ell_0^{n+1}\bigg| = \big|\langle\nabla_{g_{t}} \ell_0^{n+1}, V_{t}\rangle_{g_{t}}\big| \leq (n+1)e^{-\underline{\delta}\ell_0^{2n}}.
\]
Thus, if $K$ is sufficiently large, we will have
\[
\frac{K}{2}  \leq \ell_0(M_{K}) \leq 2 K.
\]

It remains to control the geometry of $(F_{t}(M_{\epsilon}), g_{t})$.  By the discussion in Section~\ref{sec: perturb} it suffices to control the geometry of $M_{\epsilon} \subset C$ with respect to the Riemannian metrics  $\tilde{g}_{t} := F_{t}^{*}g_{t}$.  By Corollary~\ref{cor: TYmodVest}, together with Lemma~\ref{lem: perturbNorm} we have that
\begin{equation}\label{eq: perturbEstTYmod}
\begin{aligned}
|\del_t \tilde{g}_{t}|_{\tilde{g_{t}}} &\leq C_0 e^{-\underline{\delta}\ell_0^{2n}},\\
|\nabla_{\tilde{g}_{t}}\del_{t}\tilde{g}_{t}|_{\tilde{g_{t}}}& \leq C_0 e^{-\underline{\delta}\ell_0^{2n}}.
\end{aligned}
\end{equation}
Let us first consider the second fundamental form and the mean curvature of $M_{\epsilon}$ with respect to $\tilde{g}_{t}$.  Consider the ODE
\begin{equation}\label{eq: perturbVarEq}
\frac{df}{dt} = c(f^{\frac{1}{2}} + f), \qquad f(0)\geq 0
\end{equation}
whose solution is $f(t) = \left(-1+ [1+f(0)^{\frac{1}{2}}]e^{\frac{ct}{2}}\right)^2$.  By Lemma~\ref{lem: perturbAH}, equation~\eqref{eq: perturbEstTYmod} and part  $(1)$ of the Proposition both $|H|^2(t)$ and $|A|^2(t)$ are subsolutions of ~\eqref{eq: perturbVarEq} with constant $c= C'e^{-\delta'K^{2n}}$ where $C', \delta' >0$ are uniform constants.  For $K$ sufficiently large depending only on $C', \delta'$ we obtain
\[
\begin{aligned}
|A|^2(t) &\leq 100(C')^2e^{-2\delta'K^{2n}}+ 4|A|^2(0),   \\
|H|^2(t) &\leq  100(C')^2e^{-2\delta'K^{2n}} + 4|H|^2(0).
\end{aligned}
\]
Since $M_{\epsilon}$ is minimal with respect to $\tilde{g}_0$ we have $|H|(0)=0$, while by Lemma~\ref{lem: TYmodA} we have $|A|(0) \leq C_1K^{-2}$ for a constant $C_1$ depending only on $n$ and $N$.  This establishes $(2)$.

Estimate $(4)$ follows immediately from~\eqref{eq: perturbEstTYmod}, while (5) follows from~\eqref{eq: perturbEstTYmod} and Lemma~\ref{lem: perturbLambda}. Finally, estimate $(6)$ follows from the Lemma~\ref{lem: perturbLambda}, since~\eqref{eq: perturbEstTYmod}, together with $(1)$ implies that for $K$ sufficiently large, depending only on $N, n$ and the constants in Proposition~\ref{prop: HSVZdecay},
\[
\frac{1}{2}\tilde{g}_{t} \leq \tilde{g}_{0} \leq 2 \tilde{g}_{t}
\]
for all $t\in[0,1]$.  Combining these calculations with the discussion at the beginning of Section~\ref{sec: perturb} we obtain the result.
\end{proof}

\subsection{The mean curvature flow}

The only remaining task is to prove that the almost special Lagrangian manifold $M_{K}$ constructed in Proposition~\ref{prop: TYalmostSLAG} can be perturbed to a special Lagrangian.  To do this, we will show that for $K$ sufficiently large the Lagrangian mean curvature flow starting from $M_{K}$ converges to a special Lagrangian.  Furthermore, by controlling the scale function along the flow, we will show that we can construct infinitely many distinct special Lagrangians.

In Section~\ref{sec: ACYL} we appealed to a theorem of Li \cite{Li}; see Theorem~\ref{Liconverge}.  However, it is clear that this result does not apply in the Tian-Yau spaces since the injectivity radius is not bounded below.  More crucially, however, in order to make the mean curvature of the initial manifold $M_{K}$ very small, we may have to take $K$ very large.  In turn, by the estimates in Proposition~\ref{prop: TYalmostSLAG}, this causes $\lambda_1(M_K)$ and the non-collapsing scale to become {\em even smaller}.

We therefore need an effective version of Li's result tailored to our situation which exploits the fact that the mean curvature of $M_K$ decays exponentially in $K$, while the quantities $r_K, \lambda_{1}(M_{K})$ decay only polynomially.

Before beginning the proof, let us fix some notation.  In what follows we will use unbarred quantities, $g, \nabla, Rm, \nabla Rm$ and so forth, to denote quantities associated with the manifold $(X, g_{TY})$.  The corresponding barred quantities $\overline{g}, \overline{\nabla}, \overline{Rm}, \overline{\nabla}\overline{Rm}$ will denote quantities computed on $M_t$ with respect to the induced metric.

Let us briefly explain the idea of the proof.  The key result is the following.

\begin{lem}\label{lem: HexpDec}
Let $M_t$ be compact Lagrangian submanifolds of vanishing Maslov class moving by the LMCF.  Then the mean curvature satisfies
\[
\ddt \int_{M_t} |H|_{g}^2 dVol_{g} \leq -2\left(\lambda_1(M_t) - \sup_{M_t}|A|_{g}|H|_{g}\right)\int_{M_{t}}|H|_{g}^2 dVol_{g},
\]
where $g$ denotes the metric induced by the Tian-Yau metric on $X$ and $\lambda_1(M_t)$ is the first positive eigenvalue of the Laplacian on $(M_t,g|_{M_{t}})$.  In particular, if $\lambda_1(M_{t}) > \epsilon$ and $\epsilon >  2\sup_{M_t}|A|_{g}|H|_{g}$ on some interval $[0,T]$, then we have
\[
\int_{M_{t}} |H|_{g}^2 dVol_{g} \leq e^{-\epsilon t}\int_{M_0}|H|_{g}^2 dVol_{g}
\]
on $[0,T]$.
\end{lem}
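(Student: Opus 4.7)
The plan is to exploit the special structure of LMCF in a Calabi-Yau manifold. Because each $M_t$ is Lagrangian with vanishing Maslov class in $(X, \omega_{TY}, J, \Omega_{TY})$, the Lagrangian angle $\theta_t \colon M_t \to \mathbb{R}$ defined by $\Omega_{TY}|_{M_t} = e^{\sqrt{-1}\theta_t}\, dVol_{M_t}$ is single-valued, and the classical identity $H^{\flat} = d\theta_t$ holds; equivalently $H = \nabla \theta_t$ as a tangent vector field on $M_t$ and $|H|^2 = |\nabla\theta_t|^2$. Along Lagrangian mean curvature flow in a Calabi-Yau, Smoczyk's computation gives the parabolic evolution $\partial_t \theta_t = \Delta\theta_t$, which I would combine with the standard MCF identities $\partial_t g^{ij} = 2\langle H, A^{ij}\rangle$ and $\partial_t dVol = -|H|^2 dVol$ on the induced metric. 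These are the only ingredients needed.

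Given these inputs, the proof is a direct computation. Differentiating $\int_{M_t}|H|^2 dVol = \int_{M_t} g^{ij}\,\partial_i\theta_t\,\partial_j\theta_t\,dVol$ in $t$ and integrating by parts in the term produced by $\partial_t \theta_t = \Delta\theta_t$ yields
\[
\ddt \int_{M_t}|H|^2 dVol = -2\int_{M_t}(\Delta\theta_t)^2 dVol + 2\int_{M_t}\langle H, A^{ij}\rangle\,\partial_i\theta_t\,\partial_j\theta_t\, dVol - \int_{M_t}|H|^4 dVol.
\]
The third integral is non-positive and will be discarded. For the middle integral, Cauchy-Schwarz in the normal bundle gives $|\langle H, A_{ij}\rangle|\leq |H|\cdot|A_{ij}|$, so the symmetric $2$-tensor $B_{ij} := \langle H, A_{ij}\rangle$ has pointwise norm bounded by $|H||A|$, and consequently
\[
\bigg|2\int_{M_t}\langle H, A^{ij}\rangle\,\partial_i\theta_t\,\partial_j\theta_t\, dVol\bigg| \leq 2\int_{M_t}|A||H|\cdot|\nabla\theta_t|^2 \, dVol \leq 2\sup_{M_t}(|A||H|)\int_{M_t}|H|^2 dVol.
\]

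The final input is the spectral estimate. Since $M_t$ is closed, $\Delta\theta_t$ integrates to zero, so expanding in eigenfunctions of $-\Delta$ on $(M_t, g)$ immediately gives
\[
\int_{M_t}(\Delta\theta_t)^2\, dVol \geq \lambda_1(M_t) \int_{M_t}|\nabla\theta_t|^2 dVol = \lambda_1(M_t) \int_{M_t}|H|^2 dVol.
\]
Combining the three bounds produces the claimed differential inequality. The exponential decay assertion is then immediate by Gronwall: under the hypothesis $\lambda_1(M_t) > \epsilon$ with $\epsilon > 2\sup |A||H|$ on $[0,T]$, one has $-2(\lambda_1(M_t) - \sup|A||H|) \leq -\epsilon$, so $\ddt\int|H|^2 dVol \leq -\epsilon \int|H|^2 dVol$ and a single integration gives the stated bound. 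The only genuine subtlety is the vanishing-Maslov hypothesis, without which $\theta_t$ is multi-valued and the global integration-by-parts steps above lose meaning; beyond that, every identity invoked is a standard feature of LMCF in Calabi-Yau geometry, and no serious obstacle is anticipated.
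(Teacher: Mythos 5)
Your proof is correct and follows essentially the same route as the paper: the paper works with Smoczyk's evolution $\partial_t H_j = \nabla_j\nabla^i H_i$ of the mean curvature one-form and brings in the Lagrangian angle only at the spectral step, while you phrase the whole computation via $\partial_t\theta_t = \Delta\theta_t$ from the outset, which is the same identity in disguise. The remaining ingredients — the evolution of the induced metric and volume form, Cauchy--Schwarz on $\langle H, A_{ij}\rangle$, discarding the $-\int|H|^4$ term, the eigenfunction expansion giving $\int(\Delta\theta_t)^2 \geq \lambda_1\int|\nabla\theta_t|^2$, and the Gronwall step — coincide with the paper's argument.
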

\begin{proof}
The proof is straightforward.  To ease notation, we will suppress the metric $g$, with the understanding that it is the metric induced by $g_{TY}$.  A standard computation \cite{Smo} shows that the mean curvature one-form satisfies
\[
\ddt H_{j} = \nabla_{j} \nabla^{i}H_{i},
\]
along the flow.  Combining this formula with the evolution for the metric $\bar{g}$ yields
\[
\ddt \int_{M_t} |H|^2 dVol  \leq 2 \int_{M_t} \bigg(\overline{g}^{j\ell} H_{\ell} \nabla_{j} \nabla^{i}H_{i} + 2|A||H|^3 - |H|^4 \bigg) dVol.
\]
Integration by parts on the first term yields
\[
\ddt \int_{M_t} |H|^2 dVol  \leq -2 \int_{M_t}|\nabla^{i}H_{i}|^2 + 2\sup_{M_{t}}|A||H|\int_{M_{t}}|H|^2dVol. 
\]
Now, by the Maslov class zero assumption there is a function $\theta(t)$ so that $H_{j} = \nabla_{j}\theta(t)$.  In particular we have
\[
\int_{M_t}|\nabla^{i}H_{i}|^2 = \int_{M_t} |\Delta_{\overline{g}}\theta|^2.
\]
Write $\theta = \sum_{i}f_{i}$ where $f_{i} = \alpha _i \psi_i$ for $\alpha_i \in \mathbb{R}$ and $\psi_i$ is an orthonormal basis of $L^2$ consisting of eigenfunctions of $\Delta_{\bar{g}}$; we say that $\psi_i$ has eigenvalue $\lambda_i$ if $\Delta \psi_i + \lambda_i \psi_i=0$.  Then we have (suppressing the volume form)
\[
\int_{M_t} |\Delta_{\overline{g}}\theta|^2 = \sum_{i}\lambda_i^2\int_{M_{t}} f_{i}^2 \geq \lambda_1\sum_{i}\lambda_i\int_{M_{t}} f_{i}^2 = -\lambda_1\int_{M_{t}} \theta \Delta_{\bar{g}}\theta = \lambda_1 \int_{M_{t}} |\nabla \theta|^2.
\]
As a consequence we have
\[
\int_{M_{t}}|\nabla^{i}H_{i}|^2dVol \geq \lambda_1 \int_{M_{t}} |H|^2 dVol
\]
and the lemma follows immediately.
\end{proof}

The general idea of the proof is that if $\lambda_1(M_t)$ is large compared to $|A|, |H|$, and bounded from below on some time interval $[0,T]$, then the previous lemma implies the exponential decay of the $L^2$ norm of the mean curvature.  This implies pointwise exponential decay for $|H|^2$ provided that $M_t$ is non-collapsing and $|\nabla H|^2$ is controlled. The exponential decay of the mean curvature strongly controls the geometry of the flow on $[0,T]$ and yields exponential decay on an even larger interval.  To ease the presentation, we make the following definition.
\begin{defn}
A Maslov class zero Lagrangian submanifold $M \subset (X,\omega_{TY}, g_{TY})$ has $(C,K, \delta')$-bounded geometry if
\begin{enumerate}
\item The function $\ell_0$ satisfies 
\[
C^{-1}K < \ell_0|_{M} < CK.
\]
\item The second fundamental form satisfies 
\[
|A|^2 \leq CK^{-2}.
\]
\item The mean curvature satisfies 
\[
|H|^2 \leq Ce^{-\delta'K^{2n}}.
\]
\item The volume satisfies 
\[
C^{-1} \leq {\rm Vol}(M) \leq C.
\]
\item The first positive eigenvalue $\lambda_1(M)$ satisfies
\[
C^{-1}K^{-2} \leq \lambda_1(M_K) \leq CK^{-2}.
\]
\item  $(M, g_{TY})$ is $\kappa_0$-non-collapsing on scale $r_0$ where 
\[
\kappa_0 \geq C^{-1}, \qquad r_{0} \geq C^{-1} K^{1-n}.
\]
\end{enumerate}
\end{defn}

As a first step, we show that control of $|A|^2, |H|^2$ and $\ell_0$ at time $t=0$ implies control on a suitably large time interval.

\begin{lem}\label{lem: TYlmcfAest}
Suppose that $M_0$ is a Maslov clas zero Lagrangian in $(X,\omega_{TY})$ with $(C,K, \delta')$-bounded geometry.  Let $M_{t}$ be a solution of the mean curvature flow starting at $M_0$.  Then, for all $\delta \in(0,10)$ there is a constant $\alpha= \alpha(C,\delta) >0$ so that $M_{t}$ has $((1+\delta)C, K, \delta')$-bounded geometry for $t\in [0,\alpha K^2)$.
\end{lem}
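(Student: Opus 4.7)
The plan is a continuity argument on the time variable. Define
\[
T^* = \sup\bigl\{T \ge 0 : M_t \text{ has } ((1+\delta)C, K, \delta')\text{-bounded geometry on }[0,T]\bigr\}.
\]
Short-time existence and continuity of the LMCF ensure $T^* > 0$. The goal is to show that for $\alpha = \alpha(C,\delta)$ sufficiently small, each of the six defining bounds is actually improved to the strictly smaller constant $(1+\delta/2)C$ on $[0, \min(T^*, \alpha K^2))$; by continuity this forces $T^* \ge \alpha K^2$.

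The central observation is that on $[0,T^*]$ the mean curvature remains super-exponentially small in $K$. Because the ambient is Ricci-flat and Lagrangian, the standard evolution equation together with Corollary~\ref{cor: TYcurvDec} gives
\[
\partial_t |H|^2 \le \Delta_{\bar g}|H|^2 + C_1\bigl(|A|^2 + |Rm|_g\bigr)|H|^2 \le \Delta_{\bar g}|H|^2 + C_2 K^{-2}|H|^2,
\]
and the parabolic maximum principle yields $\max|H|^2(t) \le e^{C_2 \alpha}\max|H|^2(0)$ on $[0,\alpha K^2]$. Choosing $\alpha \le \log(1+\delta/2)/C_2$ strengthens property $(3)$. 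Combined with $|A|^2 \le (1+\delta)CK^{-2}$, the induced-metric evolution $|\partial_t \bar g|_{\bar g} \le 2|H|_g|A|_g$ integrates to $|\bar g(t)-\bar g(0)| = O(e^{-\delta' K^{2n}/3})$ on $[0,\alpha K^2]$. Applying Lemma~\ref{lem: perturbLambda} to the family $\bar g(t)$ keeps $\lambda_1(M_t)$ within a factor $1 + O(e^{-\delta' K^{2n}/3})$ of $\lambda_1(M_0)$, strengthening $(5)$; the evolution $\partial_t\,dVol = -|H|^2\,dVol$ is super-exponentially small and preserves $(4)$; and since geodesic balls in $(M_t,\bar g(t))$ and $(M_0,\bar g(0))$ differ only super-exponentially, the non-collapsing constant is preserved, giving $(6)$. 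Finally, $\partial_t \ell_0 = \langle \nabla \ell_0, H\rangle_g$, and Remark~\ref{rek: scaleRk} together with Proposition~\ref{prop: HSVZdecay} gives $|\nabla \ell_0|_g \lesssim \ell_0^{-n} \sim K^{-n}$, so $\ell_0$ barely moves, strengthening $(1)$.

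The main quantitative point is the bound on $|A|^2$. The Simons-type evolution in a Riemannian ambient gives
\[
\partial_t |A|^2 \le \Delta_{\bar g}|A|^2 - 2|\bar\nabla A|^2 + C_3\bigl(|A|^4 + |A|^2|Rm|_g + |A|\,|\nabla Rm|_g\bigr).
\]
By Corollary~\ref{cor: TYcurvDec}, on $\{\ell_0 \sim K\}$ we have $|Rm|_g \lesssim K^{-2}$ and $|\nabla Rm|_g \lesssim K^{-3}$ (with even stronger decay when $n=2$), so each reaction term is bounded by $C_4 K^{-4}$ on $[0,T^*]$. The maximum principle then yields $\max|A|^2(t) \le CK^{-2} + C_4 K^{-4} t$, and choosing $\alpha \le \delta C/(2C_4)$ strengthens $(2)$.

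The main obstacle, or rather the feature that dictates the scaling in the conclusion, is the matching of scales: both $|Rm|_g$ and $|A|^2$ are of order $K^{-2}$, so Simons' reaction term grows at rate $K^{-4}$, and this is what produces the precise time scale $K^2$. Were the ambient curvature any larger on the support of $M_t$, this argument would fail, and it is for this reason that the polynomial curvature decay of the Tian-Yau metric encoded in Corollary~\ref{cor: TYcurvDec} is essential.
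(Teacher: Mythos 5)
Your proof is correct and follows essentially the same route as the paper: a bootstrap in time using the evolution equations for $|H|$, $|A|$ and $\ell_0$ together with the curvature decay of Corollary~\ref{cor: TYcurvDec} at scale $\ell_0\sim K$, and then control of $\lambda_1$, volume and non-collapsing via the super-exponentially small quantity $\int_0^t \sup|H|\sup|A|$ and Lemma~\ref{lem: perturbLambda}. The only difference is organizational -- you run a single stopping time $T^*$ for all six conditions and improve each constant from $(1+\delta)$ to $(1+\delta/2)$, whereas the paper nests three stopping times $T_S, T_A, T_H$ -- and your quantitative estimates (linear growth of $|A|^2$ at rate $K^{-4}$, bounded multiplicative growth of $|H|^2$, near-constancy of $\ell_0$) match the paper's.
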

\begin{proof}
Define three times $T_{S}, T_{A}, T_{H}>0$ by
\[
\begin{aligned}
T_{S} &:= \sup \{s>0:  \frac{1}{(1+\delta)}C^{-1}K \leq \ell_0|_{M_t} < (1+\delta)CK\, \text{ for all } t\in [0,s)\},\\
T_{A} &:= \sup \{s>0:  |A|^2(t) < (1+\delta)CK^{-2}\, \text{ for all } t\in [0,s)\},\\
T_{H} &:= \sup \{s>0:  |H|^2(t) < (1+\delta)Ce^{-\delta' K^{2n}}\, \text{ for all } t\in [0,s)\}.
\end{aligned}
\]
We first estimate $T_{A}$.  Recall the evolution equation for the norm of the second fundamental form \cite{Smo}
\[
\ddt |A| \leq \Delta |A| + 8|A|^3 + 20|Rm||A| + 4|\nabla Rm|.
\]
By Corollary~\ref{cor: TYcurvDec}, there is a uniform constant $D$ so that on $[0, T_{S})$ we have
\[
\sup_{t\in[0, T_{S})} \sup_{M_{t}} |Rm| \leq D((1+\delta)CK^{-2}), \quad \sup_{t\in[0, T_{S})} \sup_{M_{t}} |\nabla Rm| \leq D((1+\delta)CK^{-3}).
\]
Combining this with the definition of $T_{A}$, we conclude that on the interval $[0, \min\{T_{S}, T_{A}\})$ there holds
\[
\ddt |A| \leq \Delta |A| + 10^3(C^{3/2}K^{-3} + DC^{3/2}K^{-3} + DCK^{-3}).
\]
By the comparison principle there is a constant $c_{A}>0$, depending only on $C,D, \delta$ so that
\[
T_{A} \geq \min\{ c_{A}K^2, T_{S}\}.
\]
We estimate $T_{H}$ in a similar way.  Recall that along the LMCF $|H|$ satisfies the inequality
\[
\ddt |H| \leq \Delta |H| +2 |A|^2|H| + |Rm||H|.
\]
Arguing as above, we have that, as long as $0 <t < \min \{T_{S}, c_{A}K^{2}\}$ there holds
\[
\ddt |H| \leq \Delta |H| +2 |A|^2|H| + |Rm||H| \leq \Delta |H| + (1+\delta)CK^{-2}(1+D)|H|
\]
and so by the comparison principle
\[
|H(t)|^2 \leq |H(0)|^2e^{2(1+\delta)CK^{-2}(1+D)t} \leq Ce^{-\delta' K^{2n}}e^{100CK^{-2}(1+D)t}.
\]
As a result, there is a constant $c_{H}$ depending only on $C,D, \delta$ so that $T_{H} \geq \min \{ c_{H}K^2, c_{A}K^2,  T_{S}\}$. 

On the other hand, it is easy to see that $T_{S} > T_{H}$.  Arguing as in the proof of Proposition~\ref{prop: TYalmostSLAG}, by the equivalence of $\Phi^{*}g_{TY}, g_{C}$ near infinity, we have
\[
\bigg|\ddt \ell^{n+1}_0(M_t)\bigg| \leq |\nabla \ell_0^{n+1}||H| \leq (n+1)Ce^{-\delta'K^{2n}}
\]
as long as $t \leq T_{H}$.  In particular, we can certainly choose a constant $c_{S}$ depending only on $n, \delta,\delta', C$ so that if $t < \min\{ T_{H}, c_{S}K^{2}\}$ then we have
\[
\frac{1}{(1+\delta)}C^{-1}K \leq \ell_0|_{M_{t}} \leq (1+\delta) C K
\]
and so $T_{S} > \min\{ T_{H}, c_{S}K^{2}\}$. Combining these estimates, we conclude that there is a constant $\alpha'$ such that
\[
\min\{T_{S}, T_{H}, T_{A}\} \geq \alpha' K^{2}.
\]
It remains only to prove that $\lambda_1(M_t)$ and the non-collapsing scale are under control.  This is straightforward.  A standard computation shows that the induced metric $\overline{g}_{t}$ on $M_{t}$ satisfies 
\[
\frac{d \overline{g}_{t}}{dt} = -2\langle H(t), A(t)\rangle_{g_{t}}.
\]
Define
\[
\mu(t) = 2\int_{0}^{t} (\sup_{M_{t}}|H|)\cdot (\sup_{M_{t}}|A|) dt.
\]
Then by Lemma~\ref{lem: perturbLambda}, we have
\[
\lambda_1(M_0)e^{-3\mu(t)} \leq \lambda_1(M_t) \leq \lambda_1(M_0)e^{3\mu(t)}
\]
and trivially $e^{-\mu(t)} \overline{g}(0) \leq \overline{g}(t) \leq e^{\mu(t)} \overline{g}(0)$.  If $M_0$ is $\kappa_0$-non-collapsing on scale $r_K$, then $M_{t}$ will be $\kappa_0e^{-(n+1)\mu(t)}$ non-collapsing on scale $r_K$.  Now, if $t< \alpha K^2$ for $\alpha \leq  \alpha'$ then we have
\[
\mu(t) \leq t (1+\delta)^2C^2K^{-2}e^{-\delta' K^{2n}} \leq \alpha(1+\delta)^2C^2e^{-\delta'K^{2n}}  \leq \frac{1}{3(n+1)}\log(1 +\delta)
\]
provided $\alpha$ is taken sufficiently small depending only on $C, \delta$.  The lemma follows.
\end{proof}

We extract the bounds for $\lambda_1$ and the non-collapsing constant in the following elementary corollary which follows from \cite[Lemma 3.4]{Li} and the computations in Section~\ref{sec: perturb}.

\begin{cor}\label{cor: TYkapLam}
Suppose $M_{t}$ evolves by LMCF and $M_0$ is $\kappa_0$ non-collapsing at scale $r_0$. Denote by $\lambda_1(M_t)$ the first positive eigenvalue of the Laplacian on $M_{t}$ and define
\[
\mu(t) =  2\int_{0}^{t} (\sup_{M_{t}}|H|)\cdot (\sup_{M_{t}}|A|) dt.
\]
Then $M_{t}$ is $\kappa_0e^{-(n+1)\mu(t)}$-non-collapsing at scale $r_0$ and satisfies
\[
\lambda_1(M_0)e^{-3\mu(t)} \leq \lambda_1(M_t) \leq \lambda_1(M_0)e^{3\mu(t)}.
\]
\end{cor}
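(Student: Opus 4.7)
The plan is to reduce everything to the pointwise bound on how fast the induced metric on $M_t$ is changing, and then feed this into the perturbation results of Section~\ref{sec: perturb}. Recall from the proof of Lemma~\ref{lem: TYlmcfAest} that under LMCF the induced metric $\overline{g}_t$ on $M_t$ evolves by
\[
\frac{d\overline{g}_t}{dt} = -2\langle H(t), A(t)\rangle_{g_t}.
\]
Taking the tensor norm with respect to $\overline{g}_t$ and using $|\langle H, A\rangle|_{\overline{g}_t} \leq |H|_{g_t}|A|_{g_t}$ yields $|\del_t \overline{g}_t|_{\overline{g}_t} \leq 2|H|\,|A|$. Consequently, if we set $\tilde{\mu}(t) := \int_0^t \sup_{M_s} |\del_s \overline{g}_s|_{\overline{g}_s}\, ds$, then $\tilde{\mu}(t) \leq \mu(t)$.

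For the eigenvalue estimate, I would pull the family of induced metrics back to a fixed manifold via the ambient flow of diffeomorphisms generated by $H$, obtaining a smooth one-parameter family of Riemannian metrics on a fixed compact manifold with initial value $\overline{g}_0$ and evolution bounded by $|\del_t \overline{g}_t|_{\overline{g}_t} \leq 2|H||A|$. Lemma~\ref{lem: perturbLambda} then applies directly and gives
\[
e^{-3\tilde{\mu}(t)}\lambda_1(M_0) \leq \lambda_1(M_t) \leq e^{3\tilde{\mu}(t)}\lambda_1(M_0),
\]
and the bound $\tilde{\mu}(t) \leq \mu(t)$ completes the eigenvalue statement.

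For non-collapsing, the pointwise estimate integrates to the quadratic-form inequalities $e^{-\mu(t)}\overline{g}_0 \leq \overline{g}_t \leq e^{\mu(t)}\overline{g}_0$. From the upper bound, any $\overline{g}_0$-length curve of length $L$ has $\overline{g}_t$-length at most $e^{\mu(t)/2}L$, so $d_{\overline{g}_t}(p,q) \leq e^{\mu(t)/2} d_{\overline{g}_0}(p,q)$, which gives the ball containment $B_{\overline{g}_0}(p,\, e^{-\mu(t)/2}r) \subset B_{\overline{g}_t}(p,r)$. From the lower bound, the volume forms compare as $dV_{\overline{g}_t} \geq e^{-n\mu(t)/2}\, dV_{\overline{g}_0}$, where $n = \dim_{\mathbb{R}} M_t$. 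Combining these with the assumption that $M_0$ is $\kappa_0$ non-collapsing at scale $r_0$ (and noting $e^{-\mu(t)/2}r < r < r_0$) gives
\[
\mathrm{Vol}_{\overline{g}_t}(B_{\overline{g}_t}(p,r)) \geq e^{-n\mu(t)/2}\, \kappa_0 (e^{-\mu(t)/2}r)^n = \kappa_0 e^{-n\mu(t)} r^n,
\]
which is stronger than the stated $\kappa_0 e^{-(n+1)\mu(t)}$ bound. The weaker form in the statement matches \cite[Lemma 3.4]{Li} and leaves convenient slack for the later application.

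This result is essentially bookkeeping: everything follows once the evolution $\frac{d\overline{g}_t}{dt} = -2\langle H,A\rangle$ is in hand, combined with the abstract perturbation Lemmas~\ref{lem: perturbNorm} and~\ref{lem: perturbLambda}. There is no real analytic obstacle; the only point to be careful about is that the norm $|\del_t \overline{g}_t|_{\overline{g}_t}$ is bounded by $2|H||A|$ rather than, say, $|H||A|$, which is what produces the factor of $2$ in the definition of $\mu(t)$ and hence the exponents $3\mu$ and $(n+1)\mu$ after running through the Rayleigh-quotient and volume-comparison arguments.
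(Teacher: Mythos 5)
Your proof is correct and follows essentially the same route as the paper: the evolution equation $\frac{d\overline{g}_t}{dt} = -2\langle H, A\rangle$, the resulting bound $|\del_t \overline{g}_t|_{\overline{g}_t} \leq 2|H||A|$, Lemma~\ref{lem: perturbLambda} for the eigenvalue, and the metric equivalence $e^{-\mu(t)}\overline{g}_0 \leq \overline{g}_t \leq e^{\mu(t)}\overline{g}_0$ for the non-collapsing. Your volume-comparison bookkeeping in fact yields the slightly sharper constant $\kappa_0 e^{-n\mu(t)}$, which of course implies the stated $\kappa_0 e^{-(n+1)\mu(t)}$, so there is no gap.
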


Once we have control of the second fundamental form, we get control of all higher derivatives along the flow by the smoothing estimates for the mean curvature flow.  In our setting, we can state these estimates succinctly as

\begin{lem}\label{lem: smoothingEstTY}
Suppose $M_t$ has $(C, K, \delta')$-bounded geometry for all $t\in[0,\alpha K^2)$.  Then for all $\ell \geq 0$, there is a constant $C(\ell)$ depending only on $C, \alpha$ and the constants in Corollary~\ref{cor: TYcurvDec} so that, for all $t\in [0, \alpha K^2)$ we have
\[
|\nabla^{\ell}A|^2 \leq C(\ell)\frac{K^{-2}}{t^{\ell}}.
\]
\end{lem}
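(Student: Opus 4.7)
\medskip

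The plan is to reduce the statement to a standard Bernstein–Shi smoothing estimate for mean curvature flow in an ambient manifold of bounded geometry by a parabolic rescaling tailored to the scale $K$. Concretely, I would set $\tilde g_{TY} = K^{-2} g_{TY}$ and $\tilde t = t/K^{2}$, so that the flow $M_{t}\subset(X,g_{TY})$ on $[0,\alpha K^{2})$ becomes a mean curvature flow $\tilde M_{\tilde t}\subset (X,\tilde g_{TY})$ on $[0,\alpha)$. The whole content of the estimate is that after this rescaling the problem is uniform in $K$.

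The first step is to check that, in the rescaled picture, both the ambient geometry and the geometry of $\tilde M_{\tilde t}$ are uniformly bounded, independently of $K$. Since $M_t$ lives where $\ell_0\sim K$, Corollary~\ref{cor: TYcurvDec} gives $|\nabla^{k}Rm(g_{TY})|_{g_{TY}}\leq D_{k}K^{-(2+k)}$ (with an even stronger decay when $n=2$), and under $g_{TY}\mapsto K^{-2}g_{TY}$ the $(1,3)$–Riemann tensor is invariant while norms scale by $|\tilde\nabla^{k}\widetilde{Rm}|_{\tilde g}=K^{2+k}|\nabla^{k}Rm|_{g}$, yielding $|\tilde\nabla^{k}\widetilde{Rm}|_{\tilde g}\leq D_{k}$ uniformly in $K$. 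Similarly, the bounded geometry hypothesis $|A|^{2}\leq CK^{-2}$ (which holds on the entire time interval, not only at $\tilde t=0$) rescales to $|\tilde A|^{2}_{\tilde g}\leq C$ on $[0,\alpha)$.

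The second step is to invoke the standard Bernstein-type smoothing estimate for mean curvature flow in a Riemannian ambient of bounded geometry. Using the evolution equation
\[
(\partial_{\tilde t}-\tilde\Delta)|\tilde\nabla^{\ell}\tilde A|^{2}
\leq -2|\tilde\nabla^{\ell+1}\tilde A|^{2}
+P_{\ell}\!\left(\tilde A,\dots,\tilde\nabla^{\ell}\tilde A,\widetilde{Rm},\dots,\tilde\nabla^{\ell}\widetilde{Rm}\right),
\]
where $P_{\ell}$ is a universal polynomial, and applying Hamilton's trick to the weighted quantities $\tilde t^{\ell}|\tilde\nabla^{\ell}\tilde A|^{2}+\sum_{j<\ell}c_{j}\tilde t^{j}|\tilde\nabla^{j}\tilde A|^{2}$, one obtains by induction on $\ell$ the estimate $|\tilde\nabla^{\ell}\tilde A|^{2}(\tilde t)\leq C(\ell)/\tilde t^{\ell}$ on $[0,\alpha)$, with $C(\ell)$ depending only on $C$, $\alpha$, and the bounds $D_{k}$ from Corollary~\ref{cor: TYcurvDec}. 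Undoing the rescaling via $|\nabla^{\ell}A|^{2}_{g}=K^{-(2\ell+2)}|\tilde\nabla^{\ell}\tilde A|^{2}_{\tilde g}$ and $t=K^{2}\tilde t$ produces the claimed bound $|\nabla^{\ell}A|^{2}\leq C(\ell)K^{-2}/t^{\ell}$.

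The main technical point is the correct bookkeeping of the scaling weights — making sure that the rescaling renders \emph{all} quantities entering $P_{\ell}$ (ambient curvature derivatives, intrinsic second fundamental form derivatives, and the time variable) uniform in $K$ simultaneously. This is why the parabolic rescaling $(g,t)\mapsto(K^{-2}g,K^{-2}t)$ is the unique natural choice: it exactly balances $|A|^{2}\leq CK^{-2}$ against the ambient curvature decay $|\nabla^{k}Rm|\lesssim K^{-(2+k)}$. Once this is observed, the estimate is a purely standard application of Bernstein-type arguments as in Smoczyk's survey \cite{Smo}, and poses no substantial obstacle.
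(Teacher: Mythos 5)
Your proposal is correct, and the scaling bookkeeping (curvature, second fundamental form, time, and the final undoing of the rescaling) all checks out; it reaches the same estimate as the paper by an equivalent but differently packaged route. The paper proves the lemma via a quantitative Shi-type proposition in the appendix in which the scale parameter is carried explicitly: under $|A|^2\le K$, $|\nabla^{\ell}Rm|^2\le K^{2+\ell}$ on $[0,\alpha/K)$ one gets $|\nabla^{m}A|^2\le CK/t^{m}$, proved by applying the maximum principle to the weighted quantity $F=t|\nabla A|^2+\beta|A|^2$ and absorbing the $|\nabla A|\,|\nabla\nabla Rm|$ term through an interpolation inequality whose exponent is chosen by scaling considerations; the lemma then follows by taking the parameter to be comparable to $K^{-2}$, the hypotheses being supplied by the $(C,K,\delta')$-bounded geometry together with Corollary~\ref{cor: TYcurvDec} and the fact that $\ell_{0}\sim K$ along $M_t$. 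Your parabolic rescaling $(g,t)\mapsto(K^{-2}g,K^{-2}t)$ is the pre-normalized version of the same Bernstein argument: it converts the $K$-dependence into uniform ambient bounded geometry, after which the standard weighted maximum-principle induction on $\tilde t^{\ell}|\tilde\nabla^{\ell}\tilde A|^2+\sum_{j<\ell}c_j\tilde t^{j}|\tilde\nabla^{j}\tilde A|^2$ gives $|\tilde\nabla^{\ell}\tilde A|^2\le C(\ell)/\tilde t^{\ell}$, and undoing the scaling yields the claim; in particular the interpolation gymnastics disappear, since after rescaling every curvature term is $O(1)$ and is absorbed by Cauchy--Schwarz against the good gradient term coming from the $|A|^2$ evolution. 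What the paper's route buys is a self-contained estimate with the explicit dependence the authors say they could not locate in the literature; what yours buys is cleaner bookkeeping and the option of quoting unit-scale smoothing estimates, provided you check (or sketch, as you do) that the constants depend only on the uniform ambient curvature bounds, and you note that the bound $\ell_{0}|_{M_t}\ge C^{-1}K$ for all $t\in[0,\alpha K^{2})$ is what makes the Corollary~\ref{cor: TYcurvDec} decay usable along the entire flow.
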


This result is well-known, but since we have not been able to find a reference in the literature with the dependence we need, we have included a proof in the appendix.  With the smoothing estimates we can turn integral estimates of geometric quantities into pointwise estimates by the following simple lemma; see, for example \cite[Lemma 3.5]{Li}.

\begin{lem}\label{lem: int-pt}
Suppose a Riemannian manifold $(M,g)$ is $\kappa_0$ non-collapsing on scale $r_0$ and suppose $S$ is a tensor with
\[
\int_{M} |S|^2 \leq \epsilon, \qquad |\nabla S| \leq C.
\]
If $\epsilon < r_0^{n+2}$, then
\[
\sup_{M} |S| \leq \left(\frac{1}{\sqrt{\kappa_0}} + C\right)\epsilon^{\frac{1}{n+2}}.
\]
\end{lem}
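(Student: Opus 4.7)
The plan is a standard integral-to-pointwise argument. Pick a point $p \in M$ with $|S|(p)$ arbitrarily close to $A := \sup_M |S|$ (replacing $p$ by a maximizing sequence if the supremum is not attained). Integrating the gradient bound $|\nabla S| \leq C$ along geodesics issuing from $p$ yields
\[
|S|(q) \;\geq\; A - C\,\mathrm{dist}(p,q) \qquad \text{for every } q \in M.
\]

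The strategy is now to pick the largest radius $r$ on which $|S|$ remains comparable to $A$ and to which the non-collapsing hypothesis still applies, namely $r := \min\{A/(2C),\, r_0\}$. On the ball $B(p,r)$ the previous display gives $|S| \geq A/2$, and non-collapsing supplies $\mathrm{Vol}(B(p,r)) \geq \kappa_0 r^n$, whence
\[
\epsilon \;\geq\; \int_{B(p,r)} |S|^2 \;\geq\; \tfrac{1}{4}\kappa_0 A^2 r^n.
\]

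The two cases $r = A/(2C)$ and $r = r_0$ produce the two terms appearing in the conclusion. In the first case $A \leq 2Cr_0$, substituting $r = A/(2C)$ rearranges to $A^{n+2} \lesssim C^n \epsilon/\kappa_0$, hence $A \lesssim C^{n/(n+2)}\kappa_0^{-1/(n+2)}\epsilon^{1/(n+2)}$, which is responsible for the $C\,\epsilon^{1/(n+2)}$ contribution. In the second case $A > 2Cr_0$, substituting $r = r_0$ gives $A^2 \leq 4\epsilon/(\kappa_0 r_0^n)$; the hypothesis $\epsilon < r_0^{n+2}$ enters here in precisely the right way to interpolate
\[
\frac{\epsilon^{1/2}}{r_0^{n/2}} \;=\; \epsilon^{1/(n+2)}\Bigl(\frac{\epsilon^{1/(n+2)}}{r_0}\Bigr)^{n/2} \;\leq\; \epsilon^{1/(n+2)},
\]
producing the $\kappa_0^{-1/2}\epsilon^{1/(n+2)}$ term. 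Combining the two estimates, letting the approximation parameter for $p$ tend to zero, and absorbing harmless numerical factors yields the claimed bound. There is no substantive analytic difficulty; the only work is bookkeeping the constants and observing how the threshold $\epsilon < r_0^{n+2}$ is exactly what is needed to homogenize the exponent in the second case to $1/(n+2)$.
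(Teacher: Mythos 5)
Your mechanism is the right one, and it is essentially the argument behind the reference the paper gives in place of a proof (Li, Lemma 3.5): combine the gradient bound, the non-collapsing volume bound on a ball about a near-maximum point, and the $L^2$ bound. The place where your write-up does not close is the constant. The lemma asserts the bound with constant exactly $\frac{1}{\sqrt{\kappa_0}}+C$, so there is no room to "absorb harmless numerical factors." With your radius $r=\min\{A/(2C),r_0\}$ and the lower bound $|S|\ge A/2$ on the ball, the first case gives $A\le 2\,C^{n/(n+2)}\kappa_0^{-1/(n+2)}\epsilon^{1/(n+2)}$ and the second gives $A\le 2\,\kappa_0^{-1/2}\epsilon^{1/(n+2)}$; even after Young's inequality the first is only $\le 2\bigl(\tfrac{n}{n+2}C+\tfrac{2}{n+2}\kappa_0^{-1/2}\bigr)\epsilon^{1/(n+2)}$, and for $n\ge 3$ (and already in case two when $C$ is small compared with $\kappa_0^{-1/2}$) these exceed $\bigl(\tfrac{1}{\sqrt{\kappa_0}}+C\bigr)\epsilon^{1/(n+2)}$. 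So what you actually prove is the lemma with constant roughly $2\bigl(\tfrac{1}{\sqrt{\kappa_0}}+C\bigr)$ — enough for every use made of it in the paper, but not the statement as written.

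The repair is to let the hypothesis, not $A$, dictate the radius: take $r=\epsilon^{1/(n+2)}$, which is $<r_0$ by the assumption $\epsilon<r_0^{n+2}$, and argue by contradiction. If $\sup_M|S|>(\kappa_0^{-1/2}+C)\epsilon^{1/(n+2)}$, pick $p$ with $|S|(p)>(\kappa_0^{-1/2}+C)\epsilon^{1/(n+2)}$; the gradient bound gives $|S|>\kappa_0^{-1/2}\epsilon^{1/(n+2)}$ on all of $B(p,r)$, and non-collapsing then yields
\[
\int_M|S|^2\;\ge\;\int_{B(p,r)}|S|^2\;>\;\kappa_0^{-1}\epsilon^{2/(n+2)}\,\kappa_0\,r^{n}\;=\;\epsilon,
\]
contradicting $\int_M|S|^2\le\epsilon$. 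This produces the advertised constant on the nose and makes transparent that $\epsilon<r_0^{n+2}$ is exactly the condition ensuring the ball used lies below the non-collapsing scale; it is the proof in the cited Lemma 3.5 of Li. (Two minor points in your version: the non-collapsing definition in the paper is for radii strictly less than $r_0$, so in your second case you should work with $r\uparrow r_0$ rather than $r_0$ itself; and the interpolation step using $\epsilon<r_0^{n+2}$ that you perform in case two is exactly what the radius choice $r=\epsilon^{1/(n+2)}$ renders unnecessary.)
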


The next step is to show that exponential decay of the mean curvature, together with a bound on the second fundamental form, implies {\em improved} estimates on the second fundamental form.

\begin{lem}\label{lem: TYAimprove}
Suppose that $M_{0}$ has $(C,K,\delta')$-bounded geometry and let $M_t$ be the solution of the LMCF with initial data $M_0$.  Suppose that, on some interval $[0,T]$, $M_{t}$ has $(4C,K, \frac{\delta'}{n+2})$-bounded geometry and furthermore that there is constant $a>0$ so that
\[
|H(t)|^2 < e^{-\frac{\delta'}{n+2}K^{2n} -at}.
\]
Then, for $K$ sufficiently large depending only on $C, n, \delta'$ we have
\[
|A(t)|^2 \leq  2CK^{-2} +\frac{1}{a}e^{-\frac{\delta'}{n+2}K^{2n}} 
\]
for all $t\in [0,T]$.
\end{lem}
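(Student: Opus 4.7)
The plan is to integrate the evolution equation for $|A|^2$ along LMCF, using the hypothesized exponential decay of $|H|^2$ together with the spectral lower bound $\lambda_1(M_t) \geq C^{-1}K^{-2}$ (from the $(4C,K,\tfrac{\delta'}{n+2})$-bounded geometry hypothesis) to produce an effective friction term that prevents the naive exponential-in-$t$ growth one would otherwise obtain from Gr\"onwall. The first step is to derive, using Smoczyk's evolution formula for LMCF in a Calabi-Yau ambient \cite{Smo} together with the curvature bounds $|Rm|=O(K^{-2})$ and $|\nabla Rm|=O(K^{-3})$ from Corollary~\ref{cor: TYcurvDec} and the a priori bound $|A|^2 \leq 4CK^{-2}$, an inequality of the schematic form
\begin{equation*}
\ddt |A|^2 \leq \Delta|A|^2 - 2|\nabla A|^2 + c_1 K^{-2}|A|^2 + c_2|H|^2 + c_3 K^{-4},
\end{equation*}
where the coupling to $|H|^2$ comes from the Codazzi identity relating $\mathrm{div}\,A$ to $\nabla H$.

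Next I would pass to the $L^2$ level, where the gradient term $-2|\nabla A|^2$ combined with the Poincar\'e inequality produces the desired negative drift $-c_0 K^{-2} \|A\|_{L^2}^2$; Duhamel's formula then yields
\begin{equation*}
\|A\|_{L^2}^2(t) \leq e^{-c_0K^{-2}t}\|A\|_{L^2}^2(0) + c_2 \int_0^t e^{-c_0K^{-2}(t-s)}\|H\|_{L^2}^2(s)\, ds + \tfrac{c_3}{c_0}K^{-2}\mathrm{Vol}(M_t),
\end{equation*}
and the Duhamel integral involving $\|H\|_{L^2}^2(s) \leq C'\mathrm{Vol}(M_t)\,e^{-\tfrac{\delta'}{n+2}K^{2n}-as}$ is bounded, uniformly in $t$, by $\tfrac{C''}{a}e^{-\tfrac{\delta'}{n+2}K^{2n}}$ (using $\int_0^\infty e^{-as}ds = 1/a$). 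Provided $K$ is large enough that $c_3/c_0 \leq C$, this produces an $L^2$ bound of essentially the desired form.

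The final step is to promote this $L^2$ bound to the pointwise bound on $|A|^2$ claimed in the lemma, via Lemma~\ref{lem: int-pt}, using the $\kappa_0$-noncollapsing at scale $r_0 \geq C^{-1}K^{1-n}$ together with the derivative estimate $|\nabla A| \leq C(1)K^{-1}/\sqrt{t}$ supplied by the smoothing Lemma~\ref{lem: smoothingEstTY}. The main obstacle is in the Poincar\'e step: since $A$ is a tensor rather than a scalar, a direct Poincar\'e inequality for $|A|^2$ requires either decomposing $A$ into average and mean-zero parts (bounding the average separately using the $H$-hypothesis and the relation $H = \mathrm{tr}\,A$) or applying Poincar\'e to the scalar $|A|^2 - \overline{|A|^2}$, in either case with careful bookkeeping to ensure that the constants $c_i$ depend only on the data allowed by the statement and that all threshold conditions on $K$ are uniform in $T$. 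A secondary obstacle is ensuring that the quartic self-interactions $|A|^4$ which occur in the Smoczyk evolution can genuinely be absorbed into the linear term $c_1 K^{-2}|A|^2$ by the a priori pointwise bound $|A|^2 \leq 4CK^{-2}$, rather than being traded for a source term that worsens the overall estimate.
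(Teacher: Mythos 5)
There is a genuine gap, and in fact two of the steps you rely on cannot work. First, the negative drift you want at the $L^2$ level is not available. The Poincar\'e inequality with $\lambda_1(M_t)\geq (4C)^{-1}K^{-2}$ only controls the \emph{oscillation} of $|A|$ about its mean, and the mean is of size $K^{-1}$: it cannot be bounded using $H=\mathrm{tr}\,A$, since the trace-free part of $A$ is invisible to $H$ (the limiting special Lagrangian itself has $H\equiv 0$ while $|A|^2\sim CK^{-2}$ -- indeed the whole point of the lemma is that $|A|^2$ stays of size $2CK^{-2}$, not that it decays). Even ignoring this, the sign is wrong by a factor of order $C^2$: absorbing the quartic reaction term via $|A|^2\leq 4CK^{-2}$ produces a coefficient $c_1K^{-2}$ with $c_1\sim C$, which dominates the putative spectral gain $\sim (4C)^{-1}K^{-2}$, so Gr\"onwall gives growth like $e^{cK^{-2}t}$, useless on $[0,T]$ with $T$ arbitrary (in the application $T\to\infty$). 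Second, the final promotion from $L^2$ to pointwise via Lemma~\ref{lem: int-pt} fails when the $L^2$ quantity is only polynomially small: with $\int_{M}|A|^2\sim K^{-2}$ and $r_0\sim C^{-1}K^{1-n}$ the hypothesis $\epsilon<r_0^{n+2}$ is violated for large $K$, and even formally the output $\epsilon^{1/(n+2)}\sim K^{-2/(n+2)}$ is far weaker than the required $|A|\lesssim K^{-1}$; there is also no way such an argument recovers the precise structure $2CK^{-2}+\tfrac{1}{a}e^{-\frac{\delta'}{n+2}K^{2n}}$. The lemma~\ref{lem: int-pt} device is only effective on quantities that are already exponentially small, where losing the power $\tfrac{1}{n+2}$ in the exponent is harmless. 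A smaller issue: your schematic evolution inequality with a $c_2|H|^2$ source ``from Codazzi'' is not the correct reaction structure; the genuine reaction terms are $|A|^4$, $Rm*A*A$ and $\nabla Rm*A$.

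The mechanism the paper uses is different and is dictated by the shape of the conclusion (initial bound plus an exponentially small, time-integrated correction): along LMCF the \emph{time derivative} of $A$ is built entirely from the mean curvature, and one has the pointwise inequality
\[
\ddt |A|^2 \leq 100\left( |A||\nabla^{2}H| + |A|^3|H| + |Rm||H| \right),
\]
so every term carries $H$ or $\nabla^2H$. The terms with $|H|$ are exponentially small by hypothesis; the term $|\nabla^2H|$ is handled by integrating by parts, $\int|\nabla^2H|^2\leq\int|H||\nabla^4H|$, using the smoothing estimates of Lemma~\ref{lem: smoothingEstTY} (valid for $t\geq 1$, while on $[0,1]$ one keeps $|A|^2\leq 2CK^{-2}$ by Lemma~\ref{lem: TYlmcfAest}), and then applying Lemma~\ref{lem: int-pt} to $S=\nabla^2H$ -- which \emph{is} exponentially small in $L^2$ -- together with the non-collapsing and $|\nabla^3H|\lesssim K^{-1}$. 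This yields $\ddt|A|^2\leq e^{-\frac{\delta'}{2(n+2)^2}K^{2n}-\frac{a}{2(n+2)}t}$, and integrating in time gives the stated bound. If you want to salvage your approach, you would have to replace the parabolic reaction--diffusion inequality for $|A|^2$ by this $H$-driven pointwise evolution; the Poincar\'e/Duhamel scheme at the level of $\|A\|_{L^2}$ cannot be repaired.
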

\begin{proof}
By Lemma~\ref{lem: TYlmcfAest} we can assume that $T>\alpha K^2$ for $\alpha$ depending only on $C$.  If $K$ is sufficiently large, depending only on $C$, then we may assume that $T>1$ and that
\begin{equation}\label{eq: TYshortEst}
|A(t)|^2 < 2CK^{-2} \qquad \text{ for } t\in [0,1].
\end{equation}
By the smoothing estimates in Lemma~\ref{lem: smoothingEstTY}, for all $\ell \geq 0$ and $t\in [1, T)$ we have
\begin{equation}\label{eq: TYsmoothApp}
|\nabla^{\ell}A|^2 \leq C(\ell)K^{-2}
\end{equation}
for a constant $C(\ell)$ depending only on $\ell$ and $C$.

The second fundamental form satisfies the following inequality along the flow \cite{Smo}
\begin{equation}\label{eq: ddtNormA}
\ddt |A|^2 \leq 100\left( |A||\nabla^{2}H| + |A|^3|H| + |Rm||H| \right).
\end{equation}
Since the mean curvature decays exponentially, the only problematic term is $|\nabla^{2}H|$.  On the other hand, using~\eqref{eq: TYsmoothApp} and integrating by parts gives
\[
\begin{aligned}
\int |\nabla^2 H|^2 \leq \int |H| |\nabla^{4}H| &\leq \sqrt{C(4)}K^{-1}e^{-\frac{\delta'}{2(n+2)}K^{2n} -\frac{a}{2}t}\\
&\leq e^{-\frac{\delta'}{2(n+2)}K^{2n} -\frac{a}{2}t}
\end{aligned}
\]
for $t>1$, provided $K$ is sufficiently large depending only on $C$.  Since $M_{t}$ has $(4C, K, \frac{\delta'}{n+2})$-bounded geometry, we can apply Lemma~\ref{lem: int-pt} to conclude that, for all $t\in [1,T)$ there holds
\[
|\nabla^2H| \leq (2\sqrt{C} + \sqrt{C(3)}K^{-1})e^{-\frac{\delta'}{2(n+2)^2}K^{2n} -\frac{a}{2(n+2)}t}
\]
provided we choose $K$ sufficiently large, depending only on $\delta', C, n$ so that
\[
e^{-\frac{\delta'}{2(n+2)}K^{2n} -\frac{a}{2}t} \leq \frac{K^{(1-n)(n+2)}}{(4C)^{n+2}}.
\]
Plugging this estimate into~\eqref{eq: ddtNormA} we conclude that, for $K$ sufficiently large, depending only on $\delta', C, n$ and the constants in Corollary~\ref{cor: TYcurvDec} we have
\[
\ddt |A|^2(t) \leq e^{-\frac{\delta'}{2(n+2)^2}K^{2n} -\frac{a}{2(n+2)}t}.
\]
Combining this estimate with~\eqref{eq: TYshortEst} and integrating in time yields
\[
|A(t)|^2 \leq 2CK^{-2} + \frac{2(n+2)}{a} e^{-\frac{\delta'}{2(n+2)^2}K^{2n}} \qquad \text{ for all } t\in [0,T].
\]
\end{proof}

We can now prove the main theorem.

\begin{thm}\label{thm: TYLMCFconv}
Fix constants $C>1$ and $\delta' >0$.  There is a constant $K_0>0$ such that, for all $K \geq K_0$, if $M_{K}$ is a Lagrangian submanifold of $(X, \omega_{TY})$ with Maslov class zero and $(C,K,\delta')$-bounded geometry, then the LMCF starting at $M_K$ converges smoothly and exponentially fast to a special Lagrangian $M_{\infty}$ with $(4C, K, \frac{\delta'}{n+2})$-bounded geometry.
\end{thm}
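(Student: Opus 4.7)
The natural strategy is a continuity/bootstrap argument on the time interval on which $M_{t}$ has $(4C,K,\frac{\delta'}{n+2})$-bounded geometry. Set
\[
T^* := \sup\left\{ T>0 \,:\, M_{t} \text{ has } (4C,K,\tfrac{\delta'}{n+2})\text{-bounded geometry for all } t\in [0,T]\right\}.
\]
By Lemma~\ref{lem: TYlmcfAest}, $T^* \geq \alpha K^2 > 0$ once $K$ is large enough. The objective is to show that on $[0,T^*)$ every one of the six conditions in the definition of bounded geometry can be improved strictly, which by openness forces $T^* = \infty$.

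The driving engine is Lemma~\ref{lem: HexpDec}. On $[0,T^*)$, the bounded geometry gives $\lambda_1(M_t)\geq (4C)^{-1}K^{-2}$ while $2\sup|A||H| \lesssim K^{-1}e^{-\frac{\delta'}{2(n+2)}K^{2n}}$, which is negligible compared to $\lambda_1$ for $K$ large. Thus Lemma~\ref{lem: HexpDec} yields
\[
\int_{M_{t}} |H|^2\, dVol \leq e^{-a t}\int_{M_0}|H|^2\,dVol \leq C' e^{-\delta' K^{2n}} e^{-a t}
\]
with $a = c(C) K^{-2}$. Next, Lemma~\ref{lem: smoothingEstTY} applied on $[0,T^*)$ bounds $|\nabla H| \leq |\nabla A|$ by a polynomial in $K^{-1}$, and the non-collapsing constant $\kappa_0 \sim 1$ at scale $r_0 \sim K^{1-n}$ from the bounded geometry lets me feed this data into Lemma~\ref{lem: int-pt}. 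Provided the smallness hypothesis $\epsilon < r_0^{n+2}$ is met (which is automatic for $K$ large since $\epsilon$ decays like $e^{-\delta' K^{2n}}$ while $r_0^{n+2}$ decays only polynomially), I obtain the pointwise bound
\[
|H(t)|^2 \leq C''\Big(e^{-\delta' K^{2n}}e^{-at}\Big)^{2/(n+2)} \leq e^{-\frac{\delta'}{n+2}K^{2n}}\, e^{-a' t}
\]
after absorbing the constant $C''$ into the prefactor, possible by choosing $K$ large depending on $C, \delta', n$, with $a' = \tfrac{2a}{n+2}$.

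With this pointwise exponential decay of $|H|^2$, Lemma~\ref{lem: TYAimprove} immediately gives $|A|^2(t) \leq 2CK^{-2} + a'^{-1}e^{-\frac{\delta'}{n+2}K^{2n}}$, which is strictly less than $4CK^{-2}$ for $K$ large, strictly improving bound (2). Control of $\ell_0$ follows since $\big|\ddt\ell_0^{n+1}\big| \leq |\nabla\ell_0^{n+1}||H|$ and $|\nabla\ell_0^{n+1}|$ is uniformly bounded by Remark~\ref{rek: scaleRk} together with Proposition~\ref{prop: HSVZdecay}; integrating the exponentially small $|H|$ in $t$ over $[0,\infty)$ produces a total displacement far smaller than $K$, improving (1). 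The quantity $\mu(t) = 2\int_0^t \sup|A|\sup|H|\,ds$ is then uniformly bounded by something that tends to $0$ as $K\to\infty$, so Corollary~\ref{cor: TYkapLam} together with Lemma~\ref{lem: perturbLambda} improves (4), (5) and (6). Thus every bound in the definition of $(4C,K,\frac{\delta'}{n+2})$-bounded geometry is strictly improved on $[0,T^*)$, and hence $T^* = \infty$.

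Having obtained global existence with persistent bounded geometry and pointwise exponential decay $|H(t)|^2 \leq e^{-\frac{\delta'}{n+2}K^{2n}}e^{-a't}$, smooth exponential convergence to a limit $M_\infty$ follows from the standard argument: the smoothing estimates of Lemma~\ref{lem: smoothingEstTY} promote exponential decay of $|H|$ to exponential decay of every $|\nabla^{\ell}H|$, and integrating the flow velocity $H$ in time shows $M_t$ is Cauchy in $C^\infty$. The limit $M_\infty$ has $H\equiv 0$ and vanishes Maslov class, so it is a special Lagrangian; the bounded geometry constants pass to the limit. The main obstacle in this plan is the bookkeeping in step three: one must choose $K$ large so that (i) the smallness condition of Lemma~\ref{lem: int-pt} holds against the polynomially small injectivity/non-collapsing scales, (ii) the polynomial prefactor $C''$ is absorbed into the exponential in $K^{2n}$, and (iii) the integrated drift of $\ell_0$ stays within $CK$, all simultaneously. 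The exponential-versus-polynomial gap between $|H|$ and $|A|,\lambda_1,r_0^{-1}$ is precisely what makes this possible.
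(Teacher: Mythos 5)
Your proposal is correct and follows essentially the same route as the paper's proof: a bootstrap on the maximal interval of $(4C,K,\frac{\delta'}{n+2})$-bounded geometry, using Lemma~\ref{lem: HexpDec} for $L^2$ exponential decay of $H$, Lemmas~\ref{lem: smoothingEstTY} and~\ref{lem: int-pt} (with the exponential-versus-polynomial gap) for pointwise decay, Lemma~\ref{lem: TYAimprove} to improve the $|A|$ bound, and the drift/$\mu(t)$ estimates with Corollary~\ref{cor: TYkapLam} to improve the remaining quantities, forcing the interval to be infinite and giving smooth exponential convergence. The only cosmetic difference is that the paper runs the short-time interval $[0,1]$ through Lemma~\ref{lem: TYlmcfAest} explicitly before invoking the smoothing estimates, which you implicitly do as well.
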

\begin{proof}
Let $M_t$ be the solution of the LMCF with initial data $M_0= M_{K}$.  First we show that, for $K$ sufficiently large, as long as $M_t$ has $(4C, K, \frac{\delta'}{n+2})$-bounded geometry, the mean curvature decays exponentially.  

Suppose that $M_{t}$ has $(4C, K, \frac{\delta'}{n+2})$-bounded geometry on $[0,T]$.  By Lemma~\ref{lem: TYlmcfAest} we can assume that $T>1$ and $M_t$ has $(2C, K, \delta')$-bounded geometry on $[0,1]$ provided $K$ is large enough depending only on $C$.    For all $t\in [0,T]$ we have
\[
\begin{aligned}
\lambda_1(M_{t}) - \sup_{M_t}|A||H| &\geq (4C)^{-1}K^{-2} - 4CK^{-1}e^{-\frac{\delta'}{2(n+2)}K^2}\\
& > (8C)^{-1}K^{-2} =: aK^{-2}
\end{aligned}
\]
provided $K$ is sufficiently large depending on $C, \delta', n$.  Therefore, by Lemma~\ref{lem: HexpDec} we have
\[
\int_{M_{t}}|H|^2 \leq e^{-aK^{-2}t}\int_{M_0}|H|^2 \leq C^2e^{-\delta'K^{2n}-aK^{-2}t},
\]
since $M_0$ has $(C, K, \delta ')$-bounded geometry.  We now use Lemma~\ref{lem: int-pt} to turn this estimate into a pointwise bound.  By the smoothing estimates in Lemma~\ref{lem: smoothingEstTY} we have
\[
|\nabla A|^2 \leq C(1)K^{-2}
\]
for $t\in[1,T]$ for a constant $C(1)$ depending only on $C, (X, g_{TY})$.  Therefore, Lemma~\ref{lem: int-pt} yields the estimate
\[
|H|(t) \leq (2\sqrt{C} + \sqrt{C(1)}K^{-1})e^{-\frac{a}{n+2}K^{-2}t}C^{\frac{2}{n+2}}e^{-\frac{\delta'}{n+2}K^{2n}}
\]
as long as $K$ is chosen sufficiently large, depending only on $C, \delta'$, so that
\[
e^{-aK^{-2}t}C^2e^{-\delta'K^{2n}} < (4C)^{-(n+2)}K^{-(n-1)(n+2)}.
\]
Increasing $K$ if necessary, we can assume that
\begin{equation}\label{eq: TYexpDecay}
|H|^2(t) \leq e^{-\frac{\delta'}{n+2}K^{2n}-\frac{2a}{n+2}K^{-2}t}
\end{equation}
for all $t\in[1,T]$.  In particular, we have shown that $|H|^2(t)$ decays exponentially as long as $M_t$ has $(4C, K, \frac{\delta'}{n+2})$-bounded geometry and $K$ is chosen sufficiently large depending on $n, \delta', C$ and $(X, g_{TY})$. 

Next we claim that this exponential decay implies that $M_{t}$ has $(4C, K, \frac{\delta'}{n+2})$-bounded geometry for all time.  Define
\[
 T_{max} = \sup\left\{T : M_{t} \text{ has } (4C, K, \frac{\delta'}{n+2}) \text{-bounded geometry } \forall\, t\in [0,T) \right\}
 \]
 First, for $t \in [0, T_{max})$ we estimate
\[
\begin{aligned}
\mu(t) &= 2\int_{0}^{t} (\sup_{M_{t}}|H|)\cdot (\sup_{M_{t}}|A|) dt \\
&\leq 8Ce^{-\frac{\delta'}{2(n+2)}K^{2n}}K^{-1} + 2\sqrt{C}K^{-1}\int_{1}^{T} e^{-\frac{\delta'}{n+2}K^{2n}-\frac{2a}{n+2}K^{-2}t}dt\\
&<\frac{1}{3(n+1)}\log(2)
\end{aligned}
\]
provided $K$ is sufficiently large depending only on $n, C, \delta'$.  In particular, by Corollary~\ref{cor: TYkapLam}, on $M_{t}$ we have
\[
\kappa_0(M_t) > \frac{1}{2}C^{-1}, \qquad r_0(M_{t})= r_0, \qquad \lambda_1(M_t) \geq \frac{1}{2}C^{-1}K^{-2}.
\]
Similarly, we have
\[
\ddt {\rm Vol}(M_{t}) = -\int_{M_t} |H_{t}|^2 \geq -4Ce^{-\frac{\delta'}{n+2}K^{2n}-\frac{2a}{n+2}K^{-2}t}.
\]
Thus, if $K$ is sufficiently large depending only on $n, C, \delta'$, then
\[
C \geq {\rm Vol}(M_0) \geq {\rm Vol}(M_{t}) \geq \frac{1}{2}{\rm Vol}(M_{0}) \geq \frac{1}{2C}.
\]
To control $\ell_0$ we argue as in the proof of Proposition~\ref{prop: TYalmostSLAG}.  Thanks to Remark~\ref{rek: scaleRk} and Proposition~\ref{prop: HSVZdecay}, for $K$ sufficiently large depending only on $(X,g_{TY})$ we have $|\nabla \ell_0^{n+1}|_{g_{TY}}^2 \leq (n+1)^2$.  Therefore
\[
\bigg|\ddt \ell_0^{n+1}\bigg| \leq (n+1) |H(t)| \leq (n+1)e^{-\frac{\delta'}{2(n+2)}K^{2n}-\frac{a}{n+2}K^{-2}t}.
\]
Choosing $K$ sufficiently large depending only on $\delta', n, C$, we can ensure that
\[
\frac{1}{2}\ell_0\big|_{M_0} \leq \ell_0\big|_{M_{t}} \leq 2\ell_0\big|_{M_0}.
\]
Finally, we apply Lemma~\ref{lem: TYAimprove} to conclude that for $K$ sufficiently large depending on $C, \delta', n$, we have
\[
|A(t)|^2 \leq 2CK^{-2} + (n+2)^28CK^2e^{-\frac{\delta'}{2(n+2)^2}K^{2n}} \quad \text{ for all } t\in [0,T].
\]
Increasing $K$ if necessary, we obtain $|A(t)| < 3CK^{-2}$. It follows that $M_{t}$ has $(3C, K, \frac{\delta'}{n+2})$-bounded geometry on $[0,T_{max}]$.  By Lemma~\ref{lem: TYlmcfAest} it follows that $T_{max}= +\infty$ and $M_t$ has $(4C, K, \frac{\delta'}{n+2})$-bounded geometry for all time.  The estimate~\eqref{eq: TYexpDecay} holds for all time and hence $M_{t}$ converges smoothly and exponentially fast to a special Lagrangian with $(4C, K, \frac{\delta'}{n+2})$-bounded geometry.

\end{proof}

As an immediate consequence, we obtain Theorem~\ref{thm: main1Intro} for Tian-Yau spaces. 

\begin{proof}[Proof of Theorem~\ref{thm: main1Intro} for Tian-Yau spaces]
By Proposition~\ref{prop: TYalmostSLAG}, there are constants $C>1, \delta'>0, K_{0}\gg 1$ depending only on $N, (X,\omega_{TY})$ such that, for $K \geq K_{0}$ $(X,\omega_{TY})$ admits Lagrangians $M_{K}$ with $(C,K, \delta')$-bounded geometry and zero Maslov class.  By Theorem~\ref{thm: TYLMCFconv}, after possibly increasing $K_0$ depending only on $C, n, \delta'$ we can assume that the LMCF starting at $M_{K}$ converges to a special Lagrangian submanifold $M_{K,\infty}$ with $(4C, K, \frac{\delta'}{n+1})$-bounded geometry.  Define a sequence $K_{i}$, starting with $K_0$, having $K_{i}= 100C^2 K_{i-1}$.  Then we have
\[
4CK_{i} < (4C)^{-1}K_{i+1}
\]
and so, using the scale function $\ell_0$ we see that $M_{K_{i},\infty}$ is disjoint from $M_{K_{j}, \infty}$ for all $i\ne j$.  
\end{proof}

\begin{rk}\label{rk: disjoint}
Note that the proof, and in particular the exponential decay of the mean curvature, shows the following: for any $\epsilon, C, \delta' >0$, there is a constant $K(\epsilon, C, \delta')$ with the following effect.  If $M_0$ has $(C, K, \delta ')$-bounded geometry for $K \geq K(\epsilon, C, \delta')$, then the Lagrangian mean curvature flow starting from $M_0$ converges to an immersed special Lagrangian $M_{\infty} \subset (X,\omega_{TY}, g_{TY})$ and $M_{\infty} \subset B(M_0, \epsilon)$.  In particular, if $N, N'$ are two special Lagrangians in $D$, then for $\ell_0$ sufficiently large, the LMCF starting from the models $M_0, M_0'$ constructed in Proposition~\ref{prop: TYalmostSLAG} will converge to disjoint special Lagrangians.  Clearly the same result holds for the constructions in Section~\ref{sec: ACYL}.
\end{rk}

\section{Special Lagrangian Fibrations in dimension 2}\label{sec: sLagFib}

In this section we prove that, under fairly general assumptions in complex dimension 2, the existence of a single special Lagrangian torus with primitive homology class and zero self-intersection in a Calabi-Yau surface with controlled geometry implies the existence of a global special Lagrangian torus fibration.  The three main tools we use are the deformation theory of special Lagrangians, hyper-K\"ahler rotation and the moduli and compactness theory of holomorphic curves.

Recall that a hyper-K\"ahler manifold is a Riemannian manifold $(X,g)$ equipped with a triple of parallel, orthogonal, integrable complex structures $(I,J,K)$ satisfying the quaternion relations
\[
I^2=J^2=K^2=IJK=-1.
\]
This data yields an $S^2$ worth of complex structures given by $\{(aI+bJ+cK): a^2+b^2+c^2=1 \}$ on $(X,g)$ compatible with the Riemannian structure, inducing distinct K\"ahler structures on $(X,g)$.  Equivalently \cite{Don06}, in real dimension $4$, a hyper-K\"ahler structure on the oriented manifold $(X, dVol_0)$ is a triple of closed $2$-forms $(\omega_1, \omega_2, \omega_3)$ satisfying, for every $1\leq i\leq j \leq 3$
\[
\begin{aligned}
\frac{1}{2}\omega_i\wedge\omega_{j} &= Q_{ij} dVol_0,\\
\frac{1}{2}\omega_i\wedge\omega_{j} &= \frac{1}{6}\delta_{ij} (\omega_1^2+\omega_2^2+\omega_3^2)
\end{aligned}
\]
for $Q_{ij}$ a positive definite matrix.  The hyper-K\"ahler triple $(\omega_1,\omega_2,\omega_3)$ induces a Riemannian metric $g$ such that each $\omega_j$ is self-dual with respect to $g$.  Such a metric $g$ is called a hyper-K\"ahler metric.  Each form $\omega_i$ is symplectic and induces an integrable complex structure $J_i$ such that $\Omega_i = \omega_j +\sqrt{-1}\omega_k$ $i\ne j\ne k$ is a holomorphic $2$-form.  

In the present setting, we have a Calabi-Yau manifold $(X,g,J,\omega, \Omega)$ of complex dimension $2$ with K\"ahler form satisfying
\[
\omega^2= \frac{1}{2}\Omega \wedge\bar{\Omega}.
\]
Direct calculation shows that $({\rm Re}(\Omega), \omega, {\rm Im}(\Omega))$ is a hyper-K\"ahler triple, with associated complex structures $(I,J,K)$ satisfying the quaternion relations. Finally, associated to the hyper-K\"ahler structure is the twistor space $\mathcal{X}$, a smooth complex manifold diffeomorphic to $X\times \mathbb{P}^1$, but with complex structure on the fiber over $\zeta \in \mathbb{C} = \mathbb{P}^1\setminus\{\infty\}$ given by
\[
J_{\zeta} = \frac{\sqrt{-1}(-\zeta+\bar{\zeta})I - (\zeta + \bar{\zeta})K + (1-|\zeta|^2)J}{1+|\zeta|^2}.
\]
In particular, $\mathcal{X}$ has a non-trivial holomorphic fibration.  The holomorphic volume form on a fiber $(X, g, J_{\zeta})$ for $\zeta \in \mathbb{P}^1$ is given by
\begin{equation}\label{eq: twistVol}
\Omega_{\zeta} = \Omega + 2\zeta \sqrt{-1} \omega - \zeta^2\overline{\Omega}.
\end{equation}

 A crucial point for us is the observation that if $L \subset (X,g, J, \omega, \Omega)$ is special Lagrangian with $\omega|_{L}={\rm Im}(\Omega)|_{L} =0$ and ${\rm Re}(\Omega)|_{L} = dVol_{g}$, then Wirtinger's inequality implies $L$ is a {\em holomorphic} subvariety of the Calabi-Yau manifold $(X,g, I, {\rm Re}(\Omega), \omega-\sqrt{-1}{\rm Im}(\Omega))$.  We will denote this Calabi-Yau manifold by $(X,g, I)$.

We begin with the following lemma.
\begin{lem}\label{lem: openFib}
Suppose $L\subset (X,g, J, \omega_J)$ is a (possibly immersed) special Lagrangian torus and that $[L]^2=0$.  Then $L$ is embedded and there exists a neighborhood $U$ of $L$ and a fibration $\pi:U \rightarrow B$ to a complex manifold $B$, such that the fibers of $\pi$ are special Lagrangian tori.
\end{lem}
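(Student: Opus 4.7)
The strategy is to hyper-K\"ahler rotate so that $L$ becomes a holomorphic curve, use adjunction to prove embeddedness, and then combine McLean's deformation theory with positivity of intersections to assemble the fibration.

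Using the hyper-K\"ahler triple $(\mathrm{Re}\,\Omega, \omega, \mathrm{Im}\,\Omega)$ described above, after rotating $\Omega$ by a unit complex number I may arrange $\mathrm{Im}\,\Omega|_L = 0$ and $\mathrm{Re}\,\Omega|_L = dVol_L$. With respect to the complex structure $I$ whose K\"ahler form is $\mathrm{Re}\,\Omega$, the submanifold $L$ is calibrated by the K\"ahler form, so Wirtinger's inequality forces $L$ to be a (possibly immersed) holomorphic curve in the complex surface $(X, I)$, which is Calabi-Yau with holomorphic $(2,0)$-form $\omega - \sqrt{-1}\,\mathrm{Im}\,\Omega$. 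I then apply adjunction: if $\iota : T^2 \to X$ is the immersion, perturbed generically to have $\delta$ nodal singularities, then $C = \iota(T^2)$ satisfies
\[
2\delta = 2(p_a(C) - 1) = [C] \cdot ([C] + K_{(X,I)}) = [L]^2 = 0,
\]
using $K_{(X,I)} = 0$. Hence $\delta = 0$ and $L$ is embedded.

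Next I invoke McLean's theorem \cite{RMc}: the moduli space of special Lagrangian deformations of $L$ is a smooth real manifold of dimension $b_1(L) = 2$, with tangent space $H^1(L, \mathbb{R})$ at $[L]$. Under the hyper-K\"ahler rotation these are equivalently deformations of $L$ as a compact complex submanifold of $(X, I)$, parameterized infinitesimally by $H^0(L, N_{L/X, I})$. Since $L$ is an elliptic curve with $\deg N_{L/X, I} = [L]^2 = 0$, this is one-complex-dimensional, and McLean's unobstructedness promotes a small neighborhood $B$ of $[L]$ in the moduli space to a smooth complex $1$-manifold.

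Finally I assemble the fibration via the evaluation map $\mathrm{ev} : B \times L \to X$ sending $(t, p)$ to the image of $p$ in the deformation $L_t$. Its differential at $([L], p)$ is an isomorphism: tangent vectors to $L$ span $T_p L$, while deformation directions in $B$ correspond to non-vanishing sections of $N_{L/X}$ hitting transverse directions. By compactness of $L$ and after shrinking $B$, $\mathrm{ev}$ is a local diffeomorphism. For injectivity, I observe that for $t_1 \neq t_2$ the tori $L_{t_1}$ and $L_{t_2}$ are distinct holomorphic curves in $(X, I)$ in the class $[L]$; by positivity of intersections of holomorphic curves every intersection would contribute positively to $[L]^2 = 0$, forcing $L_{t_1} \cap L_{t_2} = \emptyset$. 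Combined with each $L_t$ being embedded (same adjunction argument), $\mathrm{ev}$ is globally injective on $B \times L$, hence a diffeomorphism onto an open tubular neighborhood $U$ of $L$, and projection to $B$ is the desired fibration. The main obstacle is the disjointness step, which truly requires the hyper-K\"ahler rotation: without it, two nearby special Lagrangians could a priori meet despite their homological intersection vanishing, and it is only after rotation that the problem reduces to positivity of intersections for $J$-holomorphic curves in real dimension four.
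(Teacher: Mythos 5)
Your proposal is correct and follows essentially the same route as the paper: hyper-K\"ahler rotation to make $L$ holomorphic, the adjunction formula with $K_{(X,I)}=\mathcal{O}$ and $[L]^2=0$ to get embeddedness, McLean's deformation theory to produce the deformations, positivity of intersections to get disjointness, and the induced complex structure on the base (which the paper attributes to Hitchin). The only cosmetic difference is that your adjunction step should be phrased via the $\delta$-invariant of the singular image curve rather than a ``generic perturbation'' of the immersion, but this does not affect the argument.
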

\begin{proof}
McLean's deformation theory  for (possibly immersed) special Lagrangian tori \cite{RMc, Hit} implies that, for each non-zero harmonic $1$-form representing a class in $H^{1}(L)$ we obtain a non-trivial deformation $L'$ of $L$.  By hyper-K\"ahler rotating, we obtain holomorphic curves $C\ne C'$ with $[C]=[C']= [L]$.  By assumption $0=[L]^2= [C]\cdot[C']=[L] .[L']$, so $C\cap C' = \emptyset$ and hence $L, L'$ are disjoint.  Furthermore, since $C^2=0$ and the canonical bundle of $(X,I)$ is trivial, the adjunction formula  \cite[Page 69]{BPV} implies that any immersed torus fiber $\pi$ is, in fact, a smooth embedded torus.   It follows that the deformations of $L$ are all disjoint smooth, embedded Lagrangian tori and hence we obtain an open set $U$ containing $L$ such that $\pi:U \rightarrow B$, where $B$ is an open neighborhood of $0 \in H^1(L)$, is a fibration whose fibers are embedded Lagrangian tori.  That the base of the fibration admits a natural complex structure is due to Hitchin \cite{Hit}.
\end{proof}

The remainder of this section is devoted to proving that this local fibration extends to a global fibration.  The basic idea is to prove that the set of points which lie on a (possibly singular) special Lagrangian  $L'$ deformable to $L$ is both open and closed.  We will make heavy use of the theory of holomorphic curves.  Since our manifold is not compact, we need a result to ensure that our holomorphic curves cannot escape to infinity.  We begin by noting the following lemma, which is likely well-known to experts in the field.

\begin{lem}\label{lem: holVol}
Suppose $(X,g, J)$ is a complete K\"ahler manifold.  For $p\in X$, let ${\rm inj}(p)$ denote the injectivity radius and
\[
K(p) = \sup_{B(p, {\rm inj}(p))}|Rm|.
\]
 There is a universal constant $C_1>0$ with the following effect; for any $r < \min\{{\rm inj}(x), C_1K(x)^{-1/2}\}$ if $u:\Sigma \rightarrow X$ is a $J$-holomorphic curve with $x \in u(\Sigma)$ and $u(\del \Sigma) \subset \del B(x, r)$, then
 \[
 {\rm Area}(u(\Sigma)\cap B(x,r)) \geq \frac{\pi}{4}r^2.
 \]
 \end{lem}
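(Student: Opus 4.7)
The plan is to apply the standard monotonicity formula for two-dimensional minimal submanifolds, exploiting the fact that $J$-holomorphic curves are calibrated by the K\"ahler form $\omega$ (Wirtinger's identity) and hence are minimal. Concretely, any $J$-holomorphic map $u : \Sigma \to X$ satisfies $u^*\omega = d\mu_{u^*g}$, the area form of the induced metric, so that
\[
\phi(s) := \mathrm{Area}\bigl(u(\Sigma) \cap B(x, s)\bigr) = \int_{u^{-1}(B(x, s))} u^*\omega
\]
is well-defined with multiplicities, and $u$ parametrizes a (possibly branched) minimal surface in $X$.

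Next I would derive a monotonicity inequality on $B(x, \mathrm{inj}(x))$ by testing the first variation formula against the radial vector field. A standard Hessian comparison in normal coordinates based at $x$ gives
\[
\bigl\|\mathrm{Hess}(d^2/2) - g\bigr\|_g \leq C_0 K(x)\, s^2 \qquad \text{at distance } s \text{ from } x,
\]
for a universal $C_0$. Plugging $V = \tfrac{1}{2}\nabla (d^2)$ into the first variation formula for the minimal map $u$ restricted to $u^{-1}(\overline{B(x, s)})$, and using Sard's theorem to restrict to those $s$ at which $u$ meets $\partial B(x, s)$ transversally, one obtains in the usual way the differential inequality
\[
\frac{d}{ds}\log\!\left(\frac{\phi(s)}{s^2}\right) \geq -C_0' K(x)\, s
\]
on $(0, r]$, provided $r \leq C_1 K(x)^{-1/2}$ for $C_1$ a small universal constant.

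To extract the lower bound I would integrate this inequality, using that the density of $u$ at $x$ is bounded below. By the local structure theory for $J$-holomorphic curves, $u$ factors locally through a finite-multiplicity complex-analytic branch cover, so since $x \in u(\Sigma)$ the limit
\[
\lim_{s \to 0^+} \frac{\phi(s)}{\pi s^2} = m(u, x)
\]
exists and is a positive integer, hence $\geq 1$. Integrating the monotonicity inequality from $0$ to $r$ then yields
\[
\frac{\phi(r)}{r^2} \geq \pi \exp\!\left(-\frac{C_0'}{2} K(x)\, r^2\right) \geq \frac{\pi}{4},
\]
provided $C_1$ is chosen small enough that $C_0' C_1^2 / 2 \leq \log 4$, giving the claim.

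The step needing the most care is the derivation of the monotonicity inequality with explicit control of how the curvature enters the correction term; this is classical but must be tracked carefully to obtain a clean universal constant $C_1$. By contrast, the calibration identity, the integer density at $x$, and the final integration are all standard facts that can be invoked.
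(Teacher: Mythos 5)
Your argument is correct, but it follows a genuinely different route from the one in the paper. You prove the area lower bound via the almost-monotonicity formula for minimal surfaces: holomorphic curves are calibrated by $\omega$, hence (branched) minimal, and testing the first variation against the radial field $\tfrac12\nabla(d^2)$, with the Hessian comparison $|{\rm Hess}(d^2/2)-g|\le C K(x) s^2$ valid on $B(x,\min\{{\rm inj}(x), cK(x)^{-1/2}\})$, gives $\frac{d}{ds}\log(\phi(s)/s^2)\ge -C'K(x)s$; combined with the Lelong-type density bound $\lim_{s\to 0}\phi(s)/(\pi s^2)\ge 1$ at the interior point mapping to $x$ (here $J$ is integrable, so the image is locally an analytic curve and the density is its multiplicity), integration yields $\phi(r)\ge \pi e^{-C'K(x)r^2/2}r^2\ge \frac{\pi}{4}r^2$ once $C_1$ is small. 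The paper instead uses the Rauch comparison theorem to get $\frac12 g_{Euc}\le g\le 2g_{Euc}$ in normal coordinates on such a ball, reduces to the Euclidean setting, and then invokes the isoperimetric route of \cite{Sik} (Proposition 4.3.1): the cone over $u(\del\Sigma\cap$ the sphere$)$ from a point on it is developable, so the planar isoperimetric inequality for non-simple curves \cite{BanPo, Os} bounds its area by ${\rm Length}^2/4\pi$, and the absolute area-minimality of the holomorphic piece plus the coarea inequality $a'(s)\ge {\rm Length}(s)$ give $a(s)\gtrsim s^2$. Your approach trades the isoperimetric/cone comparison for the Hessian comparison and density argument; it is arguably more self-contained on the Riemannian side and makes the dependence on ${\rm inj}(x)$ and $K(x)$ equally explicit, while the paper's route stays closer to the standard symplectic reference and avoids discussing densities and branch points altogether.

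Two minor points to watch. First, your $\phi(s)=\int_{u^{-1}(B(x,s))}u^*\omega$ counts multiplicity, so strictly speaking you bound the multiplicity-weighted area from below; this is exactly the quantity used when the lemma is applied in Proposition~\ref{prop: cpctSet} (where ${\rm Area}(\Sigma)=\int_\Sigma u^*\omega$ and the balls are disjoint), and if one insists on the area of the image set one can instead run the same monotonicity argument on the image viewed as a multiplicity-one analytic subvariety. Second, the first variation identity must be applied to the pushforward current/varifold so that the finitely many branch points cause no trouble; this is standard (cf.\ \cite{McDSal}) but should be acknowledged rather than left implicit.
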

 \begin{proof}
This is a standard fact in symplectic geometry, which is based on the fact that holomorphic curves are absolutely area minimizing in their homology class; we refer the reader to \cite{Sik} for a transparent proof.  However, since the dependence on the geometry is not explicit there, we sketch the details.  First, using the Rauch theorems one can easily show that there is a universal constant $R>0$ so that, in normal coordinates centered at $x$, we have
 \[
 \frac{1}{2}g_{Euc} \leq g \leq 2 g_{Euc}
 \]
 on any ball of radius $r <  \min\{{\rm inj}(x), C_1K(x)^{-1/2}\}$.  In particular, it suffices to estimate the area of $u(\Sigma)\cap B(x,r)$ with respect to the Euclidean metric.  We now apply a comparison argument that goes back to Blaschke \cite[p. 247]{Blas}.  Since $u(\del \Sigma) \subset \del B(x,r)$, we can choose a point $x_0 \in u(\del \Sigma)$.  Taking the cone of $u(\del \Sigma) $ over $x_0$ yields a surface $D$ developable onto a disk.  One can then apply the isoperimetric inequality in the plane for non-simple curves \cite{BanPo} to conclude that, in the Euclidean space, the area and length satisfy
 \[
 4\pi{\rm Area_{Euc}}(D) \leq {\rm Length}_{Euc}(\del D) = {\rm Length}_{Euc}u(\del \Sigma).
 \] 
We refer the reader to \cite{Os} for a nice discussion of this argument.  We can now apply  \cite[Proposition 4.3.1]{Sik}.
\end{proof}

\begin{prop}\label{prop: cpctSet}
Let $(X,g)$ be a complete K\"ahler manifold. Fix a point $x_0 \in X$ and let $r(x)= d(x_0,x)$.  Suppose that
\begin{enumerate}
\item The sectional curvature of $(X,g)$ is bounded by a constant $C_2$.
\item There is a non-increasing function $f : [0,\infty) \rightarrow \mathbb{R}_{>0}$ such that $\int_{0}^{+\infty} f(s)ds = +\infty$ and
\[
{\rm inj}(x) \geq f(r(x)).
\]
\end{enumerate}
Let $K$ be a compact set in $X$.  If $\Sigma$ is a connected holomorphic curve with $\Sigma \cap K \ne \emptyset$, $\del\Sigma\subset K$ and ${\rm Area}(\Sigma) \leq A$, then there is a constant $e=e(X,K,A)>0$, independent of $\Sigma$, so that $\Sigma \subset B_{e}(K)$.
\end{prop}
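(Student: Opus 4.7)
The plan is to combine the monotonicity bound of Lemma~\ref{lem: holVol} with the divergence hypothesis $\int_0^\infty f(s)\,ds = +\infty$ to produce many pairwise disjoint balls along $\Sigma$, each contributing a definite amount of area, thereby bounding how far $\Sigma$ can stray from $K$. Fix $x_0 \in K$, set $D_K := \mathrm{diam}_g(K)$ and $d_K(y) := d(y,K)$, and define the single scalar radius function
\[
\tilde\rho(t) := \tfrac{1}{2}\min\{f(t+D_K),\, C_1 C_2^{-1/2}\},
\]
which is strictly positive, non-increasing, and still satisfies $\int_0^\infty \tilde\rho(s)\,ds = +\infty$.

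The role of $\tilde\rho$ is that it simultaneously controls all the quantities needed to apply Lemma~\ref{lem: holVol} at a point $y \in \Sigma$ with $d_K(y) = t$: the triangle inequality $r(y) \leq t + D_K$ together with hypothesis (2) yields $\mathrm{inj}(y) \geq f(t+D_K) \geq 2\tilde\rho(t)$; hypothesis (1) yields $C_1 K(y)^{-1/2} \geq C_1 C_2^{-1/2} \geq 2\tilde\rho(t)$; and setting $r_t := \min\{t,\tilde\rho(t)\}$ ensures $B(y, r_t) \cap K = \emptyset$, so $B(y, r_t)$ misses $\partial\Sigma$. Applying Lemma~\ref{lem: holVol} to a component of $\Sigma \cap B(y, r_t/2)$ containing $y$ gives the area lower bound
\[
\mathrm{Area}(\Sigma \cap B(y, r_t/2)) \geq \tfrac{\pi}{16}\, r_t^2.
\]
Now assume $\Sigma$ reaches some distance $D > \tilde\rho(0)$ from $K$ (otherwise there is nothing to prove). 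By connectedness of $\Sigma$ and continuity of $d_K$, the image $d_K(\Sigma) \subset \mathbb{R}$ is connected and meets both $0$ and $D$, so for every $t \in [0,D]$ there is a point $y_t \in \Sigma$ with $d_K(y_t) = t$; for $t \geq \tilde\rho(0)$ we have $r_t = \tilde\rho(t)$.

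Inductively choose $t_0 := \tilde\rho(0)$ and $t_{i+1} := t_i + \tilde\rho(t_i)$, stopping at the largest $N$ with $t_N \leq D$. Since $\tilde\rho$ is non-increasing, the balls $B(y_{t_i}, \tilde\rho(t_i)/2)$ are contained in the disjoint annular layers $\{y \in \Sigma : |d_K(y) - t_i| < \tilde\rho(t_i)/2\}$, hence are pairwise disjoint. Summing the monotonicity estimates gives
\[
A \geq \sum_{i=0}^{N} \tfrac{\pi}{16}\tilde\rho(t_i)^2 = \tfrac{\pi}{16}\sum_{i=0}^{N} \tilde\rho(t_i)(t_{i+1}-t_i) \geq \tfrac{\pi}{16}\int_{t_0}^{t_{N+1}} \tilde\rho(s)\,ds,
\]
using monotonicity of $\tilde\rho$ in the last step. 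Since $\int_0^\infty \tilde\rho\,ds = +\infty$, this forces $t_{N+1}$ and therefore $D$ to be bounded above by a constant $e$ depending only on $A$, $f$, $C_2$, and $D_K$ (i.e.\ on $X$, $K$, and $A$), which is the desired conclusion. The main obstacle is the bookkeeping of packaging into the single non-increasing function $\tilde\rho$ a radius that is small enough to satisfy the injectivity-radius and curvature hypotheses of Lemma~\ref{lem: holVol} and to keep the covering balls disjoint from each other and from $\partial\Sigma$, while still having divergent integral; once this is arranged, the area-summation argument is immediate.
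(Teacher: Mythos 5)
Your proof is correct, and although it rests on the same two pillars as the paper's argument --- the monotonicity bound of Lemma~\ref{lem: holVol} and a packing of disjoint balls along $\Sigma$ forced by connectedness --- the bookkeeping is genuinely different and in one respect sharper. The paper covers $\Sigma$ by a Vitali family of balls of radius $\delta(x)\sim \min\{{\rm inj}(x),1,C_1C_2^{-1/2}\}$, slices $X$ into unit-width annuli $B(x_0,m+1)\setminus B(x_0,m)$, uses connectedness to show every intermediate annulus contains a center, and bounds the area below by $\sum_m \min\{f(m+1),c\}^2$; for that sum to diverge one really wants $\int f^2=+\infty$ (true in the paper's applications, where $f(s)\sim s^{-1/3}$ in complex dimension $2$, but not a formal consequence of $\int f=+\infty$). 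Your chain construction instead spaces the centers $y_{t_i}$ so that $t_{i+1}-t_i=\tilde\rho(t_i)$, whence $\sum_i\tilde\rho(t_i)^2=\sum_i\tilde\rho(t_i)(t_{i+1}-t_i)$ is a lower Riemann sum for $\int\tilde\rho$, and the hypothesis $\int f=+\infty$ suffices verbatim; working with $d_K$ rather than $d(x_0,\cdot)$ also replaces the paper's claim about nonempty intermediate annuli by the intermediate value property of $d_K$ on the connected set $\Sigma$. Two small points to tidy: the base point $x_0$ in the proposition need not lie in $K$, so replace $D_K$ by $D_K+d(x_0,K)$ in the definition of $\tilde\rho$ (harmless, since $e$ may depend on $K$); and when invoking Lemma~\ref{lem: holVol} you should note that the component of $\Sigma\cap\overline{B}(y_{t_i},r_t/2)$ through $y_{t_i}$ does reach $\partial B(y_{t_i},r_t/2)$ --- otherwise it would be open and closed in $\Sigma$, hence all of $\Sigma$, contradicting $\Sigma\cap K\neq\emptyset$ while the ball misses $K$ --- which is the same implicit step the paper takes with its Vitali balls.
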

\begin{proof}
Fix $x_0$ and a constant $A$ as in the statement of the proposition.  We may as well assume that $K=B(x_0, R_0)$ for some $R_0 \in \mathbb{N}$.  For $m\geq R_0$ let
\[
\Sigma_m =\Sigma \cap \big( B(x_0, m+1)\setminus B(x_0, m)\big)
\]
and note that each of these sets is either connected or empty.  We will show that there is a constant $N\in \mathbb{N}$, $N\geq R_0+1$ independent of $\Sigma$ and depending only on $X, A$ with the property the following property; if ${\rm Area}(\Sigma) \leq A$, then $\Sigma_m = \emptyset$ for $m\geq N$.

For the sake of contradiction, let us suppose that $\Sigma_N \ne \emptyset$. Cover $\Sigma$ by balls of radius $\delta(x) = 5^{-1}\min\{ {\rm inj}(x), \frac{1}{2}, C_1C_2^{-1/2}\}$ where $C_1$ is the constant appearing in Lemma~\ref{lem: holVol}.  By the Vitali covering lemma we can extract a countable collection of points $x_j$ such that $B(x_j, \delta(x_j))$ are disjoint and 
\[
\Sigma \subset \bigcup_{j=0}^{\infty} B(x_j, 5\delta(x_j)).
\]
Define
\[
N_m = \{j\in \mathbb{N} : x_j \in \Sigma_m \}, \qquad n_m = \#N_m.
\]
Since $\delta(x_j) <1$ for each $j\in N_m$, we have $B(x_j, \delta(x_j)) \cap K=\emptyset$ and hence, since $\del\Sigma \subset K$, $B(x_j, \delta(x_j))$ is disjoint from $\del \Sigma$.  We can therefore apply Lemma~\ref{lem: holVol}, in combination with the fact that the balls $B(x_j, \delta(x_j))$ are disjoint, to obtain
\begin{equation}\label{eq: areaBnd}
\begin{aligned}
{\rm Area}(\Sigma) &\geq \sum _{R_0< m}\sum_{j \in N_m} {\rm Vol}(B(x_j, \delta(x_j)) \cap \Sigma) \\
&\geq  \frac{\pi}{4}\sum _{R_0 <m}\sum_{j \in N_m} \delta(x_j)^2.
\end{aligned}
\end{equation}
Next, we claim that if $m' >m+1 > R_0$ and $n_{m'}\ne 0 \ne n_{m}$, then for every $m<m''<m'$, we have $n_{m''}\ne0$.  This follows easily from connectedness, since if $\Sigma_{m'} \ne \emptyset$ and $\Sigma_{m} \ne \emptyset$, then the same is true for every $m'' \in [m, m']$.  On the other hand, since $\delta(x_j)<\frac{1}{10}$, it is easy to see that 
\[
\Sigma_{m''} \not\subset \bigcup_{j \in N_m\cup N_{m'}} B(x_j, 5\delta(x_j)).
\]
Concretely, no point in $\Sigma \cap B(x_0, m''+\frac{1}{2})$ can be contained in the set on the right-hand side.  But, since $B(x_j, 5\delta(x_j))$ cover $\Sigma$, we conclude that $N_{m''}\ne \emptyset$.  Now, since we assumed that $\Sigma_N \ne \emptyset$, we get
\[
 {\rm Area}(\Sigma) \geq  \frac{\pi}{4}\sum _{R_0 <m <N}\min_{j\in N_m} \delta(x_j)^2.
\]
On the other hand, for each $j \in N_m$ we have
\[
\delta(x_j) \geq 5^{-1}\min\{ f(m+1), \frac{1}{2}, C_1C_2^{-1/2}\}.
\]
Thanks to the assumption that $\int_{0}^{\infty}f(s)ds =\infty$, we conclude that there is a constant $N'\in \mathbb{N}$, depending only on $C_1, C_2, f$ so that if $N\geq N'$ then
\[
{\rm Area}(\Sigma) \geq A+1,
\]
a contradiction.
\end{proof}

\begin{rk}
The integrability of the complex structure is not needed in the proof of Proposition~\ref{prop: cpctSet}.  The result holds even if $J$ is an almost complex structure and the symplectic form $\omega$ is ``uniformly" $J$-tame; see \cite{Sik}.  Y. Groman has pointed out to us that he independently obtained a similar result \cite[Theorem 4.10]{YGro}.
\end{rk}

We can now state the main theorem of this section, whose proof and consequences will occupy the remainder of this section.

\begin{thm}\label{thm: oneToMany}
Let $(X,g)$ be a complete hyper-K\"ahler surface. Fix a point $x_0 \in X$ and let $r(x)= d(x_0,x)$.  Suppose that
\begin{enumerate}
\item The sectional curvature of $(X,g)$ is bounded.
\item There is a non-increasing function $f : [0,\infty) \rightarrow \mathbb{R}_{>0}$ such that $\int_{0}^{+\infty} f(s)ds = +\infty$ and
\[
{\rm inj}(x) \geq f(r(x)).
\]
\item $X$ has finite Euler characteristic; $\chi(X) < +\infty$.
\end{enumerate}
Assume that there exists a (possibly immersed) special Lagrangian torus $L$ with $[L]\in H_{2}(X,\mathbb{Z})$ primitive and $[L]^2=0$.  Then
\begin{enumerate}
\item $X$ admits a special Lagrangian fibration with $L$ as one of the fibers.
\item There are at most $\chi(X)$ singular fibers, each classified by Kodaira and no fiber is multiple.
\item $L$ is a smooth embedded torus.
\end{enumerate}
\end{thm}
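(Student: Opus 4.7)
My plan is to reduce the problem to holomorphic geometry via hyper-K\"ahler rotation and then produce the fibration by an open-closed argument, with Proposition~\ref{prop: cpctSet} and Gromov compactness controlling the closedness step. Concretely, I pass to the complex structure $I$ coming from the hyper-K\"ahler triple $(\mathrm{Re}\,\Omega, \omega, \mathrm{Im}\,\Omega)$; in $(X,g,I)$ the form $\omega_I = \mathrm{Re}\,\Omega$ is K\"ahler and $\Omega_I = \omega - \sqrt{-1}\,\mathrm{Im}\,\Omega$ is a nonvanishing holomorphic $2$-form, so $(X,I)$ is Calabi--Yau with $K_{(X,I)}$ trivial. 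By Wirtinger, $L$ is a (possibly immersed) holomorphic elliptic curve in $(X,I)$ of class $[L]$, and conversely any holomorphic curve in $(X,I)$ calibrated by $\omega_I$ is special Lagrangian in $(X,J)$. Thus it suffices to produce a proper holomorphic fibration $\pi:(X,I)\to B$ whose fibers are effective cycles in class $[L]$, and then rotate back. Claim (3) is then immediate from the embeddedness statement already contained in Lemma~\ref{lem: openFib}.

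The main step is to show that
\[
S := \{\,x\in X : x\text{ lies on some effective holomorphic }1\text{-cycle in } (X,I) \text{ of class }[L]\,\}
\]
is open, closed, and nonempty, hence equals $X$. Openness at a smooth fiber is immediate from Lemma~\ref{lem: openFib}, which produces a local $1$-complex-parameter family of smooth elliptic curves covering a neighborhood. Openness at a singular cycle $C$ in $[L]$ follows from deformation theory: adjunction with $K_X = 0$ and $[C]^2 = 0$ gives $N_{C/X}\simeq K_C$ and arithmetic genus $1$, and the $1$-parameter family of nearby smooth fibers provided by Lemma~\ref{lem: openFib} degenerates onto $C$ and covers a full neighborhood of every point of $C$. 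Closedness is the heart of the analytic argument: if $x_n\to x_\infty$ with $x_n$ on a cycle $C_n$ of class $[L]$, then the area $\int_{C_n}\omega_I = [\omega_I]\cdot[L]$ is a fixed finite constant, so Proposition~\ref{prop: cpctSet} confines all $C_n$ to a single compact subset of $X$, and Gromov's compactness theorem extracts a subsequential limit cycle $C_\infty$ of class $[L]$ containing $x_\infty$.

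Once $S = X$ is established, positivity of intersection of distinct irreducible holomorphic curves combined with $[L]^2 = 0$ forces any two distinct cycles in $[L]$ to be disjoint; the only alternative is sharing an irreducible component, which primitivity of $[L]$ rules out by precluding a splitting of $[L]$ into integral pieces with the same total support. Through each $x \in X$ therefore passes a unique cycle, defining $\pi:X\to B$. Every smooth fiber has arithmetic genus $1$ with trivial normal bundle, hence is a smooth embedded elliptic curve; every singular fiber is an effective cycle of arithmetic genus $1$ on a surface with trivial canonical bundle and self-intersection zero, and so falls into Kodaira's list $I_n$, $II$, $III$, $IV$, $I_n^*$, $II^*$, $III^*$, $IV^*$. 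Rotating from $I$ back to $J$ produces the desired special Lagrangian fibration.

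For the counting assertion in (2), I would invoke the classical Euler characteristic identity $\chi(X) = \sum_s e(F_s)$ summed over singular fibers of an elliptic fibration, noting that every Kodaira singular fiber satisfies $e(F_s) \geq 1$ while smooth elliptic fibers contribute $0$; this bounds the number of singular fibers by $\chi(X)$. A multiple fiber of multiplicity $m \geq 2$ would give $[L] = m[F_{\mathrm{red}}]$ with $[F_{\mathrm{red}}]$ integral, contradicting primitivity of $[L]$, so no fiber is multiple. The hardest step, in my view, is the closedness in the second paragraph: one must ensure simultaneously that no mass escapes to infinity in the Gromov limit --- which is exactly what Proposition~\ref{prop: cpctSet} delivers, via the bounded curvature and slowly-decaying injectivity-radius hypotheses --- and that bubbling does not leave the limit cycle in a strictly smaller class than $[L]$, which is standard from the preservation of total homology under Gromov limits of closed curves of fixed area.
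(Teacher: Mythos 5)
Your skeleton (hyper-K\"ahler rotation to $I$, holomorphic cycles in class $[L]$, open--closed argument with Proposition~\ref{prop: cpctSet} and Gromov compactness giving closedness) matches the paper's, but there is a genuine gap at the step you dismiss in one sentence: openness of your set $S$ at a \emph{singular} cycle. You claim that adjunction ($K_X=\mathcal{O}_X$, $[C]^2=0$) plus the nearby smooth fibers from Lemma~\ref{lem: openFib} force the cycles in class $[L]$ to cover a full neighborhood of every point of $C$. This does not follow from soft deformation theory, because the deformation problem at a singular fiber is obstructed: for a cuspidal rational curve $u:\mathbb{P}^1\to C$ in class $[L]$ one has $u^{*}TX\cong\mathcal{O}_{\mathbb{P}^1}(3)\oplus\mathcal{O}_{\mathbb{P}^1}(-3)$, so the linearized Cauchy--Riemann operator has a $4$-dimensional cokernel and the moduli space near $u$ need not be smooth or of expected dimension. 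Nothing in your argument prevents the smooth fibers from failing to sweep out a punctured neighborhood of $C$, nor does it prevent infinitely many distinct singular cycles in class $[L]$ from accumulating on $C$ --- and either failure destroys both the fibration structure and your Euler-characteristic count, which presupposes isolated singular fibers with the standard Kodaira local structure. This accumulation problem is the technical heart of Section~\ref{sec: sLagFib}: Lemma~\ref{lem: multAcc} handles reducible fibers, Proposition~\ref{prop: nodalAcc} handles nodal fibers via the twistor space (where $u^{*}T\mathcal{X}|_{X}\cong\mathcal{O}(2)\oplus\mathcal{O}(-1)\oplus\mathcal{O}(-1)$ yields an unobstructed $6$-dimensional moduli space acted on by $PSL(2,\mathbb{C})$), and Proposition~\ref{prop: cuspAcc} handles cuspidal fibers by introducing an auxiliary family of almost complex structures and deriving a contradiction with $[C]^2=0$ from a sweeping argument; only then does Proposition~\ref{prop: topFib} conclude $X=X_1\cup X_2$, by a path argument through the complement of the Hausdorff-codimension-two singular set rather than by openness at singular cycles.

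A second, smaller omission: even granting $S=X$ and disjointness of fibers, the conclusions in (2) require $\pi$ to be a holomorphic fibration over a one-dimensional complex base before you can quote Kodaira's classification and the identity $\chi(X)=\sum_s e(F_s)$ with $e(F_s)\geq 1$. In the paper this is Lemma~\ref{lem: fibration}, which constructs the smooth and complex structures on $B$ across singular values using a multiplicity-one component of each singular fiber (guaranteed by Proposition~\ref{prop: singClass}), a local section via the normal exponential map, and the Riemann extension theorem; your disjointness claim based on ``primitivity precludes shared components'' is also vaguer than the paper's combination of the Zariski-type Lemma~\ref{lem: quadForm} and local Mayer--Vietoris arguments. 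The closedness step you identify as hardest is in fact the part that goes through essentially as you describe; the real work you have skipped is ruling out accumulation of singular fibers and building the fibration map.
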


\begin{rk}\label{rk: multFib}
The assumption that $[L]$ is primitive in $H_{2}(X,\mathbb{Z})$ is not fundamental and can be weakened.  However, since it holds in all cases we have considered and streamlines parts of the argument, we have included it for convenience.
\end{rk}

Let us briefly recall the current state of affairs.  We have a complete non-compact Calabi-Yau surface $(X,g,J, \omega_J, \Omega_{J})$ with bounded curvature, which we can think of as a Tian-Yau space of either Type I or Type II, though our results apply in a rather general setting.  In addition, $(X,g, J, \omega_J, \Omega_{J})$ contains a special Lagrangian torus $L$, with $[L]^2=0$.  By Lemma~\ref{lem: openFib}, this special Lagrangian generates a local fibration.  We now hyper-K\"ahler rotate to a complex structure $I$ so that $(X,g,I, \omega_I, \Omega_{I})$ is again a Calabi-Yau surface, but now $L$ becomes a holomorphic submanifold of genus $1$ and $L$ has a neighborhood admitting a holomorphic genus $1$ fibration.  We are going to consider the moduli space of such submanifolds.

Let $\Sigma$ denote a smooth surface of genus $1$ and consider the space
\[
\mathcal{M}([L], I) = \{ u: (\Sigma, j) \rightarrow (X,g, I): u \text{ is holomorphic, } u_{*}[\Sigma] = [L] \},
\]
the moduli space of parametrized $I$-holomorphic maps from $\Sigma$ into $(X, g, I, \omega_I)$ having image homologous to our fixed elliptic curve $L$.  Note that we are not fixing the complex structure $j$ on $\Sigma$, which we allow to vary over the moduli space.   We let $\mathcal{M}_1$ denote the connected component of $\mathcal{M}$ containing the fixed holomorphic curve $L$ and let $X_1 \subset X$ be the set of points lying on $u(\Sigma)$ for some $u\in \mathcal{M}_1$.  By Lemma~\ref{lem: openFib}, $X_1$ is open and thanks to Hitchin \cite{Hit} $\mathcal{M}_1$ has a canonical complex structure.  The goal of the remainder of this section will be to prove that $X\setminus X_1$ consists of finitely many singular elliptic curves, each classified by Kodaira.  For simplicity, denote by $X_2= \del X_1$ and note that $X_2$ is closed.  As a first step we observe

\begin{lem}
If $p \in \del X_1$, then there is an $I$-holomorphic cusp curve  $u:\cup_{\alpha}\Sigma_{\alpha} \rightarrow (X,g,I)$ with $p \in \cup_{\alpha}u(\Sigma_{\alpha})$.  Furthermore, $u\notin \mathcal{M}_1$.
\end{lem}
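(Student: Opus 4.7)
The plan is to realize the desired cusp curve as a Gromov limit of smooth elliptic curves in $\mathcal{M}_1$ whose images sweep out points converging to $p$, and then to use Lemma~\ref{lem: openFib} together with the openness of $X_1$ to force the limit to degenerate.

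First I would pick a sequence $p_n\to p$ with $p_n\in X_1$ and, by definition of $X_1$, choose $u_n\in \mathcal{M}_1$ with $p_n\in u_n(\Sigma)$. Each $u_n$ represents the fixed homology class $[L]$, so because $\omega_I$ is closed the area $\int_\Sigma u_n^{*}\omega_I=[\omega_I]\cdot[L]$ is independent of $n$. Fixing a compact neighborhood $K$ of $p$ containing every $p_n$, each $u_n(\Sigma)$ meets $K$, has empty boundary (so trivially $\partial u_n(\Sigma)\subset K$), and has this uniformly bounded area. The geometric hypotheses $(1)$ and $(2)$ placed on $(X,g)$ in Theorem~\ref{thm: oneToMany} are precisely those required by Proposition~\ref{prop: cpctSet}, which I would invoke to confine the images $u_n(\Sigma)$ to a single compact subset $B_e(K)\subset X$ independent of $n$.

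With the $u_n$ now landing in a fixed compact piece of the K\"ahler manifold $(X,g,I)$ with uniformly bounded area, Gromov's compactness theorem (as in, e.g., \cite{Sik}) provides, after passing to a subsequence and reparametrizing the domains, a stable holomorphic limit $u:\bigcup_\alpha\Sigma_\alpha\to (X,g,I)$ whose images converge in Hausdorff topology to $u_n(\Sigma)$; together with $p_n\to p$ this forces $p\in \bigcup_\alpha u(\Sigma_\alpha)$. To finish I would rule out the possibility that $u$ is smooth and irreducible: if $u$ had a single smooth genus-one domain representing $[L]$, then hyper-K\"ahler rotating back to the complex structure $J$ would produce a (possibly immersed) special Lagrangian torus with self-intersection zero, and Lemma~\ref{lem: openFib} would embed $u(\Sigma)$ in an open neighborhood of $X$ foliated by nearby smooth deformations; since $u$ is a Gromov limit of elements of the connected component $\mathcal{M}_1$, these nearby deformations would themselves lie in $\mathcal{M}_1$, making $p$ an interior point of $X_1$ and contradicting $p\in\partial X_1$. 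The main technical worry is applying Gromov compactness in the non-compact target $X$, but this is exactly what Proposition~\ref{prop: cpctSet} was set up to handle; once the images are confined to a fixed compact set, the standard compactness machinery applies verbatim.
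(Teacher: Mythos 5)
Your proposal is correct and follows essentially the same route as the paper: choose curves in $\mathcal{M}_1$ through points $p_n\to p$, use the fixed energy $[\omega_I]\cdot[L]$ together with Proposition~\ref{prop: cpctSet} to confine their images to a compact set, apply Gromov--Sacks--Uhlenbeck compactness to extract a stable limit through $p$, and rule out a smooth irreducible limit via the deformation theory of Lemma~\ref{lem: openFib}, which would place $p$ in the interior of $X_1$. No substantive differences from the paper's argument.
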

\begin{proof}
This follows immediately from compactness theory for holomorphic curves.  If $u_i: \Sigma \rightarrow (X,g,I)$ is a sequence of holomorphic maps such with points $p_i\in u_i(\Sigma)$ and $p_i\rightarrow p$, then by Proposition~\ref{prop: cpctSet}, the holomorphic curves $u_i(\Sigma)$ all lie in a fixed compact set.  Since every curve in $\mathcal{M}_{1}$ has the same volume and hence energy, the standard Gromov-Sacks-Uhlenbeck compactness theory  \cite{McDSal, Ye, Gr, SacksUhl} implies that $u_i$ converges to a cusp curve (or stable curve) $u :\cup_{\alpha}\Sigma_{\alpha}\rightarrow (X,g,I)$.  Here $\cup_{\alpha}\Sigma_{\alpha}$ is some tree of Riemann surfaces; its precise structure is irrelevant for our current considerations.  If $\cup_{\alpha}\Sigma_{\alpha} = \Sigma$ irreducible, then $u:\Sigma \rightarrow (X,g,I)$ and by the deformation theory of holomorphic Lagrangians, Lemma~\ref{lem: openFib}, we obtain that $p$ is in the interior of $X_1$, a contradiction. 
\end{proof}

Let $\mathcal{M}_2$ denote the space parametrizing $I$-holomorphic cusp curves, or stable maps, appearing as limits of holomorphic curves in $X_1$.  Recall \cite{Ye} that a cusp curve in $(X,I)$ is a disjoint union $\cup_{\alpha}\Sigma_{\alpha}$ of finitely many Riemann surfaces $\Sigma_{\alpha}$, together with an identification of a finite number of points (called ``nodes") and a holomorphic curve $u:\cup_{\alpha}\Sigma_{\alpha}\rightarrow X$, compatible with the identification.  Any such holomorphic curve has connected image.  Recall that a holomorphic curve $u:\Sigma \rightarrow X$ is called multiply covered if $u= \hat{u}\circ \pi$ where $\pi:\Sigma \rightarrow \Sigma'$ is a holomorphic branched cover of degree larger than $1$. $u$ is called simple if it is not multiply covered.  If $u$ is a multiply covered holomorphic map from $\mathbb{P}^1$, then by the Riemann-Hurwitz formula, $\Sigma'= \mathbb{P}^1$ also.

Given a holomorphic cusp curve $C= u(\Sigma)$ we will denote by $C^{(k)}$ the irreducible components of $C$, $n_k$ their multiplicities and denote by
\[
u_{k}:\Sigma_k \rightarrow C^{(k)}
\]
the associated simple holomorphic curve.

\begin{lem}\label{lem: multBnd}
Suppose $x \in X_2$.  Then there is a number $N(x) \in \mathbb{N}$, depending only on $x$, such that any component of a cusp curve containing $x$ and corresponding to a point $u \in \mathcal{M}_2$ has number of components (counted with multiplicity) bounded by  $N(x)$.
\end{lem}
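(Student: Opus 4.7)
The plan is to combine Gromov--Sacks--Uhlenbeck conservation of area under bubbling with the monotonicity Lemma~\ref{lem: holVol} applied on a compact set into which the entire cusp curve is confined by Proposition~\ref{prop: cpctSet}. The resulting uniform positive lower bound on the area of each irreducible component, balanced against the fixed total area of the fibre class, will cap the number of components counted with multiplicity.

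Fix $x\in X_2$ and let $A$ denote the area of any map in $\mathcal{M}_1$ (all elements are homologous to $L$, hence share the same symplectic area). By the construction of $\mathcal{M}_2$, any $u\in\mathcal{M}_2$ through $x$ arises as a Gromov--Sacks--Uhlenbeck limit of a sequence $u_i\in\mathcal{M}_1$ of area $A$, passing through points $x_i\to x$. Conservation of symplectic area under bubbling yields
\[
\sum_k n_k\cdot\mathrm{Area}(C^{(k)}) \;=\; A,
\]
so it suffices to bound $\mathrm{Area}(C^{(k)})$ from below by a positive constant $c(x)>0$ depending only on $x$. To produce such a bound I first confine the full cusp curve: applying Proposition~\ref{prop: cpctSet} to each $u_i(\Sigma)$ (which is connected with empty boundary) with $K=\overline{B(x,1)}$, and taking $i$ large enough that $x_i\in K$, produces a compact set $K_x\subset X$ depending only on $x$ and $A$ that contains every $u_i(\Sigma)$; passing to the Gromov limit places $u(\cup_\alpha\Sigma_\alpha)\subset K_x$ as well.

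On $K_x$ the injectivity radius function attains a positive minimum $\iota_x>0$, while $|Rm|$ is uniformly bounded on $X$ by hypothesis. Setting
\[
r_x := \min\Bigl\{\iota_x,\; C_1\bigl(\sup_{X}|Rm|\bigr)^{-1/2}\Bigr\} \;>\; 0,
\]
with $C_1$ the universal constant from Lemma~\ref{lem: holVol}, for each irreducible component $C^{(k)}$ I choose a point $p_k\in C^{(k)}\subset K_x$ and restrict the simple parametrization $u_k:\Sigma_k\to C^{(k)}$ to the connected component of $u_k^{-1}(B(p_k,r_x))$ meeting $u_k^{-1}(p_k)$. Its boundary maps into $\partial B(p_k,r_x)$, so Lemma~\ref{lem: holVol} gives
\[
\mathrm{Area}(C^{(k)}) \;\geq\; \frac{\pi}{4}r_x^2 \;=:\; c(x) \;>\; 0.
\]
Substituting into the area identity above produces $\sum_k n_k \leq A/c(x)$, and we may take $N(x):=\lceil A/c(x)\rceil$.

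The only step requiring any real care is the confinement of the full limiting cusp curve to $K_x$: Proposition~\ref{prop: cpctSet} is stated for a single connected holomorphic curve with boundary in $K$, and the cusp curve itself has multiple components joined at nodes. I handle this by applying the proposition to the approximating sequence, where each $u_i(\Sigma)$ is connected and without boundary, and then passing to the Gromov--Hausdorff limit inside the resulting compact set. The remainder is a direct combination of monotonicity and bubble-tree area conservation, so I do not anticipate any substantive obstacle.
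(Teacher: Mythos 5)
Your argument is correct and is essentially the paper's own proof: the paper likewise fixes the total area $\int u_i^*\omega_I={\rm Vol}_g(L)$, confines the approximating curves $u_i(\Sigma)$ (and hence the limit cusp curve) in a compact set via Proposition~\ref{prop: cpctSet}, and then bounds each component's area below by a positive constant $\hbar(x)$ using the monotonicity estimate (cited there as \cite[Proposition 4.3.1]{Sik}, which is the content of Lemma~\ref{lem: holVol}). Your write-up only differs in spelling out these steps slightly more explicitly, so there is nothing to correct.
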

\begin{proof}
Suppose $u_{i} \in \mathcal{M}_1$ is a sequence of holomorphic maps Gromov-Sacks-Uhlenbeck converging to a stable map $u$ such that $x\in \mbox{Im}(u)$.  Write the image of $u$ as $C= \sum_{k} n_k C^{(k)}$ for $C^{(k)}$ reduced irreducible holomorphic curves.  Notice that $\mbox{Vol}_g\big(u_i(\Sigma_i)\big)=\int_{\Sigma}u_i^*\omega_I=\mbox{Vol}_g(L)$ is fixed since the $u_i$ are homotopic to each other. By Proposition~\ref{prop: cpctSet}, all the curves $u_i(\Sigma)$ fall in a fixed compact set $K \subset X$ and hence $C\subset K$.  From the convergence we have
\[
{\rm Vol}_{g}(L) = \int_{C} \omega_I = \sum_{k}n_{k} \int_{(C_k)_{reg}}\omega_{I}.
\]
On the other hand, since the sectional curvature and injectivity radius are bounded below, $\int_{(C_{k})_{reg}} \omega_I \geq \hbar >0$ for some constant $\hbar$ depending on $x$ \cite[Proposition 4.3.1]{Sik}.  The lemma follows.
\end{proof}

Since $L$ moves in a local fibration, the general fiber is disjoint from any singular curve $C$ obtained as a limit of curves in $\mathcal{M}_1$.  Thus, if we write $[C]=\sum_k n_k[C^{(k)}]$, then $0=[L].[C^{(k)}]=[C].[C^{(k)}]$ (see also \cite[Proposition III.8.2]{BPV}).  Thus, we obtain

\begin{lem}\label{lem: quadForm}
Suppose that $C= \sum_{k=1}^{m} n_k C^{(k)}$ is a singular holomorphic curve obtained as a Gromov-Sacks-Uhlenbeck limit of holomorphic curves in $\mathcal{M}_1$.  Let $Q$ denote the negative intersection form on the components $[C^{(k)}]$ with components $q_{ij} = -[C^{(i)}].[C^{(j)}]$.  Then $Q$ is positive semi-definite and the annihilator of $Q$ is one-dimensional and spanned by $[C] = \sum_{k}n_k[C^{(k)}]$.  In particular, if there is a component $[C^{(\ell)}]$ such that $[C^{(\ell)}]^2=0$, then $C =n_{\ell}C^{(\ell)}$ has only one component.

\end{lem}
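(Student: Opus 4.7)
The plan is to adapt Zariski's classical lemma for elliptic fibrations on algebraic surfaces to the present (possibly non-algebraic and noncompact) setting. The argument breaks into three steps: (i) show that the multiplicity vector $(n_1,\ldots,n_m)$ lies in $\ker(-Q)$; (ii) combine positivity of holomorphic intersections with a standard algebraic identity to conclude $Q\succeq 0$; (iii) use connectedness of $C$ to show the kernel is one-dimensional and spanned by $[C]$.

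For step (i), I would take a nearby smooth fiber $L'\in\mathcal{M}_1$ produced by Lemma~\ref{lem: openFib}, so that $[L']=[L]=[C]$ in $H_2(X,\mathbb{Z})$ and $[L']^2=0$. Since each $C^{(k)}$ is a reduced irreducible $I$-holomorphic curve distinct from the smooth torus $L'$, positivity of intersections of distinct holomorphic curves gives $[L']\cdot[C^{(k)}]\geq 0$ for every $k$. Summing, $0 = [L']^2 = [L']\cdot [C] = \sum_k n_k\, [L']\cdot [C^{(k)}]$, and since each $n_k\ge 1$ and each summand is nonnegative, every term vanishes. Hence $[C]\cdot[C^{(k)}]=0$ for all $k$, so $(n_1,\ldots,n_m)\in\ker(-Q)$.

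For steps (ii) and (iii), note first that $q_{ij}\leq 0$ for $i\ne j$ by the same positivity. For $a=(a_k)\in\mathbb{R}^m$, set $b_k=a_k/n_k$ and $D=\sum_k a_k[C^{(k)}]$. The identity $2b_kb_l=b_k^2+b_l^2-(b_k-b_l)^2$ together with the symmetry of the intersection pairing rewrites
\[
D^2 \;=\; \sum_k b_k^2 n_k\,[C^{(k)}]\cdot[C] \;-\; \tfrac{1}{2}\sum_{k\ne l}(b_k-b_l)^2 n_k n_l\,[C^{(k)}]\cdot[C^{(l)}].
\]
The first sum vanishes by step (i) and the second is $\le 0$ by positivity, so $a^{T}Qa=-D^2\ge 0$, proving $Q\succeq 0$. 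Equality $D^2=0$ forces $(b_k-b_l)^2 n_k n_l[C^{(k)}]\cdot[C^{(l)}]=0$ for all $k\ne l$; since $C$ is connected (as the image of the connected domain of a GSU limit) and every node of $C$ contributes positively to some $[C^{(k)}]\cdot[C^{(l)}]$, the dual intersection graph is connected, forcing $b$ to be constant. Thus $D\propto[C]$ and the kernel is one-dimensional, spanned by $(n_1,\ldots,n_m)$. The ``in particular" statement is then immediate: if $[C^{(\ell)}]^2=0$ then $q_{\ell\ell}=0$, so by positive semi-definiteness (Cauchy--Schwarz applied to $Q$) the entire $\ell$-th row and column of $Q$ vanish, placing $e_\ell\in\ker Q$; one-dimensionality forces the multiplicity vector to be proportional to $e_\ell$, and since each $n_k\geq 1$ this can only hold if $m=1$, giving $C=n_\ell C^{(\ell)}$.

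The step that requires the most care, I expect, is the positivity claim $[L']\cdot[C^{(k)}]\geq 0$ in step (i): one must check that $L'$ and each $C^{(k)}$ are genuinely distinct reduced irreducible holomorphic subvarieties so that local intersection multiplicities are all nonnegative integers. This is automatic when $C$ has more than one component, and in the single-component case the primitivity hypothesis $[L]$ primitive (together with the fact that $C$ is an honest boundary point of $\mathcal{M}_1$ in the GSU compactification, not itself a smooth fiber) rules out the potentially degenerate possibility $C^{(1)}=L'$ with $n_1>1$. Once positivity is in hand, the remainder of the argument is just the usual Zariski lemma transcribed into the current setting.
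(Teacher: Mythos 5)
Your proof is correct and follows essentially the same route as the paper: both establish $[C]\cdot[C^{(k)}]=0$ using a nearby smooth fiber homologous to $[C]$ with square zero, note the off-diagonal nonpositivity $q_{kl}\le 0$ from positivity of intersections of distinct irreducible holomorphic curves, and invoke connectedness of $C$ to conclude via the standard Zariski-type quadratic form lemma. The only difference is cosmetic: the paper cites \cite[Lemma I.2.10]{BPV} for the linear-algebra step (and gets $[C]\cdot[C^{(k)}]=0$ from disjointness of the general fiber from $C$ rather than your positivity-plus-summation argument), whereas you write out that lemma's proof explicitly, which is a perfectly valid substitute.
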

\begin{proof}
Consider the vector space over $\mathbb{Q}$ spanned by the classes $[C^{(k)}]$ with the quadratic form defined as above.

Then we have $q_{pk} \leq 0$ for all $p \ne kl$.  Furthermore, since $C$ is connected, there is no partition of $\{1,\ldots, m\}$ into non-empty disjoint sets $P, K$ so that $q_{pk}=0$ for all $p\in P$, $k \in K$.  Finally, since
\[
[C] = \sum_{k=1}^{m}n_k [C^{(k)}]
\]
for $n_k >0$ and $[C]^2=0$, we can apply \cite[Lemma I.2.10]{BPV} to conclude that $Q\geq 0$ and the annihilator of $Q$ is spanned by $[C]$.  The lemma follows.
\end{proof}

We can now classify the singular fibers appearing in $\mathcal{M}_2$.

\begin{prop}\label{prop: singClass}
Suppose $C$ is a singular holomorphic curve obtained as a Gromov-Sacks-Uhlenbeck limit of holomorphic curves in $\mathcal{M}_1$.  Write $C= \sum_{k=1}^{m}n_k C^{(k)}$, with $C_{k}$ reduced and irreducible.  Then the components $C^{(k)}$ satisfy $[C^{(k)}]^2= 0,-2$, and
\begin{enumerate}
\item if $[C^{(k)}]^2=0$, for some $k$, then $C$ has one component and is a singular fiber of Kodaira type $I_1$ or $II$.
\item if $[C^{(k)}]^2=-2$, for all $k$, then $C$ is a singular fiber of Kodaira  type $III, I_n, IV, I_0^*, I_n^*, IV^*, III^*$ or $II^*$.
\end{enumerate}
\end{prop}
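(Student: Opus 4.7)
The plan is to combine the adjunction formula with Lemma~\ref{lem: quadForm} and then invoke Kodaira's classical classification of singular fibers of elliptic fibrations. I will first apply adjunction componentwise. Since $(X,I)$ is Calabi-Yau with trivial canonical bundle, adjunction yields
\[
p_a(C^{(k)}) = 1 + \tfrac{1}{2}[C^{(k)}]^2
\]
for each reduced irreducible component. Since $p_a(C^{(k)}) \geq 0$ and Lemma~\ref{lem: quadForm} gives $[C^{(k)}]^2 \leq 0$, we conclude $[C^{(k)}]^2 \in \{0,-2\}$.

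For case (1), the final assertion of Lemma~\ref{lem: quadForm} applied to any component with vanishing self-intersection forces $C = n_\ell C^{(\ell)}$ to have a single component. Since $[L]=[C]=n_\ell[C^{(\ell)}]$ is primitive in $H_2(X,\mathbb{Z})$ and $[C^{(\ell)}]\neq 0$, we must have $n_\ell=1$, so $C = C^{(\ell)}$ is a reduced irreducible singular curve of arithmetic genus one. The decomposition $p_a = g_{\text{geom}} + \delta$ then forces $g_{\text{geom}}=0$ and $\delta = 1$, so the singular locus of $C$ consists of exactly one node (Kodaira type $I_1$) or one cusp (Kodaira type $II$), as claimed.

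For case (2), adjunction gives $p_a(C^{(k)}) = 0$, which together with reducedness forces each $C^{(k)}$ to be a smooth rational curve with $[C^{(k)}]^2 = -2$. By Lemma~\ref{lem: quadForm}, the connected effective divisor $C = \sum n_k C^{(k)}$ satisfies $C \cdot C^{(k)}=0$ for every $k$, so $[C]$ lies in the radical of the intersection pairing on the span of $\{[C^{(k)}]\}$, and that radical is one-dimensional. This is precisely the combinatorial setup of Kodaira's classification \cite{BPV}: the weighted dual graph of the $C^{(k)}$ must be an affine Dynkin diagram of type $\tilde A_{n-1}$, $\tilde D_{n+4}$, $\tilde E_6$, $\tilde E_7$, or $\tilde E_8$, corresponding to Kodaira types $I_n$, $I_n^*$, $IV^*$, $III^*$, $II^*$, together with the degenerate configurations in which two or three smooth rational curves meet at a single point (tangentially, resp.\ transversally), yielding types $III$ and $IV$. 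Primitivity of $[L]$ rules out the possibility that the multiplicities $n_k$ share a common factor, i.e.\ rules out multiple-fiber configurations.

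The one genuine input is Kodaira's classification itself, which I use as a black box; everything else is bookkeeping using adjunction and the intersection-theoretic content already captured in Lemma~\ref{lem: quadForm}. The main subtlety to be careful about is the use of primitivity: in case (1) it gives reducedness of $C$, and in case (2) it ensures that the configuration appears on the standard Kodaira list rather than as a non-primitive multiple of one.
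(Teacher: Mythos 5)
Your proposal is correct and follows essentially the same route as the paper: adjunction with $K_X$ trivial plus the semi-definiteness statement of Lemma~\ref{lem: quadForm} to get $[C^{(k)}]^2\in\{0,-2\}$, the annihilator statement to force a single component when some self-intersection vanishes (with $\delta=1$ giving $I_1$ or $II$), the intersection-matrix/affine Dynkin classification for the $(-2)$-curve configurations, and primitivity of $[L]$ to exclude multiple fibers. The only differences are cosmetic: you apply adjunction directly to the reduced irreducible components via $p_a\geq 0$ and invoke the hypothesis that $C$ is singular, where the paper instead tracks the genus of the domain components from Gromov--Sacks--Uhlenbeck compactness and treats the $I_2$/$III$ (intersection number $2$) case by a separate short computation before citing \cite[Lemma I.2.12]{BPV}.
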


\begin{proof}

Suppose we have a sequence of $I$-holomorphic curves $u_{k}: \Sigma \rightarrow X$ converging in the sense of Gromov-Sacks-Uhlenbeck to a cusp curve $u: \cup_{k} \Sigma_{k} \rightarrow X$.  As discussed above we let 
\[
u_{k}: \Sigma_{k} \rightarrow X
\]
be the simple holomorphic curves, $[C^{(k)}] = (u_k)_{*}[\Sigma_k]$ (which may be zero if $u_k$ is constant) and write
\[
[L] = \sum_{k} n_k [C^{(k)}]
\]
for positive integers $n_k$. First, thanks to the Gromov-Sacks-Uhlenbeck compactness theorem \cite{McDSal, Ye, Gr, SacksUhl}, only Riemann surfaces with genus $1$ or $0$ appear in the limit. 

Suppose that the map $u_\alpha :\Sigma_\alpha \rightarrow X$ is non-constant.  By the Riemann-Hurwitz formula, $u_\alpha$ is either simple or factors through a branched covering $\pi: \Sigma_\alpha \rightarrow \hat{\Sigma}_{\alpha}$; in order to lighten notation we will denote by $v_\alpha: \hat{\Sigma}_{\alpha} \rightarrow X$ the simple holomorphic map. Keep in mind that $\hat{\Sigma}_{\alpha}$ can be either the sphere or the torus, with the latter case occurring if $u_\alpha$ factors through an unramified cover of the torus (thanks to the Riemann-Hurwitz formula).  Consider the curve $C^{(\alpha)} = v_\alpha(\hat{\Sigma}_{\alpha})$ and let 
\[
\nu: \tilde{C}_{\alpha} \rightarrow C^{(\alpha)}
\]
be the normalization.  Since $\hat{\Sigma}_{\alpha}$ has genus $0$ or $1$, it follows easily from the universal property of the normalization and Riemann-Hurwitz that $\tilde{C}_{\alpha}$ has genus $0$ or $1$.  Furthermore, if $\hat{\Sigma}_{\alpha}$ has genus zero, then so does $\tilde{C}_{\alpha}$.  We now appeal to the adjunction formula \cite[Page 69]{BPV} which gives
\begin{equation}\label{eq: adjunct}
{\rm genus}(\tilde{C}_{\alpha}) + \delta = 1 + \frac{1}{2} (K_{X}+[C^{(\alpha)}]).[C^{(\alpha)}]
\end{equation} 
where the $\delta$ invariant is given by
\[
\delta =  \sum_{x\in C_{\alpha}} {\rm dim}_{\mathbb{C}}(\nu_{*}\mathcal{O}_{\tilde{C}_{\alpha}}/\mathcal{O}_{C_{\alpha}}). 
\]
Recall that $X$ has $K_X= \mathcal{O}_{X}$ and $[C^{(\alpha)}]^2\leq 0$ by Lemma~\ref{lem: quadForm}.  

Let us first treat the case that $\tilde{C}_{\alpha}$ has genus $1$ for some $\alpha$.  In this case we must have that $[C^{(\alpha)}]^2= 0$, $\delta =0$ and $\hat{\Sigma}_{\alpha}$ has genus $1$.  In particular, by Lemma~\ref{lem: quadForm}, $C$ has only one component and $u$ factors through an unramified cover of the torus.  But since $[L] = (u)_{*}[\cup_{\alpha}\Sigma_{\alpha}]$ is primitive in $H_{2}(X,\mathbb{Z})$, we must have that $\cup_{\alpha}\Sigma_{\alpha}$ is a single Riemann surface of genus $1$. Since $\delta =0$, $\nu$ is an isomorphism and $C$ is smooth.  An application of Riemann-Hurwitz implies that $\hat{\Sigma}_{1}$ is biholomorphic to $\tilde{C}$ and hence $u_1$ is an embedding.  Thus $C= C^{(1)}$ is a smooth elliptic curve, which yields a contradiction. 

We may therefore assume that $\tilde{C}_{\alpha}$ has genus $0$ for all $\alpha$.  Equation~\eqref{eq: adjunct} becomes
\[
\delta = 1+ \frac{1}{2}[C^{(\alpha)}]^2
\]
and so either $[C^{(\alpha)}]^2=0$ or $-2$.  If $[C^{(\alpha)}]^2=0$, then by Lemma~\ref{lem: quadForm}, $C= n_{\alpha}C^{(\alpha)}$ has only one component, which has $\delta =1$.  By an exercise in algebraic geometry (see, for example \cite[Chapter 1, Exercise 4]{Fried}), $C^{(\alpha)}$ has either a single ordinary double point or a single cusp and hence is (a positive integer multiple of) a fiber of Kodaira type $I_1$ or $II$.

It remains to consider the case when $[C^{(\alpha)}]^2=-2$ for all $\alpha$.  In this case each $C^{(\alpha)}$ is a smooth rational curve by the adjunction formula and $C$ must have more than one component.  Furthermore, by Lemma~\ref{lem: quadForm}, for any $\alpha\ne \beta$ we have
\[
0 \geq \left([C^{(\alpha)}]+[C^{(\beta)}]\right)^2 = -4 + 2[C^{(\alpha)}].[C^{(\beta)}]
\]
and so $[C^{(\alpha)}].[C^{(\beta)}]\leq 2$ and by Lemma~\ref{lem: quadForm} equality is achieved if and only if $[C] = n([C^{(\alpha)}] + [C^{(\beta)}])$.  In particular, $C$ is a multiple of a fiber of Kodaira type $I_2$ or $III$.  We are reduced to considering the case when $0\leq [C^{(\alpha)}].[C^{(\beta)}] \leq 1$ for all $\alpha\ne \beta$.  We can now apply directly \cite[Lemma 2.12]{BPV} to conclude that the intersection matrix is of type $\tilde{A}_{n}, \tilde{D}_{n}$, or $\tilde{E}_k$ for $k=6,7,8$.    By inspection these yield singular fibers of type $I_n$ (or type IV if $n=3$), or of type $I_0^*, I_n^*, IV^*, III^*, II^*$. 

It only remains to rule out the case of multiple fibers, but this follows from the assumption that $[C]=[L]$ is primitive in $H_{2}(X,\mathbb{Z})$.
\end{proof}

\begin{rk}\label{rk: redCompExist}
Note that every singular fiber appearing in the Kodaira classification result Proposition~\ref{prop: singClass} contains a component with multiplicity one.
\end{rk}

Next we will show that, in fact, $X_1\cup X_2 = X$.  The main technical issue is to prove that the set of points lying on singular elliptic curves is ``discrete" in an appropriate sense.  Geometrically, we will prove that, for any singular elliptic curve $C$ lying in $X_2$, there is an $\epsilon>0$, such that the $\epsilon$-neighborhood $B(C, \epsilon) \subset (X,g)$ contains no other singular elliptic curve in $X_2$.  We begin by proving this statement when all the singular curves under consideration have multiple components.

\begin{lem}\label{lem: multAcc}
Suppose $C \subset X_2$ has $m$ irreducible components for some $m\geq 2$.  Then there exists $\epsilon>0$ such that $B(C, \epsilon)$ does not contain any singular curve $C'\ne C$ homologous to $[L]$ with more than one component.
\end{lem}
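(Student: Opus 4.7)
The plan is to argue by contradiction, exploiting the rigidity of $(-2)$-curves on a Calabi-Yau surface. Suppose there is a sequence of distinct singular curves $C_i \neq C$, each with $[C_i]=[L]$ and more than one irreducible component, such that $C_i \subset B(C,\epsilon_i)$ with $\epsilon_i \to 0$. By Proposition~\ref{prop: singClass} every irreducible component of any such $C_i$ (and of $C$) is a smooth rational curve of self-intersection $-2$, and because $[C_i]=[L]$ with positive integer multiplicities, each such component has area at most $\mathrm{Vol}_g(L)$.

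The key observation is that two distinct irreducible $(-2)$-curves $R_1, R_2$ in the Calabi-Yau surface $(X,g,I)$ must satisfy $[R_1] \neq [R_2]$: otherwise $[R_1]\cdot [R_2] = -2 < 0$, contradicting the fact that two distinct irreducible holomorphic curves intersect non-negatively. Combining this rigidity with Proposition~\ref{prop: cpctSet} and Gromov-Sacks-Uhlenbeck compactness, the plan is to establish the following claim: for any sequence of irreducible $(-2)$-curves $R_i \subset B(C,\epsilon_i)$ with $\epsilon_i \to 0$ and $\int_{R_i}\omega_I \leq \mathrm{Vol}_g(L)$, a subsequence converges as stable maps to a limit supported in $C$. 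Since the homology classes $[R_i]$ take values in the lattice $H_2(X,\mathbb{Z})$ and converge to the class of the limit, they are eventually constant; rigidity then forces the $R_i$ themselves to be eventually equal to a single $(-2)$-curve $R$, and since $R \subset B(C,\epsilon_i)$ for all $i$, $R$ must be contained in $C$, hence is one of the components $C^{(k)}$.

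Applying this claim to each component of each $C_i$, for $i$ sufficiently large every irreducible component of $C_i$ is among the components $C^{(k)}$ of $C$. One may therefore write $C_i = \sum_k n_k^{(i)} C^{(k)}$ and $C = \sum_k n_k C^{(k)}$ with $n_k, n_k^{(i)}$ positive integers satisfying $\sum_k (n_k^{(i)} - n_k)[C^{(k)}] = 0$ in $H_2(X,\mathbb{Z})$. To conclude, the map $\phi\colon \mathbb{Z}^m \to H_2(X,\mathbb{Z})$ sending $(a_k) \mapsto \sum_k a_k [C^{(k)}]$ has trivial kernel: if $\sum_k a_k [C^{(k)}] = 0$, then pairing with each $[C^{(j)}]$ shows $(a_k)$ lies in the null space of the intersection form on the components, which by Lemma~\ref{lem: quadForm} is the line spanned by $(n_k)$; but $\phi((n_k)) = [C] = [L] \neq 0$, so this line injects into $H_2(X,\mathbb{Z})$ and $\ker\phi$ is forced to be trivial. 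Hence $n_k^{(i)} = n_k$ for every $k$, i.e., $C_i = C$, a contradiction.

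The main technical step is the claim in the second paragraph; the key point is that Proposition~\ref{prop: cpctSet} confines all the $R_i$ to a fixed compact subset of $X$ so that Gromov-Sacks-Uhlenbeck compactness genuinely applies, after which the discreteness of $H_2(X,\mathbb{Z})$ together with the rigidity of $(-2)$-curves does the rest.
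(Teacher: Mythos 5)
Your argument is correct, but it reaches the conclusion by a different mechanism than the paper's proof. The paper controls the homology classes of the components of a nearby singular curve topologically: by Mayer--Vietoris, $H_2(B(C,\ell^{-1}),\mathbb{Z})$ is generated by the classes $[C^{(k)}]$ for $\ell$ large, so the components of a nearby curve $C_\ell$ may be taken homologous to those of $C$, and a single differing component $C^{(k)}_\ell \neq C^{(k)}$ already yields $[C^{(k)}]^2=[C^{(k)}].[C^{(k)}_\ell]\geq 0$, contradicting $[C^{(k)}]^2=-2$ from Lemma~\ref{lem: quadForm}. You instead control the classes analytically: Gromov--Sacks--Uhlenbeck compactness for the component curves (which are smooth rational curves by Proposition~\ref{prop: singClass}, have area at most ${\rm Vol}_g(L)$, and are confined to a compact neighborhood of $C$), together with discreteness of $H_2(X,\mathbb{Z})$, makes their classes eventually constant along a subsequence, and the same $(-2)$-rigidity (distinct irreducible curves meet non-negatively, while a class of square $-2$ admits at most one irreducible representative) forces the components to stabilize and to be components of $C$ itself. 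You then add a multiplicity-matching step, using the description of the radical of the intersection form in Lemma~\ref{lem: quadForm} to show the only integral relation among the $[C^{(k)}]$ is trivial, hence $C_i=C$. Both proofs hinge on the identical rigidity input; yours trades the Mayer--Vietoris computation for holomorphic-curve compactness and is somewhat longer, but it is more explicit about two points the paper passes over quickly, namely why each component of a nearby curve can be identified with a component of $C$ and why the multiplicities must then agree.

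Two small points to make explicit. First, the eventual constancy of the classes $[R_i]$ is obtained only along the Gromov-convergent subsequence, so the assertion that for $i$ large every component of $C_i$ lies among the $C^{(k)}$ should be run as a subsequence-extraction/contradiction argument (if not, choose offending components $R_{i_j}$, extract a convergent subsequence, and derive a contradiction); this is routine. Second, in the kernel computation you need $[L]$ to have infinite order in $H_2(X,\mathbb{Z})$, not merely to be nonzero, in order to conclude from $(a_k)=t(n_k)$ and $t[L]=0$ that $t=0$; this is immediate since $[L]$ pairs positively with the K\"ahler form (its $\omega_I$-area equals ${\rm Vol}_g(L)>0$), so $[L]$ is non-torsion.
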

\begin{proof}
Suppose not.  Write $C = \sum_{k=1}^{m}n_k C^{(k)}$.  For every $\ell \in \mathbb{N}$ there is a singular elliptic curve $C_\ell$ in $B(C, \ell^{-1})$.  By Lemma~\ref{lem: multBnd}, we can assume that each curve $C_\ell$ has $m \geq 2$ components with multiplicity $n'_k$.  That is, we can write
\[
C_{\ell} = \sum_{k =1}^{m} n'_{k} C^{(k)}_{\ell},
\]
for $C^{(k)}_{m}$ smooth, irreducible rational curves, thanks to Proposition~\ref{prop: singClass}.  It follows from the Mayer-Vietoris theorem that, for all $\ell$ sufficiently large, $H_{2}(B(C, \ell^{-1}), \mathbb{Z})$ is generated (over $\mathbb{Z})$ by $[C^{(k)}]$ for $1\leq k \leq m$.  Thus, we can assume that $[C_{\ell}^{(k)}] = [C^{(k)}]$ for all $m$ sufficiently large.  On the other hand, since $C_\ell \ne C$, we must have that, for some $k$ and some $\ell$ sufficiently large, $C^{(k)}_{\ell} \ne C^{(k)}$.  Thus
\[
[C^{(k)}]^2= [C^{(k)}].[C^{(k)}_{\ell}] \geq 0
\]
But by Lemma~\ref{lem: quadForm} we have $[C^{(k)}]^2=-2$, a contradiction.
\end{proof}

The next step is to rule out the accumulation of singular curves with only one component at a singular curve with only one component.  By Proposition~\ref{prop: singClass}, singular curves with only one component correspond to reduced and irreducible divisors obtained as holomorphic images of $\mathbb{P}^1$ with either nodal or cuspidal singularities.  So, suppose we have a simple holomorphic curve
\[
u : \mathbb{P}^1\rightarrow C\subset (X,g, I, \omega_I)
\]
such that $[C] = [L]$.  Suppose that, for all $\ell\geq 0$, $B(C, \ell^{-1})$ contains a reduced, irreducible rational curve $C_\ell \subset X_2$, corresponding to a simple holomorphic map
\[
u_{\ell}: \mathbb{P}^1 \rightarrow C_{\ell}\subset (X,g, I, \omega_I),
\]
with $[C_\ell] = [L]$. We will address the nodal and cuspidal cases separately, but first we prove a general lemma.

\begin{lem}\label{lem: GromConv}
Suppose that $u:\mathbb{P}^1 \rightarrow C\subset (X,g,I, \omega_I)$ is a simple, holomorphic curve with $[C]= [L]$ and $C$ is reduced and irreducible.  Suppose that for every $\ell\in \mathbb{N}$ there is a simple holomorphic curve $u_{\ell}:\mathbb{P}^1 \rightarrow B(C, \ell^{-1})$ such that $(u_{\ell})_{*}[\mathbb{P}^1]= [L]$.  Then $u_{\ell}$ converges to $u$ in the sense of Gromov-Sacks-Uhlenbeck.
\end{lem}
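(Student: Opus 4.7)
The plan is to apply Gromov--Sacks--Uhlenbeck compactness to the sequence $\{u_\ell\}$ and then use the irreducibility of $C$, together with the homological constraint $[L]=[C]$, to identify the unique possible limit as $u$ itself. The main obstacle is to rule out the formation of multiply-covered or bubbled non-constant components in the limit; this is handled by a simple homological accounting once the image of the limit is pinned down.

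First, since $C=u(\mathbb{P}^1)$ is compact and $u_\ell(\mathbb{P}^1)\subset B(C,\ell^{-1})\subset B(C,1)$, every map lands in a fixed compact subset $K\subset X$. On $K$ the bounded geometry hypotheses of Theorem~\ref{thm: oneToMany} give a positive lower bound on the injectivity radius and a uniform upper bound on the sectional curvature. Each $u_\ell$ has fixed area
\[
\int_{\mathbb{P}^1} u_\ell^*\omega_I = \langle[\omega_I],[L]\rangle.
\]
The standard Gromov--Sacks--Uhlenbeck compactness theorem therefore produces, after passing to a subsequence, convergence to a cusp curve $v:\bigcup_\alpha\Sigma_\alpha\to X$, where each $\Sigma_\alpha\cong\mathbb{P}^1$ (bubbles of a sequence of spheres are spheres), the total homology class is $v_*[\bigcup_\alpha \Sigma_\alpha]=[L]$, and the image lies in $\bigcap_\ell \overline{B(C,\ell^{-1})}=C$.

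Second, because $C$ is reduced and irreducible, any non-constant component $v_\alpha$ must satisfy $v_\alpha(\Sigma_\alpha)=C$, and hence factors through the normalization $\tilde C\cong\mathbb{P}^1\to C$ as a branched cover of some degree $d_\alpha\geqslant 1$. Consequently $(v_\alpha)_*[\Sigma_\alpha]=d_\alpha[C]=d_\alpha[L]$. Summing over the non-constant components and comparing with $v_*[\bigcup_\alpha\Sigma_\alpha]=[L]$ yields
\[
[L]=\Bigl(\sum_\alpha d_\alpha\Bigr)[L],
\]
so $\sum_\alpha d_\alpha=1$. Thus exactly one non-constant component $v_0$ appears, and it is simple (degree one) onto $C$. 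Being a simple map $\mathbb{P}^1\to C$, $v_0$ factors through the normalization as a biholomorphism $\mathbb{P}^1\to\tilde C$, and so coincides with $u$ up to reparametrization.

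Finally, any remaining components of the cusp curve are constant ghost bubbles which affect neither the image nor the homology. Hence every subsequential Gromov--Sacks--Uhlenbeck limit of $\{u_\ell\}$ is equivalent to $u$, and a standard diagonal argument upgrades subsequential convergence to convergence of the full sequence $u_\ell\to u$ in the sense of Gromov--Sacks--Uhlenbeck.
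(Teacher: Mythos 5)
Your argument is correct and takes essentially the same route as the paper: Gromov--Sacks--Uhlenbeck compactness in a fixed compact set, irreducibility of $C$ forcing every non-constant limit component onto $C$, and the homology class $[L]$ forcing exactly one non-constant component of degree one, so the limit agrees with $u$ up to reparametrization by $PSL(2,\mathbb{C})$. (Your step $\sum_\alpha d_\alpha = 1$ tacitly uses that $[L]$ is non-torsion, which is immediate here since $\langle[\omega_I],[L]\rangle>0$.)
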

\begin{proof}
By Proposition~\ref{prop: cpctSet} up to taking a subsequence, $u_{\ell}$ Gromov-Sacks-Uhlenbeck converges to a cusp curve, or stable map $u': \cup_{\alpha} \Sigma_{\alpha}\rightarrow (X,g, I, \omega_I) $, with each $\Sigma_{\alpha} = \mathbb{P}^1$, whose image is contained in $C$ \cite{Gr, McDSal, Ye}.  Since $C$ is irreducible, the image of $u'$ must be $C$.  Furthermore, since $(u_{\ell})_{*}[\mathbb{P}^1] = [C_\ell]=[C]$, we have that $(u')_{*}[\cup_{\alpha} \Sigma_{\alpha}] = [C]$. It follows that $u'$ is constant on all but one component of $\cup_{\alpha} \Sigma_{\alpha}$.  Forgetting the constant components of the map, we obtain $u':\mathbb{P}^1 \rightarrow C$ and since $(u')_*[\mathbb{P}^1]=[C]$, the map $u'$ is simple.  Therefore, $u' = u\circ \tau$ for some $\tau \in PSL(2,\mathbb{C})$.  Therefore $u_\ell$ converges in the sense of Gromov-Sacks-Uhlenbeck to $u$.  
\end{proof}

\begin{prop}\label{prop: nodalAcc}
Suppose $C\subset X_2$ is a reduced and irreducible rational curve with a nodal singularity obtained as a Gromov-Sacks-Uhlenbeck limit of $I$-holomorphic curves in $\mathcal{M}_1$.  Then there exists an $\epsilon >0$ such that $B(C, \epsilon)$ does not contain any irreducible singular curve in $\mathcal{M}_2$ distinct from $C$.
\end{prop}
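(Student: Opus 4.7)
The plan is to argue by contradiction using the deformation theory of the normalization of $C$. Suppose the conclusion fails, so for every $\ell \in \mathbb{N}$ there exists an irreducible singular rational curve $C_\ell \in \mathcal{M}_2$ with $C_\ell \subset B(C, \ell^{-1})$ and $C_\ell \ne C$. By Proposition~\ref{prop: singClass} each $C_\ell$ is the image of a simple $I$-holomorphic map $u_\ell \colon \mathbb{P}^1 \to X$ with $(u_\ell)_*[\mathbb{P}^1] = [L]$, and the normalization of $C$ provides such a simple map $u \colon \mathbb{P}^1 \to X$.

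The first step is to invoke Lemma~\ref{lem: GromConv}, which gives that $u_\ell$ converges to $u$ in the sense of Gromov-Sacks-Uhlenbeck. Because $C$ is irreducible and its only simple lift is $u$ up to reparametrization, no genuine bubbling can occur in the limit, so after reparametrizing $u_\ell$ by a suitable sequence in $PSL(2,\mathbb{C})$ the convergence upgrades to $C^\infty$-convergence on $\mathbb{P}^1$.

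The heart of the argument is a rigidity statement for $u$. Since $C$ has only an ordinary double point, $u$ is an immersion, and the exact sequence
\[
0 \to T\mathbb{P}^1 \to u^*TX \to N_u \to 0
\]
defines a line bundle $N_u$ of degree $c_1(X)\cdot[C] - 2 = -2$, using that $X$ is Calabi-Yau and $[C]=[L]$. In particular $H^0(\mathbb{P}^1, N_u) = 0$. By the standard Kuranishi model for the moduli of simple $I$-holomorphic maps (see, e.g., \cite{McDSal}), a $C^\infty$-neighborhood of $u$ in the moduli of unparametrized simple holomorphic curves in class $[L]$ embeds into $H^0(N_u)$ as the zero locus of an obstruction map valued in $H^1(N_u)$. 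Since $H^0(N_u) = 0$, this neighborhood is reduced to $\{u\}$, so any simple holomorphic map sufficiently $C^\infty$-close to $u$ is a reparametrization of $u$. Combining with the previous step gives $u_\ell = u \circ \phi_\ell$ for some $\phi_\ell \in PSL(2,\mathbb{C})$ for $\ell$ large, whence $C_\ell = C$, a contradiction.

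I expect the main subtlety to lie in the passage from Gromov-Sacks-Uhlenbeck convergence to the $C^\infty$-convergence required to invoke the Kuranishi description: the absence of bubbling, which follows from irreducibility of $C$ and uniqueness of the simple lift $u$, is what makes this possible. Once that is in hand, the vanishing $H^0(N_u)=0$ supplies the isolation of $u$ modulo reparametrization, even though $H^1(N_u)$ may be nonzero (so one does not need unobstructedness of the deformation problem).
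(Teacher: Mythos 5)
Your proof is essentially correct, but it takes a genuinely different route from the paper's. The paper deals with the failure of surjectivity of the linearized operator at $u$ (here $u^{*}TX\cong\mathcal{O}_{\mathbb{P}^1}(2)\oplus\mathcal{O}_{\mathbb{P}^1}(-2)$, so the cokernel is $H^{1}(\mathbb{P}^1,u^{*}TX)\cong\mathbb{C}$) by passing to the twistor space: Lemma~\ref{lem: nonSplit} shows $u^{*}T\mathcal{X}|_{X}\cong\mathcal{O}_{\mathbb{P}^1}(2)\oplus\mathcal{O}_{\mathbb{P}^1}(-1)\oplus\mathcal{O}_{\mathbb{P}^1}(-1)$, so there the operator is surjective, the parametrized moduli space is a smooth manifold of real dimension $6$, and it is locally exhausted by the free $PSL(2,\mathbb{C})$-orbit of $u$; combined with Lemma~\ref{lem: GromConv} this forces $C_{\ell}=C$ for large $\ell$. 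You instead stay in $X$ and use that $u$ is an immersion, so $N_{u}\cong\mathcal{O}_{\mathbb{P}^1}(-2)$ and $H^{0}(N_u)=0$: every infinitesimal deformation of $u$ is a reparametrization, and the (possibly obstructed) local deformation space modulo $PSL(2,\mathbb{C})$ is a single point, which yields the same rigidity without the twistor construction or the non-splitting computation of Lemma~\ref{lem: nonSplit}. What the paper's route buys is that it never leaves the transverse setting of \cite{McDSal}; what yours buys is brevity, at the price of invoking a local completeness (Kuranishi) statement at a non-regular $u$, for which \cite{McDSal} is not really a reference. This is a citation issue rather than a gap: since $I$ is integrable you can argue classically, e.g.\ the local space of holomorphic maps $\mathbb{P}^1\to (X,I)$ has Zariski tangent space $H^{0}(u^{*}TX)\cong\mathbb{C}^{3}$ at $u$ and contains the free $3$-complex-dimensional reparametrization orbit, hence is smooth at $u$ and locally equal to that orbit; alternatively use a slice for the reparametrization action and a Lyapunov--Schmidt reduction. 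Also, the no-bubbling step is better justified not by ``uniqueness of the simple lift'' but by the observations already implicit in Lemma~\ref{lem: GromConv}: the image of the limit lies in the irreducible curve $C$, primitivity of $[L]$ forces exactly one non-constant component of degree one, and ghost components are excluded by stability, so Gromov--Sacks--Uhlenbeck convergence is $C^{\infty}$ convergence after reparametrization, which is exactly the topology in which your rigidity statement applies.
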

\begin{proof}
We begin with a simple calculation.  By assumption, the rational curve $u: \mathbb{P}^1 \rightarrow X$ is nodal and hence $du$ is injective.  Therefore, we have an injection of holomorphic vector bundles on $\mathbb{P}^1$ by
\[
0\rightarrow T\mathbb{P}^1 \rightarrow u^{*}TX.
\]
By Grothendieck's theorem \cite{Grot} and the fact the $c_1(u^{*}TX)=0$, $u^{*}TX$ splits as a direct sum $u^{*}TX = \mathcal{O}_{\mathbb{P}^1}(a) \oplus  \mathcal{O}_{\mathbb{P}^1}(-a)$ for some $a\geq 0$.  Since $T\mathbb{P}^1 = \mathcal{O}_{\mathbb{P}^1}(2)$, there is a nowhere vanishing section of $\mathcal{O}_{\mathbb{P}^1}(a-2) \oplus  \mathcal{O}_{\mathbb{P}^1}(-a-2)$.  This immediately implies $a=2$ and hence
\[
 u^{*}TX = \mathcal{O}_{\mathbb{P}^1}(2) \oplus  \mathcal{O}_{\mathbb{P}^1}(-2).
\]
We now consider the twistor space of $X$, which we denote by $\mathcal{X}$.  By the fiber exact sequence we have
\[
0 \rightarrow TX \rightarrow T\mathcal{X}|_{X} \rightarrow \mathcal{O}_{X}\rightarrow 0.
\]
Composing the holomorphic map $u$ with the inclusion, we obtain
\[
0\rightarrow  \mathcal{O}_{\mathbb{P}^1}(2) \oplus  \mathcal{O}_{\mathbb{P}^1}(-2)\rightarrow u^{*}T\mathcal{X}|_{X} \rightarrow \mathcal{O}_{\mathbb{P}^1} \rightarrow 0.
\]
We need the following lemma
\begin{lem}\label{lem: nonSplit}
In the above notation, we have
\[
u^{*}T\mathcal{X}|_{X} = \mathcal{O}_{\mathbb{P}^1}(2) \oplus  \mathcal{O}_{\mathbb{P}^1}(-1) \oplus \mathcal{O}_{\mathbb{P}^1}(-1).
\]
\end{lem}
Let us assume the lemma for now and finish the proof.  Let $\iota: X \hookrightarrow \mathcal{X}$ be the inclusion of $X$ into the twistor space by the complex structure $I$.  Combining Lemma~\ref{lem: nonSplit} with \cite[Lemma 3.3.1]{McDSal} and \cite[Theorem 3.1.6]{McDSal} we conclude that the moduli space of parametrized, simple holomorphic rational curves in the twistor space homologous to $\iota_*[C]$ is a smooth manifold with Gromov-Sacks-Uhlenbeck topology. Moreover, it has real dimension $6$ by the Riemann-Roch theorem (see also \cite[Theorem 3.1.6]{McDSal}).  On the other hand, the $3$-complex dimensional group $PSL(2,\mathbb{C})$ acts on the holomorphic rational curves by reparametrization.  It follows that there is an open neighborhood $U$ of $u$ in the Gromov-Sacks-Uhlenbeck topology such that, if $u' \in U$, then $u'$ is obtained from $u$ by pre-composing with a M\"obius transformation of $\mathbb{P}^1$; in particular, $u(\mathbb{P}^1) = u'(\mathbb{P}^1)$.  Since any $I$-holomorphic rational curve into $X$ homologous to $[C]$ induces a holomorphic rational curve into $\mathcal{X}$ homologous to $\iota_*[C]$, we deduce that the same result holds true for $X$.

Assume for the sake of a contradiction that there are rational curves $u_{\ell}:\mathbb{P}^1 \rightarrow (X,g,I)$ in $\mathcal{M}_2$ such that $C_\ell= u_{\ell}(\mathbb{P}^1) \subset B_{\ell^{-1}}(C)$, but $C_\ell \ne C$.  By Lemma~\ref{lem: GromConv}, the rational curves $u_{\ell}$ Gromov-Sacks-Uhlenbeck converge to $u$.  In particular, for $\ell$ sufficiently large $u_\ell = u\circ \tau$ for some $\tau \in PSL(2,\mathbb{C})$.  But this implies $u_{\ell}(\mathbb{P}^1) = u(\mathbb{P}^1)$, a contradiction.

\end{proof}

It only remains to prove Lemma~\ref{lem: nonSplit}.
\begin{proof}[Proof of Lemma~\ref{lem: nonSplit}]
Consider the exact sequence of vector bundles
\begin{equation}\label{eq: twistorSeq}
0 \rightarrow TX \hookrightarrow T\mathcal{X}\big|_{X} \rightarrow \mathcal{O}_{X} \rightarrow 0.
\end{equation}
Restricting to $C$ and pulling back gives the exact sequence
\[
0\rightarrow  \mathcal{O}_{\mathbb{P}^1}(2) \oplus  \mathcal{O}_{\mathbb{P}^1}(-2)\rightarrow u^{*}T\mathcal{X}|_{X} \rightarrow \mathcal{O}_{\mathbb{P}^1} \rightarrow 0.
\]
An easy computation shows that ${\rm dim}_{\mathbb{C}}{\rm Ext}^1(\mathcal{O}_{\mathbb{P}^1},  \mathcal{O}_{\mathbb{P}^1}(2) \oplus  \mathcal{O}_{\mathbb{P}^1}(-2))=1$ and so it suffices to show that the exact sequence is not split.  Taking the long exact sequence in cohomology, this question is reduced to understanding the connecting homomorphism 
\[
\delta: H^{0}(\mathbb{P}^1, \mathcal{O}_{\mathbb{P}^1}) \rightarrow H^{1}(\mathbb{P}^1, u^{*}TX).
\]
In particular, it is easy to see that if $\delta$ is not the zero map, then the exact sequence cannot be split.

On the other hand, since the twistor family $\mathcal{X} \rightarrow \mathbb{P}^1$ is a non-trivial deformation of complex structures, the Kodaira-Spencer map
\[
\delta: H^{0}(X,\mathcal{O}_{X}) \rightarrow H^{1}(X, TX)
\]
of the long exact sequence associated with~\eqref{eq: twistorSeq} is non-trivial.  By a well-known computation (see, for example \cite[Lemma 7.2]{Tian}), the contraction of the image of the Kodaira-Spencer map (viewed as a $(TX, I)$-valued $(0,1)$ form) with the holomorphic $2$-form is the $(1,1)$ component of $\frac{d}{d\zeta}|_{\zeta= \sqrt{-1}} \Omega_{\zeta}$.  By~\eqref{eq: twistVol} we have
\[
\frac{d}{d\zeta}\bigg|_{\zeta=\sqrt{-1}} \Omega_{\zeta} = 2\sqrt{-1}\omega- 2\sqrt{-1}\,\overline{\Omega} = 2\sqrt{-1}(\omega+\sqrt{-1}{\rm Im}(\Omega)) -2\sqrt{-1}{\rm Rm}(\Omega).
\]
Since $(\omega+\sqrt{-1}{\rm Im}(\Omega))$ is holomorphic on $X_{I}$, we are reduced to considering $-2\sqrt{-1}{\rm Re}(\Omega)$.

It suffices to show that the restriction of the deformation of complex structures to the nodal rational curve $C$ is non-trivial.  In other words, it suffices to show that
 \[
 u^{*}{\rm Re}(\Omega) \in H^{1}(\mathbb{P}^1, u^{*}\Lambda^{1,1}T^{*}X_{I})
 \]
 is non-trivial.  But this is clear, since $u^{*}{\rm Re}(\Omega)$ is a K\"ahler metric on $\mathbb{P}^1$ and hence cannot be in the image of $\dbar: u^{*}\Lambda^{1,0}T^*X_{I} \rightarrow u^{*}\Lambda^{1,1}T^*X_{I}$ for otherwise we would have
 \[
 0 < \int_{C}{\rm Re}(\Omega) = \int_{\mathbb{P}^1} \dbar(u^{*}\beta) = 0.
 \]
\end{proof}

We next rule out the accumulation of irreducible rational curves at a cuspidal rational curve.

\begin{prop}\label{prop: cuspAcc}
Suppose $C\subset X_2$ is a reduced and irreducible rational curve with cuspidal singularities obtained as a Gromov-Sacks-Uhlenbeck limit of $I$-holomorphic curves in $\mathcal{M}_1$.  Then there exists an $\epsilon >0$ such that $B(C, \epsilon)$ does not contain any rational curve in $X_2$ homologous to $[C]= [L]$ and distinct from $C$.
\end{prop}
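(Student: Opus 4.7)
The plan is to mimic Proposition~\ref{prop: nodalAcc}: compute the splittings of $u^*TX$ and $u^*T\mathcal{X}|_X$ on the twistor space, and use the resulting rigidity to conclude that the unparametrized moduli is discrete near $u$.  Arguing by contradiction, assume there exist simple $I$-holomorphic maps $u_\ell : \mathbb{P}^1 \to C_\ell \subset B(C, \ell^{-1})$ with $(u_\ell)_*[\mathbb{P}^1] = [L]$ and $C_\ell \neq C$.  Lemma~\ref{lem: GromConv} yields Gromov-Sacks-Uhlenbeck convergence $u_\ell \to u$, where $u : \mathbb{P}^1 \to C$ is the normalization of the cuspidal rational curve $C$.

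For the normal bundle computation, the adjunction analysis in Proposition~\ref{prop: singClass} forces $C$ to carry a single $A_2$ cusp (since $\delta = 1$), so near its preimage $p \in \mathbb{P}^1$ the map has the local form $u(z) = (z^2, z^3)$ and $du$ vanishes to first order at $p$.  The saturation of $T\mathbb{P}^1 \hookrightarrow u^*TX$ is therefore $T\mathbb{P}^1(p) \cong \mathcal{O}(3)$, and since $\mathrm{Ext}^1(\mathcal{O}(-3), \mathcal{O}(3)) = 0$ we obtain
\[
u^*TX \;=\; \mathcal{O}_{\mathbb{P}^1}(3) \oplus \mathcal{O}_{\mathbb{P}^1}(-3).
\]
The Kodaira-Spencer argument of Lemma~\ref{lem: nonSplit} carries over verbatim: $u^*\mathrm{Re}(\Omega)$ is a non-negative $(1,1)$-form on $\mathbb{P}^1$ vanishing only at $p$ (to order $2$), hence represents a non-zero class in $H^{1,1}(\mathbb{P}^1)$, so the twistor extension class in $H^1(u^*TX) = H^1(\mathcal{O}(-3)) \cong \mathbb{C}^2$ is non-trivial.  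A Grothendieck classification of rank-two bundles with trivial determinant of the form above then forces
\[
u^*T\mathcal{X}\big|_X \;=\; \mathcal{O}_{\mathbb{P}^1}(3) \oplus \mathcal{O}_{\mathbb{P}^1}(-1) \oplus \mathcal{O}_{\mathbb{P}^1}(-2),
\]
giving $h^0 = 4$ and $h^1 = 1$.  The local moduli $\mathcal{M}_{\mathcal{X}}$ of simple parametrized holomorphic maps $\mathbb{P}^1 \to \mathcal{X}$ in the class $\iota_*[C]$ is the zero locus of a Kuranishi map $\kappa : \mathbb{C}^4 \to \mathbb{C}$, and the Lie algebra $H^0(T\mathbb{P}^1) \hookrightarrow H^0(u^*T\mathcal{X})$ of the $PSL(2,\mathbb{C})$-action lies in $\ker d\kappa$.

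To finish as in Proposition~\ref{prop: nodalAcc} it suffices to show $\mathcal{M}_{\mathcal{X}}$ has complex dimension exactly $3$ near $u$: the quotient by the three-complex-dimensional $PSL(2,\mathbb{C})$-orbits is then $0$-dimensional, GSU convergence forces $u_\ell = u \circ \phi_\ell$ for some $\phi_\ell \in PSL(2,\mathbb{C})$ and all large $\ell$, and consequently $C_\ell = C$, a contradiction.  This dimension bound is equivalent to the non-vanishing of $d\kappa$ on the transverse direction $H^0(\mathcal{O}(3))/H^0(T\mathbb{P}^1) \cong \mathbb{C}$, and establishing it is the main obstacle of the proof---in the nodal setting $h^1(u^*T\mathcal{X}) = 0$ made this step vacuous.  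The transverse vector is represented by a section of $\mathcal{O}(3)$ not vanishing at $p$ and geometrically corresponds to an infinitesimal displacement of the cusp preimage along the tangent cone of $C$ at the cusp.  I would verify the non-vanishing of its obstruction in $H^1(\mathcal{O}(-2)) \cong \mathbb{C}$ by an explicit second-order Kuranishi computation at the local model $u(z) = (z^2, z^3)$: expanding a perturbation $u + tV + t^2 W$ and showing that no $W$ can solve the $\bar\partial$-equation to second order while preserving an $A_2$ singularity of the image.  A parallel tool, which I would attempt alongside, is a reduced-obstruction-theory argument using the Calabi-Yau volume form $\Omega$ as a cosection of the obstruction bundle (in the style of Kiem-Li and Maulik-Pandharipande for $K3$ surfaces), which should cut the effective obstruction down to the expected one-dimensional piece and automatically make $\kappa$ surjective near $u$.
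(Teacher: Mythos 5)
Your reduction to the twistor-space moduli problem is set up correctly as far as it goes: the splitting $u^{*}TX=\mathcal{O}_{\mathbb{P}^1}(3)\oplus\mathcal{O}_{\mathbb{P}^1}(-3)$ is exactly the paper's computation, and your conclusion that $u^{*}T\mathcal{X}|_{X}=\mathcal{O}_{\mathbb{P}^1}(3)\oplus\mathcal{O}_{\mathbb{P}^1}(-1)\oplus\mathcal{O}_{\mathbb{P}^1}(-2)$ when the extension is non-split is consistent. But at that point your argument stops at precisely the step that carries all the difficulty: because $h^{1}(u^{*}T\mathcal{X}|_{X})=1$, the moduli space is no longer automatically cut out transversally (this is what made the nodal case, Proposition~\ref{prop: nodalAcc}, work), and your whole contradiction rests on the unproven claim that the Kuranishi obstruction is effective, i.e.\ that the local moduli has dimension $3$ rather than $4$. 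You acknowledge this and offer two strategies, but neither is carried out, and neither is clearly viable: a second-order computation in the local model $(z^2,z^3)$ cannot by itself decide the issue, since the obstruction is a global pairing valued in $H^{1}(\mathbb{P}^1,u^{*}T\mathcal{X}|_X)$ and could a priori vanish to second order and appear only at higher order; and the cosection/reduced-theory heuristic, if anything, points the other way --- a surjective cosection kills the obstruction and would make the moduli \emph{unobstructed} of dimension $4$, which is the opposite of the rigidity you need. So the proposal has a genuine gap at its core step.

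The paper's proof avoids ever proving such a transversality statement. It enlarges the problem by a finite-dimensional family $\mathcal{I}$ of tamed almost complex structures so that the \emph{universal} moduli space is smooth, and then argues conditionally: the assumed accumulating curves $C_\ell$ (via Lemma~\ref{lem: GromConv}) produce one non-zero element $w\in\ker D_{(u,I)}$ giving a non-trivial deformation; integrability of $I$ makes $D_{(u,I)}$ complex linear, so $Iw$ is a second, independent, non-trivial deformation; integrating $sw+tIw$ gives a two-real-parameter family of curves homologous to $[C]$ sweeping out a neighborhood of a smooth point $p\in C$. The contradiction then comes not from rigidity but from intersection theory together with the fact that $C\subset X_2=\partial X_1$: smooth elliptic curves in the class $[L]$ pass arbitrarily close to $p$, so some deformed rational curve $\tilde{C}$ would meet a smooth fiber $\hat{C}\subset X_1$, forcing $0<\tilde{C}.\hat{C}=[C]^2=0$. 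If you want to salvage your approach you would need to supply a complete proof that the genus-$0$ moduli near $u$ is exactly the $PSL(2,\mathbb{C})$-orbit; absent that, you should switch to the paper's conditional ``one deformation implies two implies a sweeping family'' argument, which uses the hypothesis of accumulation and the boundary condition $C\subset\partial X_1$ instead of transversality.
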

\begin{proof}
The idea is to show that if there exist irreducible rational curves $C_{\ell} \ne C$ in $X_1$ contained in $B_{\ell^{-1}}(C)$, then in fact $C$ deforms in a real $2$ dimensional family of holomorphic curves sweeping out a neighborhood of the generic point of $C$.  In particular, there will be rational curves homologous to $[C]$ intersecting the smooth elliptic curves homologous to $[C]$ in $X_1$, which is impossible since $[C]^2=0$.

In order to do this we must first study the deformation theory of the cuspidal rational curve $C$.  Recall \cite[Chapter 2]{McDSal} that if $u$ is an $I$-holomorphic curve (for some almost complex structure $I$) with $u_{*}[\mathbb{P}^1] = C$, then we can deform $u$ for $\lambda$ sufficiently small by
\[
u_{\lambda}= \exp_{u}(\lambda \xi),
\]
where $\xi$ is any smooth (or more generally $W^{k,p}$) section of $u^{*}TX \rightarrow \mathbb{P}^1$.  Note that we are considering {\em parametrized} $I$-holomorphic curves.  Under this identification, one obtains the linearized $I$-holomorphic curve operator
\begin{equation}\label{eq: linDBAR}
D_{(u,I)}: W^{k,p}(\mathbb{P}^1, u^{*}TX) \rightarrow W^{k-1,p}(\mathbb{P}^1, \Lambda^{0,1}T^{*}\mathbb{P}^1\otimes u^{*}TX)
\end{equation}
where $k, p$ are chosen sufficiently large.  The moduli space of $I$-holomorphic curves with $u_{*}[\mathbb{P}^1] = [C]$ will be a smooth manifold near $(u,I)$ provided $D_{(u,I)}$ is surjective.

We begin by showing that, in the present case $D_{(u,I)}$ is in fact {\em not} surjective by applying (the proof of) \cite[Lemma 3.3.1]{McDSal}. By assumption, $u: \mathbb{P}^1 \rightarrow (X,g, I)$ is a cuspidal rational curve and so $du$ has a simple zero at some point $p\in\mathbb{P}^1$.  In particular, $du$ induces an injective map
\[
du: T\mathbb{P}^1 \otimes \mathcal{O}_{\mathbb{P}^1}(1) \rightarrow u^{*}TX.
\]
Since $u^{*}TX$ has degree zero, Grothendieck's theorem implies that $u^{*}TX = \mathcal{O}_{\mathbb{P}^1}(3)\oplus \mathcal{O}_{\mathbb{P}^1}(-3)$ and Serre duality yields
\[
H^{1}(\mathbb{P}^1,  \mathcal{O}_{\mathbb{P}^1}(3)\oplus \mathcal{O}_{\mathbb{P}^1}(-3)) = H^0(\mathbb{P}^1,  \mathcal{O}_{\mathbb{P}^1}(-5))\oplus H^0(\mathbb{P}^1,\mathcal{O}_{\mathbb{P}^1}(1)).
\]
By \cite[Lemma 3.3.1]{McDSal}, $D_{(u,I)}$ has a $4$ real dimensional cokernel. By \cite[Proposition 3.1.11]{McDSal} the index of $D_{(u,I)}$ is $4$ and hence the kernel is $8$ dimensional.  

In order to obtain a smooth moduli space, we must expand the set of almost complex structures under consideration.  By a standard argument, we can construct a smooth family $\pi: \mathcal{I} \rightarrow D\subset \mathbb{R}^4$ of (not necessarily integrable) almost complex structures compatible with the given symplectic form $\omega_0 := \omega_{I_0}$ over a disk $D\subset \mathbb{R}^4$ with $I_0:= \pi^{-1}(0)= I$ such that the expanded moduli space
\[
\mathcal{M}([C], \mathbb{P}^1; \mathcal{I}) = \{ (u, I_t) : t \in D, \, u:\mathbb{P}^1 \rightarrow (X, \omega_{0}, I_t) \text{ is $I_t$-holomorphic}\}
\]
becomes a smooth manifold in a neighborhood of $(u,I_0)$.  More precisely, the extended linearized operator
\begin{equation}\label{eq: extDU}
\hat{D}_{(u,I_t)}(\xi, Y) = D_{(u, I_t)}\xi + \frac{1}{2}Y(u)\circ du \circ j_{\mathbb{P}^1}
\end{equation}
regarded as a map
\[
\hat{D}_{(u,I_t)}: W^{k,p}(\mathbb{P}^1, u^{*}TX) \times T_{I_{t}}\mathcal{I} \rightarrow W^{k-1,p}(\mathbb{P}^1, \Lambda^{0,1}T^{*}\mathbb{P}^1\otimes u^{*}TX)
\]
is surjective.  Since this operator is homotopic to the operator $(\xi, Y) \mapsto D_u\xi$, one can easily show that $\mathcal{M}([C], \mathbb{P}^1; \mathcal{I})$ has dimension $8$, with tangent space at $(u,I_t)$ given by the kernel of~\eqref{eq: extDU}.  The $6$ real dimensional group $PSL(2,\mathbb{C})$ acts on $\mathcal{M}([C], \mathbb{P}^1; \mathcal{I})$ by reparametrization.  Taking the quotient, we obtain a smooth manifold 
\[
 \widetilde{\mathcal{M}}([C], \mathbb{P}^1; \mathcal{I}) :=  \mathcal{M}([C], \mathbb{P}^1; \mathcal{I})/PSL(2,\mathbb{C})
\]
of real dimension $2$ consisting of {\em unparametrized} holomorphic maps.

Combining the assumption with Lemma~\ref{lem: GromConv}, $\widetilde{\mathcal{M}}([C], \mathbb{P}^1; \mathcal{I})$ contains a sequence of disjoint $I_0$ holomorphic curves converging to the $I_0$ holomorphic curve $u$.  Thus, there is a non-zero smooth section $w$ of $u^{*}TX\rightarrow \mathbb{P}^1$  satisfying 
\[
\hat{D}_{(u, I_0)}(w, 0) = D_{(u,I_0)}w =0,
\]
and giving rise to a non-trivial deformation of $u$.  On the other hand, by \cite[Remark 3.2.6]{McDSal}, the operator $D_u = \hat{D}_{(u,I)}$ is complex linear since $I=I_0$ is integrable. Hence $(Iw,0) \ne (w,0)$ is also in the kernel of $\hat{D}_{(u,I)}$.  We claim that $Iw$ gives rise to a non-trivial deformation.  To see this observe that if $Iw$ gives rise to a trivial deformation, then $Iw= du(V_{\tau})$ where $V_{\tau}$ is the vector field induced by a $1$-parameter subgroup of $PSL(2,\mathbb{C})$.  But by the $I$-holomorphic curve equation,
\[
w= -I(Iw) = -Idu(V_{\tau}) = -du(j_{\mathbb{P}^1} V_{\tau})
\]
and hence $w$ gives rise to a trivial deformation, a contradiction.

Fix a point $p \in C_{reg}$, where $w(p)\ne 0$. Let $z$ be a local coordinate on $C_{reg}$ near $p$ identifying a neighborhood of $p$ with $B_1\subset \mathbb{C}$. For $\epsilon \ll 1$ consider the map
\begin{equation}\label{eq: defEmbedding}
B_1 \times \{ (s,t) \in \mathbb{R}^2: |s|< \epsilon, |t|<\epsilon\} \mapsto u_{s,t}(z),
\end{equation}
where $u_{s,t}$ is the deformation of $u$ generated by $sw+tIw$. Let $C_{s,t}=u_{s,t}(\mathbb{P}^1)$.  Since $p$ is a regular point of $C$ and $w(p)\ne 0$,~\eqref{eq: defEmbedding} is an immersion in a neighborhood of the origin.  Furthermore, since the deformation is non-trivial and $[C].[C]=0$, $C_{s,t}$ is disjoint from $C_{s',t'}$ for $(s,t)\ne (s',t')$.  The map~\eqref{eq: defEmbedding} is therefore an embedding. In particular, deformations of $C$ sweep out a neighborhood of $p$.

On the other hand, since $C\subset X_2$, for any $\epsilon >0$ there are smooth elliptic curves homologous to $[C]$ and intersecting $B_{\epsilon}(p)$.  It follows that there is an $I=I_0$ holomorphic rational curve $\tilde{C}$ homologous to $C$ such that $\tilde{C}$ intersects $X_1$ non-trivially.  Therefore, we can choose a smooth elliptic curve $\hat{C} \subset X_1$ homologous to $C$ such that
\[
0< \tilde{C}.\hat{C} = [C]^2 =0
\]
a contradiction.
\end{proof}

\begin{rk}
The proof of Proposition~\ref{prop: cuspAcc} could be used to prove Proposition~\ref{prop: nodalAcc} as well.  The main advantage of the argument given to prove Proposition~\ref{prop: nodalAcc} is that the extraneous family of complex structures is constructed explicitly using the twistor space construction
\end{rk}

Finally, we only need to rule out the accumulation of singular rational curves with one component at singular curves with several components and vice versa.  Note if $C$ is an irreducible (nodal or cuspidal) rational curve, then it is a simple consequence of Mayer-Vietoris that, for some $\epsilon>0$, $H_{2}(B(C, \epsilon), \mathbb{Z}) = \mathbb{Z}[C]$.  In particular, singular curves $\sum_{k}n_kC^{(k)}$ with more than one component cannot accumulate at singular curves with only one component.  The converse will be a corollary of the following proposition

\begin{prop}\label{prop: topFib}
The set $X_1$ consisting of points lying on smooth elliptic curves is open, dense and path connected in $X$. In particular, $X=X_1\cup X_2$. 
\end{prop}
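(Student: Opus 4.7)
The plan is to establish that $X_1 \cup X_2$ is both open and closed in $X$. Combined with the fact that $L \subset X_1 \cup X_2$ and the connectedness of $X$, this forces $X_1 \cup X_2 = X$, which (since $X_2 = \partial X_1$) is equivalent to density of $X_1$. Path connectedness of $X_1$ will then follow from the local fibration of Lemma~\ref{lem: openFib}. To see closedness: if $p_n \in X_1 \cup X_2$ converges to $p$, choose stable maps $u_n \in \overline{\mathcal{M}}_1 := \mathcal{M}_1 \cup \mathcal{M}_2$ with $p_n$ in the image. The symplectic area $\int_{\Sigma_n} u_n^*\omega_I = [\omega_I]\cdot[L]$ is constant, so Proposition~\ref{prop: cpctSet} confines the images $u_n(\Sigma_n)$ to a common compact subset of $X$, and Gromov--Sacks--Uhlenbeck compactness extracts a subsequential limit $u \in \overline{\mathcal{M}}_1$ passing through $p$.

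Openness at points of $X_1$ is Lemma~\ref{lem: openFib}. For $p \in X_2$, let $C \in \mathcal{M}_2$ be a singular fiber through $p$, whose topological type is restricted by Proposition~\ref{prop: singClass}. The key assertion is that $C$ admits a local analytic smoothing: there exist an open neighborhood $U \subset X$ of $C$ and a proper continuous surjection $\pi_U : U \to \mathbb{D}$ onto an open disk around $0 \in \mathbb{C}$ with $\pi_U^{-1}(0) = C$ and $\pi_U^{-1}(t)$ a smooth elliptic curve of class $[L]$ for every $t \ne 0$. Granting this, the curves $\pi_U^{-1}(t)$ for $0 < |t| \ll 1$ are smooth elliptic curves homologous to $[L]$ converging in the Gromov sense to $C$, so they lie in $\mathcal{M}_1$; hence $U \subset X_1 \cup X_2$. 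As a byproduct, this smoothing argument disposes of the last unresolved non-accumulation case --- an irreducible singular curve accumulating on a multi-component one --- which is the corollary promised immediately before the statement of the proposition.

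With $X_1 \cup X_2$ open, closed, non-empty, and $X$ connected, we conclude $X_1 \cup X_2 = X$, and therefore $X_1$ is dense (every point of $X_2$ is a Gromov limit of smooth curves in $\mathcal{M}_1$). For path connectedness of $X_1$: by Lemma~\ref{lem: openFib} the assignment sending a point to the unique smooth elliptic curve in $\mathcal{M}_1$ containing it is well defined (uniqueness follows from $[L]^2 = 0$, which forces any two distinct curves in $\mathcal{M}_1$ to be disjoint) and realizes $X_1$ as a torus bundle $X_1 \to B_1 := \mathcal{M}_1/PSL(2,\mathbb{C})$. Since $B_1$ is the quotient of the connected space $\mathcal{M}_1$ and the fibers are tori, $X_1$ is path connected.

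The main obstacle is the construction of the local smoothing $\pi_U$ for each of the Kodaira configurations in Proposition~\ref{prop: singClass}, especially the multi-component types for which no previously proved non-accumulation statement applies. I would handle it case by case: for the irreducible types ($I_1$, $II$) the smoothing is the standard analytic deformation of a nodal or cuspidal plane curve; for the multi-component types ($I_n$, $I_n^*$, $II^*$, $III$, $III^*$, $IV$, $IV^*$) the relevant local model is the plumbing of disk bundles over the components $C^{(k)} \cong \mathbb{P}^1$ along the negative-definite intersection pattern identified in Lemma~\ref{lem: quadForm}, which carries a standard elliptic fibration structure from Kodaira's description of elliptic surfaces. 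Alternatively one may argue abstractly via the unobstructedness of the deformation theory of stable maps of genus $1$ and class $[L]$ with $[L]^2=0$ in a complex surface whose canonical bundle is trivial near $C$, which produces the required $1$-complex-parameter smoothing family.
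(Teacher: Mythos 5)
The closedness half of your plan is automatic (by definition $X_2=\del X_1$, so $X_1\cup X_2=\overline{X_1}$), and openness at points of $X_1$ is Lemma~\ref{lem: openFib}; hence the entire weight of your argument rests on the ``key assertion'' that every singular fiber $C$ has a neighborhood $U\subset X$ carrying a proper map $\pi_U\colon U\rightarrow \mathbb{D}$ with central fiber $C$ and all other fibers smooth elliptic curves of class $[L]$. This is precisely the step that is not proved, and the justifications you offer do not establish it. The plumbing model only shows that \emph{some} surface germ containing a configuration of the given Kodaira type admits an elliptic fibration; it does not show that the actual germ of $X$ along $C$ is biholomorphic to that model. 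Germs of neighborhoods of a compact curve with self-intersection zero are not determined by the intersection pattern: the degree-zero line bundle $\mathcal{O}_C(C)$ need not be trivial, and even when it is, the curve need not move, let alone sweep out a full neighborhood (there are classical examples of curves with topologically trivial normal bundle whose neighborhoods contain no other compact curves). The alternative appeal to ``unobstructedness of the deformation theory of genus $1$ stable maps'' is false as stated: already for a smooth fiber $H^1(C,N_{C/X})\cong H^1(C,\mathcal{O}_C)\neq 0$, singular fibers such as $I_0^*$ involve multiple and multiply covered components, and the paper's own proof of Proposition~\ref{prop: cuspAcc} computes a four-dimensional cokernel of the linearized operator at a cuspidal fiber, i.e.\ the relevant problem is obstructed. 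Even granting a smoothing, you would still need the nearby smooth curves to lie in the fixed component $\mathcal{M}_1$, which does not follow merely from Gromov convergence to $C$. Consequently both your openness step at points of $X_2$ and the claimed ``byproduct'' (ruling out irreducible curves accumulating on multi-component fibers) are unsupported.

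The paper's proof takes a softer route that never smooths a singular fiber. Using Lemma~\ref{lem: multAcc}, Propositions~\ref{prop: nodalAcc} and~\ref{prop: cuspAcc}, Proposition~\ref{prop: cpctSet} and Hausdorff/Gromov compactness, it shows that inside a large ball $X_2$ is a \emph{finite} union of holomorphic curves, hence a closed set of Hausdorff dimension $2$ in a $4$-manifold, so $B_{2R}(p)\setminus X_2$ is path connected. Any $q\notin X_2$ is then joined to a point of $X_1$ by a path avoiding $X_2$; along this path the set of times lying in $X_1$ is open by Lemma~\ref{lem: openFib} and closed because $\overline{X_1}\setminus X_1\subset X_2$ is avoided, whence $q\in X_1$ and $X=X_1\cup X_2$. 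The local fibered structure near singular fibers is only obtained afterwards (Lemma~\ref{lem: fibration}). To salvage your approach you would have to prove the local smoothing assertion, which is essentially the local form of the statement being proven.
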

\begin{proof}

Choose a smooth elliptic curve $C$ passing through a point $p\in X_1$.  Let $q\in X$ and choose $R>0$ so that $q\in B_{R}(p)$.  We can assume that $q \notin X_2$, for otherwise we are finished.  Consider $\overline{B_{2R+e}}(p)$, where $e$ is the constant from Proposition~\ref{prop: cpctSet} for $K=B_{2R}(p)$ and $A= [\omega].[C]$ .  We claim that, by Lemma~\ref{lem: multAcc} and Propositions~\ref{prop: nodalAcc} and~\ref{prop: cuspAcc}, the set $X_2\cap \overline{B_{2R}}(p)$ is a finite union of sets $C_i\cap  \overline{B_{2R}}(p)$ where $C_{i}\subset \overline{B_{2R+e}}(p)$ are singular holomorphic curves.  Suppose that this is not the case.  By Proposition~\ref{prop: cpctSet} any connected holomorphic curve  intersecting $\overline{B_{2R}(p)}$ is contained in $\overline{B_{2R+e}}(p)$.  Thus, for the sake of contradiction, we can assume
\begin{equation}\label{eq: X2union}
X_2\cap \overline{B_{2R}}(p) = \bigcup_{\alpha \in \mathcal{A}} C_\alpha\cap  \overline{B_{2R}}(p),
\end{equation}
where the $C_{\alpha}$ are singular holomorphic curves in $\overline{B_{2R+e}}(p)$ and $\mathcal{A}$ is an infinite index set.  There is an infinite subset $\mathcal{A}'\subset \mathcal{A}$ such that all the curves $C_{\alpha'}, \alpha'\in \mathcal{A}'$ are either all reduced, irreducible rational curves or all curves with $m\geq 2$ irreducible components.  Now, since $\overline{B_{2R+e}}(p)$ is compact, the Hausdorff distance is compact, and so there is a sequence of curves $C_{\alpha'_i}, i\in \mathbb{N}$ converging in the Hausdorff distance to $C_{\alpha'_{\infty}}$ for some $\alpha'_{\infty}\in \mathcal{A}'$. If $\mathcal{A}'$ consists only of irreducible rational curves, this contradicts  Propositions~\ref{prop: nodalAcc} and~\ref{prop: cuspAcc}, while if $\mathcal{A}'$ consists of reducible curves then this contradicts Lemma~\ref{lem: multAcc}.  Therefore, the index set $\mathcal{A}$ in~\eqref{eq: X2union} is finite.  Since each $C_{\alpha}$ has only finitely many components by Lemma~\ref{lem: multBnd} each with Hausdorff dimension $2$, $X_{2}\cap B_{2R}(p)$ has Hausdorff dimension $2 <3 =4-1$.  Therefore, the complement $B_{2R}(p)\setminus X_2$ is path connected (see, for example \cite{Sem}).  Hence we can find a smooth curve $\gamma(t)$ such that $\gamma(0)=p, \gamma(1)=q$ and $\gamma(t)\notin X_2$ for any $t\in[0,1]$.  The set $I:= \{ t\in[0,1]: \gamma(t) \in X_1\}$ is non-empty and open by Lemma~\ref{lem: openFib}.  Since $\gamma(t)\notin X_2$ for any $t\in[0,1]$, it follows that $A$ is closed and hence $q\in X_1$ as desired.

\end{proof}

The following lemma is straightforward, but we state it for completeness

\begin{lem}\label{lem: eulerChar}
	Let $Y$ be a del Pezzo surface or a rational elliptic surface and $D\in |-K_{Y}|$ a smooth divisor with $D^2=d$.  Then $X= Y\setminus D$ has $\chi(X)=9-d$.
\end{lem}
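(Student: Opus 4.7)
The plan is to combine additivity of the Euler characteristic under removal of a smooth real codimension--$2$ submanifold with the topological classification of $Y$. First, by adjunction $K_D=(K_Y+D)|_D=\mathcal{O}_D$, so $D$ is a smooth elliptic curve and $\chi(D)=0$.

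Next, let $U$ be a tubular neighborhood of $D\subset Y$; then $U$ deformation retracts onto $D$, while $U\cap X=U\setminus D$ deformation retracts onto the unit circle bundle $S(N_{D/Y})\to D$ of the normal bundle. Any $S^1$-bundle has vanishing Euler characteristic (by multiplicativity in fiber bundles, using $\chi(S^1)=0$), so $\chi(U\cap X)=0$ regardless of the self-intersection $D^2$. Mayer--Vietoris applied to $Y=U\cup X$ then gives
\[
\chi(Y)=\chi(U)+\chi(X)-\chi(U\cap X)=\chi(D)+\chi(X)=\chi(X),
\]
so the computation reduces entirely to $\chi(Y)$.

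Finally, $\chi(Y)$ is a routine classification exercise. If $Y$ is a del Pezzo surface of degree $d=D^2=K_Y^2$, then $Y$ is either $\mathbb{P}^2$ ($d=9$), $\mathbb{P}^1\times\mathbb{P}^1$ ($d=8$), or the blow-up of $\mathbb{P}^2$ at $9-d$ general points, and each of these has $\chi(Y)=12-d$. If $Y$ is a rational elliptic surface and $D$ a smooth fiber, then $d=D^2=0$ and $\chi(Y)=12$, either from Kodaira's formula for the Euler characteristic of an elliptic fibration (smooth fibers contribute $0$) or from the fact that such a $Y$ is obtained by blowing up $\mathbb{P}^2$ at the nine base points of a generic cubic pencil.

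There is no serious obstacle; the only mild subtlety is the vanishing $\chi(U\cap X)=0$, which is insensitive to the Euler class $D^2$ of the normal circle bundle since it is forced by $\chi(S^1)=0$. The formula I obtain is $\chi(X)=12-d$, which matches the expected counts $12$ for a rational elliptic surface ($d=0$) and $3$ for $\mathbb{P}^2\setminus\{\text{smooth cubic}\}$ ($d=9$); the stated value $9-d$ would give $9$ and $0$ instead, so I suspect the $9$ in the lemma is a typographical slip for $12$.
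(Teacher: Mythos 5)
Your proof is correct and takes essentially the same route as the paper's: compute $\chi(Y)$ from the description of $Y$ as a blow-up of $\mathbb{P}^2$ at $9-d$ points (so $\chi(Y)=12-d$) and then remove the elliptic curve $D$ with $\chi(D)=0$, which the paper does by additivity of the Euler characteristic and you do, equivalently, by Mayer--Vietoris with the normal circle bundle. You are also right that the stated value $9-d$ is a slip: the paper's own proof concludes $\chi(X)=12-d$, and that is the value used elsewhere (for instance $\chi(\check{Y})=12$ in the proof of Theorem~\ref{thm: RES}, and the counts of $3$ and $12$ singular fibers in Corollaries~\ref{cor: introAurConj1} and~\ref{cor: introAurConj2}).
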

\begin{proof}
	This follows immediately from the fact that, topologically, $Y$ is obtained by blowing-up $\mathbb{P}^2$ at $9-d$ points.  Thus $\chi(Y)= 12$.  Therefore $\chi(X)= 12-d$.
\end{proof}

Finally, we prove the following lemma, which in combination with the previous results, establishes Theorem~\ref{thm: oneToMany}.

\begin{lem}\label{lem: fibration}
There is a complex manifold $B$ of complex dimension $1$ such that $(X,g,I, \omega_I)$ admits an elliptic fibration $\pi: X\rightarrow B$, the number of singular fibers bounded by $\chi(X) < +\infty$.  Furthermore, this fibration is minimal in the sense that no fiber contains a rational curve with self-intersection $(-1)$. 
\end{lem}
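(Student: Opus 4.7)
The plan is to use the moduli of smooth fibers in $\mathcal{M}_1$ as the base of a fibration over $X_1$, then fill in the finitely many points corresponding to the singular fibers in $X_2$. Since any two distinct members of $\mathcal{M}_1$ have intersection number $[L]^2=0$ and are $I$-holomorphic, they are disjoint; combined with Lemma~\ref{lem: openFib}, this means assigning to each $x \in X_1$ the unique smooth fiber through it defines a holomorphic submersion $\pi_1: X_1 \to B_1$ onto a Riemann surface $B_1$ parametrizing the image curves of $\mathcal{M}_1$ (the complex structure on $B_1$ is furnished by Hitchin's construction).

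Next I would extend $\pi_1$ across each singular fiber $C \subset X_2$. By Lemma~\ref{lem: multAcc} and Propositions~\ref{prop: nodalAcc}, \ref{prop: cuspAcc}, some neighborhood $U$ of $C$ satisfies $U \cap X_2 = C$ and $U\setminus C \subset X_1$. The crucial claim is that $\pi_1(U\setminus C)$ is a punctured neighborhood of a single end of $B_1$, so that $\pi_1$ extends continuously across $C$; by Riemann removable singularities the extension is holomorphic, and we may glue a disk onto $B_1$ at each such end. This yields a Riemann surface $B$ and a proper holomorphic fibration $\pi: X \to B$ extending $\pi_1$, with $\pi^{-1}(t_C) = C$ for each $C \subset X_2$.

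Topological additivity of the Euler characteristic then gives $\chi(X) = \sum_{t\in S}\chi(\pi^{-1}(t))$, where $S$ is the discriminant, since smooth fibers are tori and contribute zero. The Kodaira classification (Proposition~\ref{prop: singClass}) shows $\chi(\pi^{-1}(t))\geq 1$ for every singular fiber, hence $|S|\leq \chi(X)<+\infty$. Minimality is automatic from Proposition~\ref{prop: singClass} and Lemma~\ref{lem: quadForm}: every fiber component has self-intersection either $0$ (in the irreducible $I_1$ or $II$ case, where the class equals $[L]$) or $-2$, so no $(-1)$-curve can occur.

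The main obstacle is the extension step: one must rule out that smooth fibers approaching $C$ could limit to distinct ends of $B_1$. The resolution combines the uniqueness portions of Lemma~\ref{lem: multAcc} and Propositions~\ref{prop: nodalAcc}, \ref{prop: cuspAcc} (no other singular fiber accumulates near $C$) with Proposition~\ref{prop: cpctSet} and Gromov-Sacks-Uhlenbeck compactness: any sequence of fibers in $\mathcal{M}_1$ whose images meet every neighborhood of $C$ has a Gromov subsequential limit, which is a stable curve supported in $U \cap X_2 = C$ and homologous to $[L]$, hence is $C$ itself. Thus all such sequences converge to the same end of $B_1$, and $\pi_1$ extends continuously as required.
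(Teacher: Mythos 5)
Your overall route is the same as the paper's (define the base from the fibers, extend over the finitely many singular fibers, count them by $\chi(X)$, get minimality from Proposition~\ref{prop: singClass}), but there is a genuine gap at the extension step, which is precisely where the paper's proof does its real work. Writing $D^{*}$ for the image in $B_1$ of a punctured neighborhood of a singular fiber $C$, your Gromov--Sacks--Uhlenbeck argument only shows that nearby smooth fibers Hausdorff-converge to $C$, i.e.\ that $D^{*}$ is a single topological end. That is not enough to ``glue a disk onto $B_1$'' holomorphically: by the classification of doubly connected Riemann surfaces, $D^{*}$ could a priori be biholomorphic to the punctured disk $\Delta^{*}$, to $\mathbb{C}^{*}$, or to a finite annulus $\{1<|z|<R\}$, and in the latter case \emph{no} complex structure on the filled-in base restricts to the given one on $D^{*}$. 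The Riemann removable singularity theorem cannot be invoked before this is settled, because there is as yet no holomorphic chart at the point you want to add; the theorem has nothing to act on.

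The paper closes exactly this gap by producing a holomorphic transversal: it chooses $p\in C_{reg}$ lying on a component of multiplicity one (such a component exists by Proposition~\ref{prop: singClass} and Remark~\ref{rk: redCompExist}; the absence of multiple fibers is essential here), takes coordinates $(z_1,z_2)$ with $C=\{z_2=0\}$ locally, and proves—using connectedness of $C^{i}\cap B(p,\epsilon)$ for nearby fibers and a mean value theorem argument—that the disk $\{z_1=0,\ |z_2|<N^{-1}\epsilon\}$ meets each nearby fiber in exactly one point. This exhibits $D^{*}\cong \Delta^{*}$, provides the local holomorphic (and, via the normal exponential map, smooth) coordinate on $B$ at the new point, and only then does the Riemann extension theorem give the holomorphic extension of $\pi$. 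Your proposal never addresses the multiplicity-one issue or the conformal type of the end, so the claim ``the extension is holomorphic and we may glue a disk'' is unjustified as stated; the rest of your argument (Euler characteristic count and minimality) matches the paper and is fine.
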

\begin{proof}
First we show that $X$ contains only finitely many singular fibers. Notice that one can get compute the Euler characteristic of $X$ from the Mayer-Vietoris sequence. Since $X$ is a torus fibration we have
\[
+\infty > \chi(X)  = \sum_{C: \text{ singular fibers}} \chi(C).
\]
On the other hand, by Proposition~\ref{prop: singClass} the singular fibers are classified by Kodaira's list and each singular fiber $C$ has $\chi(C) \geq 1$ (see, for example \cite{Mir}).  Therefore there can only be a finite number of singular fibers.  In particular, this rules out the accumulation of singular rational curves with only one component at a singular curves with several components.

Define a fibration $\pi: X \rightarrow B$ by sending $x \rightarrow [x]$ where we say that $x\sim y$ if $x,y$ lie on the same connected holomorphic curve homologous to $[L]$.  Note that by Proposition~\ref{prop: topFib} this equivalence relation is well defined on all of $X$.  Let $B_1 = \pi(X_1)$ and recall that by a result of Hitchin \cite{Hit}, $B_1$ has a natural complex structure making $\pi: X_1 \rightarrow B_1$ a holomorphic fibration with fibers smooth genus $1$ curves.

For a torus fiber $C$, choose a point $p \in C$.  Then the normal exponential map $\nu : T_{p}C^{\perp} \rightarrow X$ defines a local section of $\pi$ which is smooth with respect to the smooth structure on $B_1$.  We extend this structure to all of $B$ in the following way.  If $C$ is a singular curve in $X_2$, choose a point $p\in C_{reg}$ lying in a component with multiplicity $1$; this is possible by Proposition~\ref{prop: singClass} and Remark~\ref{rk: redCompExist}.  Since the singular fibers of $\pi$ are isolated, there is a small ball $B(p, \epsilon)$ such that $B(p, \epsilon)\setminus C$ consists only of points lying on smooth torus fibers.  Furthermore, since $p$ lies on a component with multiplicity $1$, the normal exponential map from $p$ defines a local section of $\pi$ intersecting each smooth fiber in one point and hence induces local coordinates in a neighborhood of $\pi(p)\in B$.  Since the disk has a unique smooth structure, the resulting smooth structure on $B$ is well-defined and independent of any choices.

It only remains to prove that the holomorphic structure on $B_1$ extends to all of $B$.  Choose a point $b\in B\setminus B_1$ and let $D \ni b$ be a disk with local coordinates $(x_1, x_2)$ centered at $b$ and such that $D^{*} := D\setminus\{b\} \subset B_1$.  By Hitchin's result \cite{Hit}, $D^{*}$ has a complex structure.  By a result from complex analysis (see, for example \cite[Corollary 1.2.7a]{Thur}) $D^{*}$ is biholomorphic to either the punctured disk $\Delta^{*} = \{ z \in \mathbb{C} : 0< |z|<1\}$, $\mathbb{C}^{*}$, or an annulus $\{ z \in \mathbb{C} : 1 < |z| < R\}$ for some $R>1$. We claim that in fact $D^{*}$ is biholomorphic to $\Delta^{*}$.  Take $p \in C_{reg}$ a smooth point in a component of multiplicity $1$; note that such a component always exists; see Proposition~\ref{prop: singClass} and Remark~\ref{rk: redCompExist}.  We can find holomorphic coordinates $(z_1,z_2)$ on an open ball $B(p, \epsilon) \subset X$ so that $\{z_2=0\}=C\cap B(p,\epsilon) $ and $p=\{z_1=z_2=0\}$. We claim that for $N\in \mathbb{N}$ sufficiently large, the set $\{(0,z_2): |z_2|<N^{-1}\epsilon\}$ will intersect each fiber of $\pi$ exactly in one point.  Suppose that this is not the case.  Then, for all $i\in \mathbb{N}$ there exist smooth torus fibers $C^i \ne C$, Hausdorff converging to $C$ as $i\rightarrow \infty$ and having the following property: for each $i \in \mathbb{N}$ there exists points $p_1^i, p_2^i \in C^i \cap B(p,\epsilon)$, with $p_1^i \ne p_2^i$ such that $z_1(p_1^i)=z_1(p_2^i)=0$ and $|z_{2}(p_1^i)|<i^{-1}\epsilon, |z_2(p_2^i)|<i^{-1}\epsilon$.  Clearly $p_1^i,p_2^i \rightarrow p$ as $i\rightarrow \infty$. 

If there is a subsequence $i_{k}\rightarrow +\infty$ such that $C^{i_k}\cap B(p, \epsilon)$ has $2$ or more connected components then necessarily the component of $C$ containing $p$ has multiplicity at least $2$.  But this contradicts the fact that $p$ is in a component of $C$ with multiplicity $1$.  Thus, we can assume that for $i$ sufficiently large $C^{i}\cap B(p, \epsilon)$ has only one connected component.  Thanks to the connectedness and the fact that $z_1(p_j^i)=0$ for $j=1,2$, the mean value theorem gives the existence of a point $q^i \in C^i\cap B(p, \epsilon)$ such that $dz_1=0$ at $q^i$, but this contradicts the fact that $(z_1,z_2)$ are coordinates.

 Thus, after possibly shrinking $D$, the map $z_2 \mapsto \pi(0,z_2)$ gives a biholomorphic map from a punctured disk to $D^{*}$; in particular $D$ inherits a complex structure extending the one on $D^{*} \cong \Delta^{*}\subset \mathbb{C}$.  Thus, we have a holomorphic map $\pi: \pi^{-1}(D^{*}) \rightarrow D^{*} \cong \Delta^{*} \subset \mathbb{C}$.  By the Riemann extension theorem, $\pi$ extends to a holomorphic map $\pi: \pi^{-1}(D) \rightarrow \Delta$ and hence $\pi: X\rightarrow B$ is a holomorphic fibration.  By the adjunction formula, we conclude that the smooth fibers of $\pi$ are tori and no smooth rational curve with self intersection $(-1)$ can occur in any fiber (this also follows from Proposition~\ref{prop: singClass}).  Thus, $\pi:(X,I) \rightarrow B$ is a minimal holomorphic torus fibration.
\end{proof}

Finally, we can apply Theorem~\ref{thm: oneToMany}, in conjunction with Theorem~\ref{thm: main1Intro} to prove Theorem~\ref{thm: wtf}.  In order to apply Theorem~\ref{thm: oneToMany} we need to first check that the conditions apply.  We begin with the following result.

\begin{lem}\label{lem: prim}
Let $Y$ be a compact K\"ahler surface and $D\in|-K_{Y}|$ a smooth anti-canonical divisor.  Let $N$ be a tubular neighborhood of $D$ and $X=Y\setminus D$.  Assume that $[L]\in H_{2}(N,\mathbb{Z})$ is primitive, $H_{2}(Y,\mathbb{Z})$ is torsion-free and $H_{1}(Y,\mathbb{Z})=0$.  Then $[L] \in H_{2}(X,\mathbb{Z})$ is primitive.
\end{lem}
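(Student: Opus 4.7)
The plan is to argue by contradiction, pushing the hypothesized divisibility of $[L]_X$ forward into $H_2(Y,\mathbb{Z})$ and exploiting the intersection pairing on the compact surface $Y$, with a final appeal to surface classification to treat the degenerate subcase $D^2 = 0$.

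First I would identify $[L]_N$ explicitly. Since $H_1(Y,\mathbb{Z}) = 0$ forces $h^{0,1}(Y) = 0$, and $D \in |-K_Y|$ is smooth, the adjunction formula $K_D = (K_Y + D)|_D = 0$ shows $D$ is a disjoint union of elliptic curves; in the cases of interest (in particular when $-K_Y$ is ample or a fiber of an elliptic pencil) $D$ is connected. The tubular neighborhood $N$ deformation retracts onto $D$, so $H_2(N,\mathbb{Z}) = H_2(D,\mathbb{Z}) = \mathbb{Z}$, generated by the fundamental class $[D]_N$. Primitivity of $[L]_N$ then forces $[L]_N = \pm [D]_N$, and pushing forward through $i: N \hookrightarrow Y$ gives $[L]_Y = \pm [D]_Y$ in $H_2(Y,\mathbb{Z})$.

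Now suppose for contradiction that $[L]_X = k\alpha$ in $H_2(X,\mathbb{Z})$ for some integer $k \geq 2$ and $\alpha \in H_2(X,\mathbb{Z})$. Pushing forward under $j: X \hookrightarrow Y$ gives $[L]_Y = k\, j_*\alpha$, so combining with the above yields $\pm [D]_Y = k\, j_*\alpha$; torsion-freeness of $H_2(Y,\mathbb{Z})$ then shows $[D]_Y$ is divisible by $k$. On the other hand, any class in $H_2(X,\mathbb{Z})$ admits a representative disjoint from $D$, so $j_*\alpha \cdot [D]_Y = 0$. Intersecting both sides of $\pm[D]_Y = k\,j_*\alpha$ with $[D]_Y$ yields $D^2 = \pm k\,(j_*\alpha \cdot [D]_Y) = 0$; if $D^2 \neq 0$, this is already the desired contradiction.

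The main obstacle is the remaining case $D^2 = 0$, which requires additional input. Here the hypotheses $H_1(Y,\mathbb{Z}) = 0$ and $H_2(Y,\mathbb{Z})$ torsion-free, combined with the existence of a smooth effective anticanonical divisor of self-intersection zero, force $Y$ to be a rational elliptic surface via the Enriques--Kodaira classification: K3 is excluded since $-K_Y$ is nontrivial and effective; Enriques, bielliptic, and any surface of positive irregularity are excluded by $H_1(Y,\mathbb{Z}) = 0$; and among the rational surfaces only the rational elliptic case realizes $K_Y^2 = 0$ with $-K_Y$ effective and smooth. Then $[D]_Y$ is the class of a smooth generic fiber of the elliptic fibration, which is primitive in $H_2(Y,\mathbb{Z}) \cong \mathbb{Z}^{10}$ (in the standard basis $\{H, E_1, \ldots, E_9\}$ one has $[D]_Y = 3H - E_1 - \cdots - E_9$ with $[D]_Y \cdot E_i = 1$). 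This primitivity contradicts the $k$-divisibility of $[D]_Y$ derived above.
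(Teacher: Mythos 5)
Your argument proves a different statement from the one the paper needs, and the divergence happens at the very first step. You read the hypothesis as saying that the image of $[L]$ in $H_2(N,\mathbb{Z})\cong H_2(D,\mathbb{Z})\cong\mathbb{Z}$ is a generator, hence $[L]_N=\pm[D]_N$. The printed statement admittedly invites this reading, but for the lemma to make sense at all the class $[L]$ must be carried by a cycle in $N\cap X=N\setminus D$, and the group in which primitivity is intended (and in which the paper's proof actually works) is $H_2(N\setminus D,\mathbb{Z})$, the second homology of the circle bundle over $D$, not $H_2(N,\mathbb{Z})$. In the application (proof of Theorem~\ref{thm: sLagFib}) $L$ is a torus fibering over a circle $\gamma\subset D$, so its image in $H_2(D)$ is \emph{zero}; it is never $\pm[D]_N$. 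Your own computation exposes the problem: from $[L]_Y=\pm[D]_Y$ and the fact that classes pushed forward from $X$ have zero intersection with $[D]$ you deduce $D^2=0$ -- but this already follows without assuming non-primitivity, so under your reading the hypotheses can only ever be satisfied when $D^2=0$, and the del Pezzo case $D^2=d>0$ (the main case of interest) is covered only vacuously. The intended lemma is proved by Mayer--Vietoris: $H_1(Y,\mathbb{Z})=0$ gives $H_3(Y,\mathbb{Z})=0$, so $\alpha:H_2(N\setminus D)\to H_2(X)\oplus H_2(D)$ is injective, and $\alpha([L])=([L]_X,0)$; if $[L]_X=mc$ with $m\geq 2$, then either $(c,0)$ lies in the image of $\alpha$, contradicting primitivity of $[L]$ in $H_2(N\setminus D)$, or else $\beta(c,0)$ is a nonzero $m$-torsion class in $H_2(Y,\mathbb{Z})$, contradicting torsion-freeness. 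No intersection form on $Y$ and no surface classification is needed.

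There is also an independent error in your $D^2=0$ case: the hypotheses do not force $Y$ to be a rational elliptic surface. Blowing up $\mathbb{P}^2$ at nine very general points of a smooth cubic gives a rational surface with $K_Y^2=0$ and a smooth anticanonical curve, yet $h^0(-K_Y)=1$ and the surface admits no elliptic fibration. The fact you actually use -- that $[D]_Y=-K_Y$ is primitive in $H_2(Y,\mathbb{Z})$ -- is nevertheless true: $-K_Y$ effective and nonzero forces Kodaira dimension $-\infty$, $H_1(Y,\mathbb{Z})=0$ then forces $Y$ rational, and $K_Y^2=0<8$ rules out minimality, so $Y$ contains a $(-1)$-curve $E$ with $K_Y\cdot E=-1$. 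So that step is repairable, but as written the classification claim is false, and in any case the overall route does not establish the lemma in the form in which the paper uses it.
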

\begin{proof}
This is a direct consequence of the Mayer-Vietoris sequence.  Since $H_{1}(Y)=0$, Poincar\'e duality implies $H_3(Y)=0$ and by Mayer-Vietoris
\[
\begin{aligned}
0= H_{3}(Y) \rightarrow H_{2}(N\setminus D) &\overset{\alpha}{\longrightarrow}H_{2}(X) \oplus H_{2}(D) \overset{\beta}{\longrightarrow} H_{2}(Y)\\
[L]&\longrightarrow(\alpha([L]), 0).
\end{aligned}
\]
If $\alpha([L])$ is not primitive in $X$, then we can write $\alpha([L]) = m[L']$ for some primitive homology class $[L'] \in H_{2}(X)$ not in the image of $\alpha$.  But then $\beta([L'])$ is a non-zero class in $H_{2}(Y)$ with $m\beta([L'])=0$, contradicting the assumption the $H_2(Y)$ has no torsion.
\end{proof}

\begin{thm}\label{thm: sLagFib}
Let $Y$ be a del Pezzo surface or a rational elliptic surface and $D\in |-K_{Y}|$ a smooth anti-canonical divisor.  Then $X=Y\setminus D$ admits a special Lagrangian fibration $\pi:X\rightarrow \mathbb{R}^2$ with at most finitely many singular fibers.  Furthermore, after hyper-K\"ahler rotation with respect to the Tian-Yau metric, the fibration $\pi:X \rightarrow \mathbb{C}$ is holomorphic.
\end{thm}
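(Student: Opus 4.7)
The plan is to produce a single special Lagrangian torus $L\subset X$ with primitive homology class and vanishing self-intersection, and then invoke Theorem~\ref{thm: oneToMany} together with a hyper-K\"ahler rotation.  Since the anticanonical divisor $D$ is a smooth elliptic curve, every primitive class $[\gamma]\in H_1(D,\mathbb{Z})$ is represented by a flat, straight special Lagrangian circle.  In the Type II case (rational elliptic surface), the corollary at the end of Section~\ref{sec: ACYL} directly produces a special Lagrangian torus $L\cong S^1\times\gamma$ in $(X,\omega_{TY})$.  In the Type I case (del Pezzo), Theorem~\ref{thm: main1Intro} produces a countable family of disjoint special Lagrangian tori $\{\tilde L_i\}$ of the same topological type $S^1\times\gamma$, all homologous to one another.

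Next I would verify the hypotheses of Theorem~\ref{thm: oneToMany}.  Disjointness gives $[L]^2=0$ immediately.  To obtain primitivity of $[L]$ in $H_2(X,\mathbb{Z})$ via Lemma~\ref{lem: prim}, I note that a del Pezzo surface and a rational elliptic surface are simply connected with torsion-free $H_2$, and that in a tubular neighborhood $N$ of $D$ the class $[L]$ is the preimage of the primitive class $[\gamma]$ under the circle bundle projection $\partial N\to D$, hence primitive in $H_2(N,\mathbb{Z})$ by the Gysin sequence.  For the Riemannian hypotheses, Corollary~\ref{cor: TYcurvDec} gives bounded sectional curvature and $\mathrm{inj}(x)\geq C^{-1}\ell_0(x)^{-1}$ in complex dimension $2$; since $\ell_0^{3}$ is comparable to the distance $r(x)$ from a fixed basepoint, this yields
\[
\mathrm{inj}(x) \geq C^{-1} r(x)^{-1/3} =: f(r(x)),
\]
with $\int_0^\infty f(s)\,ds=\infty$.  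In the Type II case the asymptotically cylindrical geometry makes both conditions trivial.  Finally, $\chi(X)=9-d<\infty$ by Lemma~\ref{lem: eulerChar}.

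With all hypotheses of Theorem~\ref{thm: oneToMany} in place, $L$ extends to a special Lagrangian torus fibration $\pi:X\to B$ onto a smooth $2$-manifold $B$ with at most $\chi(X)=9-d$ singular fibers from Kodaira's list.  For the last assertion, I would hyper-K\"ahler rotate to the complex structure $I$ characterized by $\omega_I=\mathrm{Re}(\Omega_J)$ and $\Omega_I=\omega_J-\sqrt{-1}\,\mathrm{Im}(\Omega_J)$.  By Wirtinger's inequality each special Lagrangian fiber becomes an $I$-holomorphic elliptic curve, and Hitchin's construction of the complex structure on $B$, as used in Lemma~\ref{lem: fibration}, promotes $\pi$ to a holomorphic elliptic fibration.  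Since the Tian-Yau end is identified near infinity with the Calabi model, whose fibers in this region are (after rotation) affine elliptic curves parametrized by a copy of $\mathbb{C}$, the base $B$ is diffeomorphic to $\mathbb{R}^2$ and biholomorphic to $\mathbb{C}$.

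The main technical obstacle is absorbed into the already-proved Theorem~\ref{thm: oneToMany}, but at the level of checking hypotheses the delicate point is the injectivity radius integrability: the Type I Tian-Yau decay $\mathrm{inj}\gtrsim r^{-1/3}$ is exactly borderline, and this is precisely what Corollary~\ref{cor: TYcurvDec} supplies in dimension $2$.  The remaining content is an assembly of the structural results of Sections~\ref{sec: TY} and~\ref{sec: sLagFib}, together with the elementary topological inputs on $Y$.
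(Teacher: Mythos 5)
Your assembly of the first part matches the paper's own proof: produce disjoint special Lagrangian tori via Theorem~\ref{thm: main1Intro} (or the Type II corollary), get $[L]^2=0$ from disjointness, get primitivity from Lemma~\ref{lem: prim} plus the topology of $Y$, check the curvature/injectivity-radius/Euler-characteristic hypotheses (your explicit computation ${\rm inj}\gtrsim r^{-1/3}$ from Corollary~\ref{cor: TYcurvDec} and Remark~\ref{rek: scaleRk} is exactly the right check, spelled out more explicitly than in the paper), and then invoke Theorem~\ref{thm: oneToMany} and hyper-K\"ahler rotation. Up to that point your argument is correct and essentially identical to the paper's.

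There is, however, a genuine gap in your last step. Theorem~\ref{thm: oneToMany} (via Lemma~\ref{lem: fibration}) only gives a fibration $\pi:X\rightarrow B$ over \emph{some} Riemann surface $B$; identifying $B$ with $\mathbb{R}^2$, and biholomorphically with $\mathbb{C}$ after rotation, is additional content of Theorem~\ref{thm: sLagFib}, and your justification --- that the Tian-Yau end is modeled on the Calabi geometry whose fibers are ``parametrized by a copy of $\mathbb{C}$'' --- does not establish it. The model region only covers a neighborhood of infinity, and there the base of the model fibration is a half-cylinder (the parameter is $(\epsilon,\gamma)$ with $\gamma$ ranging over the circle of parallel special Lagrangian geodesics in the flat torus $D$), so all you can conclude is that $B$ has a single annular end. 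That is compatible with $B$ being, say, a once-punctured surface of positive genus, and even if $B\cong\mathbb{R}^2$ topologically, uniformization leaves open whether $B$ is biholomorphic to $\mathbb{C}$ or to the unit disk. The paper closes both holes: it proves $H_1(X,\mathbb{Z})$ is torsion (Lemma~\ref{lem: torsionFun}), lifts loops in $B\setminus B_s$ to get a surjection $\pi_1(X)\rightarrow\pi_1(B)$, and uses that the fundamental group of a non-compact Riemann surface is free to conclude $B$ is simply connected; then it rules out the disk by pulling back the bounded coordinate function along the holomorphic fibration and applying Yau's theorem that a complete Ricci-flat manifold carries no non-constant bounded harmonic function. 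You need to supply these two steps (or an equivalent argument) before the conclusion $\pi:X\rightarrow\mathbb{R}^2$, holomorphic onto $\mathbb{C}$ after rotation, is justified.
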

\begin{proof}
Let $Y$ be a del Pezzo surface or a rational elliptic surface.  Then, topologically, $Y$ is obtained by blowing up $\mathbb{P}^2, \mathbb{P}^1\times\mathbb{P}^1$, or the second Hirzebruch surface $\mathbb{F}_2$. It follows that $H_1(Y,\mathbb{Z})=0$, $H_2(Y,\mathbb{Z})$ is torsion-free and $Y$ has finite Euler characteristic by Lemma~\ref{lem: eulerChar}.

Since the divisor $D$ is a flat torus, $D$ contains infinitely many smooth special Lagrangian circles.  By Theorem~\ref{thm: main1Intro} we obtain infinitely many disjoint, possibly embedded, special Lagrangian tori in $X$ with ${\rm Im}(\Omega)\big|_{L}=0$ (after possibly rotating $\Omega$) within a fixed homology class $[L]$ which is primitive in a tubular neighborhood of $D$.  Therefore, by Lemma~\ref{lem: prim}, $[L]$ is primitive in $H_{2}(X,\mathbb{Z})$ and since $[L]$ can be represented by disjoint embedded special Lagrangians, $[L]^2=0$.   

Now, $X$ equipped with its the Tian-Yau, or asymptotically cylindrical Calabi-Yau metric, satisfies the assumptions of Theorem~\ref{thm: oneToMany} and hence we obtain a special Lagrangian torus fibration $\pi: (X,g, J, \omega_J) \rightarrow B$ with finitely many singular fibers, each classified by Kodaira and having no multiple fibers.  We only need to prove that $B= \mathbb{R}^2$.  After hyper-K\"ahler rotating, we have a holomorphic fibration $\pi:(X,g,I, \omega_I) \rightarrow B$ and $B$ is a non-compact Riemann surface by Hitchin's result \cite{Hit}.  We need the following lemma

\begin{lem}\label{lem: torsionFun}
The manifold $X$ has torsion first homology group.
\end{lem}
\begin{proof}
	Consider the long exact sequence of relative homologies
	\begin{align*}
	H_2(X)\rightarrow H_2(Y)\rightarrow H_2(Y,X)\rightarrow H_1(X)\rightarrow H_1(Y)=0. 
	\end{align*} The last equality can be seen from the fact that any del Pezzo surface $Y$ is either a blow-up of $\mathbb{P}^2$ in points or $\mathbb{P}^1\times \mathbb{P}^1$.
	Under the duality $H_2(Y,X)\cong H^2(D)\cong \mathbb{Z}$, the second map is given by 
	\begin{align*}
	     H_2(Y)&\rightarrow H^2(D)  \\ 
	        [C]&\mapsto ([D]\mapsto [C].[D]).
	\end{align*} This is surjective after tensoring with $\mathbb{R}$ because $D$ is ample.

\end{proof}

Let $B_{s}$ be the image of the singular fibers, $X_{s} = \pi^{-1}(B_s)$ and recall that by Lemma~\ref{lem: fibration}, $B_{s}$ is a finite set.  Given any closed curve $\gamma$ in $B$, after possibly a small homotopy, we can assume that $\gamma$ avoids $B_{s}$.  By lifting $\gamma$ along the smooth fibration $\pi:X\setminus X_s\rightarrow B\setminus B_s$, we obtain a surjection $\pi_1(X)\rightarrow \pi_1(B)$.  On the other hand, it is a classical result (see, for example \cite[Theorem 44A]{AS}) that the fundamental group of a non-compact Riemann surfaces is a free group.  In particular, if $H_1(X)$ is torsion, then $B$ must be simply connected.  By the uniformization theorem $B$ is biholomorphic to either $\mathbb{C}$ or the unit disk.  If $B$ is biholomorphic to a disk, then pulling back the bounded holomorphic function $z$ along the holomorphic fibration $\pi$, we obtain (after taking real and imaginary parts) a bounded harmonic function on $(X,g)$.  But $(X,g)$ is complete and Ricci-flat and a well-known theorem of Yau says that no such function can exist \cite{Y1}. Thus $B$ is biholomorphic to $\mathbb{C}$.
\end{proof}

Finally, we prove that for Tian-Yau surfaces of Type I or II the special Lagrangian fibration in a neighborhood of $\infty$ is obtained from the Lagrangian mean curvature flow of the model fibration, in a very precise sense.  To fix notation, let $L_0$ be a Lagrangian in the model fibration and denote by $F_{t}(L_0)$ the solution of the Lagrangian mean curvature flow starting from $L_0$ at time $t \in [0, \infty)$.  Sections~\ref{sec: ACYL} and~\ref{sec: TY} show that in either the Type I or Type II cases we can fix a compact set $K_0\subset X$ such that $X\setminus K_0$ is diffeomorphic to the corresponding model geometry and such that if $L_0\subset X\setminus K_0$ is a Lagrangian obtained from a special Lagrangian in the model geometry, then Lagrangian mean curvature flow starting at $L_0$ converges smoothly and exponentially fast to a special Lagrangian which we denote by $F_{\infty}(L_0)$.

\begin{prop}\label{prop: MCFhomotope}
In the above setting, there are compact sets $K_0\subset K_1 \subset K_2\subset X$ with the following property; let $\tilde{L}\subset X\setminus K_1$ be a fiber of the special Lagrangian fibration of $(X,\omega_{TY})$.  Then there exists a unique Lagrangian $L\subset (X,\omega_{TY})$ contained in $X\setminus K_0$ which is a fiber of the model Lagrangian fibration such that 
\[
\tilde{L} = F_{\infty}(L).
\]
Furthermore, the mean curvature flow $F_t(\cdot), t\in[0,\infty]$ induces a continuous family of continuous maps $F_t: X\setminus K_1 \rightarrow X\setminus K_0$ such that $F_{\infty}$ is injective and $F_{\infty}(X\setminus K_1) \supset X\setminus K_2$.
\end{prop}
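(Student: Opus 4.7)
The plan is to construct $F_t$ by running the Lagrangian mean curvature flow simultaneously on the two-parameter family of model fibers that foliate a neighborhood of infinity, to show that each $F_t$ is a small perturbation of the identity, and finally to identify the resulting family of special Lagrangians with fibers of the SYZ fibration constructed in Theorem~\ref{thm: sLagFib}.

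First I set up the flow. Let $\{L_{K,\tau}\}$ denote the transplanted model Lagrangians from Section~\ref{sec: TYmodLag} (Type~I) or Section~\ref{sec: ACYL} (Type~II), indexed by the scale $K$ and by $\tau\in S^1$ parametrizing primitive special Lagrangian circles in the elliptic curve $D$. For $K$ sufficiently large these foliate the end of $X$, which I take to be $X\setminus K_0$. By Theorem~\ref{thm: TYLMCFconv} (or its ACyl analog) the MCF starting at each $L_{K,\tau}$ converges smoothly and exponentially fast to a special Lagrangian $\mathcal{M}_{K,\tau}$; assembling these flows yields a continuous family of smooth maps $F_t:X\setminus K_0 \to X$, $t\in[0,\infty]$, with $F_0 = \mathrm{id}$. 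Since each point moves along the mean curvature vector, the decay estimate $|H(t)|^2 \le e^{-\frac{\delta'}{n+2}K^{2n}-\frac{2a}{n+2}K^{-2}t}$ from the proof of Theorem~\ref{thm: TYLMCFconv} gives
\[
d_{g_{TY}}(x, F_\infty(x)) \;\le\; \int_0^\infty |H(t)|\, dt \;\le\; CK^2 e^{-\frac{\delta'}{2(n+2)}K^{2n}}
\]
for $x$ at scale $K$, so $F_\infty$ is $C^0$-close to the identity on $X\setminus K_0$. Choosing $K_1\supset K_0$ so that this perturbation size is smaller than the distance from $X\setminus K_1$ to $K_0$ forces $F_t(X\setminus K_1)\subset X\setminus K_0$ for all $t\in[0,\infty]$; the smoothing estimates of Lemma~\ref{lem: smoothingEstTY} applied to derivatives of $H$ give the corresponding $C^1$ smallness, hence local injectivity of $F_\infty$.

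The final step is to upgrade local to global injectivity and to match $\{\mathcal{M}_{K,\tau}\}$ with SYZ fibers. After hyper-K\"ahler rotation, each $\mathcal{M}_{K,\tau}$ is a holomorphic torus in the primitive class $[L]$ with $[L]^2 = 0$, so by the intersection argument in Lemma~\ref{lem: openFib} any two distinct $\mathcal{M}$'s are disjoint. Combined with local injectivity and the explicit separation $\sim K|\tau-\tau'|$ between model fibers at the same scale (which vastly exceeds the perturbation size $CK^2 e^{-\frac{\delta'}{2(n+2)}K^{2n}}$ for $K$ large), this rules out coincidences $\mathcal{M}_{K,\tau} = \mathcal{M}_{K',\tau'}$ for distinct $(K,\tau)\ne(K',\tau')$, giving global injectivity of $F_\infty$. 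By the uniqueness property of the SLag torus fibration implicit in Theorem~\ref{thm: sLagFib} and Lemma~\ref{lem: openFib} (every smooth SLag torus in $[L]$ must be one of its fibers), the $\mathcal{M}_{K,\tau}$ are SYZ fibers. Since $F_\infty$ is $C^0$-close to the identity, invariance of domain makes $F_\infty(X\setminus K_1)$ open and containing a full neighborhood of infinity, so $F_\infty(X\setminus K_1)\supset X\setminus K_2$ for $K_2\supset K_1$ suitably chosen; existence and uniqueness of the model fiber $L$ associated with each SYZ fiber $\tilde L\subset X\setminus K_1$ follow.

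The main obstacle will be simultaneously establishing global injectivity of $F_\infty$ and matching its image with the SYZ fibration: both depend on coupling the exponentially small perturbation estimates from MCF convergence with the global disjointness of smooth SLag tori in $[L]$ that comes from the self-intersection computation underlying Lemma~\ref{lem: openFib} and Theorem~\ref{thm: sLagFib}.
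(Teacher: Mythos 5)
There is a genuine gap at the injectivity step, which is exactly the hard point of this proposition. Your argument reduces injectivity of $F_\infty$ to two ingredients: (i) distinct limits $\mathcal{M}_{K,\tau}\ne\mathcal{M}_{K',\tau'}$ are disjoint because they are holomorphic tori in a class with square zero, and (ii) coincidences are ruled out because the separation of model fibers ``vastly exceeds'' the $C^0$-displacement, together with ``local injectivity'' coming from a claimed $C^1$-closeness of $F_\infty$ to the identity. But (i) does not address the dangerous case at all: the problem is precisely to exclude that two \emph{distinct} model fibers flow to the \emph{same} special Lagrangian, and the intersection argument only says distinct limits are disjoint. Your quantitative separation argument does not close this case either, because the model fibers form a continuous two-parameter family: for $|\tau-\tau'|$ or $|K-K'|$ small the initial fibers are closer together than the fixed (exponentially small but positive) displacement bound, so nothing prevents their limits from colliding. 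Finally, the ``$C^1$ smallness'' of $F_\infty-\mathrm{id}$ is asserted but not proved: Lemma~\ref{lem: smoothingEstTY} controls intrinsic derivatives $|\nabla^\ell A|$ along each \emph{fixed} flow, whereas $C^1$-closeness of the assembled map requires uniform-in-time control of the derivative of the flow (and of its infinite-time limit) with respect to the initial fiber, i.e.\ a cross-flow/linearized-flow estimate that does not follow from the cited smoothing estimates. Since both invariance of domain and your exclusion of nearby coincidences rest on this unproven claim, the injectivity of $F_\infty$ is not established.

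The paper takes a different, softer route here. Injectivity is Lemma~\ref{lem: HF}: if two \emph{disjoint} model Lagrangians had equal limits $\tilde L_1=\tilde L_2$, then, using that LMCF preserves the Hamiltonian isotopy class, that all four Lagrangians are unobstructed (Maslov zero, disk moduli of virtual dimension $-1$), and Floer's theorem, one would get $H^*(\tilde L_1)\cong HF^*(\tilde L_1,\tilde L_2)\cong HF(L_1,L_2)=0$, contradicting $\tilde L_1$ being a torus; no quantitative transverse estimate is needed. Surjectivity onto $X\setminus K_2$ is then obtained by showing the image is relatively closed (Lemma~\ref{lem: mcfContinuous}, a Hausdorff-continuity statement proved from continuous dependence on initial data on finite time intervals plus the exponential decay of $|H|$ to control the tail uniformly), relatively open by invariance of domain (using the injectivity just established), and by connectedness of $X\setminus K_2$. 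Your surjectivity step likewise skips the closedness argument: $C^0$-closeness to the identity plus openness does not by itself show the image contains all of $X\setminus K_2$ unless you add either such a closedness/connectedness argument or a proper-degree argument using the homotopy $F_t$. The correct parts of your outline (assembling the flows into $F_t$, the displacement bound $\int_0^\infty|H|\,dt$, and identifying the limits with fibers of the fibration) do match the paper, but the central injectivity mechanism is missing.
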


The proposition will be the result of the following two lemmas.

\begin{lem} \label{lem: HF}
   	Let $L_1,L_2$ be two disjoint model Lagrangian submanifolds contained in $X\setminus K_0$ and denote by $\tilde{L}_i = F_{\infty}(L_i)$ the limits of the LMCF for $i=1,2$.  Then $\tilde{L}_1$ and $\tilde{L}_2$ are disjoint.
   \end{lem}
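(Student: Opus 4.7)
The plan is to combine the exponential smallness of the LMCF displacement with positivity of intersection after hyper-K\"ahler rotation, and to rule out equality of the limits via the deformation theory of special Lagrangians.

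First, I would extract from the proof of Theorem~\ref{thm: TYLMCFconv} the pointwise exponential decay $|H|(t) \leq e^{-\delta' K^{2n}/(2(n+2))} e^{-aK^{-2}t/(n+2)}$ of the mean curvature along the flow. Since points on the flowing Lagrangian are transported at speed $|H|$, integration in time yields that every point of $L_i$ is displaced in $(X, g_{TY})$ by at most $\eta_i := C K_i^2 e^{-\delta' K_i^{2n}/(2(n+2))}$, where $K_i$ is the scale of $L_i$. In particular $\tilde{L}_i \subset B(L_i, \eta_i)$, and $\eta_i$ tends to zero super-exponentially as $K_0 \to \infty$; compare Remark~\ref{rk: disjoint}.

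Second, suppose for contradiction that $\tilde{L}_1 \cap \tilde{L}_2 \neq \emptyset$. Because $L_1, L_2$ are fibers of the model Lagrangian fibration, they represent a common primitive homology class $[L] \in H_2(X, \mathbb{Z})$ with $[L]^2 = 0$, and so do their limits. Applying the hyper-K\"ahler rotation from Theorem~\ref{thm: oneToMany}, the $\tilde{L}_i$ become irreducible holomorphic genus-one curves in $(X, I)$. Positivity of intersections for distinct irreducible holomorphic curves then forces $\tilde{L}_1 = \tilde{L}_2$, since otherwise $[\tilde{L}_1] \cdot [\tilde{L}_2] \geq 1$ would contradict $[L]^2 = 0$.

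It therefore remains to rule out $\tilde{L}_1 = \tilde{L}_2$. For pairs with Hausdorff distance $d_H(L_1, L_2) > \eta_1 + \eta_2$, the first step gives $\tilde{L}_1 \neq \tilde{L}_2$ directly. For very close pairs, where the displacement bound is insufficient, I would invoke the deformation theory of special Lagrangians as in Lemma~\ref{lem: openFib}, together with an inverse-function-theorem setup for LMCF near a special Lagrangian $L_\infty$: the linearization there is a Laplace-type operator whose kernel, by McLean's theorem, parametrizes the moduli of nearby special Lagrangians. Consequently $L_0 \mapsto F_\infty(L_0)$ is, locally in the space of model fibers, a diffeomorphism onto the corresponding moduli of special Lagrangian fibers in $(X, \omega_{TY})$, so $L_1 \neq L_2$ forces $\tilde{L}_1 \neq \tilde{L}_2$. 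The main obstacle is precisely this close-pair case: bridging between the global positivity argument and the local diffeomorphism given by the moduli-theoretic analysis is where the bulk of the work lies.
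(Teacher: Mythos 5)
Your first two steps (the super-exponential displacement bound, and the hyper-K\"ahler rotation plus intersection-theory reduction to ``disjoint or equal'') match the paper's strategy, but the proof has a genuine gap exactly where you locate ``the bulk of the work'': ruling out $\tilde{L}_1=\tilde{L}_2$ for close pairs. This case cannot be avoided, since two disjoint model fibers can be at Hausdorff distance far smaller than the displacement scale $\eta_i$ (which depends only on $K_i$, not on $d(L_1,L_2)$). Your proposed fix --- that $L_0\mapsto F_\infty(L_0)$ is locally a diffeomorphism from the space of model fibers to the moduli of special Lagrangians, via ``an inverse-function-theorem setup for LMCF'' --- is an assertion, not an argument: nothing in the paper (or in your sketch) gives differentiable, quantitatively invertible dependence of the infinite-time limit on the initial Lagrangian, and McLean's theorem only describes the target moduli space, not the map $F_\infty$ into it. Establishing such local injectivity of the limit map is at least as hard as the lemma itself, so as written the close-pair case is unproved.

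The paper closes this case by a soft global invariant rather than a quantitative one. Since the Lagrangians have zero Maslov class, the mean curvature vector is the Hamiltonian vector field of the Lagrangian angle, so LMCF moves each $L_i$ within its Hamiltonian isotopy class \cite{Smo}. All four Lagrangians $L_1,L_2,\tilde{L}_1,\tilde{L}_2$ are special (for various holomorphic volume forms), hence graded and spin, and in complex dimension $2$ the moduli space of holomorphic disks with boundary on them has virtual dimension $-1$; after a generic perturbation they bound no disks and are unobstructed, so Lagrangian Floer homology is defined and is invariant under Hamiltonian isotopy. Disjointness of $L_1,L_2$ gives $HF(L_1,L_2)=0$, whereas $\tilde{L}_1=\tilde{L}_2$ would force
\[
0=HF(L_1,L_2)\cong HF(\tilde{L}_1,\tilde{L}_2)=HF(\tilde{L}_1,\tilde{L}_1)\cong H^*(T^2)\neq 0,
\]
a contradiction \cite{Fl}. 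This argument treats near and far pairs uniformly and removes any need for the displacement estimate or for injectivity of $F_\infty$; if you want to salvage your approach, you would need to replace the inverse-function-theorem claim by an invariant of this kind (Floer homology, or a flux-type argument using that the LMCF isotopy is Hamiltonian while distinct model fibers have distinct flux).
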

   \begin{proof}
   By our choice of $K_0$ above, the Lagrangian mean curvature flow starting from any model Lagrangian contained in $X\setminus K_0$ converges smoothly and exponentially fast to a special Lagrangian torus $\tilde{L}$.  Let $L_1, L_2$ be two such model Lagrangians in $X\setminus K_0$ and let $\tilde{L}_1, \tilde{L}_2$ be the corresponding special Lagrangians obtained as limits of the LMCF.
   
   Hyper-K\"ahler rotate so that $\tilde{L}_1, \tilde{L}_2$ are holomorphic.  Since $0 = [\tilde{L}_1].[ \tilde{L}_2$] we see that $\tilde{L}_1, \tilde{L}_2$ are either disjoint or equal.  We only need to rule out the case $\tilde{L}_1=\tilde{L}_2$.  We do this by using a Floer homology-theoretic argument.  Recall that LMCF preserves the Hamiltonian isotopy class \cite{Smo}. Since $L_1,L_2,\tilde{L}_1,\tilde{L}_2$ are all special Lagrangians (though with respect to different holomorphic volume forms), they are all spin and have Maslov index $0$.  Since $X$ has complex dimension $2$, a standard index calculation shows that the moduli space of holomorphic disks with boundary on any of $L_1,L_2,\tilde{L}_1,\tilde{L}_2$ has virtual dimension $-1$.  Thus, by a generic small perturbation of almost complex structures, we can assume that they don't bound any pseudo-holomorphic disks and hence are all unobstructed Lagrangians; see \cite{SolVer} for an even stronger result. In particular, the Floer homology between any pair of $L_1,L_2,\tilde{L}_1,\tilde{L}_2$ is well-defined; see for instance \cite{Aur3}. If $\tilde{L}_1=\tilde{L}_2$ coincide, then the standard argument of Floer \cite{Fl} yields
   	\begin{align*}
   	H^*(\tilde{L}_1)\cong HF^*(\tilde{L}_1,\tilde{L}_1)=  HF(\tilde{L}_1,\tilde{L}_2)\cong HF(L_1,L_2)=0.
   	\end{align*}
	But since $\tilde{L}_1$ is a torus, this is absurd. Therefore, $\tilde{L}_1,\tilde{L}_2$ are disjoint and the result follows.
   \end{proof}	

Next we have

\begin{lem}\label{lem: mcfContinuous}
There exist compact sets $K_1\subset K_2$ with $K_0\subset K_1\subset X$ and having the following property; suppose $\tilde{L}_i = F_{\infty}(L_i)$ are special Lagrangians in $X\setminus K_2$ converging in the Hausdorff topology to $\tilde{L}_{\infty}\subset X\setminus K_2$.  Then the sequence $\{L_{i}\}_{ i\in \mathbb{N}}$ of model Lagrangian submanifolds is contained in $X\setminus K_1$ and converges in the Hausdorff topology to a model Lagrangian $L_{\infty} \subset X\setminus K_1$ with $F_{\infty}(L_\infty)= \tilde{L}_{\infty}$. 
\end{lem}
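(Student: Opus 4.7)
The plan is to extract the compact sets $K_1, K_2$ from the quantitative LMCF convergence already proved, then combine a compactness argument on the parameter space of model Lagrangians with continuous dependence of LMCF on initial data. First I would fix $K_1, K_2$ as follows. The exponential decay estimate for the mean curvature from Theorem~\ref{thm: TYLMCFconv} (respectively Proposition~\ref{perturbedcyl} in the ACyl case),
\[
|H(t)|^2 \leq e^{-\frac{\delta'}{n+2}K^{2n}-\frac{2a}{n+2}K^{-2}t},
\]
integrates in time to bound the total displacement of $L_0$ under LMCF by $C'K^2 e^{-\frac{\delta'}{2(n+2)}K^{2n}}$, which tends to zero as $K\to\infty$. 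Fixing a small displacement $\delta>0$, choose $K_1 \supset K_0$ large enough that every point of $X\setminus K_1$ has scale function $\ell_0 \geq K(\delta)$, where $K(\delta)$ is so large that the above total displacement is at most $\delta$; then choose $K_2 \supset K_1$ so that $d(\,\cdot\,,K_1) > 2\delta$ on $X\setminus K_2$. This forces each $L_i \subset X\setminus K_1$, establishing the first claim of the lemma.

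Second, I would establish compactness of the parameter space. Model Lagrangians in $X\setminus K_1$ are parametrized by a special Lagrangian $N \subset D$ and a second real parameter (the Calabi radius $\epsilon$ in the Type I case, or the cylinder coordinate $\rho$ in the Type II case). Hausdorff convergence $\tilde L_i \to \tilde L_\infty$ together with the uniform $\delta$-closeness of $L_i$ to $\tilde L_i$ forces $\ell_0|_{L_i}$ to remain in a fixed compact interval, hence the $\epsilon_i$ (or $\rho_i$) lie in a compact subinterval. In complex dimension two, $D$ is a flat torus and the special Lagrangian $N_i$ has a prescribed primitive homology class (determined by $[\tilde L_i]$ through the fibration structure), so $N_i$ varies over a compact circle of translates. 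Extracting a subsequence, $L_i \to L_\infty$ smoothly for a model Lagrangian $L_\infty \subset X\setminus K_1$.

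Third, I would prove continuous dependence of $F_\infty$ on the initial data. For any fixed finite $T$, standard parabolic regularity applied to the LMCF equation gives $F_T(L_i) \to F_T(L_\infty)$ smoothly. To pass to $T = \infty$, I use that all $L_i$ satisfy \emph{uniform} $(C,K,\delta')$-bounded geometry (for the same $C, K, \delta'$), so the exponential decay estimate quoted above applies to each $L_i$ with constants independent of $i$. Consequently, the tail displacement $\int_T^\infty |H(s)|\,ds$ can be made uniformly small in $i$ by choosing $T$ large. Combining the finite-time convergence with this uniform tail bound gives $F_\infty(L_i) \to F_\infty(L_\infty)$ in Hausdorff distance, so $F_\infty(L_\infty) = \tilde L_\infty$. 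Uniqueness of $L_\infty$ is immediate from Lemma~\ref{lem: HF}: any two distinct model Lagrangians in the family are disjoint, hence their LMCF limits are disjoint, so they cannot both equal $\tilde L_\infty$.

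The main obstacle I anticipate is making the passage to $t = \infty$ rigorous: one must verify that the quantitative constants in Theorem~\ref{thm: TYLMCFconv} depend only on the bounded-geometry class $(C,K,\delta')$ of the initial Lagrangian, and not on $L_0$ itself, and then carefully decompose the Hausdorff distance as a finite-time continuity estimate plus a uniform exponential tail. Everything else reduces to finite-dimensional compactness for the parameter $(N_i, \epsilon_i)$ and the Floer-theoretic disjointness of Lemma~\ref{lem: HF}.
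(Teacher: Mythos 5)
Your argument is correct and follows essentially the same route as the paper: choose $K_1\subset K_2$ from the displacement bound obtained by integrating the exponential decay of $|H|$ in Theorem~\ref{thm: TYLMCFconv}, extract a smoothly convergent subsequence of the $L_i$ from the explicit finite-dimensional family of model Lagrangians, and identify $F_{\infty}(L_{\infty})=\tilde{L}_{\infty}$ by combining finite-time continuous dependence on initial data with the uniform exponential tail, whose constants indeed depend only on the $(C,K,\delta')$-bounded-geometry class and not on the particular initial Lagrangian. Your closing appeal to Lemma~\ref{lem: HF} for uniqueness of $L_{\infty}$, which upgrades subsequential to full-sequence convergence, is a minor point the paper leaves implicit.
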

\begin{proof}
By Theorem~\ref{thm: TYLMCFconv} (or Theorem~\ref{Liconverge} in the Type II case) the LMCF starting from any model Lagrangian $L$ in $X\setminus K_0$ converges to a special Lagrangian $\tilde{L}$ which is contained in a ball of radius $\epsilon$ around $L$, where $\epsilon=\epsilon(K_0)$ depends only on $K_0$.  Thus, we can choose $K_0\subset K_1\subset K_2$ such that
\begin{itemize}
\item[(i)] the LMCF starting from any model Lagrangian in $X\setminus K_1$ converges to a special Lagrangian in $X\setminus K_0$,
\item[(ii)] if $L$ is a model Lagrangian such that the LMCF starting from $L$ converges to $\tilde{L} =F_{\infty}(L) \subset X\setminus K_2$, then $L \subset X\setminus K_1$.
\end{itemize}
In fact, we may as well just take
\[
K_i= \overline{B(K_0, 100^i \epsilon(K_0))}
\]
for $i=1,2$.

Let $\tilde{L}_i\subset X\backslash K_2$ be as in the statement of the lemma. Since ${\tilde{L}_i\subset X\setminus K_2}$ for all $i$ (including $i=\infty$) and the sequence $\{\tilde{L}_i\}$ converges in the Hausdorff topology, we get that the $L_i$ are contained in a compact subset of \linebreak $X\setminus K_1$. From the explicit description of the model fibration we can pass to a subsequence (not relabelled)  converging in the Hausdorff topology (and even smoothly) to a limit model Lagrangian $L_{\infty}\subset X\setminus K_1$.  It suffices to show that $F_{\infty}(L_{\infty})= \tilde{L}_{\infty}$; this follows from the continuous dependence of the LMCF on initial conditions, together with the exponential decay of the mean curvature established in Theorem~\ref{thm: TYLMCFconv}.

First, since $L_i \subset X\setminus K_0$ for all $i$ (including $i=\infty$), the proof of Theorem~\ref{thm: TYLMCFconv} (see~\eqref{eq: TYexpDecay}) shows that (after possibly enlarging $K_0$) we can assume that the mean curvature along the LMCF satisfies
\[
|H(t)|^2 \leq e^{-ct}
\]
on $[1,\infty]$ for a uniform constant $c>0$.  Thus, for any $\epsilon >0$ we can choose $T_{\epsilon}$ large so that
\[
\int_{T_{\epsilon}}^{\infty} |H(t)| dt < \epsilon.
\]
Since the $L_i$ converge smoothly to $L_{\infty}$, the continuous dependence of the mean curvature flow on initial data shows that we can choose $N$ large so that, if $i\geq N$, then
\[
d(F_{t}(L_i), F_{t}(L_{\infty})) < \epsilon
\]
for all $t\in [0, T_{\epsilon}]$.  Combining these estimates we see that
\[
d(F_{t}(L_i), F_{t}(L_{\infty})) < 3\epsilon
\]
 for all $t\in [0,\infty]$ provided $i \geq N$.  Since the flows $F_{t}(L_i)$ converge smoothly to $\tilde{L}_i$ we get $d(\tilde{L}_i, F_{\infty}(L_{\infty})) < 3\epsilon$. Now since $\epsilon$ was arbitrary and $\tilde{L}_i$ converge to $\tilde{L}_{\infty}$, we conclude that $F_{\infty}(L_{\infty}) = \tilde{L}_{\infty}$ as desired.  Furthermore, since $\tilde{L}_{\infty}$ is a smooth fiber of a smooth torus fibration, the convergence $\tilde{L}_i \rightarrow \tilde{L}_{\infty}$ is smooth.

\end{proof}

\begin{proof}[Proof of Proposition~\ref{prop: MCFhomotope}]
Let $K_0\subset K_1 \subset K_2$ be as in Lemmas~\ref{lem: HF} and~\ref{lem: mcfContinuous}. For $t\in[0,\infty]$,  let $F_t : X\setminus K_2 \rightarrow X\setminus K_1$ be the map sending a point $x$ to its time $t$ flow under the LMCF.  By (the proof of) Lemma~\ref{lem: mcfContinuous}, $F_t$ is continuous for all $t\in[0,\infty]$.  By Lemma~\ref{lem: HF}, $F_{\infty}$ is injective and by Lemma~\ref{lem: mcfContinuous}, $F_{\infty}(X\setminus K_1)\cap X\setminus K_2$ is relatively closed in $X\setminus K_2$.  By invariance of domain, $F_{\infty}(X\setminus K_1)\cap X\setminus K_2$ is relatively open in $X\backslash K_2$.  Since $K_2$ is compact and $X$ has only one end, $X\backslash K_2$ is connected, and so $F_{\infty}(X\setminus K_1) \cap X\backslash K_2= X\backslash K_2$ as claimed.
\end{proof}

\section{Applications to Mirror Symmetry}\label{sec: mirror}

In this section we apply the results from Section~\ref{sec: sLagFib}, together with the classification of compact complex surfaces, to prove Corollaries~\ref{cor: introAurConj1} and~\ref{cor: introAurConj2} and Theorem~\ref{thm: main3Intro}.  This will be obtained by compactifying the elliptic fibrations obtained by hyper-K\"ahler rotating our special Lagrangian fibrations.  We begin with the following lemma, which says that there is a section in a neighborhood of $\infty$.  The reader may wish to compare with \cite[Proposition 5.3.1]{CosDol}.

\begin{lem}\label{lem: section}
Let $Y$ be a del Pezzo surface or a rational elliptic surface and $D\in |-K_{Y}|$ a smooth anti-canonical divisor.  Let $\pi:(X,g, J)\rightarrow \mathbb{R}^2$ be the special Lagrangian fibration whose existence in guaranteed by Theorem~\ref{thm: sLagFib}.  Then, after hyper-K\"ahler rotation, the genus one fibration $\pi: (X,g, I)\rightarrow \mathbb{C}$ admits a local holomorphic section in a neighborhood of $\infty$.
\end{lem}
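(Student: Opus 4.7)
By Theorem~\ref{thm: sLagFib}, the elliptic fibration $\pi: (X, I) \to \mathbb{C}$ has only finitely many singular fibers, whose projections form a finite set $S \subset \mathbb{C}$. Choosing $R > 0$ with $S \subset \{|z|<R\}$, any open disk $\Delta \subset \{|z|>R\}$ lies in a neighborhood of $\infty$, and the restriction $\pi_\Delta := \pi|_{\pi^{-1}(\Delta)} \to \Delta$ is a smooth, proper holomorphic submersion with smooth elliptic curve fibers. It therefore suffices to produce a holomorphic section of $\pi_\Delta$, a purely complex-analytic statement.

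For this I appeal to the classical theory of Jacobian torsors. The relative Jacobian $J(\pi_\Delta) \to \Delta$ is a smooth elliptic fibration equipped with a canonical zero section, and it acts fiberwise and holomorphically on $\pi^{-1}(\Delta)$ by translation, presenting $\pi_\Delta$ as a holomorphic $J(\pi_\Delta)$-torsor. Isomorphism classes of such torsors over $\Delta$ are classified by $H^1(\Delta, \mathcal{J})$, where $\mathcal{J}$ is the sheaf of local holomorphic sections of $J(\pi_\Delta)$. From the relative exponential sequence on $\Delta$,
\[
0 \longrightarrow R^1(\pi_\Delta)_*\mathbb{Z} \longrightarrow R^1(\pi_\Delta)_*\mathcal{O} \longrightarrow \mathcal{J} \longrightarrow 0,
\]
the long exact sequence in cohomology yields $H^1(\Delta, \mathcal{J}) = 0$: Cartan's Theorem B gives $H^1(\Delta, R^1(\pi_\Delta)_*\mathcal{O}) = 0$ since $\Delta$ is Stein and $R^1(\pi_\Delta)_*\mathcal{O}$ is a coherent analytic sheaf (Grauert), while the contractibility of $\Delta$ gives $H^2(\Delta, R^1(\pi_\Delta)_*\mathbb{Z}) = 0$. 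Hence the torsor is trivial: $\pi_\Delta \cong J(\pi_\Delta)$ as fibrations over $\Delta$, and the zero section of $J(\pi_\Delta)$ transports to a holomorphic section of $\pi_\Delta$.

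The only real obstacle here is the standard bookkeeping of Jacobian torsors for holomorphic elliptic fibrations in the non-projective analytic setting (classical, going back to Kodaira's theory of elliptic surfaces); the Tian-Yau geometry enters only through the finiteness of singular fibers provided by Theorem~\ref{thm: sLagFib}, which frees a disk of smooth elliptic fibers inside a neighborhood of $\infty$. As a more hands-on alternative, one could produce the section explicitly: compute $I_{\mathrm{mod}}$-holomorphic coordinates $z_1 = x+i\theta$ and $z_2 = y+i\log r$ on the Calabi model (with $u = x+iy$ flat on $D$, $N_t = \{y=t\}$ the special Lagrangian circles, and $w = re^{i\theta}$ the fiber coordinate of $L$), take the tautological section $z_2 \mapsto (0, z_2)$ of the resulting model fibration $\pi_{\mathrm{mod}}(z_1,z_2)=z_2$, transport it via Proposition~\ref{prop: MCFhomotope}, and correct the exponentially small non-holomorphic error using Proposition~\ref{prop: HSVZdecay} and a weighted $\dbar$-argument.
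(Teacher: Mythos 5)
Your cohomological machinery is the right one, but as written it proves a weaker statement than the lemma needs. You produce a section only over a simply connected disk $\Delta\subset\{|z|>R\}$, and you use the contractibility of $\Delta$ to kill $H^{2}(\Delta,R^{1}\pi_{*}\mathbb{Z})$. The lemma, however, asserts (and the subsequent compactification argument uses) a holomorphic section of $\pi$ over an entire punctured-disk neighborhood $\Delta^{*}$ of $\infty$: in the proof of the compactification one needs a marked point on \emph{every} fiber near infinity to get a map $\Delta^{*}\rightarrow \mathcal{M}_{1,1}$ and to identify $X^{*}\rightarrow\Delta^{*}$ with a pullback of the universal family, before extending over the puncture using the monodromy. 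A section over a small disk inside the annulus does not give this, and it cannot be extended around the loop for free: by Lemma~\ref{lem: monodromy} the monodromy at infinity is the nontrivial matrix $m_{\infty,d}$, so the local system $R^{1}\pi_{*}\mathbb{Z}$ is nonconstant on $\Delta^{*}$ and the continuation of a local section around the puncture is exactly the nontrivial content of the lemma. Over a simply connected base with no singular fibers the existence of a section is soft; the point of the statement is the non--simply-connected neighborhood of $\infty$.

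The good news is that your torsor argument closes this gap with almost no change, so the defect is in the reduction, not the method: $\Delta^{*}$ is Stein, so Grauert coherence and Cartan's Theorem B still give $H^{1}(\Delta^{*},R^{1}\pi_{*}\mathcal{O})=0$, and $H^{2}(\Delta^{*},R^{1}\pi_{*}\mathbb{Z})=0$ because $\Delta^{*}$ is an open Riemann surface homotopy equivalent to a circle (no contractibility needed); hence $H^{1}(\Delta^{*},\mathcal{J})=0$ and the torsor is trivial over all of $\Delta^{*}$. For comparison, the paper runs the same direct-image/Cartan B input over $\Delta^{*}$ but avoids discussing $H^{2}$ altogether: it uses the exact sequence $0\rightarrow R^{1}\pi_{*}\mathbb{Z}\rightarrow R^{1}\pi_{*}\mathcal{O}\rightarrow X^{\#}\rightarrow 0$ for the sheaf of sections, identifies $R^{1}\pi_{*}\mathcal{O}$ as the trivial line bundle (Grauert--R\"ohrl), and concludes $H^{0}(\Delta^{*},X^{\#})\neq\emptyset$ by a cardinality argument (uncountable sections of a line bundle versus countable lattice cohomology). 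Your $H^{2}$-vanishing route, once stated over $\Delta^{*}$, is an equally valid and arguably cleaner variant; your ``hands-on'' alternative via Proposition~\ref{prop: MCFhomotope} and a weighted $\dbar$-correction is only a sketch and would need substantial work to count as a proof.
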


\begin{proof}
By Theorem~\ref{thm: sLagFib}, after hyper-K\"ahler rotation, we have an elliptic fibration $\pi: (X,g, I) \rightarrow \mathbb{C}$ with no singular fibers in a neighborhood of infinity.  Let $\Delta^{*}$ be a punctured disk neighborhood of $\infty$ and let $X^{*} = \pi^{-1}(\Delta^{*})$.  Since the fibers of $\pi$ are smooth elliptic curves, the map $\pi$ is flat \cite[p. 158]{Fis}.  In particular, the direct image sheaves $R^{i}\pi_{*}\mathcal{O}_{X}$ are locally free.  The fiber of $R^{i}\pi_{*}\mathcal{O}_{X}$ over $b\in \Delta^{*}$ is, by definition, $H^{i}(\pi^{-1}(b), \mathcal{O}_{\pi^{-1}(b)})$.  When $i=1$, Serre duality implies $H^{1}(\pi^{-1}(b), \mathcal{O}_{\pi^{-1}(b)}) = H^{0}(\pi^{-1}(b), \mathcal{O}_{\pi^{-1}(b)})= \mathbb{C}$ and so $R^{1}\pi_{*}\mathcal{O}_{X}$ is a line bundle.  Thanks to the fact that  $\Delta^{*}$ is Stein, Cartan's Theorems A and B imply that $H^{1}(\Delta^{*}, R^{p}\pi_{*}\mathcal{O}_{X}) =0$.  By a theorem of Grauert \cite{Gr} and R\"ohrl \cite{Ro}, $R^{1}\pi_{*}\mathcal{O}_{X}$ is the trivial line bundle.

Let $X^{\#}$ denote the sheaf of holomorphic sections of $\pi:X^{*}\rightarrow \Delta^{*}$.   Since $\pi:X^{*}\rightarrow \Delta^{*}$ is a smooth fibration without multiple fibers, there is an exact sequence of commutative groups (see, for example, \cite[Chapter V, Section 9]{BPV})
\[
0\rightarrow R^{1}\pi_{*}\mathbb{Z} \rightarrow R^{1}\pi_{*}\mathcal{O}_{X^{*}} \rightarrow X^{\#} \rightarrow 0.
\]
Taking the long exact sequence in cohomology yields
\[
0 \rightarrow H^{0}(R^{1}\pi_{*}\mathbb{Z}) \rightarrow H^{0}(R^{1}\pi_{*}\mathcal{O}_{X^*}) \rightarrow H^{0}( X^{\#}) \rightarrow H^{1}(R^{1}\pi_{*}\mathbb{Z})\rightarrow 0.
\]
Since $H^{0}(\Delta^{*}, R^{1}\pi_{*}\mathcal{O}_{X^*})$ is the sheaf of global sections of a trivial bundle over $\Delta^{*}$, it is uncountable.  On the other hand, $H^{i}(\Delta^{*}, R^{1}\pi_{*}\mathbb{Z}), i=0,1$ is a lattice and hence countable.  Therefore $H^{0}(\Delta^{*}, X^{\#})$ is infinite dimensional and hence we obtain a section.
\end{proof}

We now have a elliptic fibration over a punctured disk with a section.  In order to extend this elliptic fibration over $0$, we need to control the monodromy of the fibration.  We have

\begin{lem}\label{lem: monodromy}
Let $Y$ be a del Pezzo surface or a rational elliptic surface and $D\in |-K_{Y}|$ a smooth anti-canonical divisor with $D^2=d$.  Let $\pi:(X,g, J)\rightarrow \mathbb{R}^2$ be the special Lagrangian fibration whose existence in guaranteed by Theorem~\ref{thm: sLagFib}.  Then, around $\infty$ the torus fibration has monodromy
\[
m_{\infty, d} := \begin{pmatrix}
1 & d
\\ 0&1
\end{pmatrix}.
\]
\end{lem}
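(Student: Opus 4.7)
The plan is to compute the monodromy explicitly from the asymptotic model geometry. By Proposition~\ref{prop: MCFhomotope}, outside a sufficiently large compact set $K_2 \subset X$, every fiber of $\pi$ arises (uniquely) as the image under $F_\infty$ of a model Lagrangian $M_\epsilon(N) = \{\xi \in L : \xi \text{ projects to } N,\, |\xi|_h^2 = \epsilon\}$, where $N \subset D$ is a smooth special Lagrangian circle in a fixed primitive homology class $[\gamma] \in H_1(D,\mathbb{Z})$. Because $F_\infty$ is a homeomorphism onto its image sending fibers to fibers, the monodromy of $\pi$ around infinity coincides with the monodromy of the model fibration around infinity.

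First I would identify the base of the model fibration. For a fixed primitive class $[\gamma] \in H_1(D,\mathbb{Z})$, the moduli of special Lagrangian circles in $D$ in the class $[\gamma]$ is an $S^1$, since $D$ is a flat 2-torus and such special Lagrangians are all translates of one another. Combined with the radial parameter $\epsilon \in (0, \epsilon_0)$, the model base is topologically a cylinder $(0,\epsilon_0) \times S^1$. Sitting inside the base $\mathbb{R}^2$ of $\pi$ as the complement of a large compact set, this cylinder is an annulus whose core $S^1$ generates $\pi_1$ of the region near $\infty$.

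Next I would compute the monodromy along this core $S^1$ in the model. Choose flat real coordinates $(x,y) \in \mathbb{R}^2/\mathbb{Z}^2$ on $D$ so that $[\gamma] = [\partial_x]$ and $N_{y_0} := \{y = y_0\}$ is a special Lagrangian circle. The $S^1$-bundle $\cC_\epsilon \to D$ has Euler number equal to $\deg L = d$, so it can be presented as $\mathbb{R}^2 \times S^1_\theta/\sim$ with identifications
\[
(x,y,\theta) \sim (x+1,y,\theta) \sim (x,y+1,\theta+dx).
\]
The model Lagrangian fiber $M_{\epsilon,y_0}$ over $N_{y_0}$ is then parametrized by $(x,\theta) \in (\mathbb{R}/\mathbb{Z}) \times S^1$.

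Finally, varying $y_0$ from $0$ to $1$ traverses the core $S^1$ of the moduli once. The identification of the fiber at $y_0 = 0$ with the fiber at $y_0 = 1$ is, by the second relation above, the diffeomorphism $(x,\theta) \mapsto (x,\theta - dx)$. In the basis $(\alpha, \gamma)$ of $H_1(M_{\epsilon,y_0}, \mathbb{Z})$ with $\alpha = [S^1_\theta]$ and $\gamma = [N]$, this diffeomorphism acts by $\alpha \mapsto \alpha$ and $\gamma \mapsto \gamma - d\alpha$, giving monodromy matrix $\left(\begin{smallmatrix} 1 & -d \\ 0 & 1 \end{smallmatrix}\right)$. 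Up to the choice of orientation of the loop around $\infty$, this is precisely $m_{\infty, d}$. The main subtle point is to verify that the moduli $S^1$ of translates of $N$ really does correspond to a generator of $\pi_1$ of the region at infinity in the base $\mathbb{R}^2$, rather than being null-homotopic; this follows because the radial parameter $\epsilon$ (equivalently the scale function $\ell_0$) is the ``non-compact'' direction on the annulus, so the complementary $S^1$ is forced to be the generator.
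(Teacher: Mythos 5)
Your proposal is correct and follows the same overall route as the paper: both arguments use Proposition~\ref{prop: MCFhomotope} to identify the torus bundle over a loop around $\infty$ with the model torus fibration over a corresponding loop in the model base, thereby reducing the computation to the monodromy of the model fibration. The only divergence is in the second step: where the paper simply cites \cite{Sco} for the monodromy of the model fibration (the degree-$d$ circle bundle over $D$ restricted to the circle of special Lagrangian translates of $N$), you compute it directly from the standard presentation $(x,y,\theta)\sim(x+1,y,\theta)\sim(x,y+1,\theta+dx)$ of that bundle, obtaining $\gamma\mapsto\gamma\mp d\alpha$, $\alpha\mapsto\alpha$; this makes the argument self-contained, and the residual sign is just the choice of orientation of the loop and of the basis of $H_1$ of the fiber, consistent with the statement of the lemma (and it correctly specializes to trivial monodromy in the asymptotically cylindrical case $d=0$). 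Your closing remark about why the moduli circle of translates of $N$, rather than the radial ($\ell_0$ or $\epsilon$) direction, carries the generator of $\pi_1$ near infinity is exactly the point the paper handles by choosing the loop $\gamma$ in $X\setminus K_2$ and invoking the fiber-preserving identification $F_\infty$, so no gap remains.
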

\begin{proof}
Let $\gamma$ be a simple closed loop in the base of the special Lagrangian fibration $\pi:X\rightarrow \mathbb{R}^2$ circling $\infty$ with positive orientation.  By Proposition~\ref{prop: cpctSet},  we can choose $\gamma$ so that the torus bundle $\pi^{-1}(\gamma)$ is contained in $X\setminus K_2$, where $K_2$ is the compact set from Proposition~\ref{prop: MCFhomotope}.  By Proposition~\ref{prop: MCFhomotope}, we can find a loop $\hat{\gamma}$ in the base of the model fibration such that the model torus fibration over $\hat{\gamma}$ is carried by LMCF to $\pi^{-1}(\gamma)$.  Thus $\pi^{-1}(\gamma)$ has the same homotopy type as the model torus fibraton over $\hat{\gamma}$ and hence has the same monodromy.  But the monodromy of the model fibration in the Calabi model, or asymptotically cylindrical model, is $m_{\infty,d}$ \cite{Sco}.
\end{proof}

We now obtain the following corollary, which follows from Kodaira's classification of singularities of elliptic fibrations \cite{Kod2, BPV} and the theory of stable reduction.
\begin{cor}\label{lem: compact}
There is a compact complex surface $\check{Y}$ equipped with a relatively minimal elliptic fibration $\check{\pi}:\check{Y}\rightarrow \mathbb{P}^1$ without multiple fibers such that $\check{Y}\setminus \check{\pi}^{-1}(\infty) \cong (X, I)$.  
\end{cor}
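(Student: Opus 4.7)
The plan is to compactify the elliptic fibration $\pi:(X,I)\to\mathbb{C}$ by gluing in a Kodaira local model over $\infty$. Choose a coordinate disk $U\subset\mathbb{P}^1$ around $\infty$ such that $U^*:=U\setminus\{\infty\}$, viewed inside $\mathbb{C}=\pi(X)$, contains none of the finitely many singular values of $\pi$ from Theorem~\ref{thm: sLagFib} and moreover satisfies $\pi^{-1}(U^*)\subset X\setminus K_2$, where $K_2$ is the compact set of Proposition~\ref{prop: MCFhomotope}. Then $\pi:\pi^{-1}(U^*)\to U^*$ is a smooth elliptic fibration; by Lemma~\ref{lem: section} it admits a holomorphic section, and by Lemma~\ref{lem: monodromy} its monodromy around $\infty$ equals $m_{\infty,d}$.

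Next I would invoke Kodaira's structure theorem for relatively minimal elliptic fibrations with section over a disk \cite[Ch.~V]{BPV}. A smooth elliptic fibration with section over $U^*$ is determined, up to isomorphism, by its $J$-invariant $J:U^*\to\mathbb{C}$ together with its monodromy representation. Since $m_{\infty,d}$ is unipotent of infinite order, the elliptic fibers degenerate as one approaches $\infty$, so $J$ extends meromorphically to $U$ with a pole of order $d$ at $\infty$. Kodaira's theorem then produces a (unique) relatively minimal elliptic fibration $p:V\to U$ without multiple fibers, extending $\pi^{-1}(U^*)\to U^*$, whose fiber over $\infty$ is the singular fiber determined by the pair $(J,m_{\infty,d})$. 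By inspection of Kodaira's list, this singular fiber is of type $I_d$: a nodal rational curve for $d=1$, or a cycle of $d$ smooth rational $(-2)$-curves for $d\geq 2$.

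Finally, the canonical isomorphism $p^{-1}(U^*)\cong\pi^{-1}(U^*)$ as elliptic fibrations over $U^*$ provides the gluing data, and I form $\check{Y}:=X\sqcup_{\pi^{-1}(U^*)}V$ as a complex manifold, with the induced holomorphic map $\check\pi:\check Y\to\mathbb{P}^1$ extending $\pi$. Compactness is immediate since $\mathbb{P}^1=\mathbb{C}\cup U$ and the new fiber is compact. The fibration has no multiple fibers: inside $X$ this is part of Theorem~\ref{thm: oneToMany}, and the $I_d$ fiber at $\infty$ is reduced by construction. Relative minimality holds over $\mathbb{C}$ by Lemma~\ref{lem: fibration}, while over $\infty$ every component of the $I_d$ fiber has self-intersection $-2$ (or is a single nodal rational curve when $d=1$), so no $(-1)$-curve appears.

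The main obstacle is the agreement step: we must be sure that the abstract Kodaira model $V$ coincides with the given $\pi^{-1}(U^*)\to U^*$ over the overlap, so that the gluing is unambiguous. This reduces to the uniqueness statement in Kodaira's theorem, which is a standard consequence of the minimal Weierstrass form: two relatively minimal elliptic fibrations with section over a punctured disk having the same $J$-invariant and monodromy are isomorphic as fibrations. Since both invariants are intrinsic to the fibration we started with, no choice is involved, and $\check Y$ is well-defined.
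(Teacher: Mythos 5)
Your proposal is correct, but it reaches the compactification by a different mechanism than the paper. The paper uses the section (Lemma~\ref{lem: section}) and the monodromy computation (Lemma~\ref{lem: monodromy}) to view $X^{*}\rightarrow \Delta^{*}$ as a map $\Delta^{*}\rightarrow \mathcal{M}_{1,1}$, extends this map across the puncture to $\overline{\mathcal{M}}_{1,1}$ by citing Hain, pulls back the universal family to fill in a reduced central fiber, and only then passes to a relatively minimal model by taking a minimal resolution and blowing down $(-1)$-curves in the fiber, identifying the central fiber as type $I_{d}$ from the monodromy; finally it glues $W$ to $(X,I)$ along $X^{*}$. You instead invoke Kodaira's classification of relatively minimal elliptic fibrations over a disk by the functional invariant $J$ and the homological invariant, construct the local extension directly as the basic member attached to $(J, m_{\infty,d})$, and glue using the uniqueness of that basic member (equivalently, the minimal Weierstrass model); this buys you relative minimality, reducedness, and the $I_{d}$ fiber by construction, at the cost of leaning on Kodaira's existence-and-uniqueness machinery rather than the universal family. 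Two points deserve care. First, your uniqueness step genuinely needs the section: fibrations with fixed $(J,G)$ form a torsor-type family and only the one admitting a section is the basic member, so Lemma~\ref{lem: section} is doing real work there (as it does in the paper). Second, the assertion that $J$ extends meromorphically across $\infty$ is exactly the non-formal extension input that the paper outsources to Hain's Proposition 5.9; ``the fibers degenerate'' is not an argument. It is true, and can be justified either by citing the standard theory or by noting that $q=e^{2\pi\sqrt{-1}\,\tau}$ descends to a bounded nonvanishing holomorphic function on the punctured disk, hence extends over the puncture (and nontrivial unipotent monodromy forces $q(0)=0$), so that $J=\tilde{\jmath}(q)$ is meromorphic; the precise pole order $d$ you state is not needed for the construction. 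With that step referenced or argued, your proof is complete.
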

\begin{proof}
As usual, $(X,I)$ denotes the hyper-K\"ahler rotation so we have a fibration $\pi:(X, I) \rightarrow \mathbb{C}$. Identify a neighborhood of $\infty$ with the punctured disk $\Delta^{*}\subset \mathbb{C}$; let $\pi^*: X^{*} \rightarrow \Delta^{*}$ be the induced elliptic fibration.  By Lemma~\ref{lem: section}, $X^{*}\rightarrow \Delta^{*}$ has a section,  and hence we get a map $ f^*: \Delta^{*} \rightarrow \mathcal{M}_{1,1}$, the moduli space of elliptic curves with a marked point. By Lemma~\ref{lem: monodromy} the fibration $X^{*}\rightarrow \Delta^{*}$ has monodromy $m_{\infty, d}$ and so, by \cite[Proposition 5.9]{Hain} (and its proof), $f$ extends to a holomorphic map $f: \Delta \rightarrow \overline{\mathcal{M}}_{1,1}$.  Thus, we can identify $X^{*}\rightarrow \Delta^*$ with the universal family away from the central fiber. We can therefore fill in the fiber over $0\in \Delta$ and obtain a holomorphic  family
\[
f:X\rightarrow \Delta
\]
extending $X^{*}$ and having reduced central fiber.  By taking a minimal resolution and blowing down any $(-1)$ curves contained in the fiber we obtain a relatively minimal family of elliptic curves $\bar{\pi}: W\rightarrow \Delta$ agreeing with the fibration over $\Delta^*$.  Since $W$ is isomorphic to $X$ away from the fiber over $0$, the fibration has monodromy corresponding to a fiber of type $I_{d}$.  By Kodaira's classification \cite{Kod2, BPV}, this implies that the central fiber is of type $I_d$. 

Now, since $W\setminus \bar{\pi}^{-1}(0)$ is isomorphic to $X^{*}$ we can glue $W$ to $(X, I)$ along $X^*$ to obtain a compact complex surface $\check{Y}$ with a relatively minimal fibration $\check{\pi}:\check{Y}\rightarrow \mathbb{P}^1$.

\end{proof}

We are now in a position to prove

\begin{thm}\label{thm: RES}
Let $Y_{d}$ be a del Pezzo surface or rational elliptic surface and $D\in |-K_{Y_{d}}|$ a smooth divisor with $D^2=d$.  Let $X_d = Y_d\setminus D$ and equip $X_d$ with the Tian-Yau metric $g_{TY}$ and let $\pi:(X_d, g_{TY}, J)\rightarrow \mathbb{R}^2$ be the special Lagrangian torus fibration of Theorem~\ref{thm: sLagFib}.  Then after hyper-K\"ahler rotating to a complex structure $I$ so that $\pi:(X_d, g_{TY}, I)\rightarrow \mathbb{C}$ is a holomorphic elliptic fibration the following holds:  There is a rational elliptic surface $\check{\pi}: \check{Y}\rightarrow \mathbb{P}^1$ with a singular fiber of Kodaira type $I_{d}$ so that so that $(X_{d}, I)$ is biholomorphic to $\check{Y}\setminus I_{d}$. 
\end{thm}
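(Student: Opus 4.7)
\textbf{Proof plan for Theorem~\ref{thm: RES}.}

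The strategy is to feed Theorem~\ref{thm: sLagFib} into Corollary~\ref{lem: compact} to produce a compact complex surface $\check{Y}$ together with an elliptic fibration having all the structure required, and then to identify $\check{Y}$ as a rational elliptic surface by computing two invariants and invoking the Enriques--Kodaira classification.

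First I would combine Theorem~\ref{thm: sLagFib}, Lemmas~\ref{lem: section} and \ref{lem: monodromy}, and Corollary~\ref{lem: compact} to obtain a compact complex surface $\check{Y}$ equipped with a relatively minimal elliptic fibration $\check{\pi}:\check{Y}\to\mathbb{P}^1$, no multiple fibers, a unique singular fiber over $\infty$ of Kodaira type $I_d$ (this is where the monodromy computation from Lemma~\ref{lem: monodromy} feeds into Kodaira's classification of singular fibers), and a biholomorphism $\check{Y}\setminus\check{\pi}^{-1}(\infty)\cong(X_d,I)$. The remaining task is to verify that $\check{Y}$ belongs to the rational class of the Enriques--Kodaira classification.

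Next I would compute the two key invariants. Topologically $Y_d$ is $\mathbb{P}^2$ blown up at $9-d$ points, so $\chi(Y_d)=12-d$, and since $D$ is elliptic we get $\chi(X_d)=12-d$. An $I_d$ fiber is a cycle of $d$ rational curves meeting transversely at $d$ nodes, so $\chi(I_d)=d$. Additivity of Euler characteristic gives $\chi(\check{Y})=12$. Because $\check{\pi}$ is relatively minimal and has no multiple fibers, the canonical bundle formula shows that $K_{\check{Y}}$ is the pullback of a line bundle from $\mathbb{P}^1$, so $K_{\check{Y}}^2=0$. Noether's formula $12\chi(\mathcal{O}_{\check{Y}})=K_{\check{Y}}^2+\chi(\check{Y})$ then gives $\chi(\mathcal{O}_{\check{Y}})=1$.

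Finally I would apply the Enriques--Kodaira classification. Since every non-K\"ahler compact complex surface satisfies $\chi(\mathcal{O})\leq 0$, our $\check{Y}$ is K\"ahler (in fact projective, by the existence of a section); the only K\"ahler surfaces admitting a relatively minimal elliptic fibration with $\chi(\mathcal{O})=1$ are rational elliptic surfaces and Enriques surfaces. However, every elliptic fibration on an Enriques surface has exactly two multiple fibers of multiplicity two, incompatible with the absence of multiple fibers in $\check{\pi}$. Hence $\check{Y}$ is a rational elliptic surface, as desired. The substantive difficulty in the argument has already been absorbed by the preceding sections, most critically the monodromy identification $m_{\infty,d}$ in Lemma~\ref{lem: monodromy} (which pins down the Kodaira type as $I_d$) and the stable-reduction step in Corollary~\ref{lem: compact} (which produces a relatively minimal fibration without multiple fibers); once these are in hand, the classification step is short and routine.
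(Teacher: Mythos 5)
Your skeleton up to the classification step coincides with the paper's: transplant Theorem~\ref{thm: sLagFib} through Lemmas~\ref{lem: section},~\ref{lem: monodromy} and Corollary~\ref{lem: compact}, then compute $\chi(\check{Y})=12$, use the canonical bundle formula (valid because $\check{\pi}$ is relatively minimal without multiple fibers) to get $K_{\check{Y}}^2=0$, and Noether's formula to get $\chi(\mathcal{O}_{\check{Y}})=1$. All of that is fine. The gap is in the final step: the dichotomy you invoke --- ``the only K\"ahler surfaces admitting a relatively minimal elliptic fibration with $\chi(\mathcal{O})=1$ are rational elliptic surfaces and Enriques surfaces'' --- is false. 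Dolgachev surfaces are K\"ahler, properly elliptic (Kodaira dimension $1$), carry relatively minimal elliptic fibrations over $\mathbb{P}^1$, and have $\chi(\mathcal{O})=1$; more generally nothing you have said excludes Kodaira dimension $\geq 1$. Ruling out exactly this possibility is where the paper's proof does its real work: it first excludes minimal properly elliptic surfaces by combining Noether's formula, Serre duality and $b_1(\check{Y})=0$ (which the paper gets from Lemma~\ref{lem: torsionFun} and Mayer--Vietoris, an input you also never establish), and then treats the non-minimal case via a $(-1)$-curve, projectivity, the canonical bundle formula forcing $K_{\check{Y}}=\check{\pi}^{*}\mathcal{O}_{\mathbb{P}^1}(-1)$, and Castelnuovo's rationality criterion.

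Your argument can be repaired within your own framework, but the repair is precisely the missing content: since $\check{\pi}$ has no multiple fibers, the canonical bundle formula gives $K_{\check{Y}}=\check{\pi}^{*}\bigl(K_{\mathbb{P}^1}\otimes\mathcal{O}_{\mathbb{P}^1}(\chi(\mathcal{O}_{\check{Y}}))\bigr)=\check{\pi}^{*}\mathcal{O}_{\mathbb{P}^1}(-1)$, so all plurigenera vanish and ${\rm Kod}(\check{Y})=-\infty$; combined with $q(\check{Y})=0$ (which needs a justification, e.g. $b_1(\check{Y})=0$ as above, or the fact that a fibration over $\mathbb{P}^1$ with a genuinely singular, non-multiple fiber such as $I_d$ has $q=0$) Castelnuovo then yields rationality. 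Without stating the no-multiple-fiber and irregularity hypotheses inside the classification claim, the claim you cite is simply not a theorem. A smaller but related slip: the parenthetical assertion that $\check{Y}$ is projective ``by the existence of a section'' is unjustified at that point --- Lemma~\ref{lem: section} only provides a local section near $\infty$, and in the paper a global section appears only at the end, as the image of a $(-1)$-curve after rationality is established; fortunately your argument does not actually need projectivity there, only K\"ahlerness, which does follow from $\chi(\mathcal{O}_{\check{Y}})=1>0$ and the classification of surfaces with odd $b_1$.
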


\begin{proof}
Let $\check{\pi}: \check{Y}\rightarrow \mathbb{P}^1$ be the surface constructed in Lemma~\ref{lem: compact}. Then $\check{Y}$ admits a genus $1$ fibration with an $I_d$ fiber over $\infty \in \mathbb{P}^1$.  It follows from Lemma~\ref{lem: eulerChar} that $\chi(\check{Y})=12$.  From the Mayer-Vietoris sequence and Lemma~\ref{lem: torsionFun}, we have $b_1(\check{Y})=0$.  Since $\check{Y}\rightarrow \mathbb{P}^1$ is a genus $1$ fibration without multiple fibers, the canonical bundle formula (see, for example \cite[Chapter 7, Theorem 15]{Fried} or \cite[Chapter V, Theorem 12.1]{BPV}) gives
\begin{equation}\label{eq: classCanonForm}
K_{\check{Y}} = \pi^{*}(K_{\mathbb{P}}^1 \otimes \mathcal{O}_{\mathbb{P}^1}(k)) 
\end{equation}
for some $k\geq 0$.  Thus $c_1(\check{Y})^2=0$.  Furthermore, applying \cite[Chapter 7, Corollary 17]{Fried} we conclude that, since $\check{\pi}:\check{Y}\rightarrow \mathbb{P}^1$ has no multiple singular fibers we must have $k>0$ in~\eqref{eq: classCanonForm}. We can now appeal to the classification of compact complex surfaces.

To begin with, assume $\check{Y}$ is minimal.  Since $c_1(\check{Y})^2=0, b_1(\check{Y})=0$, by the Enriques-Kodaira classification (see, for example \cite[Chapter VI, Table 10]{BPV}) $\check{Y}$ must be an Enriques surface, a $K3$ surface or a minimal properly elliptic surface.  Since $\chi(\check{Y})=12$, $\check{Y}$ is not a $K3$ surface.  If $\check{Y}$ is an Enriques surface, then \cite[Chapter VIII, Lemma 17.1]{BPV} gives that $\pi: \check{Y}\rightarrow \mathbb{P}^1$ has two multiple fibers.  But by construction the fibration $\check{\pi}$ has no multiple fibers.  Thus $\check{Y}$ is not an Enriques surface.  It only remains to rule out the possibility that $\check{Y}$ is a minimal properly elliptic surface.  We apply Noether's formula in combination with $K_{\check{Y}}^2=0, \chi(\check{Y})=12$ to obtain
\[
\chi(\mathcal{O}_{\check{Y}}) = \frac{1}{12}(K_{\check{Y}}^2 + \chi(\check{Y})) = 1.
\]
By definition, a properly elliptic surface has Kodaira dimension $1$, which implies that in equation~\eqref{eq: classCanonForm}, we must have $k\geq 3$.  In particular, we have $h^{0}(\check{Y},K_{\check{Y}}) >0$.  In combination with Serre duality and $b_1(\check{Y})=0$, we obtain
\[
1= \chi(\mathcal{O}_{\check{Y}}) = h^0(\check{Y}, \mathcal{O}_{\check{Y}})-h^1(\check{Y}, \mathcal{O}_{\check{Y}})+h^2(\check{Y}, \mathcal{O}_{\check{Y}})= 1-0+h^0(\check{Y}, K_{\check{Y}}) >1, 
\]
a contradiction.

It follows that $\check{Y}$ is not minimal.  Let $C$ be a rational curve in $Y$ with $C^2=-1$.  Since the genus $1$ fibration is relatively minimal, $C$ must intersect the generic fiber of $\check{\pi}:\check{Y}\rightarrow \mathbb{P}^1$ positively; in particular, $C$ is a multi-section of the fibration.  Let $F$ be a generic fiber of $\pi$.  Then we have $(C+F)^2=-1 +2C.F + F^2 = 2C.F-1 >0$.  Thus by \cite[Chapter IV, Theorem 5.2]{BPV}, $\check{Y}$ is projective.  By the canonical bundle formula~\eqref{eq: classCanonForm} (and the remarks following it) we have
\[
K_{\check{Y}} = \pi^{*}(\mathcal{O}_{\mathbb{P}^1}(k')) 
\]
for some $k' \geq -1$.  If $k'\geq 0$, then $\check{Y}$ has ${\rm Kod}(\check{Y}) \geq 0$ and so $K_{\check{Y}}$ is effective.  But by the adjunction formula $K_{\check{Y}}.C=-1$, a contradiction.  Thus, we have $k'=-1$ and hence $h^1(\check{Y},\mathcal{O}_{\check{Y}}) =0, h^0(\check{Y}, K_{\check{Y}}^2)=0$ and $C$ intersects the generic fiber in one point.  Therefore, by Castelnuovo's rationality criterion \cite[Chapter VI, Theorem 2.1]{BPV} we conclude that $\check{Y}$ is rational.  Thus, $\check{Y}$ is a rational elliptic surface and $C: \mathbb{P}^1 \rightarrow \check{Y}$ is a section.
\end{proof}

\begin{cor}\label{cor: SYZsection}
Let $Y_{d}$ be a del Pezzo surface or rational elliptic surface and $D\in |-K_{Y_{d}}|$ a smooth divisor with $D^2=d$.  Let $X_d = Y_d\setminus D$, equip $X_d$ with the Tian-Yau metric $g_{TY}$ and let $\pi:(X_d, g_{TY}, J)\rightarrow \mathbb{R}^2$ be the special Lagrangian torus fibration of Theorem~\ref{thm: sLagFib}.  Then $\pi$ admits a global section.
\end{cor}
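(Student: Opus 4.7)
The corollary is essentially an immediate consequence of Theorem~\ref{thm: RES} together with the proof of that theorem, so the plan is simply to extract the section already constructed during the classification argument. The key observation is that hyper-K\"ahler rotation does not alter the underlying smooth manifold or the set-theoretic fibers of $\pi$; it only changes which compatible complex structure one uses. Therefore a topological section of the holomorphic fibration $\check{\pi}|_{X_d} : X_d \rightarrow \mathbb{C}$ is automatically a section of the special Lagrangian torus fibration $\pi: X_d \rightarrow \mathbb{R}^2$.

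First I would invoke Theorem~\ref{thm: RES} to identify $(X_d, I)$ biholomorphically with $\check{Y}\setminus \check{\pi}^{-1}(\infty)$, where $\check{\pi}: \check{Y} \rightarrow \mathbb{P}^1$ is a rational elliptic surface whose fiber over $\infty$ is of Kodaira type $I_d$. Next I would observe that the proof of Theorem~\ref{thm: RES} does not merely establish that $\check{Y}$ is rational---it actually produces a $(-1)$-curve $C \subset \check{Y}$, and via the canonical bundle formula argument shows $C \cdot F = 1$ for a generic fiber $F$. Since $C \cong \mathbb{P}^1$ and $\check{\pi}|_C : \mathbb{P}^1 \rightarrow \mathbb{P}^1$ has degree one, it is an isomorphism; thus $C$ is already a holomorphic section of $\check{\pi}: \check{Y} \rightarrow \mathbb{P}^1$.

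Then I would restrict $C$ to $X_d$. Because $\check{\pi}|_C$ is an isomorphism, $C$ meets $\check{\pi}^{-1}(\infty)$ in exactly one point, so $C \cap X_d = C \setminus \{\text{pt}\}$ maps isomorphically onto $\mathbb{C}$ under $\check{\pi}|_{X_d}$. In particular, $C \cap X_d$ is a holomorphic section of the elliptic fibration $\check{\pi}|_{X_d}: (X_d, I) \rightarrow \mathbb{C}$.

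Finally, I would conclude by noting that the smooth fibers of $\pi: (X_d, g_{TY}, J) \rightarrow \mathbb{R}^2$ coincide, as subsets of $X_d$, with the fibers of $\check{\pi}|_{X_d}$ after identifying $\mathbb{R}^2 \cong \mathbb{C}$ via the hyper-K\"ahler rotation (this is explicit in the passage between $(X_d, J)$ and $(X_d, I)$). Hence $C \cap X_d$ also meets every special Lagrangian fiber of $\pi$ in exactly one point, and therefore defines a global section of $\pi$. There is no real obstacle to overcome---the only thing to verify carefully is that the section $C$ produced inside the proof of Theorem~\ref{thm: RES} satisfies $C \cdot F = 1$ rather than merely $C \cdot F > 0$, and this is precisely what the canonical-bundle computation in that proof established.
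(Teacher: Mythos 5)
Your proposal is correct and matches the paper's intended argument: the paper gives no separate proof of this corollary precisely because the last line of the proof of Theorem~\ref{thm: RES} already exhibits the $(-1)$-curve $C$ as a section of $\check{\pi}$ (via $k'=-1$ forcing $C\cdot F=1$), and restricting to $X_d$ and using that hyper-K\"ahler rotation leaves the fibration map unchanged gives the section of $\pi$. Your only addition is spelling out these (correct) routine steps.
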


Note that Corollary~\ref{cor: SYZsection}, together with Theorem~\ref{thm: sLagFib} establishes Theorem~\ref{thm: wtf}, modulo the statement that, near infinity, the special Lagrangians are $S^1$-bundles over special Lagrangians in the divisor at $\infty$.  But this statement is an immediate consequence of Proposition~\ref{prop: MCFhomotope} and the fact that the model special Lagrangians are $S^1$-bundles over special Lagrangians in the divisor at $\infty$.

The next result says that, at least in the special case of $\mathbb{P}^2$, we can identify the rational elliptic surface obtained by hyper-K\"ahler rotation.  Together with Theorem~\ref{thm: sLagFib}, this result establishes Corollary~\ref{cor: introAurConj1} and a conjecture of Auroux \cite[Conjecture 2.9]{Aur}.

\begin{prop}
Let $D \in |-K_{\mathbb{P}^2}|$ be a smooth cubic and let $\check{\pi}:\check{Y}\rightarrow \mathbb{P}^1$ be the rational elliptic surface obtained via Theorem~\ref{thm: RES}.  Then $\check{\pi}^{-1}(\infty)$ is a singular fiber of type $I_9$ and  $\check{\pi}: \check{Y}\setminus \pi^{-1}(\infty)\rightarrow \mathbb{C}$ has exactly three singular fibers of type $I_1$.
\end{prop}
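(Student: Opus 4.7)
The plan is as follows. The assertion that $\check{\pi}^{-1}(\infty)$ is of Kodaira type $I_9$ is immediate from Theorem~\ref{thm: RES} applied to $Y = \mathbb{P}^2$ with smooth cubic $D$, since $D^2 = (-K_{\mathbb{P}^2})^2 = 9$. The substantive content is to show that $\check{\pi}$ has exactly three singular fibers over $\mathbb{C}$, each of type $I_1$.

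First I would run an Euler characteristic count. Any rational elliptic surface has $\chi(\check{Y}) = 12$, and by Lemma~\ref{lem: eulerChar} we have $\chi(X) = 12 - 9 = 3$ for $X = \mathbb{P}^2\setminus D$. The additivity of the Euler number under the torus fibration (used in the proof of Lemma~\ref{lem: fibration}) then yields $\sum_{F} \chi(F) = 3$, the sum ranging over interior singular fibers of $\check{\pi}$.

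Next I would apply the Shioda-Tate formula
\[
\rho(\check{Y}) = 2 + \mathrm{rank}\,MW(\check{Y}) + \sum_v (m_v - 1),
\]
where $m_v$ is the number of components of the fiber $F_v$. Since $\rho(\check{Y}) = 10$ and the $I_9$ fiber contributes $m_v - 1 = 8$, the Mordell-Weil rank must vanish and every other singular fiber must be irreducible. By Proposition~\ref{prop: singClass}, such a fiber is of type $I_1$ or $II$, and the Euler constraint $\sum \chi(F) = 3$ narrows the configuration to either $[3\,I_1]$ or $[I_1, II]$.

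The main obstacle is to rule out the configuration $[I_1, II]$. I would attempt this through the global Weierstrass model $y^2 = x^3 + A(t) x + B(t)$ for $\check{Y}$, with $\deg A \leq 4$, $\deg B \leq 6$, and $\Delta = 4A^3 + 27B^2$. The presence of an $I_9$ fiber at $\infty$ forces $v_\infty(\Delta) = 9$, hence massive cancellation in the top coefficients of $A^3$ and $B^2$. Combined with the local conditions $v(A)\geq 1, v(B)=1, v(\Delta) = 2$ at the prospective $II$ fiber, $v(\Delta) = 1$ at the $I_1$ fiber, and the requirement that the $j$-invariant $j = 1728 \cdot 4A^3/\Delta$ has equal total zero and pole orders, a careful bookkeeping should produce a contradiction. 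An alternative (and likely shorter) route is to appeal to Persson's classification of singular-fiber configurations on rational elliptic surfaces, from which $[I_9, 3\,I_1]$ is the unique configuration containing an $I_9$ fiber. Either route yields the claim, and the three remaining singular fibers are then automatically of type $I_1$.
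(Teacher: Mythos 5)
Your reduction is sound up to the last step, and it genuinely differs from the paper in one place: where the paper uses the monodromy at infinity both to force more than one interior singular fiber and to kill the configuration $\{I_1,I_2\}$, you use Shioda--Tate ($\rho(\check Y)=10$, the $I_9$ fiber already contributing $8$), which forces Mordell--Weil rank $0$ and all remaining fibers irreducible; together with $\chi=3$ this leaves exactly $\{3I_1\}$ and $\{I_1,II\}$, the same dichotomy the paper reaches. That is a perfectly good alternative for this part (just say explicitly that $\check\pi$ has a section, which it does --- the $(-1)$-curve produced in the proof of Theorem~\ref{thm: RES}).

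The gap is in ruling out $\{I_1,II\}$, which is the actual content of the proposition. Your primary route, the Weierstrass bookkeeping, is not carried out, and the constraints you list do not produce a contradiction: for a hypothetical $I_9+II+I_1$ one needs $\deg A=4$, $\deg B=6$, $\Delta=c(t-t_0)^2(t-t_1)$ with $v_{t_0}(A)\ge 1$, $v_{t_0}(B)=1$, and all of these, including your ``equal total zero and pole orders of $j$'' condition, are mutually consistent --- the zero/pole balance is automatic (both totals are $12$ as sections of $\mathcal{O}_{\mathbb{P}^1}(12)$), and the ramification counts of $j$ over $0$ and $1728$ close up exactly, since $10=(3v_{t_0}(A)-2)+3(4-v_{t_0}(A))$ and $10=2\cdot 5$. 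So ``careful bookkeeping'' at this level cannot see the obstruction; the obstruction is genuinely monodromy-theoretic. This is exactly what the paper does: by Lemma~\ref{lem: monodromy} the monodromy at infinity is $\left(\begin{smallmatrix}1&9\\0&1\end{smallmatrix}\right)$, every conjugate of the $I_1$ matrix has the form $\left(\begin{smallmatrix}1-ab&a^2\\-b^2&1+ab\end{smallmatrix}\right)$, and its product with the type $II$ matrix has trace $1+ab-a^2-b^2$, which can never equal $2$; the same trace argument disposes of $\{I_1,I_2\}$. Your fallback --- quoting Persson's classification, in which $I_9+3I_1$ is the only configuration containing an $I_9$ --- would close the gap, but it replaces a short self-contained $SL(2,\mathbb{Z})$ computation (using data the paper has already established) by a black-box citation; if you do not want to cite the classification, you need to supply the monodromy computation, as no argument purely at the level of degrees and valuations of $A$, $B$, $\Delta$ will suffice.
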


\begin{proof}
Recall that $\check{\pi}: \check{Y}\setminus \pi^{-1}(\infty)$ is an elliptic fibration with no multiple fibers.  Since $\chi(\check{Y})=12$ and $\check{Y}$ has a singular fiber of type $I_{9}$ over $\infty \in \mathbb{P}^1$ with monodromy at $\infty$ given by
\[
m_{\infty} := \begin{pmatrix}
1 & 9
\\ 0&1
\end{pmatrix},
\]
monodromy considerations \cite[Chapter V, Table 6]{BPV} imply that $\check{Y}\setminus \check{\pi}^{-1}(\infty)$ must have more than one singular fiber.  Thus, there are only three possible configurations for the singular fibers in $\check{Y}\setminus \check{\pi}^{-1}(\infty)$; they are $\{ I_1, I_1, I_1\}$, $\{I_1, II\}$ and $\{I_1, I_2\}$.  If the configuration is $\{I_1, I_2\}$, then there are $a,b \in \mathbb{Z}$ with $\gcd(a,b)=1$ so that $m_{\infty}$ is conjugate in $SL(2, \mathbb{Z})$ to
\[
\begin{pmatrix}
1-ab & a^2
\\ -b^2&1+ab
\end{pmatrix}
\begin{pmatrix}
1 & 2
\\ 0&1
\end{pmatrix}.
\]
Since ${\rm Tr}(m_{\infty})=2$, we find $b=0$ and hence $a= \pm1$, which implies that $m_{\infty}$ is conjugate to
\[
\begin{pmatrix}
1 & 3
\\ 0&1
\end{pmatrix}
\]
which is absurd.  If instead the configuration is  $\{I_1, II\}$ then we conclude that $m_{\infty}$ is conjugate to
\[
\begin{pmatrix}
1-ab & a^2
\\ -b^2&1+ab
\end{pmatrix}
\begin{pmatrix}
0 & 1
\\ -1&1
\end{pmatrix}.
\]
Again, since ${\rm Tr}(m_{\infty})=2$, we obtain that $a,b$ solve $a^2+b^2-ab+1=0$, which has no real solutions.  The result follows. 
\end{proof}

In the same vein, we have the following lemma, which together with Theorem~\ref{thm: sLagFib} proves Corollary~\ref{cor: introAurConj2} and a conjecture of Auroux \cite[Conjecture 2.10]{Aur}.

\begin{lem}
Consider the moduli space $\mathcal{Y}$ consisting of $4$-tuples $(Y,D, [\omega], \Omega)$, where $Y$ is a rational elliptic surface, $D\in|-K_{Y}|$ is a smooth divisor, $[\omega]\in H^2(Y,\mathbb{R})$ a K\"ahler class, and $\Omega$ is a meromorphic $2$-form on $X=Y\setminus D$ with simple pole along $D$. For a generic $4$-tuple $(Y,D,[\omega],\Omega) \in \mathcal{Y}$, the special Lagrangian fibration of $X$ with respect to any asymptotically cylindrical metric in $[\omega]|_{Y\setminus D}$ produced by Theorem~\ref{thm: sLagFib} has 12 singular fibers, each of which is a nodal special Lagrangian sphere.
\end{lem}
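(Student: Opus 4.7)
The strategy is to hyper-K\"ahler rotate, count singular fibers via Euler characteristic, and argue genericity using Weierstrass models of rational elliptic surfaces. By Theorems~\ref{thm: sLagFib} and~\ref{thm: RES}, any $(Y,D,[\omega],\Omega)\in\mathcal{Y}$ gives rise to a holomorphic elliptic fibration $\check{\pi}:\check{Y}\to\mathbb{P}^1$ on a rational elliptic surface, with $\check{\pi}^{-1}(\infty)$ of Kodaira type $I_{D^2}$. Because $Y$ is a rational elliptic surface, $-K_Y$ is the fiber class of its elliptic fibration, so $D\in|-K_Y|$ is itself a smooth fiber and $D^2=0$; hence $\check{\pi}^{-1}(\infty)$ is smooth (type $I_0$) and every singular fiber of $\check{\pi}$ lies in $X\cong\check{Y}\setminus\check{\pi}^{-1}(\infty)$. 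I will then combine $\chi(\check{Y})=12$ with the Kodaira classification recalled in Proposition~\ref{prop: singClass}, which gives $\chi(C)\geq 1$ for any singular fiber $C$ with equality iff $C$ is of type $I_1$, to conclude that $\check{\pi}$ has at most $12$ singular fibers, with equality iff all are nodal rational curves. Under hyper-K\"ahler rotation an $I_1$ fiber (topologically a pinched torus, i.e.\ a sphere with two points identified) corresponds to a nodal special Lagrangian sphere in $\pi:X\to\mathbb{R}^2$, so attaining the maximum of $12$ singular fibers is precisely the desired conclusion.

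It remains to show the maximum is generically attained. By Corollary~\ref{cor: SYZsection}, $\check{\pi}$ admits a section, so $\check{Y}$ has a Weierstrass model $y^2=x^3+f(t)x+g(t)$ with $\deg f\leq 4$, $\deg g\leq 6$; the singular fibers sit at the zeros of the discriminant $\Delta=-16(4f^3+27g^2)$, a polynomial of degree at most $12$. The locus in the space of Weierstrass pairs $(f,g)$ where $\Delta$ has a multiple root or where its degree drops below $12$ is a proper Zariski-closed subvariety, and on its complement all $12$ singular fibers are of type $I_1$. The hyper-K\"ahler rotation assignment $\Psi:\mathcal{Y}\to(\text{moduli of rational elliptic surfaces})$, $(Y,D,[\omega],\Omega)\mapsto\check{Y}$, depends continuously on the data through the Tian-Yau metric of Theorem~\ref{thm: TY} and the Lagrangian mean curvature flow of Section~\ref{sec: TY}; combined with the openness of the generic Weierstrass stratum, the conclusion reduces to producing one $(Y_0,D_0,[\omega]_0,\Omega_0)\in\mathcal{Y}$ whose image under $\Psi$ lies in that stratum.

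The main obstacle is exhibiting this witness, since $\Psi$ is transcendental and $\check{Y}$ is in general not isomorphic to $Y$. I would handle this by exploiting the twistor family of the complete Ricci-flat hyper-K\"ahler metric on $X_0=Y_0\setminus D_0$, for $Y_0$ a generic rational elliptic surface (which by construction has $12$ $I_1$ fibers) and $D_0$ a smooth fiber. The twistor parameter $\zeta\in\mathbb{P}^1$ sweeps out a family of complex structures on $X_0$; the subset of $\zeta$ for which $(X_0,J_\zeta)$ compactifies to a rational elliptic surface is open, and on this open subset the combinatorial type of the singular fibers is locally constant, since a bubbling-off or collision of two singular fibers is a Zariski-closed condition on the twistor parameter. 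Hence for $\zeta$ in a neighborhood of the value corresponding to $Y_0$ itself, the rotated surface still has $12$ $I_1$ fibers. Since varying $[\omega]$ and $\Omega$ in $\mathcal{Y}$ (in particular via the phase rotation $\Omega\mapsto e^{i\theta}\Omega$) corresponds to moving within this twistor family, an open neighborhood of such a witness in $\mathcal{Y}$ maps under $\Psi$ into the generic Weierstrass stratum. Openness of that stratum, together with density of open subsets in the irreducible components of $\mathcal{Y}$, then yields a Zariski-open dense subset of 4-tuples with the required $12$ nodal special Lagrangian sphere fibers, completing the proof.
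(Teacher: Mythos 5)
Your Euler-characteristic bookkeeping is fine: after rotating, $\chi(\check{Y})=12$, the fiber at infinity is smooth (since $D^2=0$ for a rational elliptic surface), and since an $I_1$ fiber is the only Kodaira fiber with Euler characteristic $1$, having $12$ singular fibers forces them all to be nodal spheres. The real content of the lemma, however, is the genericity claim, and there your argument has a genuine gap. The hyper-K\"ahler rotation assignment $\Psi$ is, as you say, transcendental; it is at best continuous in the classical topology. Hence the preimage under $\Psi$ of the ``generic Weierstrass stratum'' is only classically open, not Zariski-open, and an open set is not dense: exhibiting a single witness plus openness does not yield a statement about \emph{generic} $4$-tuples. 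Your attempt to close this by invoking ``density of open subsets in the irreducible components of $\mathcal{Y}$'' silently assumes Zariski-openness of a set you have only shown to be analytically open.

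The proposed witness construction is also unsound. The complex structure $I$ making the special Lagrangian fibration holomorphic sits at a definite (quarter-turn) point of the twistor sphere, not in a small neighborhood of the original structure on $Y_0$, so local constancy of singular-fiber types near $\zeta_0$ (even if it held) says nothing about the rotated surface; moreover the fibers of $\check{\pi}$ lie in the class $[L]=[S^1\times\gamma]$, which is \emph{not} the fiber class of the original elliptic fibration on $Y_0$, so the $12$ $I_1$ fibers of $Y_0$ carry no direct information about the singular fibers of $\check{\pi}$, and there is no reason that other twistor parameters admit elliptic fibrations or rational elliptic compactifications at all. The paper circumvents all of this with a different mechanism: by McMullen's Torelli theorem for anticanonical pairs, the rotated pair $(\check{Y},\check{D})$ is determined by the period of $\check{\Omega}=\omega-\sqrt{-1}\,\mathrm{Im}\,\Omega$ (with the scale fixed by $\int_{M_\gamma}\Omega=1$), and varying $(Y,D)$ in its deformation family together with the K\"ahler class $[\omega]$ realizes \emph{every} small deformation of $(\check{Y},\check{D})$; since the generic rational elliptic surface has $12$ $I_1$ fibers, generic data must produce such a surface. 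Some version of this surjectivity-of-periods argument (or another mechanism controlling how $(\check{Y},\check{D})$ moves as the data varies) is what your proof is missing.
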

\begin{proof}
Recall that a generic rational elliptic surface has 12 singular fibers of type $I_1$.  The idea is to show that for generic choice of data $(Y,D,[\omega], \Omega)$, the hyper-K\"ahler rotation along the special Lagrangian fibration produced in Theorem~\ref{thm: sLagFib} will produce a generic rational elliptic surface. Fix a rational elliptic surface $\pi: Y\rightarrow \mathbb{P}^1$, a smooth divisor $D \in |-K_{Y}|$ and a class $[{\gamma}] \in H_{1}({D}, \mathbb{Z})$ and let $M_{\gamma}$ be the model Lagrangian induced by $\gamma$. Let ${\Omega}$ be the holomorphic volume form on ${X}:= {Y}\setminus {D}$ with a simple pole along ${D}$ and normalized such that $\int_{M_\gamma}\Omega=1$. Let ${\omega}$ be the asymptotically cylindrical Tian-Yau metric such that $2\omega^2=\Omega\wedge \bar{\Omega}$.  By Theorem~\ref{thm: sLagFib}, there exists a special Lagrangian fibration ${\pi}: {X} \rightarrow \mathbb{R}^2$.  Denote by $\check{X}$ the complex surface obtained from hyper-K\"ahler rotation with the K\"ahler form and holomorphic volume form 
\begin{equation} \label{HK}
\begin{aligned} 
\check{\Omega} &= {\omega} -\sqrt{-1}{\rm Im}{\Omega},\\
\check{\omega }&= {\rm Re}{\Omega}.
\end{aligned}
\end{equation}
Note that these choices are compatible with our normalizations so that after this rotation the fibration ${\pi}:X \rightarrow \mathbb{R}^2$ becomes an elliptic fibration. By Theorem~\ref{thm: RES}, there is a rational elliptic surface $\check{Y}$ and a smooth divisor $\check{D}\in|-K_{\check{Y}}|$ so that $X = \check{Y}\setminus \check{D}$.  By direct computation in the cylindrical model together with the estimates~\eqref{formdecay},~\eqref{Jdecay},~\eqref{holvoldecay}, one can check that $\check{\Omega}$ has a simple pole along $\check{D}$. 
 
By the Torelli theorem for pairs of rational surfaces with smooth anti-canonical divisors \cite{Mc}, a deformation $(Y',D')$ of the pair $(Y,D)$ is determined by the cohomology class of the meromorphic $2$-form $\Omega'$ in
\[
H^2(Y'\setminus D',\mathbb{C})\cong H^2(Y\setminus D,\mathbb{C})
\]
up to $\mathbb{C}^*$ scaling, but we have fixed the scaling by the $\int_{M_\gamma}\Omega=1$ (see also \cite[Proposition 3.12]{Fr2} to go from markings to periods). Since all the rational elliptic surfaces are deformation equivalent and generic fibres are smooth, all such pairs are in the same deformation family. Assume that $Y$ is a generic rational elliptic surface. Then for a generic small deformation $Y'$ of $Y$, the cohomology class $[\omega]\in H^2(Y,\mathbb{R})$ is transported by the Gauss-Manin connection to a K\"ahler form $[\omega'] \in H^2(Y',\mathbb{R})$ \cite[Theorem 0.9]{Demailly} (since $H^{2,0}(Y)=0$). In particular, a small deformation of the K\"ahler class $[\omega']|_{X'} \in H^{2}(X', \mathbb{R})$ within the subspace $\mbox{Im}\big(H^2(Y',\mathbb{R})\rightarrow H^2(X',\mathbb{R})\big)$ is K\"ahler.  We now apply the discussion in the preceding paragraph to $Y'$, obtaining a hyper-K\"ahler rotation as in~\eqref{HK}.  Denote by $(\check{Y}', \check{D}')$ the pair of a rational elliptic surface and a smooth anti-canonical divisor so obtained.  Let $\check{X}'=\check{Y}'\setminus \check{D}'$. By the Torelli theorem \cite{Mc}, any small deformation of $(\check{Y},\check{D})$ can be achieved by a suitable choice of $(Y',D')$ and its K\"ahler class $[\omega']$. The theorem follows from the fact that the generic rational elliptic surface $\check{Y}'$ has $12$ singular fibres.

\end{proof}
\begin{rk}
More generally, the above lemma shows that the special Lagrangian fibrations constructed by Theorem~\ref{thm: sLagFib} on the complement of a smooth divisor in a rational elliptic surface can have all possible singularities in the Kodaira's list, since this is true of elliptic fibrations on rational elliptic surfaces.
\end{rk}

\begin{rk}
In \cite[Remark 2.5]{HSVZ}, Hein-Sun-Viaclovsky-Zhang note that for special choices of elliptic curves $D$ and homology classes $[\gamma]\in H_1(D,\mathbb{Z})$, hyper-K\"ahler rotating the Calabi model along the model special Lagrangian fibration over $[\gamma]$ produces the semi-flat ansatz in a neighborhood of a type $I_{d}$ fiber.  The semi-flat model was used by Hein \cite{Hein} to construct Ricci-flat metrics on complements of $I_d$ fibers in rational elliptic surfaces.  The authors suggest that this could be used to identify the metrics by global hyper-K\"ahler rotation, which could lead to a completely different proof of Theorem~\ref{thm: main3Intro} in these special cases.
\end{rk}

\appendix
\section{Some Analysis Lemmas}

In this appendix we record, with proofs, several results which were needed for the analysis in Sections~\ref{sec: perturb}, ~\ref{sec: ACYL} and~\ref{sec: TY}.  These results are surely well-known.  However, since we have not been able to find references containing exactly the statements we need with proofs, we include complete proofs here for the reader's convenience.  The first result is a variational formula for the second fundamental form of a submanifold under variations of the metric; see \cite{Lott} for a related formula in codimension $1$.

\begin{lem}\label{lem: secFunVar}
Let $M^k \subset X^{n+k}$ be a smooth submanifold of a Riemannian manifold $(X,g_0)$.  Suppose that $g(t)$ is a smooth variation of Riemannian metrics for $t\in (-\epsilon, \epsilon)$.  Consider the product manifold $\overline{X}:=X\times (-\epsilon, \epsilon)$ equipped with the Riemannian metric $\bar{g} = dt^2 + g(t)$.  Let $A(t)$ denote the second fundamental form of $M \subset (X, g(t))$ and let $\overline{\nabla}$ denote the covariant derivative of $\bar{g}$.  For $p \in M \times \{0\}$, let $(x_1,\ldots, x_{k})$ be local coordinates on $M$centered at $p$ which are normal for $g(0)$.  Let $\{E_1, \ldots, E_n\}$ be a local orthonormal frame for $(TM)^{\perp} \subset (X,g(0))$. Then we have
\[
 \begin{aligned}
 \overline{\nabla_{t}}A_{ij}(0) &= \sum_{\alpha=1}^{n} \left( \left(\nabla^0_{i}\del_tg_{\alpha j} + \nabla^0_{j} \del_tg_{\alpha i} - \nabla^0_{\alpha}\del_tg_{ij}\right)  +  \frac{1}{2}\sum_{\beta=1}^{n}\del_t g_{\alpha\beta}A_{ij}^{\beta}(0)\right) E_{\alpha}(0)\\
 &\quad- \sum_{\alpha=1}^{n}\sum_{\ell=1}^{k}\frac{1}{2}\del_tg_{\alpha \ell} A_{ij}^{\alpha}(0) \del_{x_{\ell}},
 \end{aligned}
 \]
 where $\nabla^0$ denotes the covariant derivative on $(X,g(0))$.
 \end{lem}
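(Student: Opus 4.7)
The plan is to choose adapted coordinates $(x_1,\ldots,x_k,y_1,\ldots,y_n)$ on a neighborhood of $p$ in $X$ with $M=\{y=0\}$, where $(x_1,\ldots,x_k)$ are the given normal coordinates for $g(0)$ at $p$ and the $\partial_{y_\alpha}|_p$ agree with $E_\alpha(0)$. In such coordinates $g(0)$ is block-diagonal at $p$ with $g_{ij}(0,p)=\delta_{ij}$, $g_{\alpha\beta}(0,p)=\delta_{\alpha\beta}$, and $g_{i\alpha}(0,p)=0$. I would then view the full formula for $\overline{\nabla}_{\partial_t}A_{ij}$ as coming from combining three well-known ingredients: (i) the coordinate decomposition of the second fundamental form, (ii) the Christoffel symbols of the product metric $\bar g=dt^2+g(t)$, and (iii) the standard variation formula for Christoffel symbols under a deformation of the metric.

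First, writing $A_{ij}(t)=\nabla^t_{\partial_i}\partial_j-P^T_{g(t)}\bigl(\nabla^t_{\partial_i}\partial_j\bigr)$, where $P^T_{g(t)}V=g_M^{km}(t)\,g(t)(V,\partial_{x_m})\,\partial_{x_k}$, and expanding $\nabla^t_{\partial_i}\partial_j=\Gamma^l_{ij}(t)\partial_{x_l}+\Gamma^\alpha_{ij}(t)\partial_{y_\alpha}$, one finds that the $E_\alpha$-coefficient of $A_{ij}(t)$ is simply $\Gamma^\alpha_{ij}(t)$, while the tangential coefficient is $B^l_{ij}(t):=\Gamma^l_{ij}(t)-g_M^{lm}(t)[\Gamma^r_{ij}(t)g_{rm}(t)+\Gamma^\alpha_{ij}(t)g_{\alpha m}(t)]$. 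By the normal-coordinate and block-diagonal assumptions, $\Gamma^l_{ij}(0,p)=0$ and $B^l_{ij}(0,p)=0$, so $A_{ij}(0,p)=A_{ij}^\alpha(0)E_\alpha(0)$ as expected.

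Second, the Christoffel symbols of $\bar g=dt^2+g(t)$ satisfy $\overline{\nabla}_{\partial_t}\partial_\mu=\tfrac{1}{2}g^{\nu\lambda}(t)\,\dot g_{\lambda\mu}(t)\,\partial_\nu$ (and $\overline{\nabla}_{\partial_t}\partial_t=0$). Applying this Leibniz-style to $A_{ij}(t)=\Gamma^\alpha_{ij}(t)\partial_{y_\alpha}+B^l_{ij}(t)\partial_{x_l}$ and evaluating at $(0,p)$ gives
\[
\overline{\nabla}_{\partial_t}A_{ij}\bigr|_{(0,p)}=\dot\Gamma^\alpha_{ij}(0)E_\alpha(0)+\dot B^l_{ij}(0)\partial_{x_l}+\tfrac{1}{2}A_{ij}^\alpha(0)\,\dot g_{\mu\alpha}(0)\,\partial_\mu,
\]
where the last $\partial_\mu$ sum splits into normal ($\partial_\mu=E_\beta(0)$) and tangential ($\partial_\mu=\partial_{x_l}$) pieces, accounting for the two stated correction terms in the lemma.

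Finally, I would plug in the standard variational formula $\dot\Gamma^k_{ij}=\tfrac12 g^{kl}(\nabla_i\dot g_{jl}+\nabla_j\dot g_{il}-\nabla_l\dot g_{ij})$; evaluated at $(0,p)$, where $g^{\alpha\beta}(0)=\delta^{\alpha\beta}$ collapses index contractions, this immediately produces the $\nabla^0_i\dot g_{\alpha j}+\nabla^0_j\dot g_{\alpha i}-\nabla^0_\alpha\dot g_{ij}$ factor on $E_\alpha(0)$. A similar but shorter calculation for $\dot B^l_{ij}(0)$ reduces, after the block-diagonal assumption kills the leading pieces, to $-A^\alpha_{ij}(0)\dot g_{\alpha l}(0)$, which combines with the correction above to yield exactly the tangential term $-\tfrac12\dot g_{\alpha l}A^\alpha_{ij}(0)\partial_{x_l}$ of the stated formula. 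The main obstacle is purely bookkeeping: tracking which terms vanish at $(0,p)$ because of normal coordinates and the block-diagonal form of $g(0)$, and then gathering the surviving pieces into the correct normal and tangential contributions; there is no genuine analytic difficulty beyond this organization of a tensorial identity.
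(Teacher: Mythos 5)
Your argument is correct and is essentially the paper's proof: both work on $\overline{X}=X\times(-\epsilon,\epsilon)$ with $\bar g=dt^2+g(t)$, use the same adapted coordinates at $p$, the product-metric Christoffel symbols $\overline{\Gamma}^{\nu}_{t\mu}=\tfrac12 g^{\nu\lambda}\partial_t g_{\lambda\mu}$, and the variation formula for $\Gamma^{\alpha}_{ij}$; the only difference is bookkeeping (you expand $A_{ij}(t)$ in the fixed coordinate frame via the tangential projection, with coefficient $B^{l}_{ij}$, while the paper expands in the $g(t)$-orthonormal normal frame $E_{\alpha}(t)$), and your computation $\dot B^{l}_{ij}(0)=-A^{\alpha}_{ij}(0)\partial_t g_{\alpha l}$ reproduces the paper's tangential correction. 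One caveat: the standard formula $\dot\Gamma^{k}_{ij}=\tfrac12 g^{kl}(\nabla_i\dot g_{jl}+\nabla_j\dot g_{il}-\nabla_l\dot g_{ij})$ that you cite yields $\tfrac12\bigl(\nabla^0_i\partial_t g_{\alpha j}+\nabla^0_j\partial_t g_{\alpha i}-\nabla^0_\alpha\partial_t g_{ij}\bigr)$, not the unhalved expression in the statement; the paper's own proof quotes this formula without the $\tfrac12$, so the factor-of-two discrepancy is a slip in the paper (harmless, since the lemma only enters through the crude bound of Lemma~\ref{lem: perturbAH}), but as written your final step does not literally follow from the formula you quote.
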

 \begin{proof}
As in the statement of the lemma, let $\overline{X} = X\times (-\epsilon,\epsilon)$ and equip $\overline{X}$ with the metric $\overline{g} := dt^2 + g(t)$.  For $ p\in M$ and let $(x_1,\ldots, x_{k})$ be local coordinates on $M$ centered at $p$ which are normal for $g(0)=g_0$.  Let $\{E_1, \ldots, E_n\}$ be a local orthonormal frame for $(TM)^{\perp}$ with respect to $g(0)$. Extend $\{E_{\alpha}\}_{1\leq \alpha \leq n}$ smoothly in time to a local $g(t)$-orthonormal frame of $TM^{\perp} \subset (X,g_{t})$.  Choose local functions $(y_{1},\ldots, y_{n})$ vanishing at $p$ so that $\del_{y_i} = E_{i}(0)$ holds at $p$.  Then $(x_1,\ldots,x_k, y_1,\ldots, y_n)$ form local coordinates for $X$ and $g_0$ is the identity at $p$.  The second fundamental form is
\[
A_{ij}(t) = \sum_{\alpha=1}^{n} \langle \nabla^{t}_{\del_{x_i}} \del_{x_j}, E_{\alpha}(t)\rangle_{g_{t}} E_{\alpha}(t).
\]
 Let $\overline{\nabla}$ denote the covariant derivative of $\overline{g}$.  Then we have
 \[
 \begin{aligned}
 \overline{\nabla_{t}}A_{ij}(t) &= \sum_{\alpha=1}^{n} \langle \overline{\nabla}_{\del_t}\nabla^{t}_{\del_{x_i}} \del_{x_j}, E_{\alpha}(t)\rangle_{g_{t}} E_{\alpha}(t)\\
 &+ \sum_{\alpha=1}^{n} \langle \nabla^{t}_{\del_{x_i}} \del_{x_j}, \overline{\nabla}_{\del_t}E_{\alpha}(t)\rangle_{g_{t}} E_{\alpha}(t)\\
 &+\sum_{\alpha=1}^{n} \langle \nabla^{t}_{\del_{x_i}} \del_{x_j}, E_{\alpha}(t)\rangle_{g_{t}} \overline{\nabla}_{\del_t}E_{\alpha}(t).
 \end{aligned}
 \]
 At $t=0$ we have $\nabla^{0}_{\del_{x_i}} \del_{x_j} = (\nabla^{0}_{\del_{x_i}} \del_{x_j})^{\perp}$ and so
 \[
 \begin{aligned}
  \langle \nabla^{0}_{\del_{x_i}} \del_{x_j}, \overline{\nabla}_{\del_t}E_{\alpha}(0)\rangle_{g_{0}} &=    \langle \nabla^{0}_{\del_{x_i}} \del_{x_j}, (\overline{\nabla}_{\del_t}E_{\alpha})^{\perp}(0)\rangle_{g_{0}}\\
  &=\sum_{\beta} \langle \nabla^{0}_{\del_{x_i}} \del_{x_j}, E_{\beta}(0)\rangle_{g_0}\langle E_{\beta}(0),  \overline{\nabla}_{\del_t}E_{\alpha}(0)\rangle_{g_{0}}.
  \end{aligned}
\]
Since $E_{\beta}(t)$ is orthogonal to $TM$ with respect to $g(t)$, we can also write
\[
\begin{aligned}
\overline{\nabla}_{\del_t}E_{\alpha}(0) &= \langle E_{\beta}, \overline{\nabla}_{\del_t}E_{\alpha}(0)\rangle_{g_0} E_{\beta} + \sum_{\ell=1}^{k}\langle \del_{x_{\ell}}, \overline{\nabla}_{\del_t}E_{\alpha}(0)\rangle \del_{x_{\ell}}\\
&= \langle E_{\beta}, \overline{\nabla}_{\del_t}E_{\alpha}(0)\rangle_{g_0} E_{\beta} - \sum_{\ell=1}^{k}\langle \overline{\nabla}_{\del_t}\del_{x_{\ell}}, E_{\alpha}(0)\rangle \del_{x_{\ell}}.
\end{aligned}
\]
Putting these formulae together, we obtain
\[
 \begin{aligned}
 \overline{\nabla_{t}}A_{ij}(t) &= \sum_{\alpha=1}^{n} \langle \overline{\nabla}_{\del_t}\nabla^{t}_{\del_{x_i}} \del_{x_j}\big|_{t=0}, E_{\alpha}(0)\rangle_{g_{t}} E_{\alpha}(0)\\
 &+ \sum_{1\leq \alpha, \beta \leq n} \langle \nabla^{0}_{\del_{x_i}} \del_{x_j}, E_{\beta}(0)\rangle_{g_0}\langle E_{\beta}(0),  \overline{\nabla}_{\del_t}E_{\alpha}(0)\rangle_{g_{0}} E_{\alpha}(0)\\
 &+\sum_{1\leq \alpha, \beta \leq n} \langle \nabla^{0}_{\del_{x_i}} \del_{x_j}, E_{\alpha}(0)\rangle_{g_{0}}  \langle E_{\beta}, \overline{\nabla}_{\del_t}E_{\alpha}(0)\rangle_{g_0} E_{\beta}(0)\\
 &- \sum_{\alpha=1}^{n}\sum_{\ell=1}^{k}\langle \nabla^{0}_{\del_{x_i}} \del_{x_j}, E_{\alpha}(0)\rangle_{g_{0}}\langle \overline{\nabla}_{\del_t}\del_{x_{\ell}}, E_{\alpha}(0)\rangle \del_{x_{\ell}}.
 \end{aligned}
 \]
Swapping $\alpha, \beta$ in the second line and using that $\overline{\del_t}\langle E_{\alpha}, E_{\beta}\rangle_{g_t}=0$, the second and third lines cancel and we obtain
\[
 \begin{aligned}
 \overline{\nabla_{t}}A_{ij}(t) &= \sum_{\alpha=1}^{n} \langle \overline{\nabla}_{\del_t}\nabla^{t}_{\del_{x_i}} \del_{x_j}\big|_{t=0}, E_{\alpha}(0)\rangle_{g_{0}} E_{\alpha}(0)\\
 &- \sum_{\alpha=1}^{n}\sum_{\ell=1}^{k}\langle \nabla^{0}_{\del_{x_i}} \del_{x_j}, E_{\alpha}(0)\rangle_{g_{0}}\langle \overline{\nabla}_{\del_t}\del_{x_{\ell}}, E_{\alpha}(0)\rangle \del_{x_{\ell}}.
 \end{aligned}
 \]
We now compute
\[
\begin{aligned}
\langle \overline{\nabla}_{\del_t}\nabla^{t}_{\del_{x_i}} \del_{x_j}\big|_{t=0}, E_{\alpha}(0)\rangle_{g_{0}} &=  \ddt \Gamma_{ij}^{\alpha} + \sum_{\ell=1}^{k} \Gamma_{ij}^{\ell} \langle \overline{\nabla}_{\del_t}\del_{x_{\ell}}, E_{\alpha}\rangle_{g_0} + \sum_{\beta=1}^{n}\Gamma_{ij}^{\beta}\langle \overline{\nabla}_{\del_t} \del_{y_{\beta}}, E_{\alpha}(0)\rangle_{g_{0}}\\
&=  \ddt \Gamma_{ij}^{\alpha}  + \sum_{\beta=1}^{n}A_{ij}^{\beta}(0) \overline{\Gamma}_{t\beta}^{\alpha},
\end{aligned}
\]
where $\overline{\Gamma}$ denotes the Christoffel symbols of $\overline{g}$ in coordinates $(x_{\ell}, y_{\alpha}, t)$ and we have used that $\Gamma_{ij}^{\ell}(0)$ vanish at $p$.  By straightforward calculation we have
\[
\overline{\Gamma}_{t\ell}^{\alpha}= \frac{1}{2}\del_t g_{\alpha \ell}, \qquad \overline{\Gamma}_{t\beta}^{\alpha} = \frac{1}{2}\del_t g_{\alpha\beta}.
\]
Therefore,
\[
 \begin{aligned}
 \overline{\nabla_{t}}A_{ij}(t) &= \sum_{\alpha=1}^{n} \left(\ddt \Gamma_{ij}^{\alpha}  +  \frac{1}{2}\sum_{\beta=1}^{n}\del_t g_{\alpha\beta}A_{ij}^{\beta}\right) E_{\alpha}(0)\\
 &- \sum_{\alpha=1}^{n}\sum_{\ell=1}^{k}\frac{1}{2}\del_tg_{\alpha \ell} A_{ij}^{\alpha}(0) \del_{x_{\ell}}.
 \end{aligned}
 \]
By the well-known formula
\[
\ddt \Gamma_{ij}^{\alpha} = \left(\nabla_{i}^0\del_tg_{\alpha j} + \nabla_{j}^0 \del_tg_{\alpha i} - \nabla_{\alpha}^0\del_tg_{ij}\right),
\]
we obtain
\[
 \begin{aligned}
 \overline{\nabla_{t}}A_{ij}(t) &= \sum_{\alpha=1}^{n} \left( \left(\nabla^0_{i}\del_tg_{\alpha j} + \nabla^0_{j} \del_tg_{\alpha i} - \nabla^0_{\alpha}\del_tg_{ij}\right)  +  \frac{1}{2}\sum_{\beta=1}^{n}\del_t g_{\alpha\beta}A_{ij}^{\beta}(0)\right) E_{\alpha}(0)\\
 &- \sum_{\alpha=1}^{n}\sum_{\ell=1}^{k}\frac{1}{2}\del_tg_{\alpha \ell} A_{ij}^{\alpha}(0) \del_{x_{\ell}},
 \end{aligned}
 \]
 which is the desired result.
\end{proof}

Our next result concerns smoothing estimates to the mean curvature flow.  Such estimates are essentially standard in the theory.  However, we have been unable to a find reference for these estimates in a scale invariant form in a non-flat background.  We refer the reader to \cite[Chapter 3]{Ecker} for a proof when the background geometry is Euclidean and \cite[Theorem 1.2]{Smo} for similar estimates but which are not manifestly scale invariant.

\begin{prop}
Suppose that $M^k_0 \subset (X^{n+k},g)$ is compact submanifold and let $M_t$ be the mean curvature flow starting at $M_0$.  Suppose that there is a constant $K>0$ so that, for all $t\in [0, \frac{\alpha}{K})$, we have the following estimates:
\begin{itemize}
\item[(i)] the second fundamental form $A_t$ of $M_t$ satisfies
\[
|A_t|^2 \leq K.
\]
\item[(ii)] For all $0\leq \ell \leq m+1$, the curvature tensor $Rm$ of $(X,g)$ satisfies
\[
\sup_{M_{t}} |\nabla^{\ell}Rm|^2 \leq K^{2+\ell}.
\]
\end{itemize}
Then there exists a constant $C>0$ depending only on $\alpha, n, k, m$ so that
\[
|\nabla^{m}A|^2 \leq \frac{CK}{t^{m}}.
\]
\end{prop}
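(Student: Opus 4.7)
The proof proceeds by induction on $m$, via Bernstein-style auxiliary functions carrying time weights. The case $m=0$ is the hypothesis, so assume the conclusion holds for all orders less than $m$ with constants $C_0,\ldots,C_{m-1}$.

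The first step is to derive the evolution equation for $|\nabla^m A|^2$ along mean curvature flow in a curved ambient space. A standard Simons-type commutator computation yields schematically
\[
(\del_t-\Delta)|\nabla^m A|^2 \leq -2|\nabla^{m+1}A|^2 + \sum_{i+j+k=m}|\nabla^i A|\,|\nabla^j A|\,|\nabla^k A|\,|\nabla^m A| + \sum_{i+j\leq m}|\nabla^i A|\,|\nabla^j Rm|\,|\nabla^m A|,
\]
with every derivative of $Rm$ that appears of order at most $m+1$, so that hypothesis (ii) applies directly. Substituting the inductive bounds $|\nabla^i A|^2 \leq C_i K t^{-i}$ for $i<m$, using hypothesis (ii), and applying Cauchy--Schwarz while tracking the homogeneities $[|\nabla^m A|^2]\sim \lambda^{-2m-2}$, $[t]\sim\lambda^{2}$, $[K]\sim \lambda^{-2}$, one arrives at the scale-invariant inequality
\[
(\del_t-\Delta)|\nabla^m A|^2 \leq -2|\nabla^{m+1}A|^2 + C K\,|\nabla^m A|^2 + C K^{m+1} t^{-m}, \qquad t\in[0,\alpha/K),
\]
where $C$ depends on $\alpha,m,n,k$ and on $C_0,\ldots,C_{m-1}$.

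To close the estimate I will employ the Bernstein combination
\[
F = t^m |\nabla^m A|^2 + \beta\, t^{m-1}|\nabla^{m-1} A|^2
\]
with a constant $\beta \geq m$ to be fixed. The unfavourable term $m\,t^{m-1}|\nabla^m A|^2$ produced by differentiating the weight $t^m$ is absorbed by the negative gradient contribution $-2\beta t^{m-1}|\nabla^m A|^2$ arising in the evolution of the second summand (applied at order $m-1$). Invoking the inductive bound on $|\nabla^{m-1}A|^2$ to control the remaining reaction terms, one gets
\[
(\del_t-\Delta)F \leq C_1 K\,F + C_2 K \quad \text{on } [0,\alpha/K),
\]
while $F(0)$ is bounded in terms of $K$ (and vanishes at $t=0$ when $m\geq 1$). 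The parabolic maximum principle on the compact family $M_t$ combined with Gr\"onwall's inequality then yields $F \leq C(\alpha,m,n,k)\,K$ on $[0,\alpha/K)$, which is precisely the desired bound on $|\nabla^m A|^2$.

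The main technical obstacle is not conceptual but combinatorial: one must keep precise track of the scale-invariant polynomial structure of the reaction terms in the evolution equation for $|\nabla^m A|^2$, and select the hierarchy of constants (here $\beta$, but more generally one may need to cascade down to all orders $0,\ldots,m$) so that the positive contributions from time-weight derivatives are absorbed at each step. Compactness of $M_t$ and the finite horizon $t<\alpha/K$ make the application of the maximum principle routine and obviate any spatial cut-off. Hypothesis (ii) is used precisely to control the curvature terms arising when one commutes covariant derivatives past the Laplacian in the evolution of $\nabla^m A$.
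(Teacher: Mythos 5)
Your proposal is correct and takes essentially the same route as the paper, which proves the case $m=1$ in detail with exactly your auxiliary function $F=t\,|\nabla A|^2+\beta|A|^2$ (together with a scale-invariant Young inequality for the $|\nabla A|\,|\nabla\nabla Rm|$ term and the maximum principle on $[0,\alpha/K)$), and then asserts the higher-order cases by the induction you describe. One caveat: for $m\geq 2$ your headline two-term combination $t^{m}|\nabla^{m}A|^2+\beta t^{m-1}|\nabla^{m-1}A|^2$ does not close as written, since differentiating the weight $t^{m-1}$ produces $\beta(m-1)t^{m-2}|\nabla^{m-1}A|^2\leq C\beta K/t$ when estimated by the inductive sup bound, a non-integrable forcing term incompatible with the claimed inequality $(\del_t-\Delta)F\leq C_1KF+C_2K$; hence the cascade $\sum_{j=0}^{m}\beta_j t^{j}|\nabla^{j}A|^2$ you mention as a possible refinement (or, alternatively, a time-localized induction on intervals $[t_0/2,t_0]$ where the lower-order bound is uniform) is actually required rather than optional.
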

\begin{proof}
The proof is based on Shi's well-known estimates for the Ricci flow \cite{Ban, Shi1, Shi2}, see also \cite[Chapter 7]{ChKn}.  We will only prove the case $m=1$, the remaining cases being essentially identical and following from an easy induction argument.  For tensors $S,T$ we write $S*T$ for various contractions using the metric and multiplication by dimensional constants; the precise form will be irrelevant for our considerations.  We begin by recalling the well-known evolution equations for the second fundamental form along the mean curvature flow.  We have
 \[
 \begin{aligned}
 \ddt A &= \Delta A + A*A*A + A*Rm + \nabla Rm.
  \ddt \nabla A &= \nabla \ddt A + A*A*\nabla A.
  \end{aligned}
 \]
 Furthermore, we have
 \[
 \ddt A = \Delta A + A*A*A + A*Rm + \nabla Rm.
 \]
Therefore
\[
\begin{aligned}
\nabla \ddt A &= \nabla \Delta A + \nabla A*A*A + \nabla A*Rm + A*\nabla Rm + \nabla\nabla Rm\\
&= \Delta \nabla A + (\nabla A)*A*A + \nabla A*Rm + A*\nabla Rm + \nabla\nabla Rm,
\end{aligned}
\]
 where we recall that everything is taken up to dimensional constants.  Then we have
 \[
 \ddt|\nabla A|^2 = 2 \langle \nabla A, \Delta \nabla A\rangle + (\nabla A)^{*2}*A*A + \nabla A^{*2}*Rm + \nabla A*A*\nabla Rm + \nabla A*\nabla\nabla Rm.
 \]
 Thus
 \[
 \begin{aligned}
 \left(\ddt - \Delta\right)|\nabla A|^2 &\leq -2|\nabla \nabla A|^2\\
 & +C\left(|\nabla A|^2|A|^2 + |\nabla A|^2|Rm| + |\nabla A||A||\nabla Rm| + |\nabla A||\nabla\nabla Rm|\right)
 \end{aligned}
 \]
 for a dimensional constant $C$.  We also have
 \[
 \left(\ddt - \Delta\right)| A|^2 \leq -2|\nabla A|^2 + C\left(|A|^4 + |A|^2|Rm|+|A|\nabla Rm|\right).
 \]
 Now, as before, assume that, on the interval $t\in[0, \frac{\alpha}{K})$ we have
 \[
 |A|^2 \leq K, \quad |Rm| \leq K, \quad  |\nabla Rm| \leq K^{3/2},\quad  |\nabla\nabla Rm| \leq K^{2}.
 \]
Consider the quantity $F= t|\nabla A|^2+\beta|A|^2$.  Then we have
\[
\begin{aligned}
\left(\ddt - \Delta\right)F &\leq |\nabla A|^2 + \\
&\quad Ct\left(|\nabla A|^2|A|^2 + |\nabla A|^2|Rm| + |\nabla A||A||\nabla Rm| + |\nabla A||\nabla\nabla Rm|\right)\\
&\quad - 2\beta|\nabla A|^2 + C\beta\left(|A|^4 + |A|^2|Rm|+|A|\nabla Rm|\right).\\
\end{aligned}
\]
Now, we want to estimate the $|\nabla A|^2$ term.  Write
\[
\begin{aligned}
|\nabla A||\nabla\nabla Rm| &= (|\nabla A||\nabla\nabla Rm|^{\alpha})(|\nabla \nabla Rm|^{1-\alpha})\\
&\leq |\nabla A|^2|\nabla\nabla Rm|^{2\alpha})+ |\nabla \nabla Rm|^{2(1-\alpha)}.
\end{aligned}
\]
By considering the scaling of each term, we are lead to take $2(1-\alpha)\times 4 = 6$ or in other words, $\alpha = \frac{1}{4}$.  Then we get
\[
\begin{aligned}
\left(\ddt - \Delta\right)F &\leq (1+ CtK -2\beta)|\nabla A|^2 +CtK^3\\
&\quad + C\beta\left(|A|^4 + |A|^2|Rm|+|A||\nabla Rm|\right).
\end{aligned}
\]
Choosing $\beta$ large depending only on $\alpha, C$, we obtain
\[
\left(\ddt - \Delta\right)F \leq C'\beta K^2,
\]
for a uniform constant $C'$.  It follows that $F-C'\beta K^2t \leq F(0)\leq \beta K$ and so
\[
t|\nabla A|^2 \leq \beta K +C'\beta K^2t \leq C''K,
\]
which is the desired estimate.
\end{proof}

\end{document}